\newcommand{\N}{\mathbb{N}}
\newcommand{\Z}{\mathbb{Z}}
\newcommand{\R}{\mathbb{R}}
\newcommand{\C}{\mathbb{C}}
\newcommand{\A}{\mathcal{A}}
\DeclareMathOperator{\e}{\mathrm{e}}
\DeclareMathOperator{\erf}{\mathrm{erf}}
\DeclareMathOperator{\erfc}{\mathrm{erfc}}
\DeclareMathOperator{\artanh}{\mathrm{artanh}}
\renewcommand{\le}{\leqslant}
\renewcommand{\ge}{\geqslant}
\renewcommand{\leq}{\leqslant}
\renewcommand{\geq}{\geqslant}  
\newcommand{\floor}[1]{\left\lfloor #1 \right\rfloor}
\newcommand{\ceil}[1]{\left\lceil #1 \right\rceil}
\newcommand{\abs}[1]{\left| #1 \right|}
\newcommand{\dv}{\, \mid \,}
\newcommand{\notdv}{\, \nmid \,}
\newcommand{\norm}[1]{\left\| #1 \right\|}
\newcommand{\conjugate}[1]{\overline{#1}}
\newcommand{\pimirror}{\overleftarrow{\pi}\!}
\newcommand{\one}{\mathbf{1}}
\newtheorem{theorem}{Theorem}
\newtheorem{proposition}{Proposition}
\newtheorem{lemma}{Lemma}
\newtheorem{definition}{Definition}
\newtheorem{notation}{Notation}
\newtheorem{remark}{Remark}
\numberwithin{equation}{section}
\numberwithin{theorem}{section}
\numberwithin{proposition}{section}
\numberwithin{corollary}{section}
\numberwithin{lemma}{section}
\numberwithin{conjecture}{section}
\numberwithin{definition}{section}
\numberwithin{notation}{section}
\numberwithin{remark}{section}
\numberwithin{table}{section}
\begin{document}
\title{Prime numbers with an almost prime reverse}

\author[C. Dartyge]{C\'ecile Dartyge}
\address{
  C\'ecile Dartyge,
  Institut Élie Cartan CNRS UMR 7502, Institut Universitaire de France,
  Universit\'e de Lorraine,
  BP 70239, 54506 Vandœuvre-lès-Nancy Cedex, France
}
\email{cecile.dartyge@univ-lorraine.fr}

\author[J. Rivat]{Jo\"el Rivat}
\address{
  Jo\"el Rivat,
  Universit\'e d'Aix-Marseille, Institut Universitaire de France,
  Institut de Math\'ematiques de Marseille CNRS UMR 7373,
  163 avenue de Luminy, Case 907, 13288 Marseille Cedex 9, France.
}
\email{joel.rivat@univ-amu.fr}

\author[C. Swaenepoel]{Cathy Swaenepoel}
\address{
  Cathy Swaenepoel,
  Universit\'e Paris Cit\'e and Sorbonne Universit\'e,
  CNRS, INRIA, IMJ-PRG, F-75013 Paris, France.
}
\email{cathy.swaenepoel@u-paris.fr}

\begin{abstract} 
Let $b$ be an integer greater than or equal to $2$. For any integer $n\in \left[b^{\lambda-1}, b^{\lambda}-1\right]$, we denote by $R_\lambda (n)$ the reverse of $n$ in base~$b$, obtained by reversing the order of the digits of $n$. We establish a Bombieri-Vinogradov type theorem for the set of the reverses of the prime numbers. Combined with sieve methods, this permits us to prove that there exist $\Omega_b\in\mathbb{N}$ and $c_b>0$ such that, for at least  $c_b b^{\lambda} \lambda ^{-2}$ primes $p\in \left[b^{\lambda-1}, b^{\lambda}-1\right]$, the reverse $R_\lambda(p)$ has at most $\Omega_b$ prime factors. Some explicit admissible values of $\Omega_b$ are given.
\end{abstract}

\keywords{prime numbers, almost prime numbers, reversed expansions, digits} \subjclass[2020]{11A63, 11N05, 11N36}

\thanks{This work was supported by the joint ANR-FWF Grant 4945-N and ANR Grant 20-CE91-0006}

\date{\today}

\maketitle

\setcounter{tocdepth}{1}
\tableofcontents 

\section*{Notation} 

Throughout the paper, the letter $b$ will denote an integer larger
than or equal to 2 which will be the base of all digit expansions. The
notations $U = O(V)$, $U\ll V$ and $V\gg U$ all mean that there is a
constant $C>0$ depending at most on $b$ such that $\abs{U} \leq C
V$. If we want to indicate that the implicit constant $C$ is allowed
to depend on a parameter $\alpha$ then this dependence is indicated by
writing $U = O_{\alpha}(V)$, $U\ll_{\alpha} V$ or $V \gg_{\alpha}
U$. We also write $U\asymp V$ if $U \ll V \ll U$ and similarly for
$U\asymp_{\alpha} V$.

The letter $p$, with or without indices, always denotes a prime number.

We use $\mu(d)$, $\Lambda(d)$, $\tau(d)$, $\omega(d)$, $\Omega (d)$
and $v_p(d)$ to denote respectively the M{\"o}bius function, the von
Mangoldt function, the number of positive divisors, the number of distinct
prime factors, the number of prime factors counted with multiplicity
and the $p$-adic valuation of an integer $d\ge 1$.  We denote by
$P^{-}(d)$ the smallest prime factors of an integer $d\ge 2$.

For a finite set $S$ we use $\abs{S}$ to denote its cardinality.   

For a real number $x$ we set
\begin{displaymath}
  \e(x) = \exp(2\pi i x) \text{ and } \norm{x} = \min\{|x-k|:~k \in \Z\}. 
\end{displaymath}

For a certain property $\mathsf{P}$, we define
$\mathbf{1}_{\mathsf{P}}$ by $\mathbf{1}_{\mathsf{P}}=1$ if
$\mathsf{P}$ is satisfied and  $\mathbf{1}_{\mathsf{P}}=0$ otherwise.

\section{Introduction}

The investigation of the  Twin Primes conjecture,
the Goldbach conjecture, and more generally the distribution of 
pairs of primes related together (e.g. Sophie Germain primes)
remain among the most famous, difficult and challenging
problems.
Unfortunately,
despite centuries of efforts,
none of these problems could be solved.
However,
for the Goldbach conjecture, Rényi \cite{renyi-1948} successfully 
replaced one of the two prime numbers by an almost prime number
(i.e. with a bounded number of prime factors).
Later,
Chen \cite{chen-1973}
proved that any sufficiently large even integer is a sum of a prime
and a product of at most two primes,
and a similar result for the Twin Primes conjecture.

In the last 30 years, the investigation of integers with digital
properties has attracted a lot of interest.
After partial results by Fouvry and Mauduit
\cite{fouvry-mauduit-1996-1,fouvry-mauduit-1996-2}
and
Dartyge and Tenenbaum
\cite{dartyge-tenenbaum-2005-Fourier,dartyge-tenenbaum-2006},
two questions posed  in 1968 by Gelfond \cite{gelfond-1968} on the sum
of digits of prime numbers and squares 
were solved by Mauduit and
Rivat~\cite{mauduit-rivat-2010,mauduit-rivat-2009}. 
Bourgain \cite{bourgain-2013-IJM,bourgain-2015-IJM} studied the prime
numbers with a positive proportion of preassigned digits in base $2$,
and Swaenepoel \cite{swaenepoel-2020-plms} obtained explicit results
for this question valid in all bases.  A recent breakthrough was
obtained in the detection of prime numbers with missing digits by
Maynard \cite{maynard-2019-inventiones,maynard-2021-IMRN}.  In
particular he proved that there are infinitely many prime numbers with
no digit $3$ in their decimal expansion (in this result $3$ can replaced
by any $a_0\in\{ 0,\ldots ,9\}$).
The case of almost prime numbers with forbidden digits has been studied by
Dartyge and Mauduit~\cite{dartyge-mauduit-2000,dartyge-mauduit-2001}.
They proved for example that there are infinitely many integers $n$
with at most $3$ prime factors and without the digit $2$ in their
expansion in base~$3$. When the base $b \to +\infty$, they proved the
existence of infinitely many integers $n$ with less than
$\frac{8b}{\pi} (1+o(1))$ prime factors and with only the digits $0$
and $1$ in their expansion in base $b$.

The aim of this work, inspired by the Twin Primes conjecture and
related problems,
is to study, in the spirit of Rényi \cite{renyi-1948},
one of the most natural analogue questions concerning pairs of prime numbers
characterized by their digital expansion,
namely the almost primality of the reverse of
prime numbers.  The reverse of an integer in a given base $b$ is
the integer obtained by reversing its digits in this base. For
example in base $b=10$, the integer $321$ is the reverse of $123$.

We formalize this as follows.
Let $b\geq 2$ be an integer.
Any integer~$n\geq 0$ can be written in base $b$ as
\begin{displaymath}
    n = \sum_{j\geq 0} \varepsilon_{j}(n) \,b^j
\end{displaymath}
where $\varepsilon_j(n) \in \{0,\ldots,b-1\}$
for all $j\geq 0$
and $\varepsilon_j(n)=0$ for $j \geq \log(n+1)/\log b$.

\begin{definition}
  For any integers $n\geq 0$ and $\lambda\geq 0$ we define
  $R_\lambda(n)$ to be the `reverse' of the $\lambda$ first digits of
  $n$ in base $b$ by
  \begin{displaymath}
    R_\lambda(n)
    =
    \sum_{j=0}^{\lambda-1} \varepsilon_j(n) \ b^{\lambda-1-j}
    ,
  \end{displaymath}
  with the convention that $R_0(n)=0$.
  
  If $n\in \left\{b^{\lambda -1},\ldots,b^{\lambda}-1\right\}$ then
  $R_\lambda (n)$ is simply called the `reverse' of $n$.
\end{definition}

\begin{remark}
  We notice that $\varepsilon_j$ and $R_\lambda$ implicitly depend on
  the base $b$. 
\end{remark}

The integers equal to their reverse are called palindromes.
For instance, $12321$ is a palindrome in base $b=10$.
Col \cite{col-2009-palindromes} proved, for any $b\ge 2$,
the existence of an integer $k_b\geq 1$ such that there are infinitely
many palindromes in base $b$ with at most $k_b$ prime factors.
Recently Tuxanidy and Panario \cite{tuxanidy-panario-2024}
proved that $k_b=6$ is admissible
for all $b\ge 2$.

Concerning the reverse of integers, in the case $b=2$, it is proved in
\cite{DMRSS_reversible_primes} that there exists $\lambda_0\geq 1$
such that for any integer $\lambda \geq \lambda_0$,
\begin{displaymath}
  \abs{\{ 2^{\lambda-1}\leq n < 2^\lambda :
    \max\left(\Omega (n), \Omega (R_\lambda (n)\right)\le 8\}}
  \gg \frac{2^\lambda}{\lambda^2}.
\end{displaymath}
The proof of \cite{DMRSS_reversible_primes} can easily be generalized
to give a similar result in other bases. 

Finding infinitely many prime numbers whose reverse
is also prime seems to be at least as difficult as the Twin
Primes conjecture.  However we are able to show that there are
infinitely many primes with an almost prime reverse:
\begin{theorem}\label{thm-almost-prime}
  For any integer $b\geq 2$, there exists an explicit $\Omega_b > 0$
  (defined by \eqref{eq:def_Omega_b_weighted_sieves})
  and
  $\lambda_0(b)>0$ such that for any integer
  $\lambda \geq \lambda_0(b)$,
  \begin{equation}\label{eq:lower_bound_thm_almost_primes}
    \abs{\{b^{\lambda-1} \leq p < b^\lambda :
      \Omega(R_\lambda(p))\leq \Omega_b\}}
    \gg
    \frac{b^\lambda}{\lambda^2}
    .
  \end{equation}
  The values of $\Omega_b$ for $2\le b\le 10$ are given in
  Table~\ref{table:values_Omega_b}.
  \begin{table}[H]
    \centering
    \begin{equation*}
      \begin{tabu}{|c|c|c|c|c|c|c|c|c|c|}
        \hline
        b        &  2 &  3 &  4 &  5 &  6 &  7  &  8  &  9  &  10 \\
        \hline
        \Omega_b & 228 & 333 & 443 & 487 & 669 & 759 & 984 & 1137 & 1378 \\
        \hline
      \end{tabu}
    \end{equation*}
    \caption{Values of $\Omega_b$ for $2 \leq b \leq 10$}
    \label{table:values_Omega_b}
  \end{table}
  When $b\to +\infty$ we have
  \begin{equation}\label{eq:maj_Omega_b}
    \Omega_b
    \leq
    \frac{36\, b^2}{\pi ( T_{\infty ,1}^{-1}-1)^2} +O(b)
    ,
  \end{equation}
  where the implicit constant is absolute and
  $T_{\infty ,1}$ is an explicit constant
  defined in \eqref{eq:definition-T_infty_1}
  so that
  \begin{displaymath}
    \frac{36}{\pi ( T_{\infty ,1}^{-1}-1)^2}=538.106849\ldots
  \end{displaymath}
\end{theorem}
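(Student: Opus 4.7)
The strategy is to apply a weighted sieve to the multiset
\[
\mathcal{A} = \bigl( R_\lambda(p) \bigr)_{b^{\lambda-1} \le p < b^\lambda},
\]
using as input the Bombieri--Vinogradov type theorem announced in the abstract. Writing $\mathcal{A}_d = \{a \in \mathcal{A} : d \mid a\}$ and $X = |\mathcal{A}| \asymp b^\lambda/\lambda$ (by the prime number theorem), that theorem will provide a level of distribution $\alpha = \alpha(b) \in (0,1)$ and a multiplicative density $g$ such that, for every fixed $A > 0$,
\[
\sum_{\substack{d \le X^{\alpha} \\ \gcd(d,b) = 1}} \Bigl| |\mathcal{A}_d| - g(d)\, X \Bigr| \ll_{A,b} \frac{X}{\lambda^A}.
\]

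Before invoking the sieve, one has to identify $g$ and verify the sieve axioms. For $d$ coprime to $b$ the congruence $d \mid R_\lambda(n)$ is a $\Z/d$-linear condition on the base-$b$ digits of $n$, so $|\{n \in [b^{\lambda-1}, b^\lambda) : d \mid R_\lambda(n)\}| = b^\lambda/d + O(1)$, and one expects the same density $g(d)=1/d$ on the primes; a boundary analysis of the leading and trailing digits gives the local factor at $p \mid b$. This yields $\sum_{p \le z,\, p \nmid b} g(p) = \log \log z + O_b(1)$, so the sieve has dimension $\kappa = 1$. I would then feed the estimate into a one-dimensional weighted sieve --- Richert's weights, or the Diamond--Halberstam--Richert $\beta$-sieve with weights --- with sifting parameter $z = X^{1/u}$ for $u$ tuned to $\alpha$. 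The output is the lower bound
\[
\bigl| \bigl\{a \in \mathcal{A} : \Omega(a) \leq \Omega_b\bigr\} \bigr| \gg \frac{X}{\log X} \prod_{\substack{p < z \\ p \nmid b}}\bigl(1 - g(p)\bigr) \asymp \frac{b^\lambda}{\lambda^2},
\]
by Mertens' theorem, with $\Omega_b$ produced by the weighted-sieve inequality \eqref{eq:def_Omega_b_weighted_sieves}. The asymptotic \eqref{eq:maj_Omega_b} then emerges by tracking how $\alpha$ and the local factors of $g$ scale with $b$ as $b \to \infty$: the constant $T_{\infty,1}$ encodes the limiting sifting ratio, and the $36/\pi$ absorbs the Mertens constants attached to the one-dimensional sieve.

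The main obstacle of the whole argument is not this sieve step, which is fairly mechanical once the axioms are in place, but the Bombieri--Vinogradov type theorem itself. Proving it requires controlling exponential sums of the form $\sum_{p < b^\lambda} \e(\theta R_\lambda(p))$ uniformly in $\theta \in \R$, inserting them into a combinatorial identity of Vaughan or Heath--Brown to extract the type~I and type~II estimates, and then summing over $d$ via the large sieve. The novel analytic work lies entirely in the Fourier analysis of the reversal operation on digits; once it is carried out, Theorem~\ref{thm-almost-prime} follows in the standard Rényi--Richert fashion outlined above.
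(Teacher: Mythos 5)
Your plan matches the paper's proof: Theorem~\ref{thm-almost-prime} is obtained by feeding the Bombieri--Vinogradov type estimate (Theorem~\ref{theorem:type_bombieri-vinogradov}, proved via Vaughan's identity and Fourier analysis of $R_\lambda$) into the weighted sieve with Richert's weights (Lemma~\ref{lemma-weighted-sieve}), with $\Omega_b = 1+\lceil 1/\xi_0(b)\rceil$ coming from the admissible level of distribution, exactly as you outline. Two small corrections: the coprimality condition governing the density is $\gcd(d,b(b^2-1))=1$ rather than $\gcd(d,b)=1$, because $R_\lambda(p)\equiv p \bmod (b-1)$ and $R_\lambda(p)\equiv (-1)^{\lambda-1}p \bmod (b+1)$ force $g$ to vanish at primes dividing $b^2-1$ as well; and the sieve output is $\gg X\,V(z)$, not $\gg \frac{X}{\log X}\prod(1-g(p))$ --- your displayed formula evaluates to $b^\lambda/\lambda^3$, off by a factor $\log X$ from the $\asymp b^\lambda/\lambda^2$ you correctly assert.
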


The lower bound \eqref{eq:lower_bound_thm_almost_primes} is almost of the
expected order of magnitude, which is
$b^\lambda (\log \lambda)^{\Omega_b-1}\lambda^{-2}$. This is due to
the fact that the prime numbers $p$ detected by our method are such
that $R_\lambda (p)$ is without prime factors $<b^{\xi_1 (b) \lambda}$
for some $\xi_1 (b) >0$.


In base $2$, an upper bound of the expected order of magnitude for the
number of primes whose reverse is also prime is given in
\cite{DMRSS_reversible_primes}.  We are able to extend this result to
any base.
\begin{theorem}\label{theorem:true-reversible-primes}
  For any integer $b\geq 2$, we have for any integer $\lambda\geq 1$,
  \begin{equation}
    \label{eq:true-reversible-primes}
    \abs{\{b^{\lambda-1} \leq p < b^\lambda :
      R_\lambda(p) \text{ is prime} \}
    }
    \ll
    \frac{b^\lambda}{\lambda^2}
    .
  \end{equation}
\end{theorem}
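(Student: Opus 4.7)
The plan is to deduce \eqref{eq:true-reversible-primes} from the Bombieri-Vinogradov type theorem for reverses of primes (announced in the abstract) by means of Selberg's $\Lambda^2$ upper bound sieve, paralleling the classical derivation of $\pi_2(x) \ll x/(\log x)^2$ for twin primes. First I would observe that any prime $p > b$ with $b^{\lambda-1} \le p < b^{\lambda}$ has nonzero last digit in base $b$, so its reverse $R_\lambda(p)$ still lies in $[b^{\lambda-1}, b^{\lambda})$; in particular, if $R_\lambda(p)$ is prime then it has no prime factor less than $z$ for any $z < b^{(\lambda-1)/2}$. Writing $\mathcal{A}$ for the multiset $\{R_\lambda(p):p\text{ prime},\ b^{\lambda-1}\le p<b^{\lambda}\}$ and $P(z) = \prod_{q \le z,\, q \nmid b} q$, the left-hand side of \eqref{eq:true-reversible-primes} is therefore bounded, up to $O(1)$, by the sifting quantity $S(\mathcal{A}, z) = \#\{a \in \mathcal{A} : \gcd(a, P(z)) = 1\}$.

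To set up Selberg's sieve I need an approximate local density $\#\mathcal{A}_d = \omega(d)X/d + r_d$, where $X = \pi(b^{\lambda}) - \pi(b^{\lambda-1}) \asymp b^{\lambda}/\lambda$. Expanding $\mathbf{1}(d \mid R_\lambda(p))$ via the additive characters modulo $d$ shows that the natural choice is $\omega(d) = 1$ for every squarefree $d$ coprime to $b$, with remainder $r_d$ a linear combination of exponential sums $\sum_{p} \e(k R_\lambda(p)/d)$ for $1 \le k < d$. These are precisely the sums handled by the Bombieri-Vinogradov type theorem for reverses of primes, which therefore yields an averaged bound of the shape $\sum_{d \le D} \tau_3(d)\,|r_d| \ll b^{\lambda}/\lambda^{A}$ (any fixed $A>0$) valid up to some level of distribution $D = b^{\theta \lambda}$ with $\theta > 0$.

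Selberg's sieve then provides the upper bound
\[
  S(\mathcal{A}, z) \le \frac{X}{G(z, D)} + O\!\left( \sum_{\substack{d \le D \\ d \mid P(z)}} \tau_3(d)\,|r_d| \right),
\]
with
\[
  G(z, D) = \sum_{\substack{d \le \sqrt{D},\ \mu^2(d) = 1 \\ d \mid P(z)}} \prod_{p \mid d} \frac{1}{p - 1}.
\]
Taking $z = \sqrt{D} = b^{\theta \lambda / 2}$, the standard Mertens-type estimate gives $G(z, D) \gg \log z \asymp \lambda$, hence $X/G(z, D) \ll b^{\lambda}/\lambda^{2}$, while the error sum is $o(b^{\lambda}/\lambda^{2})$ by the Bombieri-Vinogradov input. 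Combining these delivers \eqref{eq:true-reversible-primes}.

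The only genuine difficulty in this plan is hidden inside the Bombieri-Vinogradov theorem for reverses of primes itself, whose proof (as foreshadowed in the discussion following Theorem~\ref{thm-almost-prime}) rests on Vaughan's identity combined with delicate estimates for exponential sums twisted by the digit-reversal map. Once that arithmetic input is granted, the sieve deduction above is completely routine.
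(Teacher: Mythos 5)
Your proposal follows essentially the same route as the paper: reduce the count to a sifting function for the reverses of primes with sifting level $z=b^{\xi\lambda}$, apply an upper-bound sieve (the paper uses the Rosser--Iwaniec linear sieve in Lemma~\ref{lemma:linear-sieve} and Theorem~\ref{theorem:theta_upper_bound} where you use Selberg's $\Lambda^2$ sieve, an interchangeable choice here), and control the remainders by the Bombieri--Vinogradov type theorem, so the main term is $\asymp (b^\lambda/\lambda)\cdot(\log z)^{-1}\asymp b^\lambda/\lambda^2$. One slip to repair: your density $\omega(d)=1$ is wrong for $d$ having a prime factor $q\mid b^2-1$, since $R_\lambda(n)\equiv n\bmod (b-1)$ and $R_\lambda(n)\equiv(-1)^{\lambda-1}n\bmod(b+1)$ force $\#\mathcal{A}_q=O(1)$ there; you must exclude the primes dividing $b(b^2-1)$ (not just $b$) from $\mathcal{P}$, exactly as the paper does in \eqref{eq:def_g}, which is also why its Bombieri--Vinogradov statement carries the condition $\gcd(d,b(b^2-1))=1$.
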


A crucial task to apply sieve methods is to understand the
distribution in arithmetic progressions of the reverse of prime numbers.

\begin{notation}\label{notation:pimirror}
  For  $b,a,d,\lambda\in\Z$
  with $b\geq 2$, $d\geq 2$, $\lambda\geq 1$, $t \in \R$
  we define
  \begin{align}
    \pi_\lambda(t)
    & =
      \abs{\{ b^{\lambda-1} \leq p < t \}}
      \label{eq:def_pi_lambda_t}
    \\
    \pimirror_\lambda(t,a,d)
    &=
      \abs{\{
      b^{\lambda-1} \leq p < t:
      R_\lambda(p) \equiv a \bmod d
      \}}
      \label{eq:def_pimirror_lambda_t_a_d}
    \\
    \pimirror_\lambda(a,d)
    & =
      \abs{\{
      b^{\lambda-1} \leq p < b^\lambda:
      R_\lambda(p) \equiv a \bmod d
      \}}
      .
      \label{eq:def_pimirror_lambda_a_d}
  \end{align}
\end{notation}
Let us first consider
\begin{displaymath}
  Z_\lambda(a,d)
  =
  \abs{\{
    b^{\lambda-1} \leq n < b^\lambda:
    R_\lambda(n) \equiv a \bmod d
    \}}
  .
\end{displaymath}
If $\gcd(d,b)=1$ then, writing $m = R_{\lambda}(n)$, we get
\begin{align*}
  Z_\lambda(a,d)
  &=
  \abs{\{0\leq m< b^{\lambda}:
    m \equiv a \bmod d \text{ and } m\not\equiv 0 \bmod b\}}
  \\
  &=
  \frac{b^{\lambda}-b^{\lambda-1}}{d} + O(1),
\end{align*}
hence the reverse of the integers $n$ such that
$b^{\lambda-1} \leq n < b^\lambda$ is well distributed in all
arithmetic progressions modulo $d$, provided that
$d = o(b^{\lambda})$.

When one restricts to prime numbers, this well distribution is no
longer true in general. Indeed, we will see in
\eqref{eq:relations_n_reverse_n} that $R_\lambda(n)$ and $n$ are
connected modulo $b-1$ and $b+1$, 
which implies for instance that if $\gcd(a,d,b^2-1) > 1$ then
$\pimirror_\lambda(a,d)=0$ for any $\lambda \geq 2$.  Nevertheless,
when $\gcd(d,b(b^2-1))=1$, the reverse of the prime numbers $p$
such that $b^{\lambda-1} \leq p < b^\lambda$ is expected to be well
distributed modulo $d$, namely 
\begin{displaymath}
  \pimirror_\lambda(a,d)
  =
  \frac{\pi_\lambda(b^{\lambda})}{d}(1+o(1))
  .
\end{displaymath}

In order to prove Theorem~\ref{thm-almost-prime} and
Theorem~\ref{theorem:true-reversible-primes}, we will need to estimate
$\pimirror_\lambda(a,d)$ for $d <b^{\xi \lambda}$ for some $\xi>0$. An
estimate on average in the spirit of the Bombieri-Vinogradov Theorem
will be sufficient. The following result is the key and most difficult
part of our work.
\begin{theorem}\label{theorem:type_bombieri-vinogradov}
  For any integer $b\geq 2$, there exists an explicit
  $ \xi_0 = \xi_0(b) >0$ (which will be defined by
  \eqref{eq:def_xi_0})
  such that, given $\xi \in\left]0,\xi_0\right[$, we can find
  $c = c(b,\xi) >0$ and $\lambda_0(b,\xi)\geq 2$ with the property
  that for any integer $\lambda\geq\lambda_0(b,\xi)$, we have
  \begin{displaymath}
    \sum_{\substack{d \leq b^{\xi \lambda}\\ \gcd(d,b(b^2-1))=1}}
    \sup_{t \in\left[b^{\lambda-1}, b^{\lambda}\right]}
    \sup_{1\le a\le d}
    \abs{
      \pimirror_\lambda(t,a,d)
      - \frac{ \pi_\lambda(t)}{d}
    }
    \ll
    b^{\lambda-c\sqrt{\lambda}}.
  \end{displaymath}
\end{theorem}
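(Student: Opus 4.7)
\medskip

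\noindent\textbf{Proof plan.}
The strategy is a digital adaptation of the classical Bombieri–Vinogradov machinery: Fourier detection of the congruence modulo $d$, Vaughan's identity to decompose $\Lambda$, and bilinear exponential sum estimates tailored to the reverse function. The first step is to remove the congruence by orthogonality: since $\gcd(d,b)=1$,
\[
  \one_{R_\lambda(p)\equiv a\bmod d} = \frac{1}{d}\sum_{h=0}^{d-1}\e\!\left(\frac{h(R_\lambda(p)-a)}{d}\right),
\]
so the $h=0$ term yields exactly the main term $\pi_\lambda(t)/d$. Taking absolute values removes the phase $\e(-ha/d)$ and absorbs the supremum over $a$, so the conclusion reduces to a bound of the shape
\[
  \sum_{\substack{d\le b^{\xi\lambda}\\ \gcd(d,b(b^2-1))=1}} \frac{1}{d}\sum_{h=1}^{d-1}\sup_{t}\abs{S_\lambda(t,h/d)} \ll b^{\lambda-c\sqrt\lambda},
\]
where $S_\lambda(t,\alpha) = \sum_{b^{\lambda-1}\le p<t}\e(\alpha R_\lambda(p))$. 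Dyadic decomposition of $d$, reduction of $h/d$ to lowest terms, and partial summation in $t$ let us work at a single scale of modulus on the full prime sum $\sum_{b^{\lambda-1}\le p<b^\lambda}\e(\alpha R_\lambda(p))$.

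Next I would apply Vaughan's identity to $\Lambda$ to produce Type~I sums $\sum_{m\le M} a_m \sum_n \e(\alpha R_\lambda(mn))$ with bounded divisor-like coefficients, together with Type~II bilinear sums $\sum_{m\asymp M}\sum_{n\asymp N} a_m b_n \e(\alpha R_\lambda(mn))$ satisfying $MN\asymp b^\lambda$. The feature that makes these sums tractable is the digit decomposition
\[
  \e(\alpha R_\lambda(k)) = \prod_{j=0}^{\lambda-1}\e\!\left(\alpha b^{\lambda-1-j}\varepsilon_j(k)\right),
\]
which exhibits $k\mapsto \e(\alpha R_\lambda(k))$ as a \emph{digit-additive} function with frequencies $\alpha b^{\lambda-1-j}\bmod 1$. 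The hypothesis $\gcd(d,b(b^2-1))=1$ is precisely what prevents these frequencies from degenerating: a factor sharing a divisor with $b$ would suppress the high-order digits, while a factor of $b\pm 1$ would exploit the identities $R_\lambda(n)\equiv n\pmod{b-1}$ and $R_\lambda(n)\equiv (-1)^{\lambda-1} n\pmod{b+1}$ to invalidate equidistribution altogether. For Type~I I would freeze $m$, track the digits of $mn$ via carry propagation as $n$ varies, and extract geometric cancellation one digit at a time, in the spirit of Mauduit–Rivat's treatment of digit functions of primes. For Type~II I would apply Cauchy–Schwarz in $m$ to reduce to the inner sum $\sum_n \e(\alpha(R_\lambda(mn_1)-R_\lambda(mn_2)))$ and handle it by van der Corput / Weyl-type differencing on the digit sequences of $mn_1$ and $mn_2$.

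The principal obstacle will be the Type~II estimate: multiplication by $m$ reshuffles the digits of $n$ in a way that is not respected by the reverse, so establishing a power-of-$b$ saving uniformly over $\alpha=h/d$ with only $\gcd(d,b(b^2-1))=1$ at hand requires careful accounting of how many digits of $mn_1$ and $mn_2$ can align and how many remain sufficiently mixed. The quality of this bilinear saving is precisely what will determine the admissible threshold $\xi_0$. Once the bilinear estimates yield a saving of order roughly $b^{-c_0\sqrt\lambda}$, feeding them into Vaughan with $M$ and $N$ optimally chosen, and then summing dyadically over $d$ while absorbing the $1/d$ weight and the length $d$ of the $h$-sum, delivers the announced bound $b^{\lambda-c\sqrt\lambda}$.
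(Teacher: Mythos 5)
Your reduction (orthogonality in $h$ to isolate the main term, partial summation to pass to $\Lambda$, Vaughan's identity, Cauchy--Schwarz plus van der Corput differencing with carry control for Type~II) matches the architecture of the paper, and you correctly identify both the role of $\gcd(d,b(b^2-1))=1$ and Type~II as the critical point. But the final step of your plan does not close, and the gap is exactly where the paper's real work lies. You propose to prove a saving ``uniformly over $\alpha=h/d$'' and then sum dyadically over $d$, ``absorbing the $1/d$ weight and the length $d$ of the $h$-sum.'' That absorption only cancels the $h$-sum against the weight; the sum over $d\leq b^{\xi\lambda}$ still contributes a factor of order $b^{\xi\lambda}$, so a uniform per-fraction saving of $b^{-c_0\sqrt{\lambda}}$ yields $b^{\lambda+\xi\lambda-c_0\sqrt{\lambda}}$, which is worse than trivial. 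Moreover, no uniform individual bound of the required strength can hold for $d$ as large as $b^{\xi\lambda}$: the individual estimates one can prove (and that the paper proves, Lemmas~\ref{lemma:conclusion-S_II-individual} and~\ref{lemma:conclusion-S_I-individual}) save only $b^{-c\lambda/\log d}$, which for $d=b^{\xi\lambda}$ is a bounded factor.

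The missing idea is that for large denominators one must estimate the exponential sums \emph{on average} over the Farey fractions $h/d$, exploiting that the points of $\mathcal{A}_D$ are $b^{-2\Delta}$ well spaced modulo $1$ (with $D=b^{\Delta}$). The paper splits the moduli at $D_0=b^{\lfloor c_0\sqrt{\lambda}\rfloor}$: individual bounds handle $d\leq D_0$, while for $D_0<d\leq b^{\xi\lambda}$ the sum over $\alpha\in\mathcal{A}_D$ is controlled by mean-value estimates for the product-form Fourier transform $\mathcal{F}_\lambda$ and the auxiliary function $G_\lambda$ (large sieve, Sobolev--Gallagher, $L^{\kappa}$ norms via H\"older with the exponent $\kappa_b$ of \eqref{eq:def_kappa_b}), producing a decay in $\Delta$ that makes the $d$-sum convergent; the two regimes are then balanced at $\Delta_0\asymp\sqrt{\lambda}$, which is where the exponent $\sqrt{\lambda}$ in the theorem comes from. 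Without this averaging mechanism over the moduli, the approach you describe cannot reach level $b^{\xi\lambda}$, only level $\exp(c\sqrt{\lambda})$ (essentially the Siegel--Walfisz range of Theorem~\ref{theorem:corollary_SW}).
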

It is probably possible to prove a version of
Theorem~\ref{theorem:type_bombieri-vinogradov} without the condition
$\gcd(d , b(b^2-1))=1$ in the sum over $d$ (and so with
$\frac{ \pi_\lambda(t)}{d}$ replaced by a suitable quantity).  This
generalization would require further decompositions and splitting in
several steps of the proof where we have exploited this coprimality
condition.  In order to avoid such complications we have chosen not to
explore this question and to state a result sufficient for our sieve
applications.

\medskip

Our method permits us, as a byproduct, to obtain the following `individual'
estimate of $\pimirror_\lambda(a,d)$.
 
\begin{theorem}\label{theorem:reversed-siegel-walfisz}
  For any integer $b\geq 2$, there exists $c=c(b)>0$ such that
  for  $a,d,\lambda\in\Z$
  with  $\lambda\geq 2$
  and
  \begin{equation}\label{eq:range_d}
    2 \leq d \leq \exp\left(\frac{c \, \lambda}{\log \lambda}\right)
    ,
  \end{equation}
  we have for any $t \in\left[b^{\lambda-1}, b^{\lambda}\right]$,
  \begin{equation}
    \label{eq:reversed-siegel-walfisz}
    \pimirror_\lambda(t,a,d)
    =
    \frac{\Pi_d(b)}{d}
    \pimirror_\lambda\left(t,a,\Pi_d(b)\right)
    +
    O\left(b^\lambda \exp\left(-\frac{c \, \lambda}{\log d}\right)\right)
  \end{equation}
  where
  \begin{equation}\label{def:Pi_d(b)}
    \Pi_d(b) := \gcd(d,(b^2-1)) \prod_{p\dv b} p^{v_p(d)}
    .
  \end{equation}
  In particular if $\gcd(d,b(b^2-1))=1$ then
  \begin{equation}
    \label{eq:reversed-siegel-walfisz_with_coprimality}
    \pimirror_\lambda( t,a,d)
    =
    \frac{\pi_\lambda(t)}{d}
    +
    O\left(b^\lambda \exp\left(-\frac{c \, \lambda}{\log d}\right)\right)
    .
  \end{equation}
\end{theorem}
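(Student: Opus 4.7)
The plan is to detect the congruence $R_\lambda(p) \equiv a \pmod{d}$ by the orthogonality of additive characters modulo $d$, writing
\[
  \pimirror_\lambda(t,a,d)
  = \frac{1}{d}\sum_{h=0}^{d-1}\e\!\left(-\tfrac{ha}{d}\right) S_h(t),
  \qquad
  S_h(t) := \sum_{b^{\lambda-1} \le p < t}\e\!\left(\tfrac{h\,R_\lambda(p)}{d}\right).
\]
The integer $\Pi_d(b)$ divides $d$ (its two factors $\gcd(d,b^2-1)$ and $\prod_{p\mid b}p^{v_p(d)}$ are coprime, each being a divisor of $d$), so the change of variables $h = h'\,d/\Pi_d(b)$, combined with the analogous expansion modulo $\Pi_d(b)$, shows that the frequencies $h$ that are multiples of $d/\Pi_d(b)$ contribute exactly $\frac{\Pi_d(b)}{d}\pimirror_\lambda(t,a,\Pi_d(b))$. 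The theorem therefore reduces to a uniform bound
\[
  |S_h(t)| \ll b^\lambda\exp\!\left(-\tfrac{c\lambda}{\log d}\right)
\]
valid for every $h \in \{0,\ldots,d-1\}$ with $(d/\Pi_d(b)) \nmid h$, since summing over such $h$ and dividing by $d$ produces the claimed error.

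To obtain this bound on $S_h(t)$ I would apply Vaughan's identity to the sum over primes, reducing it to linear combinations of type~I and type~II bilinear sums in the range $[b^{\lambda-1},b^\lambda)$. These bilinear sums are then handled by the Fourier-analytic treatment of the reversed digit expansion developed in the proof of Theorem~\ref{theorem:type_bombieri-vinogradov} (a Mauduit--Rivat type analysis adapted to the reverse). The crucial point is that for $h$ not a multiple of $d/\Pi_d(b)$, the fraction $h/d$ avoids the arithmetic obstructions encoded by $b$ and $b^2-1$ modulo $d$ (which, via \eqref{eq:relations_n_reverse_n}, are precisely the resonances captured by $\Pi_d(b)$), so these estimates deliver genuine cancellation.

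The main obstacle is uniformity in $h$ and $d$. Vaughan's identity introduces a loss $(\log d)^{O(1)}$ coupled with a loss from the sizes of the type~I and type~II ranges $U,V$, which must be balanced against the fixed exponential cancellation provided by the digital estimate. Optimizing $U,V$ as functions of $d$ degrades the exponential saving to the stated shape $\exp(-c\lambda/\log d)$, which is non-trivial precisely in the range \eqref{eq:range_d}. Finally, the special case $\gcd(d,b(b^2-1))=1$ gives $\Pi_d(b)=1$, in which case $\pimirror_\lambda(t,a,1)=\pi_\lambda(t)$, so \eqref{eq:reversed-siegel-walfisz_with_coprimality} follows as a direct specialization of \eqref{eq:reversed-siegel-walfisz}.
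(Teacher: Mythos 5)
Your proposal follows essentially the same route as the paper: orthogonality of additive characters modulo $d$, identification of the main term as the exact contribution of the frequencies $h$ divisible by $d/\Pi_d(b)$ (the paper phrases this via $h(b^2-1)b^\lambda\equiv 0\bmod d$ and then checks $\gcd(d,(b^2-1)b^\lambda)=\Pi_d(b)$ in the stated range), and a uniform bound on the remaining non-resonant exponential sums obtained from Vaughan's identity together with the individual Type~I/Type~II estimates — which the paper has packaged as Theorem~\ref{theorem:bound_expo_sum_conclusion} rather than the averaged Theorem~\ref{theorem:type_bombieri-vinogradov} you cite. The argument is correct as outlined.
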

In a recent preprint, Bhowmik and
Suzuki~\cite{bhowmik-suzuki-2024-arxiv} obtained
\eqref{eq:reversed-siegel-walfisz} (stated differently) under the
additional assumption that $b\geq 31699$ (and $t=b^\lambda$).  This
range was improved to $b\ge 26000$ by Chourasiya and
Johnston~\cite{chourasiya-johnston-arxiv-2025}.
Theorem~\ref{theorem:reversed-siegel-walfisz} extends these
results to any base $b\geq 2$,
using a different method.

When $\gcd(d,b(b^2-1))>1$, estimating the quantity
$\pimirror_\lambda\left(t,a,\Pi_d(b)\right)$ can be reduced to
counting prime numbers in some subintervals of
$\left[b^{\lambda-1}, t\right[$ and in an arithmetic progression
modulo $\gcd(d,b^2-1)$ (see~\cite{bhowmik-suzuki-2024-arxiv} for
further details), which is a classical task in Analytic Number Theory.

We notice that Theorem
\ref{theorem:reversed-siegel-walfisz} does not provide an asymptotic
formula for the complete range \eqref{eq:range_d}.  In particular,
\eqref{eq:reversed-siegel-walfisz_with_coprimality} leads to an
asymptotic formula in the range
\begin{displaymath}
2\le d\le\exp \left (
 c_1 \sqrt{\lambda}
 \right ),
\end{displaymath} 
for some $c_1=c_1(b)>0$, while for larger values of $d$,
\eqref{eq:reversed-siegel-walfisz_with_coprimality} provides only an
upper bound.
This permits us to formulate the following result in the spirit of the
Siegel-Walfisz Theorem.

\begin{theorem}\label{theorem:corollary_SW}
  For any integer $b\geq 2$, there exists $c_1=c_1(b)>0$ such that
  for  $a,d,\lambda\in\Z$
  with  $\lambda\geq 1$, $\gcd(d,b(b^2-1))=1$
  and
  \begin{equation}\label{eq:range_d_cor}
    2 \leq d \leq \exp(c_1 \sqrt{\lambda})
    ,
  \end{equation}
  we have 
  \begin{equation}
    \label{eq:reversed-siegel-walfisz_cor}
    \pimirror_\lambda(a,d)
    =
    \frac{\pi_\lambda(b^{\lambda})}{d}
    \left(1+O\left(\exp(-c_1 \sqrt{\lambda})\right)\right)
    .
  \end{equation}
\end{theorem}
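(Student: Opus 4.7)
The plan is to derive Theorem \ref{theorem:corollary_SW} as a direct consequence of Theorem \ref{theorem:reversed-siegel-walfisz} by choosing $c_1$ small enough (in terms of the constant $c$ from that theorem) that the error term can be absorbed into a multiplicative relative error on the main term.

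First I would apply the coprime case \eqref{eq:reversed-siegel-walfisz_with_coprimality} with $t=b^\lambda$, which is legitimate because we assume $\gcd(d,b(b^2-1))=1$, and because the constraint \eqref{eq:range_d_cor} is stricter than \eqref{eq:range_d} provided $c_1$ is taken at most $c$. This yields
\begin{equation*}
  \pimirror_\lambda(a,d)
  =
  \frac{\pi_\lambda(b^\lambda)}{d}
  +
  O\!\left(b^\lambda \exp\!\left(-\frac{c\,\lambda}{\log d}\right)\right).
\end{equation*}
Next, the classical Prime Number Theorem applied on $[b^{\lambda-1},b^\lambda)$ gives $\pi_\lambda(b^\lambda) \asymp b^\lambda/\lambda$, so factoring out the main term the relative error is
\begin{equation*}
  \ll
  d\,\lambda\,\exp\!\left(-\frac{c\,\lambda}{\log d}\right)
  =
  \exp\!\left(\log d + \log\lambda - \frac{c\,\lambda}{\log d}\right).
\end{equation*}

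Under \eqref{eq:range_d_cor} we have $\log d \le c_1\sqrt{\lambda}$, hence $\log d + \log\lambda \le 2c_1\sqrt{\lambda}$ for $\lambda$ large, while $\frac{c\lambda}{\log d} \ge \frac{c}{c_1}\sqrt{\lambda}$. Therefore the exponent is bounded by $-\bigl(\tfrac{c}{c_1}-2c_1\bigr)\sqrt{\lambda}$, and choosing $c_1$ small enough that $\tfrac{c}{c_1}-2c_1 \ge c_1$ (for instance $c_1 \le \sqrt{c/3}$) gives the desired bound $O(\exp(-c_1\sqrt{\lambda}))$, which establishes \eqref{eq:reversed-siegel-walfisz_cor}. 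There is no real obstacle here: the whole content is in Theorem \ref{theorem:reversed-siegel-walfisz}, and the only subtlety is checking that the parameter $c_1$ can be simultaneously used in \eqref{eq:range_d_cor} and in the multiplicative error, which is achieved by taking $c_1 = \min(c,\sqrt{c/3})$ and adjusting $\lambda_0$ so that the lower-order $\log\lambda$ term is dominated by $c_1\sqrt{\lambda}$.
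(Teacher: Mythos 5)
Your proposal is correct and follows exactly the route the paper intends: the paper gives no separate proof of Theorem~\ref{theorem:corollary_SW}, treating it as the immediate consequence of \eqref{eq:reversed-siegel-walfisz_with_coprimality} described in the paragraph preceding its statement, namely that for $\log d\leq c_1\sqrt{\lambda}$ the error $b^\lambda\exp(-c\lambda/\log d)\leq b^\lambda\exp(-(c/c_1)\sqrt{\lambda})$ is negligible compared with the main term $\pi_\lambda(b^\lambda)/d\gg b^\lambda\lambda^{-1}\exp(-c_1\sqrt{\lambda})$ once $c_1$ is chosen small enough in terms of $c$. Your bookkeeping with $c_1=\min(c,\sqrt{c/3})$ and the adjustment of the implied constant for bounded $\lambda$ are exactly the needed details.
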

 
It is interesting to observe that
Theorem~\ref{theorem:corollary_SW} provides,
in any base $b\geq 2$,
an asymptotic formula for
the number of primes $p\le b^\lambda$ such that $R_{\lambda}(p)$ is
squarefree. The interested reader can consult
\cite{chourasiya-johnston-arxiv-2025} and also the recent preprint of
Carillo-Santana \cite{carillo-santana-arxiv-2025} for further details.

\medskip
  
It is important to notice that
Theorem~\ref{theorem:reversed-siegel-walfisz} and
Theorem~\ref{theorem:corollary_SW} are only byproducts of our method
and that the range for $d$ given in \eqref{eq:range_d} is not
sufficient to apply the sieves as needed in the proof of
Theorem~\ref{thm-almost-prime} and
Theorem~\ref{theorem:true-reversible-primes}.
  
\bigskip

A natural approach to detect congruences is to work with exponential sums.
We will prove the two following results.

\begin{theorem}\label{theorem:bound_expo_sum_average}
  For any integer $b\geq 2$, there exists an explicit
  $ \xi_0 = \xi_0(b) >0$ (which will be defined by
  \eqref{eq:def_xi_0}) such that, given $\xi \in\left]0,\xi_0\right[$,
  we can find $c = c(b,\xi) >0$ and $\lambda_0(b,\xi)\geq 2$ with the
  property that for any integer $\lambda\geq\lambda_0(b,\xi)$, we have
  \begin{equation}\label{eq:maj_conclusion_sum_expo}
    \sum_{\substack{d\leq b^{\xi \lambda}\\ \gcd(d,b(b^2-1))=1}}
    \frac{1}{d}
    \sum_{\substack{1\leq h < d\\ \gcd(h,d)=1}}
    \sup_{t \in\left[b^{\lambda-1}, b^{\lambda}\right]}
    \abs{\sum_{b^{\lambda-1} \leq n < t}
      \Lambda(n)\e\left(\frac{hR_{\lambda}(n)}{d}\right)}
    \ll
    b^{\lambda-c\sqrt{\lambda}}.
  \end{equation}
\end{theorem}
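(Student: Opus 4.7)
The plan is to apply Vaughan's identity to $\Lambda(n)$, reducing the left-hand side of \eqref{eq:maj_conclusion_sum_expo} to a combination of Type I sums $\sum_{m \le M} a_m \sum_{n} \e(hR_\lambda(mn)/d)$ with $|a_m|\ll \tau(m)$, and Type II sums $\sum_{m \asymp M}\sum_{n \asymp N} a_m b_n \, \e(hR_\lambda(mn)/d)$ with $MN \asymp b^\lambda$ and $M, N$ in a middle range. The supremum over $t$ is removed at the outset by a Perron-type completion, which introduces an auxiliary frequency sum absorbed in the final loss. The central analytic input is the \emph{digital factorisation}
$$\e\!\left(\frac{hR_\lambda(n)}{d}\right) = \prod_{j=0}^{\lambda-1} \e\!\left(\frac{h\, \varepsilon_j(n)\, b^{\lambda-1-j}}{d}\right),$$
valid for every $n$ with $0 \le n < b^\lambda$, together with the fact that, under $\gcd(d,b(b^2-1))=1$, the orbit of $b^j \bmod d$ is rich enough that the associated digit characters $\varepsilon \mapsto \e(h\varepsilon b^{\lambda-1-j}/d)$ exhibit genuine cancellation on average over $(h,d)$.

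For the Type I sums, one fixes $m$ and studies, as $n$ ranges over a dyadic block, the digits of $mn$ in base $b$. These are determined by the digits of $n$ up to carry corrections propagating over a window of size $O(\log_b m)$. Combined with the factorisation, the inner sum over $n$ becomes a product over digit positions $j$ of short character sums bounded by geometric-sum estimates. Summing over $m \le M$ and inserting the outer averages over $d$ and $h$ yields a saving of shape $b^{-c_1 \lambda}$ provided $M$ does not exceed a suitable threshold depending on $\xi$.

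The main obstacle is the Type II bound. Applying Cauchy--Schwarz in the $m$ variable reduces matters to the correlation sum
$$\sum_{n_1,n_2 \asymp N} b_{n_1}\overline{b_{n_2}} \sum_{m \asymp M} \e\!\left(\frac{h\bigl(R_\lambda(m n_1)-R_\lambda(m n_2)\bigr)}{d}\right).$$
The inner $m$-sum is analysed by cutting the $\lambda$-digit expansion of $mn_i$ at a level $\ell \asymp \sqrt{\lambda}$: below $\ell$ one faces a short block where carries mix the contributions of $n_1$ and $n_2$ in an uncontrolled way, while above $\ell$ the digits are given by a controlled expansion in $n_i$, and the digital characters can be exploited explicitly. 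Balancing the loss $b^{\ell}$ from the bad block against the cancellation $b^{-c(\lambda-\ell)}$ from the good block dictates the choice $\ell \asymp \sqrt\lambda$ and produces the exponent $b^{\lambda - c\sqrt{\lambda}}$ in the final estimate. The admissible range $\xi_0$ defined in \eqref{eq:def_xi_0} is precisely the value for which the factor $b^{2\xi\lambda}$ arising from the summations over $d \le b^{\xi\lambda}$ and $h$ remains dominated by this saving; the coprimality condition $\gcd(d,b(b^2-1))=1$ is used crucially to avoid degenerate resonances at the exceptional moduli $d \mid b(b^2-1)$.
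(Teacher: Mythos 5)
Your skeleton (Vaughan's identity, Type I/II dichotomy, Cauchy--Schwarz, and the digit-by-digit factorisation of $\e(hR_\lambda(n)/d)$, which is the product formula behind $\mathcal{F}_\lambda$) matches the paper's, but the two mechanisms that actually produce the bound are missing or wrong. First, your explanation of the exponent: balancing a loss $b^{\ell}$ against a saving $b^{-c(\lambda-\ell)}$ gives $\ell\asymp\lambda$, not $\ell\asymp\sqrt{\lambda}$, so the proposed ``cut the digit expansion at level $\ell\asymp\sqrt\lambda$'' cannot be where $b^{\lambda-c\sqrt\lambda}$ comes from. In the paper the $\sqrt\lambda$ arises from an entirely different dichotomy, in the modulus $d$ rather than in the digit position: the sum over $\alpha=h/d$ is split at a threshold $D_0=b^{\Delta_0}$, small denominators are treated by an \emph{individual} (Siegel--Walfisz type) bound saving $b^{-c\lambda/\log d}\geq b^{-c\lambda/\Delta_0}$ per term (Lemmas \ref{lemma:conclusion-S_II-individual} and \ref{lemma:conclusion-S_I-individual}), large denominators by an \emph{average} bound saving $b^{-\varepsilon_b\Delta_0}$ obtained from $L^\kappa$-norm estimates of $G_\lambda$ via H\"older and Sobolev--Gallagher (Lemma \ref{lemma:sobolev-gallagher-combined-with-holder}); balancing $b^{\Delta_0-c\lambda/\Delta_0}$ against $b^{-\varepsilon_b\Delta_0}$ forces $\Delta_0\asymp\sqrt\lambda$. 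Your proposal never performs this split and never engages with the averaged estimate over the Farey fractions $h/d$, which is the hardest part of the argument (the ``orbit of $b^j\bmod d$ is rich enough'' remark is not a substitute for it).

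Second, the Type II correlation as you set it up is not tractable: after Cauchy--Schwarz you face $R_\lambda(mn_1)-R_\lambda(mn_2)$ for arbitrary pairs $n_1,n_2\asymp N$, and there is no level $\ell$ at which the upper digits of $mn_1$ and $mn_2$ can be discarded, since they are unrelated. The paper first applies the van der Corput inequality to restrict to $n_2=n_1+r$ with $r<b^{\rho}$ small, and only then does the carry-propagation lemma (Lemma \ref{lemma:carry-property}) show that, outside an exceptional set of size $O(b^{\mu+\nu-\rho'})$, the digits of $m(n+r)$ and $mn$ agree above level $\mu_2=\mu+\rho+\rho'$, so the difference reduces to $b^{\lambda-\mu_2}(R_{\mu_2}(m(n+r))-R_{\mu_2}(mn))$. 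Relatedly, your Type I claim that the digits of $mn$ are determined by those of $n$ ``up to carry corrections over a window of size $O(\log_b m)$'' is false as stated --- carries propagate through arbitrarily long runs of digits equal to $b-1$, which is precisely why an exceptional-set estimate is needed rather than a pointwise one; the paper's Type I treatment instead detects $m\mid\ell$ by additive characters and reduces to complete sums $\mathcal{F}_{\lambda'}(\alpha b^{\lambda-\lambda'},k/m)$ handled by large-sieve and $L^1$-norm bounds. As it stands, the proposal would not close.
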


\begin{theorem}\label{theorem:bound_expo_sum_conclusion}
  For any integer $b\geq 2$,
  there exists $c(b)>0$
  such that for any integer $\lambda\geq 2$
  and any $(h,d)\in\Z^2$ with $d\geq 2$ and 
  \begin{math}
    (b^2-1) b^\lambda h \not\equiv 0 \bmod d,
  \end{math}
  we have
  \begin{equation}\label{eq:bound_expo_sum_conclusion}
    \sup_{t \in\left[b^{\lambda-1}, b^{\lambda}\right]}
    \abs{
      \sum_{b^{\lambda-1} \leq n < t}
      \Lambda(n)
      \e\left(\frac{h R_{\lambda}(n)}{d}\right)
    }
    \ll
    \lambda^{2+\frac{\omega(b)}2}
    b^{\lambda-\frac{c(b)\lambda}{\log d} }
    .
  \end{equation}
  In particular, for any integer $b\geq 2$ and any real number $A>0$,
  there exists $c'=c'(b,A)>0$ such that for any integer $\lambda\geq 2$
  and any $(h,d)\in\Z^2$ with
  \begin{equation}\label{eq:maj_d_individual}
    2 \leq d \leq \exp\left(\frac{c'\lambda}{\log \lambda}\right)
  \end{equation}
  and 
  \begin{math}
    (b^2-1) b^\lambda h \not\equiv 0 \bmod d,
  \end{math}
  we have
  \begin{equation}\label{eq:bound_expo_sum_conclusion_with_cond_d}
    \sup_{t \in\left[b^{\lambda-1}, b^{\lambda}\right]}
    \abs{
      \sum_{b^{\lambda-1} \leq n < t}
      \Lambda(n)
      \e\left(\frac{h R_{\lambda}(n)}{d}\right)
    }
    \ll
    \frac{b^{\lambda}}{\lambda^A}
    .
  \end{equation}
\end{theorem}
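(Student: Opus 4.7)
The plan is to combine Vaughan's identity with the digit-analysis of $R_\lambda$ developed in the proof of Theorem~\ref{theorem:bound_expo_sum_average}, but with Vaughan's cutoff parameters chosen as a function of $d$ rather than of $\lambda$ alone. After applying Vaughan's identity with parameter $U=V$ (to be optimized), the sum on the left of~\eqref{eq:bound_expo_sum_conclusion} splits, up to a trivial contribution of size $O(U\log b^\lambda)$, into Type~I sums of the form $S_I = \sum_{m\le UV} a_m \sum_n \e(hR_\lambda(mn)/d)$ and Type~II sums $S_{II} = \sum_{U<m\le M} \sum_{V<n\le N} a_m b_n\, \e(hR_\lambda(mn)/d)$, with the usual size constraints $|a_m|\ll\log m$, $|b_n|\ll\tau(n)$, $MN\asymp b^\lambda$, and the inner summation range restricted so that $mn\in[b^{\lambda-1},t[$.

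For the Type~I sums I would apply a Fourier decomposition to the digits of $mn$, expressing them as weighted exponentials in the digits of $n$ up to a bounded-depth carry; one then exploits the hypothesis $(b^2-1)b^\lambda h \not\equiv 0\bmod d$, which ensures that the resulting phase, viewed as a function of the digits of $n$, cannot be collinear with the trivial digit-sum and alternating-sum phases modulo $d$. This yields a saving of the form $b^{-c_1\lambda/\log d}$ over the trivial bound, where the denominator $\log d$ arises from requiring that the carry-free window has length $\gg \lambda/\log d$. For the Type~II sums, apply Cauchy--Schwarz in $m$ to reduce to $\sum_m \e(h(R_\lambda(mn_1)-R_\lambda(mn_2))/d)$ with $n_1\ne n_2$; the same digit/Fourier machinery, now applied to the multiplication map $m\mapsto mn_i$, delivers a comparable saving. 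Balancing the two contributions by choosing $U=V=\exp(c_0\lambda/\log d)$ for a small $c_0>0$ then gives~\eqref{eq:bound_expo_sum_conclusion}, the prefactor $\lambda^{2+\omega(b)/2}$ reflecting divisor-function losses in the Type~II analysis together with the factorisation of $d$ into its part coprime to $b$ and its part dividing a power of $b$.

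For the second statement, assume $2\le d\le \exp(c'\lambda/\log\lambda)$. Then $\log d\le c'\lambda/\log\lambda$, so $b^{-c(b)\lambda/\log d} \le \lambda^{-c(b)(\log b)/c'}$; taking $c'=c'(b,A)$ small enough that $c(b)(\log b)/c'\ge A+3+\omega(b)/2$ absorbs the polynomial factor $\lambda^{2+\omega(b)/2}$ in~\eqref{eq:bound_expo_sum_conclusion} and produces~\eqref{eq:bound_expo_sum_conclusion_with_cond_d}.

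The main obstacle is the Type~II estimate: unlike the sum-of-digits function studied by Mauduit and Rivat, the reverse $R_\lambda(mn)$ does not admit a simple decomposition in terms of $R_\lambda(m)$ and $R_\lambda(n)$, so one must carry out a fairly delicate analysis of how multiplication perturbs digit reversal. The non-degeneracy assumption $(b^2-1)b^\lambda h\not\equiv 0\bmod d$ is precisely what rules out the obstructions coming from the congruences of $R_\lambda(n)$ modulo $b\pm 1$ highlighted after Notation~\ref{notation:pimirror}, and is essential to produce a non-trivial saving in~\eqref{eq:bound_expo_sum_conclusion}.
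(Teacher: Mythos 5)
Your overall architecture (Vaughan's identity, then Type~I/Type~II sums treated by Fourier analysis of the digits of $mn$) is the same as the paper's, and your deduction of \eqref{eq:bound_expo_sum_conclusion_with_cond_d} from \eqref{eq:bound_expo_sum_conclusion} is exactly right. But there is a genuine gap in the mechanism you propose for producing the saving $b^{-c(b)\lambda/\log d}$, which is the whole content of the theorem. You take the Vaughan cutoffs $U=V=\exp(c_0\lambda/\log d)$, i.e.\ depending on $d$, and attribute the $\lambda/\log d$ exponent to ``requiring that the carry-free window has length $\gg\lambda/\log d$''. Neither part works. If $d$ is large (say $d=b^{\epsilon\lambda}$), your $U$ is only $b^{O(\epsilon\lambda)}$ with $\epsilon$ small, so the Type~II range $U<m\le b^\lambda/V$ contains extremely unbalanced bilinear forms; the digit/Fourier treatment of $R_\lambda(mn)$ (van der Corput plus the carry property of Lemma~\ref{lemma:carry-property}) needs \emph{both} variables of length $b^{\beta\lambda}$ with $\beta>0$ fixed, because the carry-free window $[\mu,\mu_2]$ and the shift parameter $\rho$ must be positive proportions of $\lambda$ for the error terms $b^{2\lambda-\rho}$, $b^{2\lambda-\rho'}$ to be acceptable. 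This is why the paper keeps $\beta_1=\tfrac14$ and $\beta_2$ fixed, independent of $d$. Moreover the length of the carry window is governed by the sizes of $m$ and $r$, not by $d$, so it cannot be the source of a $\log d$ in the exponent.

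The actual source of the saving, which your sketch does not contain, is an \emph{individual} bound on the generating functions at the rational point $h/d$: by the product formula \eqref{eq:FT-product-formula} one is reduced to bounding $\prod_j K_b(\norm{\alpha b^j}/(b+1))$ with $\alpha=h(b^2-1)/d$ (this is where the hypothesis $(b^2-1)b^\lambda h\not\equiv 0\bmod d$ enters: it guarantees $\norm{\alpha b^j}\geq 1/d$ for all relevant $j$). A pigeonhole argument on the orbit $j\mapsto \alpha b^j$ (Lemmas~\ref{lemma:norm-lowerbound} and \ref{lemma:G-lambda-upperbound}) shows that within every block of $J=O(\log d/\log b)$ consecutive indices some $\norm{\alpha b^{j}}$ exceeds $(b+1)^{-1}$, so each such block contributes a fixed factor $K_b((b+1)^{-2})<1$, giving $G_\lambda(\alpha)\leq K_b((b+1)^{-2})^{\lfloor\lambda/J\rfloor}\ll b^{-c\lambda/\log d}$. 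This bound is then injected into the Type~I analysis (via Lemma~\ref{lemma:uniform-upperbound-of-F-for-denominator-d}) and into the term $T_9$ of the Type~II analysis, with the Vaughan cutoffs held at fixed proportions of $\lambda$. Without this orbit/pigeonhole step your argument does not yield any quantitative saving in terms of $d$, so the central estimate \eqref{eq:bound_expo_sum_conclusion} is not established.
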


\section{Description of the proof}

In order to prove Theorem~\ref{thm-almost-prime} we apply sieve
methods. The linear sieve enables us to detect prime numbers
whose reverse has no small prime factor and the weighted sieve
permits us to get a better admissible $\Omega_b$ in
Theorem~\ref{thm-almost-prime}. These two sieves are described in
Section \ref{section-sieves}.  The main argument needed to apply these
sieves successfully is a result of the form of
Theorem~\ref{theorem:type_bombieri-vinogradov}. The proof of this
theorem is the essential part of our work.
Theorem~\ref{theorem:reversed-siegel-walfisz} is obtained in parallel
as a byproduct.
 
The condition $R_\lambda (p)\equiv a \bmod d$ is detected by exponential sums.
This yields to estimate sums of the form
\begin{displaymath}
  \sum_{b^{\lambda-1} \leq n < t}
  \Lambda(n)
  \e\left(\frac{h R_{\lambda}(n)}{d}\right)
\end{displaymath}
that we must bound individually when $d$ is small
and on average on the pairs $(h,d)$ for larger $d$.

By a combinatorial identity (Vaughan's identity,
see Section~\ref{section:combinatorial-identity}),
we are left to estimate `Type I' and `Type II' sums.

For the Type II sums,
in Section~\ref{section:type-II-sums},
we adopt the strategy of Mauduit-Rivat \cite{mauduit-rivat-2010}.
After applying the van der Corput inequality
(Lemma \ref{lemma:van-der-corput})
we show that a carry propagation property still exists
in our context and we take advantage of it. 
Then we have to obtain non trivial bounds of sums of type 
\begin{equation*}
  \sum_{b^{\nu-1}\leq n < b^{\nu+1}}
  \abs{
    \sum_{m}
    \e\left(\alpha b^{\lambda-\mu_2} (R_{\mu_2}(m(n+r))- R_{\mu_2}(mn))\right)
  },
\end{equation*}
(defined in \eqref{eq:definition-T2}) where $m$ runs in a sub-interval
of some $b$-adic segment $\left[b^{\mu-1}, b^\mu\right[$.

Since the functions $R_{\mu_2}$ are $b^{\mu_2}$-periodic, we use
discrete Fourier transforms and exponential sums in several variables.
For most terms, which may be called the `probabilistic' terms, the
exponential sums are small and can be estimated easily by
Lemma~\ref{lemma:sum_inverse_sinus}.  We are left with a collection of
exponential sums potentially large but with variables submitted to
diophantine restrictions, which may be called `extended diagonal'
terms.  After a series of delicate manipulations, we are left to
estimate sums (see \eqref{eq:definition-T4}) of the shape
\begin{displaymath}
  \sum_{0\leq a' < b^\delta}
  \min\left(
    \gcd(r,d) b^{\mu+\delta-\mu_2} ,
    \abs{
      \sin\left( \pi \norm{\frac{a'r}{\gcd(r,d) b^\delta}} \right)
    }^{-1}
  \right)
  \abs{
    \mathcal{F}_{\delta}\left(
      \alpha',\frac{a'}{b^{\delta}}
    \right)
  }^2,
\end{displaymath}
for some parameters $r,d,\delta,\mu,\mu_2,\alpha'$
and, see \eqref{eq:FT-product-formula},
\begin{displaymath}
 \abs{\mathcal{F}_\lambda(\alpha,\vartheta)}
    =
    \prod_{j=0}^{\lambda-1}
    \abs{K_b\left( \alpha b^{\lambda-1-j} - \vartheta  b^j\right)}
    ,
\end{displaymath}
where $K_b (t)$ is the normalized Dirichlet Kernel of length $b$
introduced in Lemma \ref{lemma:concavity-of-Dirichlet-kernel}.

The large values of $\delta$ constitute the most difficult part of our
paper and produce the most restrictive constraints for $\Omega_b$
in Theorem~\ref{thm-almost-prime}.
It is the object of
Section~\ref{section:critical-contribution-of-Type-II-sums}. 

For the Type I sums, we need to adapt and extend the method in
\cite{DMRSS_reversible_primes}, where the sums were defined
differently: our Type I sums here are linked to the Vaughan identity
whereas in \cite{DMRSS_reversible_primes} there was a congruence
involving the products $nR_\lambda(n)$.

We are led to study interesting questions of harmonic analysis
involving products of oscillatory functions
in several variables.
All along our proof, we need to handle non trivially both variables
$\alpha$ and $\vartheta$ in $\mathcal{F}_\lambda(\alpha,\vartheta)$,
which is made possible by the product structure of
$\mathcal{F}_\lambda(\alpha,\vartheta)$.  This permits us to apply
several times the Hölder and Cauchy--Schwarz inequalities, combined
with some elimination or separation inequality given in
Lemma~\ref{lemma:pointwise-upperbound-of-F}.  We then need sharp
bounds of $L^\kappa$ norms, with $\kappa \in [0,+\infty]$, of the
function $\mathcal{F}_\lambda(\alpha,\vartheta)$
and auxiliary associated functions. 
This is the subject of a series of lemmas given in Sections
\ref{section:lemmas} and \ref{section:fourier-transform}.  The upper
bounds or estimates given in Sections \ref{section:lemmas} and
\ref{section:fourier-transform} are completely explicit and we expect
that they can be successfully applied in other contexts.
For example, in Section~\ref{section:palindromes},
Proposition~\ref{prop:maj_int_prod_K_b} gives an upper bound of the positive
moments of the Fourier transform associated to palindromes.  This
improves and generalizes \cite{tuxanidy-panario-2024} who considered
only even moments.

Finally let us point out that Lemma \ref{lemma:Lkappa-norm-of-G}
provides a bound of the positive moments of our generating
function $G_\lambda$ with a general periodic density function $\Phi$.
The idea of considering such integrals was fruitful and may have
further applications.

\section{Sieve approach}\label{section-sieves}

In order to prove Theorem~\ref{thm-almost-prime} and
Theorem~\ref{theorem:true-reversible-primes}, we will use the
following versions of the linear sieve and the weighted sieve.

\begin{lemma}\label{lemma:linear-sieve}
  Let $\mathcal{A}$ be a finite sequence of positive integers and let
  $\mathcal{P}$ be a sequence of primes. For any real number
  $z\geq 2$, we denote
  \begin{displaymath}
    P(z) = \prod_{\substack{p<z\\p \in \mathcal{P}}} p
  \end{displaymath}
  and
  \begin{displaymath}
    S(\mathcal{A},\mathcal{P},z) = \abs{\{a \in \mathcal{A}: \gcd(a,P(z))=1\}}.
  \end{displaymath}  
  Let $X$ be a real number and $g$ be a multiplicative function
  such that for any $p\in \mathcal{P}$, $0\leq g(p) < 1$.  For any
  integer $d\geq 1$, let $r_d(\mathcal{A})$ defined by
   \begin{equation}\label{eq:definition-r_d-A}
     \abs{\{a\in \mathcal{A}: d \mid a\}}
     =
     g(d)X+r_d(\mathcal{A}).
   \end{equation}
   If there is an absolute constant $L$ such that for any real numbers
   $2 \leq w < z$,
   \begin{equation}\label{eq:condition-crible-lineaire}
     \prod_{\substack{w\leq p<z\\p\in\mathcal{P}}}(1-g(p))^{-1}
     \leq
     \frac{\log z}{\log w}\left(1+\frac{L}{\log w}\right)
   \end{equation}
   then we have, for any real numbers $2 \leq z \leq D$,
    \begin{displaymath}
    S(\mathcal{A},\mathcal{P},z)
    <
    \left(F(s) + O\left((\log D)^{-1/6}\right)\right)
    X V(z)
    +R(\mathcal{A},D),
  \end{displaymath}
   \begin{displaymath}
    S(\mathcal{A},\mathcal{P},z)
    >
    \left(f(s) + O\left((\log D)^{-1/6}\right)\right)
    X V(z)
    -R(\mathcal{A},D),
  \end{displaymath}
  where $s = \log D/\log z$,
  \begin{displaymath}
    V(z)
    =
    \prod_{\substack{p<z\\p\in\mathcal{P}}}\left(1-g(p)\right),
  \end{displaymath}
  \begin{displaymath}
    R(\mathcal{A},D)
    =
    \sum_{\substack{d<D\\d \mid P(z)}}\abs{r_d(\mathcal{A})}
  \end{displaymath}
  and $F$ and $f$ are the continuous solutions to the system
  \begin{displaymath}
    \left\{
      \begin{array}{r@{\;}ll}
        uF(u) &= 2 e^{\gamma} & \text{ if } u \leq 3,\\
        uf(u) &= 0          & \text{ if } u \leq 2,
      \end{array}
    \right.
    \qquad
    \qquad
    \left\{
      \begin{array}{r@{\;}ll}
        (uF(u))' &= f(u-1) & \text{ if } u > 3,\\
        (uf(u))' &= F(u-1) & \text{ if } u > 2,
      \end{array}
    \right.
  \end{displaymath}
  which are monotonic (decreasing and increasing respectively) and satisfy
  \begin{align*}
    \begin{gathered}
      0 < f(u) < 1 < F(u) \quad \text{ for } u > 2,\\
      F(u)=1+O\left(e^{-u}\right), \quad f(u)=1+O\left(e^{-u}\right)
      \quad \text{ as } u \to +\infty.
    \end{gathered}
  \end{align*}
\end{lemma}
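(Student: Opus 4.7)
The plan is to follow the Rosser--Iwaniec construction of upper and lower bound sieve weights adapted to dimension one (the \emph{linear sieve}). Order the primes of $\mathcal{P}$ in decreasing order and, for each squarefree $d = p_{i_1} \cdots p_{i_r}\mid P(z)$ with $p_{i_1} > \cdots > p_{i_r}$, define truncations
\begin{displaymath}
  \lambda_d^{+} = \mu(d)\, \mathbf{1}_{p_{i_1} \cdots p_{i_{2\ell-1}}\, p_{i_{2\ell-1}}^{\,2} < D \text{ for all admissible } \ell},
\end{displaymath}
and $\lambda_d^{-}$ analogously with even indices. These weights are supported on $d < D$, $d\mid P(z)$, and one checks by the standard combinatorial identity that
\begin{displaymath}
  \sum_{d \mid n} \lambda_d^{-} \leq \mathbf{1}_{n=1} \leq \sum_{d \mid n} \lambda_d^{+}.
\end{displaymath}
Applied to $n = \gcd(a, P(z))$ and summed over $a \in \mathcal{A}$, this bounds $S(\mathcal{A},\mathcal{P},z)$ from above and below, and substituting the assumption \eqref{eq:definition-r_d-A} splits each side into a main term $X\sum_d \lambda_d^{\pm} g(d)$ plus a remainder that is trivially absorbed into $R(\mathcal{A},D)$.

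Next, I would analyse the main terms. Using the Buchstab identity
\begin{displaymath}
  \sum_d \lambda_d^{\pm} g(d) = V(z) - \sum_{\substack{p<z \\ p\in\mathcal{P}}} g(p) \sum_{d\mid P(p)} \lambda_{dp}^{\mp}(D/p)\, g(d),
\end{displaymath}
together with the telescoping inherent in the Rosser construction, one derives an iterated expression in which the dimension $\kappa=1$ input from \eqref{eq:condition-crible-lineaire} appears through the elementary but delicate step
\begin{displaymath}
  \sum_{w \leq p < z} g(p) \log(z/p)\prod_{q<p}(1-g(q))^{-1}
  \;\leq\; \log(z/w) + O(L/\log w).
\end{displaymath}
This enables the comparison of the sieve sums with the continuous solutions $F, f$ to the system in the statement: formally, setting $s = \log D/\log z$, the Buchstab iterations produce exactly the functional equations $(uF(u))' = f(u-1)$ and $(uf(u))' = F(u-1)$ in the limit, so that
\begin{displaymath}
  \sum_d \lambda_d^{+} g(d) = V(z)\bigl(F(s) + E^+\bigr), \qquad
  \sum_d \lambda_d^{-} g(d) = V(z)\bigl(f(s) + E^-\bigr),
\end{displaymath}
with the error terms $E^{\pm}$ to be controlled.

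The main obstacle is sharpening $E^{\pm}$ to $O((\log D)^{-1/6})$. This requires a careful tracking of the discrepancy between the discrete sieve sums and the continuous limiting functions, typically via a multi-step induction on $s$ in which one bounds the difference between consecutive Buchstab approximations and exploits the explicit rate in \eqref{eq:condition-crible-lineaire}. The exponent $1/6$ arises from balancing the losses in the Buchstab iteration against the residual contribution of primes close to $z$, exactly as in Iwaniec's paper on Rosser's sieve (\emph{Acta Arith.} 1980); I would simply import his estimate at this step. The monotonicity and boundary behaviour of $F$ and $f$ (including $F(u), f(u) = 1 + O(e^{-u})$) then follow from the differential-difference equations by a standard Laplace transform / iteration argument, completing the proof.
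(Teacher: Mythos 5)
The paper gives no proof of this lemma: it simply cites \cite[Chapter~12]{friedlander-iwaniec-opera-cribro-2010} and \cite{iwaniec-1980-RosserSieve}, and your outline is precisely the standard Rosser--Iwaniec argument from those sources, ending by importing Iwaniec's $(\log D)^{-1/6}$ estimate. So your proposal is correct and takes essentially the same (ultimately citation-based) route as the paper, just with more of the construction spelled out.
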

\begin{proof}
  See for
  instance~\cite[Chapter~12]{friedlander-iwaniec-opera-cribro-2010} or
  \cite{iwaniec-1980-RosserSieve}.
\end{proof}

The following lemma is a version of the weighted sieve with Richert's weights.
We introduce for any integer $R\ge 2$, the real
number
\begin{equation}\label{eq:def_Lambda_R}
  \Lambda_R
  =
  R + \frac{1}{\log 3}\log\left(\frac{3}{4} \left(1+3^{-R}\right)\right)
  \in  \left] R-1, R\right].
\end{equation}  

\begin{lemma}\label{lemma-weighted-sieve}
  As in Lemma \ref{lemma:linear-sieve},
  let $\mathcal{A}$, $X$, $d$, $g$ and $r_d(\mathcal{A})$
  satisfying \eqref{eq:definition-r_d-A}
  and \eqref{eq:condition-crible-lineaire}
  with $\mathcal{P}$ 
  being the set of the prime numbers.
  Let $D\ge 2$ be a real number such that 
  \begin{equation}\label{eq:error-term-weighted-sieve}
    \sum_{d\le D} |r_d (\mathcal{A})|\ll X(\log X)^{-3}
  \end{equation}
  and
  \begin{equation}\label{eq:squares}
    \sum_{p>D^{1/4}}\abs{\{ a\in \mathcal{A}: p^2 \mid a\}}\ll X (\log X)^{-3}.
  \end{equation}
  Let $R\geq 2$ be an integer and $\varepsilon>0$ such that
  $D\ge \left(\max\mathcal{A}\right)^{\Lambda_R^{-1} +\varepsilon}$.
  Then there exists $X_0(\varepsilon,R)$ such that
  if $X \geq X_0(\varepsilon,R)$,
  \begin{displaymath}
    \abs{\{ a\in\mathcal{A} : \Omega (a)\le R \text{ and } p \mid
      a\Rightarrow p\ge D^{1/4}\}}
    \asymp_{\varepsilon,R} XV(D^{1/4}).
  \end{displaymath}
\end{lemma}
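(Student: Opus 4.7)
The plan is to apply the Richert--Halberstam weighted sieve with sifting parameter $z = D^{1/4}$ and level of distribution $D$, using Lemma~\ref{lemma:linear-sieve} as the underlying linear sieve. Let $\mathcal{B}$ denote the set whose cardinality we wish to estimate.

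The upper bound $|\mathcal{B}| \ll_R XV(D^{1/4})$ is immediate: since $\mathcal{B} \subseteq \{a\in\mathcal{A} : \gcd(a,P(z))=1\}$, the upper-bound half of Lemma~\ref{lemma:linear-sieve} at level $D$, combined with \eqref{eq:error-term-weighted-sieve}, yields
$$
  |\mathcal{B}| \leq S(\mathcal{A},\mathcal{P},z) \leq \bigl(F(4)+o(1)\bigr) XV(z) + O\bigl(X(\log X)^{-3}\bigr).
$$

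For the lower bound I form the weighted sum
$$
  W = \sum_{\substack{a\in\mathcal{A} \\ \gcd(a,P(z))=1}} \left(\Lambda_R - \sum_{\substack{p\mid a \\ z\leq p<y}} w_p\right),
$$
where $w_p = 1-\log p/\log y$ are the Richert weights on primes $p\in[z,y)$, and $y$ is a parameter chosen in $[z,D/z]$ so that $D/p$ remains a valid sieving level for each $p$ occurring. The combinatorial step factorizes every squarefree $a$ sifted by $P(z)$ as $a=p_1\cdots p_k$ with $z\leq p_1\leq\cdots\leq p_k$ and exploits the trivial inequality $\sum_i w_{p_i}\geq k - \log a/\log y$, obtained after separating the $p_i\geq y$. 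Combined with the hypothesis $\log\max\mathcal{A}\leq\log D/(\Lambda_R^{-1}+\varepsilon)$, this forces $\sum_i w_{p_i}>\Lambda_R$ as soon as $k\geq R+1$, so such $a$ contribute $\leq 0$ to $W$. The non-squarefree $a$ contribute at most $\Lambda_R$ each, but their total count is $\ll X(\log X)^{-3}$ by \eqref{eq:squares}. Hence $W\leq \Lambda_R|\mathcal{B}| + O(X(\log X)^{-3})$.

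Swapping summations,
$$
  W = \Lambda_R\, S(\mathcal{A},\mathcal{P},z) - \sum_{z\leq p<y} w_p\, S(\mathcal{A}_p,\mathcal{P},z),
$$
with $\mathcal{A}_p = \{a\in\mathcal{A} : p\mid a\}$. Applying the lower-bound half of Lemma~\ref{lemma:linear-sieve} at level $D$ to the first term, the upper-bound half at level $D/p$ to each $S(\mathcal{A}_p,\mathcal{P},z)$, and collecting the error via \eqref{eq:error-term-weighted-sieve}, a Mertens-type computation of $\sum_p (w_p/p)\cdot(\text{sieve factor})$ recasts the main term as $XV(z)\cdot \mathcal{J}(R,\Lambda_R,y/D)$, where $\mathcal{J}$ is an explicit extremal functional of the Rosser functions $F$ and $f$. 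The specific value of $\Lambda_R$ in \eqref{eq:def_Lambda_R}, with its $\log 3$ and $(1+3^{-R})$, is precisely what is needed to guarantee $\mathcal{J}(R,\Lambda_R,y/D)>0$ at an optimal choice of $y$.

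The main obstacle is verifying this last positivity, which reduces to an elementary but delicate one-variable optimisation using the explicit formulas for $F$ on $[1,3]$ and $f$ on $[2,4]$; this is the step that fixes the value of $\Lambda_R$. Once it is carried out, $W\gg_{\varepsilon,R} XV(z)$, hence $|\mathcal{B}|\gg_{\varepsilon,R} XV(z)$ for $X\geq X_0(\varepsilon,R)$, and together with the upper bound this yields $|\mathcal{B}|\asymp_{\varepsilon,R} XV(D^{1/4})$.
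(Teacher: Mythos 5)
The paper does not prove this lemma at all: it simply cites Halberstam--Richert (Chapter~9), Greaves (Chapter~5, Corollary~1.2) and Friedlander--Iwaniec (Theorem~25.1), and your outline is precisely the standard Richert-weight argument contained in those references (upper bound from the linear sieve directly; lower bound from the weighted sum $W$, the combinatorial inequality $\sum_i w_{p_i}\ge k-\log a/\log y$, the treatment of non-squarefree terms via \eqref{eq:squares}, and the sieve evaluation of $W$). So your approach matches the paper's in substance; the one step you defer --- the positivity of the resulting functional at the specific $\Lambda_R$ of \eqref{eq:def_Lambda_R} --- is exactly the content of the cited theorems, so nothing is missing relative to what the paper itself supplies.
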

\begin{proof}
  See for instance \cite[Chapter 9]{halberstam-1974},
  \cite[Chapter 5, Corollary~1.2]{greaves-2001}
  or \cite[Theorem 25.1]{friedlander-iwaniec-opera-cribro-2010}.
\end{proof}
\begin{remark}
  We have chosen to use the weights of Richerts in place of the
  weights of Greaves \cite[Chapter~5]{greaves-2001} which are more
  elaborate but less simple to employ and would only produce an
  improvement numerically small compared to the values of $\Omega_b$
  given in Theorem~\ref{thm-almost-prime} which are all $\ge 200$.
\end{remark} 
 
\bigskip

Let us now specify the situation in which we intend to apply the sieve
methods above.

For any integer $\lambda\geq 1$, any real number $z\geq 2$ and any
$i\in\{1,\ldots,b-1\}$ with $\gcd(i,b)=1$, we define
\begin{equation}\label{eq:def_P_lambda_i}
  \mathscr{P}_{\lambda,i}
  =
  \{i b^{\lambda-1}\leq p < (i+1)b^{\lambda-1}: p \text{ prime}\}
\end{equation}
and
\begin{equation}\label{eq:def_varTheta_lambda_z}
  \varTheta_i(\lambda,z)
  = |\{p \in \mathscr{P}_{\lambda,i}:~P^{-}(R_\lambda(p))\geq z\}|.
\end{equation}
For any integer~$n\in\{0,\ldots,b^\lambda-1\}$, since
\begin{displaymath}
  n = \sum_{j=0}^{\lambda-1} \varepsilon_{j}(n) \,b^j
  \quad\text{ and }\quad
  R_\lambda(n) = \sum_{j=0}^{\lambda-1}
  \varepsilon_j(n) \ b^{\lambda-1-j},
\end{displaymath}
we have the following congruences modulo $b$, $b-1$ and $b+1$:
\begin{align}\label{eq:relations_n_reverse_n}
  \nonumber
  & R_\lambda(n) \equiv \varepsilon_{\lambda-1}(n) \bmod b,
  \\
  & R_\lambda(n) \equiv n \bmod (b-1),
  \\
  \nonumber
  & R_\lambda(n) \equiv (-1)^{\lambda-1} n \bmod (b+1).
\end{align}
We assume that $\lambda \geq 3$. Since $b+1 < b^2 \leq b^{\lambda-1}$
and $\gcd(i,b)=1$, we deduce that for any
$p \in \mathscr{P}_{\lambda,i}$,
\begin{equation}\label{eq:gcd-R_lambda-b^3-b}
  \gcd(R_\lambda(p), b(b^2-1)) = 1.
\end{equation}
This implies that
\begin{displaymath}
  \varTheta_i(\lambda,z)
  =
  |\{p \in \mathscr{P}_{\lambda,i}:~\gcd(R_\lambda(p),P(z))=1\}|
  =
  S(\mathcal{A}_i,\mathcal{P},z)
\end{displaymath}
where
\begin{displaymath}
  \mathcal{A}_i = (R_\lambda(p))_{p \in \mathscr{P}_{\lambda,i}},
  \qquad 
  \mathcal{P}=\{p \text{ prime}\},
  \qquad
  P(z) = \prod_{\substack{p<z\\p \in \mathcal{P}}} p.
\end{displaymath}
For $d\geq 1$, we put
\begin{equation}\label{def_T_lambda_d}
  T_{\lambda,i}(d)
  = |\{ p \in \mathscr{P}_{\lambda,i}:~d \mid R_\lambda(p)\}|.
\end{equation}
For any prime number $p_0$ such that $p_0 \dv b(b^2-1)$,
using \eqref{eq:gcd-R_lambda-b^3-b},
we get $T_{\lambda,i}(p_0)=0$.
For $p \in \mathscr{P}_{\lambda,i}$ randomly chosen and for any prime
number $p_0$ such that $p_0 \nmid b(b^2-1)$, the probability that
$p_0 \mid R_\lambda(p)$ is expected to be about $p_0^{-1}$.  Moreover
for distinct prime numbers $p_1$ and $p_2$, the events
``$p_1 \mid R_\lambda(p)$'' and ``$p_2 \mid R_\lambda(p)$'' are
expected to be independent.  These heuristics lead us to define
$E_{\lambda,i}(d)$ for $d\geq 1$ by
\begin{equation}\label{eq:def_Rd}
  T_{\lambda,i}(d) = g(d)\, |\mathscr{P}_{\lambda,i}| + E_{\lambda,i}(d),
\end{equation} 
where $g$ is the multiplicative function defined for any prime number
$p$ and any integer $\nu\ge 1$ by
\begin{equation}\label{eq:def_g}
  g(p^\nu)=
  \left\{
    \begin{array}{ll}
      p^{-\nu} & \text{ if } p \notdv b(b^2-1),\\
      0     & \text{ otherwise}.
    \end{array}
  \right.
\end{equation} 
We observe that if
\begin{math}
  \gcd(d, b(b^2-1)) \neq 1
\end{math}
then 
\begin{displaymath}
  T_{\lambda,i}(d) = g(d) = E_{\lambda,i}(d) = 0
  .
\end{displaymath}
Let
\begin{displaymath}
  V(z)
  =
  \prod_{\substack{p<z\\p\in\mathcal{P}}}\left(1-g(p)\right)
  =
  \prod_{\substack{p < z\\p \, \nmid \, b(b^2-1)}} \left(1-p^{-1}\right).
\end{displaymath}
The {\it Mertens formula\/} (see for example \cite[Part~I,
Theorem~1.12]{tenenbaum-ITAN-2022}) implies that
\begin{equation}\label{eq:asymp_formula_V}
  V(z)
  =
  \frac{e^{-\gamma}}{\log z}\left(1+O \left(\frac{1}{\log z}\right)\right)
  \prod_{\substack{p<z\\p \dv b(b^2-1)}} \left(1-p^{-1}\right)^{-1}
\end{equation}
where
$\gamma$ denotes Euler's constant and that there exists an absolute
constant $L>0$ such that for any real numbers $2\leq w < z$, we have
\begin{displaymath}
  \prod_{\substack{w\leq p<z\\p\in\mathcal{P}}}(1-g(p))^{-1}
  =
  \frac{V(w)}{V(z)} \leq \left(\frac{\log z}{\log w}\right)
  \left(1+\frac{L}{\log w}\right).
\end{displaymath}
We are now ready to apply Lemma~\ref{lemma:linear-sieve}. For any real
numbers $2 \leq z \leq D$, we have
\begin{equation}\label{eq:upper_bound_Theta_lambda_z}
  \varTheta_i(\lambda,z)
  <
  \left(F\left(\tfrac{\log D}{\log z}\right)
    + O\left((\log D)^{-1/6}\right)\right)
  \abs{\mathscr{P}_{\lambda,i}} V(z)
  +\sum_{\substack{d<D\\d \mid P(z)}}
  \abs{E_{\lambda,i}(d)},
\end{equation}
\begin{equation}\label{eq:lower_bound_Theta_lambda_z}
  \varTheta_i(\lambda,z)
  >
  \left(f\left(\tfrac{\log D}{\log z}\right)
    + O\left((\log D)^{-1/6}\right)\right)
  \abs{\mathscr{P}_{\lambda,i}} V(z)
  -\sum_{\substack{d<D\\d \mid P(z)}}
  \abs{E_{\lambda,i}(d)}.
\end{equation}

In order to bound the sum over
$d$ on the right-hand side of \eqref{eq:upper_bound_Theta_lambda_z}
and \eqref{eq:lower_bound_Theta_lambda_z}, we will establish in
Section~\ref{section:proof_thm_distribution-level} the following
consequence of Theorem~\ref{theorem:type_bombieri-vinogradov}.
\begin{theorem}\label{theorem:distribution-level}
  For any integer $b\geq 2$, there exists an explicit
  $\xi_0 = \xi_0(b) >0$ (defined by \eqref{eq:def_xi_0}) such that,
  given $\xi \in\left]0,\xi_0\right[$, we can find $c = c(b,\xi) >0$
  and $\lambda_0(b,\xi)\geq 2$ with the property that for any integer
  $\lambda\geq\lambda_0(b,\xi)$, we have
  \begin{displaymath}
    \sum_{\substack{d \leq b^{\xi \lambda}\\\gcd(d,b(b^2-1))=1}}
    \sup_{1\leq i \leq b-1}
    \abs{E_{\lambda,i}(d)}
    \ll
    b^{\lambda-c\sqrt{\lambda}}.
  \end{displaymath}
\end{theorem}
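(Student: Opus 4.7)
The plan is to deduce Theorem~\ref{theorem:distribution-level} directly from Theorem~\ref{theorem:type_bombieri-vinogradov} by expressing each $E_{\lambda,i}(d)$ as a telescoping difference of two error terms of the shape $\pimirror_\lambda(t,a,d)-\pi_\lambda(t)/d$, which is exactly what appears inside the double supremum of that theorem.

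First I would observe that under the hypothesis $\gcd(d,b(b^2-1))=1$, the multiplicative function $g$ defined in \eqref{eq:def_g} satisfies $g(d)=1/d$ for every such $d$ (not just squarefree ones, since $g(p^\nu)=p^{-\nu}$ whenever $p\nmid b(b^2-1)$). Splitting the $b$-adic range $[b^{\lambda-1},b^\lambda)$ at the endpoints $t_i:=ib^{\lambda-1}$, and representing the residue class $0\bmod d$ by $a=d$ (which does lie in the admissible range $1\le a\le d$), one obtains
\begin{align*}
T_{\lambda,i}(d) &= \pimirror_\lambda(t_{i+1},d,d) - \pimirror_\lambda(t_i,d,d),\\
\abs{\mathscr{P}_{\lambda,i}} &= \pi_\lambda(t_{i+1}) - \pi_\lambda(t_i).
\end{align*}
Writing $\Delta_d(t):=\pimirror_\lambda(t,d,d)-\pi_\lambda(t)/d$, the definition \eqref{eq:def_Rd} then yields the clean telescoping identity $E_{\lambda,i}(d) = \Delta_d(t_{i+1}) - \Delta_d(t_i)$.

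Since $t_i$ and $t_{i+1}$ both belong to $[b^{\lambda-1},b^\lambda]$ for every $i\in\{1,\dots,b-1\}$, the triangle inequality immediately gives
\begin{displaymath}
\sup_{1\le i\le b-1}\abs{E_{\lambda,i}(d)}
\;\le\;
2\sup_{t\in[b^{\lambda-1},b^\lambda]}\;\sup_{1\le a\le d}\abs{\pimirror_\lambda(t,a,d) - \frac{\pi_\lambda(t)}{d}}.
\end{displaymath}
Summing this over $d\le b^{\xi\lambda}$ with $\gcd(d,b(b^2-1))=1$ and invoking Theorem~\ref{theorem:type_bombieri-vinogradov} with the same threshold $\xi_0(b)$ produces the desired bound $\ll b^{\lambda-c\sqrt{\lambda}}$, with the factor $2$ harmlessly absorbed into $c$.

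The main obstacle is not located here: the entire analytic difficulty has already been absorbed into the statement of Theorem~\ref{theorem:type_bombieri-vinogradov}, whose proof (via Vaughan's identity, the carry-propagation analysis for $R_\lambda$, van der Corput's inequality, and the $L^\kappa$-bounds for $\mathcal{F}_\lambda$) occupies the bulk of the paper. Theorem~\ref{theorem:distribution-level} is simply the bookkeeping step that repackages that Bombieri--Vinogradov type estimate into the form required to control the remainder $\sum_{d<D}\abs{E_{\lambda,i}(d)}$ in the linear sieve \eqref{eq:upper_bound_Theta_lambda_z}--\eqref{eq:lower_bound_Theta_lambda_z} and in the weighted sieve hypothesis \eqref{eq:error-term-weighted-sieve}.
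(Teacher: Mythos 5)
Your proposal is correct and follows essentially the same route as the paper: both reduce $E_{\lambda,i}(d)$ to the difference $\Delta_d(t_{i+1})-\Delta_d(t_i)$ of the quantities controlled by Theorem~\ref{theorem:type_bombieri-vinogradov}, bound it by twice the double supremum, and sum over $d$. Your remark that the residue class $0\bmod d$ should be represented by $a=d$ to fit the range $1\le a\le d$ is a nice touch of care that the paper glosses over, but otherwise the arguments coincide.
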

Combined with \eqref{eq:upper_bound_Theta_lambda_z} and
\eqref{eq:lower_bound_Theta_lambda_z}, this will allow us to prove in
Section~\ref{section:proof_bounds_Theta_lambda_z} the following upper
and lower bound of $\varTheta_i(\lambda,b^{\xi \lambda})$.
\begin{theorem}\label{theorem:theta_upper_bound}
  For any integer $b\geq 2$ and real number $\xi>0$, we
  have for any integer $\lambda \geq 1$,
  \begin{displaymath}
    \sup_{1\leq i \leq b-1} \varTheta_i(\lambda,b^{\xi\lambda})
    \ll_{\xi} \frac{b^{\lambda}}{\lambda^2}.
  \end{displaymath} 
\end{theorem}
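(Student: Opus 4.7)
The plan is to apply the upper-bound part of the linear sieve recorded in \eqref{eq:upper_bound_Theta_lambda_z}, using Theorem~\ref{theorem:distribution-level} to absorb the resulting sieve error term. Suppose first that $\xi \in \left]0, \xi_0(b)\right[$, and choose an auxiliary parameter $\xi'$ with $\xi < \xi' < \xi_0$. I would set $z = b^{\xi\lambda}$ and $D = b^{\xi'\lambda}$, so that the sifting parameter is $s = \log D/\log z = \xi'/\xi > 1$; by the monotonicity of $F$ this gives $F(s) = O_\xi(1)$. The prime number theorem yields $\abs{\mathscr{P}_{\lambda,i}} \ll b^{\lambda}/\lambda$, and \eqref{eq:asymp_formula_V} gives $V(b^{\xi\lambda}) \asymp_\xi 1/\lambda$. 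Hence the main term of \eqref{eq:upper_bound_Theta_lambda_z} is already of the target size $O_\xi(b^\lambda/\lambda^2)$.

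For the error term I would exploit the observation made just after \eqref{eq:def_g}, namely that $E_{\lambda,i}(d) = 0$ whenever $\gcd(d, b(b^2-1)) \neq 1$. This permits writing
\begin{displaymath}
\sum_{\substack{d < D \\ d \mid P(z)}} \abs{E_{\lambda,i}(d)}
\leq
\sum_{\substack{d \leq b^{\xi'\lambda}\\ \gcd(d,b(b^2-1))=1}}
\sup_{1\leq i \leq b-1} \abs{E_{\lambda,i}(d)},
\end{displaymath}
and then invoking Theorem~\ref{theorem:distribution-level} (applicable because $\xi' < \xi_0$) to bound this by $O(b^{\lambda - c\sqrt{\lambda}})$, which is trivially $o_\xi(b^\lambda/\lambda^2)$. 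Combining the two bounds gives the claim for every $\xi \in \left]0, \xi_0\right[$. To extend to arbitrary $\xi > 0$, I would use that $z \mapsto \varTheta_i(\lambda, z)$ is non-increasing: for $\xi \geq \xi_0$ we have $\varTheta_i(\lambda, b^{\xi\lambda}) \leq \varTheta_i(\lambda, b^{(\xi_0/2)\lambda})$, reducing to the previous case with the fixed parameter $\xi_0/2 < \xi_0$. Finitely many small values of $\lambda$ are absorbed into the implied constant via the trivial bound $\varTheta_i(\lambda, b^{\xi\lambda}) \leq \abs{\mathscr{P}_{\lambda,i}}$.

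The genuine content is entirely concentrated in Theorem~\ref{theorem:distribution-level}, itself a consequence of the Bombieri--Vinogradov type estimate in Theorem~\ref{theorem:type_bombieri-vinogradov}. Once these are available, what remains is standard linear-sieve bookkeeping. The only point I would be careful about is that the sifting ratio $s$ approaches $1$ as $\xi' \to \xi$, so one must verify that the prefactor $F$ stays bounded there; this is automatic since $F(1) = 2e^\gamma$ is finite, which is exactly what allows us to take $z$ only slightly smaller than $D$.
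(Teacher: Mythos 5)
Your proof is correct and follows essentially the same route as the paper: the upper-bound linear sieve \eqref{eq:upper_bound_Theta_lambda_z} with $z=b^{\xi\lambda}$ and $D=b^{\xi'\lambda}$ for a fixed $\xi'\in\left]\xi,\xi_0\right[$, the remainder absorbed via Theorem~\ref{theorem:distribution-level} after using $E_{\lambda,i}(d)=0$ for $\gcd(d,b(b^2-1))>1$, the main term evaluated by the Prime Number Theorem and \eqref{eq:asymp_formula_V}, and the extension to all $\xi>0$ and all $\lambda\geq 1$ by monotonicity of $z\mapsto\varTheta_i(\lambda,z)$ and the trivial bound. The only cosmetic difference is that the paper works with $\xi<\xi_0/2$ and $\xi_1=(2\xi+\xi_0)/2$ because it proves the lower bound simultaneously (needing $s>2$), whereas for the upper bound alone your weaker requirement $s>1$ suffices.
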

\begin{theorem}\label{theorem:theta_lower_bound}
  For any integer $b\geq 2$, there exists an explicit
  $\xi_0 = \xi_0(b) >0$ (defined by \eqref{eq:def_xi_0}) such that,
  given $\xi \in\left]0,\xi_0/2\right[$, we can find
  $\lambda_0(b,\xi) \geq 2$ with the property that for any integer
  $\lambda \geq \lambda_0(b,\xi)$, we have
  \begin{displaymath}
    \inf_{1\leq i \leq b-1} \varTheta_i(\lambda,b^{\xi\lambda})
    \gg_{\xi} \frac{b^{\lambda}}{\lambda^2}.
  \end{displaymath}
\end{theorem}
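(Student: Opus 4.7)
The plan is to apply the lower bound of the linear sieve from Lemma~\ref{lemma:linear-sieve} to $\mathcal{A}_i = (R_\lambda(p))_{p \in \mathscr{P}_{\lambda,i}}$ with sifting parameter $z = b^{\xi\lambda}$. Given $\xi \in \left]0,\xi_0/2\right[$, I would fix an intermediate exponent $\xi'$ with $2\xi < \xi' < \xi_0$ (for instance $\xi' = (2\xi+\xi_0)/2$) and choose the level of distribution as $D = b^{\xi' \lambda}$. With this choice, $s := \log D/\log z = \xi'/\xi > 2$, so $f(s)$ is a strictly positive constant depending only on $b$ and $\xi$, while $D \le b^{\xi_0 \lambda}$ lies precisely in the range covered by Theorem~\ref{theorem:distribution-level}.

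The setup from \eqref{eq:def_P_lambda_i} through \eqref{eq:asymp_formula_V} has already verified the hypotheses of the linear sieve: the multiplicative function $g$ satisfies $0 \le g(p) < 1$ and \eqref{eq:condition-crible-lineaire} with an absolute constant $L$. Inserting our choice of $z$ and $D$ into \eqref{eq:lower_bound_Theta_lambda_z} gives
\begin{displaymath}
  \varTheta_i(\lambda, b^{\xi\lambda})
  >
  \bigl(f(\xi'/\xi) + O(\lambda^{-1/6})\bigr)\,
  \abs{\mathscr{P}_{\lambda,i}}\, V(b^{\xi\lambda})
  - \sum_{\substack{d < D\\ d \mid P(b^{\xi\lambda})}} \abs{E_{\lambda,i}(d)}.
\end{displaymath}
Every $d$ in the remainder is squarefree; and as noted right after \eqref{eq:def_Rd}, $E_{\lambda,i}(d)=0$ whenever $\gcd(d,b(b^2-1))>1$. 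Consequently, Theorem~\ref{theorem:distribution-level} applied at level $\xi'$ bounds the remainder by $O(b^{\lambda - c\sqrt{\lambda}})$ uniformly in $i$.

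For the main term, the Prime Number Theorem applied to the interval $[i b^{\lambda-1}, (i+1)b^{\lambda-1})$ yields $\abs{\mathscr{P}_{\lambda,i}} \asymp b^\lambda/\lambda$ uniformly for $1 \le i \le b-1$ with $\gcd(i,b)=1$, while \eqref{eq:asymp_formula_V}, together with $\prod_{p\mid b(b^2-1)}(1-p^{-1})^{-1} = O_b(1)$, gives $V(b^{\xi\lambda}) \asymp_{b,\xi} \lambda^{-1}$. Hence the main term is $\gg_\xi b^\lambda/\lambda^2$, which dominates the remainder $b^{\lambda - c\sqrt{\lambda}}$ for $\lambda \ge \lambda_0(b,\xi)$. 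Since every step is uniform in $i$, the conclusion $\inf_{1\le i \le b-1}\varTheta_i(\lambda,b^{\xi\lambda}) \gg_\xi b^\lambda/\lambda^2$ follows.

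The only genuine constraint in the argument is the need to ensure $s>2$, which is exactly what the hypothesis $\xi < \xi_0/2$ provides: it leaves room to pick $\xi' < \xi_0$ with $\xi'/\xi > 2$, so that the linear sieve returns a positive main term while the distribution estimate of Theorem~\ref{theorem:distribution-level} still applies. All the real work has been transferred to Theorem~\ref{theorem:distribution-level}; what remains is a standard invocation of the linear sieve together with the Mertens and Prime Number Theorems.
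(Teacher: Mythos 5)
Your proposal is correct and follows essentially the same route as the paper: the paper also fixes the intermediate exponent $\xi_1=\tfrac12(2\xi+\xi_0)$, takes $z=b^{\xi\lambda}$ and $D=b^{\xi_1\lambda}$ in the lower bound \eqref{eq:lower_bound_Theta_lambda_z}, controls the remainder via Theorem~\ref{theorem:distribution-level} (using that $E_{\lambda,i}(d)=0$ unless $\gcd(d,b(b^2-1))=1$), and concludes with the Prime Number Theorem, the Mertens estimate \eqref{eq:asymp_formula_V}, and the positivity of $f(\xi_1/\xi)$ for $\xi_1/\xi>2$.
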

We will derive Theorem~\ref{theorem:true-reversible-primes} from
Theorem~\ref{theorem:theta_upper_bound} in
Section~\ref{section:proof_thm_upper_bound_reversible_primes}.

We will see in Section~\ref{section:proof_thm_almost_primes_weaker}
that Theorem~\ref{theorem:theta_lower_bound} implies a weaker version
of Theorem~\ref{thm-almost-prime}, namely
Theorem~\ref{thm-almost-prime} with $\Omega_b$ replaced by
$\widetilde{\Omega}_b \approx 2 \Omega_b$ ($\widetilde{\Omega}_b$ will
be defined by \eqref{eq:def_Omega_b_tilde}).  In order to prove
Theorem~\ref{thm-almost-prime}, we will apply the weighted sieve
stated above: in
Section~\ref{section:proof_theorem:application_weighted_sieve}, we
will combine Theorem~\ref{theorem:distribution-level} with
Lemma~\ref{lemma-weighted-sieve} to obtain the following result, from
which we will derive Theorem~\ref{thm-almost-prime} in
Section~\ref{section:proof_thm_almost_primes}.

\begin{theorem}\label{theorem:application_weighted_sieve}
  For any integer $b\geq 2$, there exists an explicit
  $\xi_0 = \xi_0(b) >0$ (defined by \eqref{eq:def_xi_0}) such that,
  given $\xi \in\left]0,\xi_0\right[$ and an integer
  $R\geq 2$ such that $\Lambda_R > \xi^{-1}$,
  we can find $\lambda_0(b,\xi,R) \geq 2$ with the property that for
  any integer $\lambda \geq \lambda_0(b,\xi,R)$ and any
  $i \in\{1,\ldots,b-1\}$,
  \begin{displaymath}
    |\{p \in \mathscr{P}_{\lambda,i}: 
    \Omega(R_{\lambda}(p)) \leq R \text{ and } 
    P^{-}(R_\lambda(p))\geq b^{\frac{\xi}{4}\lambda}\}|
    \asymp_{\xi,R}
    \frac{b^{\lambda}}{\lambda^2}.
  \end{displaymath}
\end{theorem}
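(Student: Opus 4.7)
The plan is to apply the weighted sieve (Lemma~\ref{lemma-weighted-sieve}) directly to the sequence $\mathcal{A}_i = (R_\lambda(p))_{p \in \mathscr{P}_{\lambda,i}}$ with the multiplicative density $g$ from \eqref{eq:def_g}, total mass $X = |\mathscr{P}_{\lambda,i}|$, remainder $r_d = E_{\lambda,i}(d)$ from \eqref{eq:def_Rd}, and level of distribution $D = b^{\xi\lambda}$. The sifting set $\mathcal{P}$ is all primes. The Mertens-type hypothesis \eqref{eq:condition-crible-lineaire} has already been verified in the paragraph preceding \eqref{eq:upper_bound_Theta_lambda_z}, and $X \asymp b^\lambda/\lambda$ follows from the Prime Number Theorem on the interval $[ib^{\lambda-1},(i+1)b^{\lambda-1})$.

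First I would verify the distribution hypothesis \eqref{eq:error-term-weighted-sieve}: this is exactly the content of Theorem~\ref{theorem:distribution-level}, which gives $\sum_{d \le b^{\xi\lambda}} |E_{\lambda,i}(d)| \ll b^{\lambda - c\sqrt{\lambda}}$, far stronger than the required $X(\log X)^{-3} \asymp b^\lambda/\lambda^4$. Next, since $\max \mathcal{A}_i \leq b^\lambda$ and the hypothesis $\Lambda_R > \xi^{-1}$ guarantees $\Lambda_R^{-1} < \xi$, I can fix $\varepsilon > 0$ small enough that $\Lambda_R^{-1} + \varepsilon \leq \xi$, giving $D \geq (\max \mathcal{A}_i)^{\Lambda_R^{-1}+\varepsilon}$ automatically.

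The key remaining hypothesis is the squares condition \eqref{eq:squares}. My plan is to exploit the fact that the map $n \mapsto R_\lambda(n)$ sends the interval $[ib^{\lambda-1},(i+1)b^{\lambda-1})$ bijectively onto the arithmetic progression $\{m \in [0,b^\lambda) : m \equiv i \bmod b\}$. By \eqref{eq:gcd-R_lambda-b^3-b} any prime $p \mid b$ contributes nothing, and for $p > D^{1/4} = b^{\xi\lambda/4}$ with $p \nmid b$ the Chinese Remainder Theorem shows that the number of multiples of $p^2$ in that coset is at most $b^{\lambda-1}/p^2 + 1$. Primes $p > b^{\lambda/2}$ contribute zero since then $p^2 > b^\lambda$ while $R_\lambda(p_0) < b^\lambda$. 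Summing the trivial bounds over the remaining range yields $\ll b^{\lambda}/D^{1/4} + b^{\lambda/2} \ll b^{\lambda(1-\xi/4)} + b^{\lambda/2}$, which is comfortably $\ll b^\lambda/\lambda^4$ once $\lambda$ is large enough in terms of $\xi$.

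With every hypothesis in place, Lemma~\ref{lemma-weighted-sieve} delivers the count $\asymp_{\varepsilon,R} X \cdot V(b^{\xi\lambda/4})$. A final appeal to Mertens' formula \eqref{eq:asymp_formula_V} gives $V(b^{\xi\lambda/4}) \asymp_{\xi,b} 1/\lambda$, and multiplying by $X \asymp b^\lambda/\lambda$ produces the asserted $\asymp_{\xi,R} b^\lambda/\lambda^2$. The genuine difficulty has already been absorbed upstream by Theorem~\ref{theorem:distribution-level}; at this stage the only delicate check is the squares bound, which is straightforward only because the digit bijection above reduces it to a CRT count on arithmetic progressions.
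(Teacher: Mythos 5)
Your proposal is correct and follows essentially the same route as the paper: both apply Lemma~\ref{lemma-weighted-sieve} to $\mathcal{A}_i$ with $D=b^{\xi\lambda}$, verify \eqref{eq:error-term-weighted-sieve} via Theorem~\ref{theorem:distribution-level} (using that $E_{\lambda,i}(d)=0$ when $\gcd(d,b(b^2-1))>1$), dispose of \eqref{eq:squares} by the same trivial count of multiples of $p^2$ among reverses (the paper simply drops the congruence $m\equiv i\bmod b$ where you keep it via CRT, an immaterial difference), and conclude with \eqref{eq:asymp_formula_V}.
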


\section{Combinatorial identity}
\label{section:combinatorial-identity}

The following lemma is a consequence of Vaughan's identity
(see for instance~\cite[Prop.\ 13.4 p.\ 345]{iwaniec-kowalski-2004}).

\begin{lemma}\label{lemma:bound_sum_lambda}
  For any real numbers $1 < u^3< y \leq x$ and any function
  $f: \left[y,x\right[\cap\Z \to \C$, we have
  \begin{multline*}
    \abs{\sum_{y \leq n < x} \Lambda(n) f(n)}
    \leq
    \\
    (\log x)\left(
      3
      \sup_{t\in [y,x]}
      \sum_{m\leq u}
      \abs{\sum_{\substack{n\\y \leq mn < t}} f(mn)}
      +
      4\sup_{(z_n)}\sum_{u<m\leq \sqrt{x}}
      \abs{\sum_{\substack{n\\ y \leq mn < x}} z_n \,f(mn)}
    \right)
  \end{multline*}
  where the second supremum is taken over all sequences $(z_n)$ of
  complex numbers with $\abs{z_n}\leq 1$.
\end{lemma}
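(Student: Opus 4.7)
The plan is to apply Vaughan's identity with the symmetric choice of parameters $U = V = u$: starting from $\Lambda = \mu * \log$, splitting $\mu = \mu_{\le u} + \mu_{>u}$, and using the identities $\log n = \sum_{d \mid n} \Lambda(d)$ and $\sum_{d \mid m} \mu(d) = \mathbf{1}_{m=1}$, one obtains, for every $n > u$, the decomposition
\begin{equation*}
\Lambda(n) = \sum_{\substack{d \mid n\\ d \le u}} \mu(d) \log(n/d) \;-\; \sum_{\substack{de \mid n\\ d,\, e \le u}} \mu(d) \Lambda(e) \;+\; \sum_{\substack{de \mid n\\ d > u,\ e > u}} \mu(d) \Lambda(e).
\end{equation*}
Because $y > u^3 > u$, this identity holds throughout the summation range, so $\sum_{y \le n < x} \Lambda(n) f(n) = \Sigma_A + \Sigma_B + \Sigma_C$, and each piece is to be reduced to a Type I or Type II quantity.

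For $\Sigma_A = \sum_{d \le u} \mu(d) \sum_{y/d \le m < x/d} f(dm) \log m$, I would perform an Abel summation on the inner sum to remove the weight $\log m$ at a cost bounded by $2 \log x$, yielding $|\Sigma_A| \le 2 \log x \cdot (\text{Type I})$. For $\Sigma_B$, after the change of variables $k = de$ I would write $\Sigma_B = -\sum_{k \le u^2} c_k \sum_{y \le km < x} f(km)$ with $|c_k| \le \log k \le 2 \log u < \log x$ (the last inequality from $u^3 < x$). The subrange $k \le u$ is already in Type I form, contributing $\log x \cdot (\text{Type I})$. On $u < k \le u^2$, a further dichotomy is required: if $k \le \sqrt{x}$, then $k$ directly plays the role of the outer Type II variable; if $k > \sqrt{x}$, I would swap the order of summation so that $m$ lies outside, and here the hypothesis $u^3 < y$ becomes decisive, since it forces $m \ge y/k > u^3/u^2 = u$, while $m < x/k < \sqrt{x}$ is automatic. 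Each subcase produces $\log x \cdot (\text{Type II})$.

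Finally, for $\Sigma_C = \sum_{d > u} \mu(d) \sum_{n_1 > u,\, y \le d n_1 < x} B(n_1) f(d n_1)$ with $B(n_1) = \sum_{e \mid n_1,\, e > u} \Lambda(e)$ (so $|B(n_1)| \le \log x$), I would split on whether $d \le \sqrt{x}$ (then $d$ is the Type II outer variable, with $B$ absorbed into the coefficients $z_n$) or $d > \sqrt{x}$ (then $n_1 < x/d < \sqrt{x}$ and $n_1 > u$, so after swapping $n_1$ becomes the outer variable). This contributes another $2 \log x \cdot (\text{Type II})$. Adding all contributions yields the stated bound $\log x \bigl(3 \cdot (\text{Type I}) + 4 \cdot (\text{Type II})\bigr)$, with $3 = 2 + 1$ coming from $\Sigma_A$ and the Type I part of $\Sigma_B$, and $4 = 2 + 2$ from the Type II part of $\Sigma_B$ together with $\Sigma_C$. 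The only nontrivial step is the subcase $u < k \le u^2$, $k > \sqrt{x}$ of $\Sigma_B$, which is precisely where the curious-looking hypothesis $u^3 < y$ is needed; everything else is careful but routine bookkeeping of constants.
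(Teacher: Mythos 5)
Your proposal is correct and follows essentially the same route as the paper: the identical Vaughan decomposition with the single parameter $u$ (the paper derives it from the Dirichlet-series identity $-\zeta'/\zeta = F_u - \zeta' G_u - \zeta F_u G_u + \zeta(\zeta^{-1}-G_u)(-\zeta'/\zeta-F_u)$, you from the equivalent arithmetic convolution form), the same three-way split, the same $\sqrt{x}$-dichotomies with swapping of the two variables, and the same use of $u^3<y$ to force the swapped inner variable above $u$ in the range $\sqrt{x}<k\le u^2$, yielding the same constants $3$ and $4$. The one point needing care in your treatment of $\Sigma_A$ is to keep the partial-summation remainder in integral form and interchange $\int_y^x(\cdot)\,\frac{dt}{t}$ with the sum over $d\le u$ before taking suprema, so that the bound ends with $\sup_t \sum_{d\le u}\abs{\cdot}$ as the Type I quantity requires; bounding each inner sum separately by $2(\log x)\sup_t\abs{\cdot}$ would instead produce $\sum_{d\le u}\sup_t\abs{\cdot}$, which is the wrong order of quantifiers.
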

\begin{proof}
  For $s\in\C$ with $\Re(s)>1$ and $u > 1$
the Dirichlet series
\begin{displaymath}
  \zeta(s) = \sum_{n\geq 1} \frac{1}{n^s},\
  \zeta'(s) = - \sum_{n\geq 1} \frac{\log n}{n^s},\
  G_u(s) = \sum_{n\leq u} \frac{\mu(n)}{n^s},\
  F_u(s) = \sum_{n\leq u} \frac{\Lambda(n)}{n^s}
\end{displaymath}
($\zeta$ denotes the Riemann zeta function)
satisfy
\begin{equation}\label{eq:dirichlet-series-equality}
  - \frac{\zeta'}{\zeta}
  =
  F_u 
  - \zeta' G_u 
  - \zeta F_u G_u 
  + \zeta \left( \frac{1}{\zeta} - G_u \right)
  \left( -\frac{\zeta'}{\zeta} - F_u \right)
  .
\end{equation}
Identifying the coefficient of $n^{-s}$ on each side of
\eqref{eq:dirichlet-series-equality} we obtain for any function
$f: \left[y,x\right[\cap\Z \to \C$ and for $1 < u < y \leq x$:
\begin{displaymath}
  \sum_{y \leq n < x} \Lambda(n) f(n)
  =
  S_1 - S_2 + S_3
\end{displaymath}
with
\begin{align*}
  S_1
  &
    =
    \sum_{\substack{1\leq m \leq u\\ y \leq mn < x}} \mu(m)\, \log(n)\, f(mn)
  \\
  S_2
  &
    =
    \sum_{\substack{1\leq m_1 \leq u\\ 1\leq m_2 \leq u\\ y \leq m_1 m_2 n < x}}
  \mu(m_1)\, \Lambda(m_2)\, f(m_1 m_2 n)
  \\
  S_3
  &
    = 
    \sum_{\substack{u < m < x\\ u < n_1 < x\\ y \leq m n_1 n_2 < x}}
  \mu(m)\, \Lambda(n_1)\, f(m n_1 n_2).
\end{align*}

We assume that $u^3 < y$.
We will see that the estimation of $S_1$, $S_2$, $S_3$ can be reduced
to the estimation of two types of sums:
\begin{itemize}
\item the type I sums which are of the form
  \begin{equation}\label{eq:Vaughan-Type-I-sums}
    \sum_{m\leq u} z_m \sum_{\substack{n\\ y \leq mn < t}} f(mn)
    \qquad
    \text{(Type I)},
  \end{equation}
  where $t\in[y,x]$ and $z_m\in \C$ with
  $\abs{z_m}\leq 1$, whose absolute value is bounded by
  \begin{displaymath}
    \sup_{t\in [y,x]}
      \sum_{m\leq u}
      \abs{\sum_{\substack{n\\y \leq mn < t}} f(mn)},
  \end{displaymath}
\item the type II sums which are of the form
  \begin{equation}\label{eq:Vaughan-Type-II-sums}
    \sum_{u<m\leq \sqrt{x}} z'_m \sum_{\substack{n\\ y \leq mn < x}} z_n \,f(mn)
    \qquad
    \text{(Type II)},
  \end{equation}
  where $z'_m\in\C$ with $\abs{z'_m}\leq 1$, $z_n\in\C$ with
  $\abs{z_n}\leq 1$, whose absolute value is bounded by
  \begin{displaymath}
    \sup_{(z_n)}\sum_{u<m\leq \sqrt{x}}
    \abs{\sum_{\substack{n\\ y \leq mn < x}} z_n \,f(mn)}
  \end{displaymath}
  where the supremum is taken over all sequences $(z_n)$ of
  complex numbers with $\abs{z_n}\leq 1$.
\end{itemize}

In order to estimate $S_1$, we use the equality  
\begin{math}
  \log n = \log\left(\frac{x}{m}\right)-\int_{mn}^{x} \frac{dt}{t}
\end{math}
to write
\begin{displaymath}
  S_1 = (\log x)(S_{11} - S_{12})
\end{displaymath}
where
\begin{displaymath}
  S_{11}
  =
  \sum_{1\leq m\leq u} \frac{\mu(m)\log\left(\frac{x}{m}\right)}{\log x}
  \sum_{\substack{n \\ y \leq mn < x}} f(mn)
\end{displaymath}
is a type I sum and, reverting the summations,
\begin{displaymath}
  S_{12}
  =
  \frac{1}{\log x}\int_{y}^{x}
  \sum_{1\leq m\leq u} \mu(m)
  \sum_{\substack{n \\ y \leq mn < t}} f(mn)
  \ 
  \frac{dt}{t},
\end{displaymath}
hence
\begin{displaymath}
  \abs{S_{12}}
  \leq
  \sup_{t\in [y,x]}\abs{
      \sum_{1\leq m\leq u}
      \mu(m)
      \sum_{\substack{n\\y \leq mn < t}} f(mn)},
\end{displaymath}
which can be estimated by a type I sum.

In order to estimate $S_2$,
we write
\begin{displaymath}
  S_2
  =
  \sum_{m\leq u^2} c_m   \left(\log u^2 \right)
  \sum_{\substack{n \\ y \leq mn < x}} f(mn)
  ,
\end{displaymath}
with, for $m\leq u^2$, 
\begin{displaymath}
  c_m
  =
  \frac1{\log u^2}
  \sum_{\substack{m_1 \leq u\\ m_2 \leq u\\ m=m_1 m_2}}
  \mu(m_1)\Lambda(m_2)
  ,
\end{displaymath}
which satisfies
\begin{displaymath}
  \abs{c_m}
  \leq
  \frac1{\log u^2}
  \sum_{m_2\dv m} \Lambda(m_2) =   \frac{\log m}{\log u^2} \leq 1.
\end{displaymath}
We have
\begin{displaymath}
  S_2
  =
  \left( \log u^2 \right)
  \left( S_{21}+S_{22}+S_{23} \right)
  ,
\end{displaymath}
where
\begin{displaymath}
  S_{21} =
  \sum_{m\leq u} c_m
  \sum_{\substack{n \\ y \leq mn < x}}
  f(mn)
\end{displaymath}
is a type I sum,
\begin{displaymath}
  S_{22} =
  \sum_{u< m\leq \sqrt{x}} \left( c_m \one_{m\leq u^2} \right)
  \sum_{\substack{n \\ y \leq mn < x}} f(mn)
\end{displaymath}
is a type II sum,
and, if $u^2>\sqrt{x}$ (otherwise $S_{23}=0$),
\begin{displaymath}
  S_{23} =
  \sum_{\sqrt{x}< m\leq u^2} c_m
  \sum_{\substack{n \\ y \leq mn < x}} f(mn)
  ,
\end{displaymath}
and since $\frac{y}{u^2} > u$, 
we have
\begin{displaymath}
  S_{23} 
  =
  \sum_{u < n \leq \sqrt{x}} 
  \sum_{\substack{m \\ y \leq mn < x}}
  \left( c_m \one_{\sqrt{x}< m\leq u^2} \right)
  f(mn),
\end{displaymath}
which is a type II sum
(with $m$ and $n$ exchanged).

In order to estimate $S_3$, we write
\begin{displaymath}
  S_3
  =
  (\log x)
  \sum_{u< m < \frac{x}{u}} \mu(m)
  \sum_{\substack{n > u\\ y \leq mn < x}}
  z_n\, f(mn),
\end{displaymath}
with, for $1\leq n\leq x$,
\begin{displaymath}
  0
  \leq
  z_n
  =
  \frac{1}{\log x}
  \sum_{\substack{u< n_1 \\ n=n_1 n_2}} \Lambda(n_1)
  \leq 
  \frac{1}{\log x} \sum\limits_{n_1\dv n} \Lambda(n_1) 
  = 
  \frac{\log n}{\log x}  \leq 1
  .
\end{displaymath}
We have
\begin{displaymath}
  S_3 = (\log x) \left( S_{31} + S_{32} \right)
\end{displaymath}
with
\begin{displaymath}
  S_{31}
  =
  \sum_{u< m \leq \sqrt{x}} \mu(m)
  \sum_{\substack{n \\ y \leq mn < x}}
  (z_n\one_{n>u})\, f(mn),
\end{displaymath}
which is a type II sum and
\begin{align*}
  S_{32}
  &=
  \sum_{\sqrt{x} < m < \frac{x}{u}} \mu(m)
  \sum_{\substack{n > u\\ y \leq mn < x}}
    z_n\, f(mn)
  \\
  &=
  \sum_{u < n \leq \sqrt{x}}
  z_n
  \sum_{\substack{m \\ y \leq mn < x}}
  \left( \mu(m) \one_{\sqrt{x} < m} \right)
  \, f(mn)
  ,
\end{align*}
which is a type II sum (with $m$ and $n$ exchanged). This ends the
proof of the lemma.
\end{proof}

The following result permits us to combine
Lemma~\ref{lemma:bound_sum_lambda} with a further averaging.
\begin{lemma}\label{lemma:maj_sum_alpha_vaughan}
  Let $\mathcal{A}$ be a nonempty set of real numbers and $W$ be a non
  negative real valued function on~$\mathcal{A}$.
  Let $\beta_1,\beta_2$ be real numbers such that $0 < \beta_1 < 1/3$ and 
  $1/2 < \beta_2 < 1$.  
  Let $b\geq2$ and $\lambda \geq 3$ be integers such that 
  \begin{math}
    1 < \left(b^{\floor{\beta_1\lambda}}-1\right)^3 < b^{\lambda-1}
  \end{math}
  and
  \begin{math}
    \ceil{\frac{\lambda}2}+1 \leq \beta_2 \lambda.
  \end{math}  
  Let $f$ be a complex valued function on
  $\mathcal{A}\times\{b^{\lambda-1},\ldots,b^{\lambda}-1\}$. We assume
  that
  \begin{itemize}
  \item (extended Type II)
    for any integer $\mu$ such that
    \begin{math}
      \beta_1\lambda < \mu \leq \beta_2\lambda, 
    \end{math}
    \begin{equation}\label{eq:maj_gen_type_II}
      \sum_{\alpha\in \mathcal{A}}
      W(\alpha)
      \sup_{t \in \left[b^{\lambda-1},b^{\lambda}\right]}
      \sup_{(z_n)}
      \sum_{b^{\mu-1} \leq m < b^{\mu}}
      \abs{
        \sum_{\substack{n\\ b^{\lambda-1} \leq mn < t}} z_{n} f(\alpha,mn)
      }
      \leq U
      ,
    \end{equation}
    where the second supremum is taken over all sequences $(z_n)$ with
    $z_n\in\C$ and $\abs{z_n}\leq 1$,
  \item (extended Type I) for any integer $\mu$ such that
    \begin{math}
      1 \leq \mu \leq \beta_1 \lambda,
    \end{math}
    \begin{equation}\label{eq:maj_gen_type_I}
      \sum_{\alpha \in \mathcal{A}}
      W(\alpha)
      \sup_{t\in\left[b^{\lambda-1},b^{\lambda}\right]}
      \sum_{b^{\mu-1} \leq m < b^{\mu}}
      \abs{
        \sum_{\substack{n\\ b^{\lambda-1}\leq mn < t}}
        f(\alpha,mn)
      }
      \leq U
      .
    \end{equation}
  \end{itemize}
  Then
  \begin{displaymath}
    \sum_{\alpha \in \mathcal{A}}
    W(\alpha)
    \sup_{t\in\left[b^{\lambda-1},b^{\lambda}\right]}
    \abs{\sum_{b^{\lambda-1} \leq n < t}
      \Lambda(n)
      f(\alpha,n)}
    \leq 
     7 \lambda (\log b^{\lambda}) U.
  \end{displaymath}
\end{lemma}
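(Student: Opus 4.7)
The plan is to apply Vaughan's identity (Lemma~\ref{lemma:bound_sum_lambda}) to the inner sum pointwise in $\alpha$, then sum over $\alpha$ after decomposing the outer variable $m$ dyadically, so that each piece falls under the extended Type~I hypothesis \eqref{eq:maj_gen_type_I} or the extended Type~II hypothesis \eqref{eq:maj_gen_type_II}. For each $\alpha$, pick $t(\alpha) \in [b^{\lambda-1}, b^\lambda]$ realising the supremum (the map $t \mapsto |S(\alpha,t)|$ is a step function on this compact interval, so the supremum is attained). Set $y = b^{\lambda-1}$, $x = t(\alpha)$, and choose $u = b^{\floor{\beta_1\lambda}} - 1$, which by the hypothesis $1 < (b^{\floor{\beta_1\lambda}}-1)^3 < b^{\lambda-1}$ satisfies the condition $1 < u^3 < y$. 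Vaughan's identity then yields
\begin{displaymath}
  |S(\alpha,t(\alpha))|
  \leq (\log x)\bigl( 3\, T_I(\alpha) + 4\, T_{II}(\alpha) \bigr),
\end{displaymath}
with $T_I(\alpha)$ a Type~I sum (sup over $t \in [y, t(\alpha)]$, $m \leq u$) and $T_{II}(\alpha)$ a Type~II sum (sup over $(z_n)$, $u < m \leq \sqrt{t(\alpha)} \leq b^{\lambda/2}$).

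Next, I decompose the $m$-ranges into dyadic blocks $[b^{\mu-1}, b^\mu)$. For Type~I the range $m \leq u < b^{\floor{\beta_1\lambda}}$ is covered by $\mu \in \{1,\dots,\floor{\beta_1\lambda}\}$, giving at most $\beta_1\lambda$ blocks, all lying in the range $1 \leq \mu \leq \beta_1\lambda$ allowed by \eqref{eq:maj_gen_type_I}. For Type~II the range $u < m \leq b^{\lambda/2}$ is covered by $\mu \in \{\floor{\beta_1\lambda}+1, \dots, \ceil{\lambda/2}+1\}$; the hypothesis $\ceil{\lambda/2}+1 \leq \beta_2\lambda$ guarantees that every such $\mu$ lies in the range $\beta_1\lambda < \mu \leq \beta_2\lambda$ required by \eqref{eq:maj_gen_type_II}, and the number of blocks is at most $\beta_2\lambda$.

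Summing over $\alpha$ with the weight $W(\alpha)$ and exchanging with the dyadic sum in $\mu$, two things need to be checked: the inner supremum $\sup_{t \in [y,t(\alpha)]}$ in the Type~I term and the particular value $x = t(\alpha)$ in the Type~II term must be absorbable by the suprema over $t \in [b^{\lambda-1}, b^\lambda]$ appearing in \eqref{eq:maj_gen_type_I}-\eqref{eq:maj_gen_type_II}. Both are immediate since $[y, t(\alpha)] \subseteq [b^{\lambda-1}, b^\lambda]$ and $t(\alpha) \in [b^{\lambda-1}, b^\lambda]$. Thus each dyadic block, after summation in $\alpha$, is bounded by $U$. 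Combining the blocks and using $\log x \leq \log b^\lambda$ produces
\begin{displaymath}
  \sum_{\alpha} W(\alpha) \sup_{t} |S(\alpha,t)|
  \leq (\log b^\lambda)\bigl( 3\,\floor{\beta_1\lambda} + 4\,(\ceil{\lambda/2}+1-\floor{\beta_1\lambda})\bigr) U,
\end{displaymath}
and the constraints $\beta_1 < 1/3$, $\beta_2 < 1$ together with simple boundary estimates give the stated $7\lambda(\log b^\lambda) U$ with ample room.

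There is no genuine obstacle here; the proof is essentially a bookkeeping exercise. The only mildly delicate point is the interplay between the supremum over $t$ that Vaughan's identity introduces inside its own bound and the supremum over $t$ built into the hypotheses, which is precisely what the extended formulations \eqref{eq:maj_gen_type_I}-\eqref{eq:maj_gen_type_II} were designed to absorb. Care must also be taken that the dyadic range for Type~II never exceeds $\beta_2\lambda$, which is exactly what the second arithmetic condition on $\lambda$ guarantees.
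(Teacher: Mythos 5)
Your proposal is correct and follows essentially the same route as the paper: apply Lemma~\ref{lemma:bound_sum_lambda} with $u=b^{\floor{\beta_1\lambda}}-1$ pointwise in $\alpha$, split the $m$-ranges $b$-adically into at most $\lambda$ blocks of each type, absorb the inner suprema over $t$ into the suprema over $[b^{\lambda-1},b^{\lambda}]$ appearing in \eqref{eq:maj_gen_type_I}--\eqref{eq:maj_gen_type_II}, and sum over $\alpha$ to get $(3\lambda+4\lambda)(\log b^{\lambda})U$. The only cosmetic difference is your selection of a maximising $t(\alpha)$, where the paper simply works with an arbitrary $t$ and takes the supremum at the end.
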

\begin{proof}
  Let $\alpha \in \mathcal{A}$ and
  $t\in\left[b^{\lambda-1},b^{\lambda}\right]$. By applying
  Lemma~\ref{lemma:bound_sum_lambda} with $y=b^{\lambda-1}$, $x=t$ and
  $f(n)$ replaced by $f(\alpha,n)$,
  we obtain for any real number $u$ such that $1<u^3 < b^{\lambda-1}$:
  \begin{equation}\label{eq:decomp_sum_lambda_f_alpha}
    \abs{\sum_{b^{\lambda-1} \leq n < t} \Lambda(n) f(\alpha,n)}
    \leq
    (\log t)\left(
      3 S_I(\alpha,t)
      +
      4 S_{II}(\alpha,t)
    \right)
  \end{equation}
  with
  \begin{displaymath}
    S_I(\alpha,t) = 
    \sup_{t'\in [b^{\lambda-1},t]}
      \sum_{m\leq u}
      \abs{\sum_{\substack{n\\b^{\lambda-1} \leq mn < t'}} f(\alpha,mn)}
  \end{displaymath}
  and
  \begin{displaymath}
    S_{II}(\alpha,t) = 
    \sup_{(z_n)}\sum_{u<m\leq \sqrt{t}}
    \abs{\sum_{\substack{n\\ b^{\lambda-1} \leq mn < t}} z_n \,f(\alpha,mn)}
  \end{displaymath}
  where the supremum is taken over all sequences $(z_n)$ of complex
  numbers with $\abs{z_n}\leq 1$.

  Since 
  \begin{math}
    1 < \left(b^{\floor{\beta_1\lambda}}-1\right)^3 < b^{\lambda-1}
    ,
  \end{math}
  we can choose $u = b^{\floor{\beta_1\lambda}}-1$.  By $b$-adic
  splitting, for any $t'\in [b^{\lambda-1},t]$, we have
  \begin{displaymath}
    \sum_{m\leq u}
    \abs{\sum_{\substack{n\\b^{\lambda-1} \leq mn < t'}} f(\alpha,mn)}
    = 
    \sum_{\mu=1}^{\floor{\beta_1\lambda}}
    \sum_{b^{\mu-1} \leq m < b^{\mu}}
      \abs{
        \sum_{\substack{n\\ b^{\lambda-1}\leq mn < t'}}
        f(\alpha,mn)
    }
  \end{displaymath}
  hence
  \begin{equation}\label{eq:maj_sum_m_sum_n_I}
    \sum_{m\leq u}
    \abs{\sum_{\substack{n\\b^{\lambda-1} \leq mn < t'}} f(\alpha,mn)}
    \leq
    \sum_{\mu=1}^{\floor{\beta_1\lambda}}
      \sup_{t\in\left[b^{\lambda-1},b^{\lambda}\right]}
      \sum_{b^{\mu-1} \leq m < b^{\mu}}
      \abs{
      \sum_{\substack{n\\ b^{\lambda-1}\leq mn < t}}
    f(\alpha,mn)
    }
  \end{equation}
  and for any sequence $(z_n)$ of complex numbers with
  $\abs{z_n}\leq 1$, since 
  \begin{displaymath}
    b^{\ceil{\frac{\lambda}2}+1}-1 \geq b^{\frac{\lambda}2} \geq
    \floor{\sqrt{t}},
  \end{displaymath}
  we have
  \begin{align}\label{eq:maj_sum_m_sum_n_II}
    \sum_{u<m\leq \sqrt{t}}
    &
      \abs{\sum_{\substack{n\\ b^{\lambda-1} \leq mn < t}} z_n \,f(\alpha,mn)}
    \\
    \nonumber
    &\leq
    \sum_{\mu = \floor{\beta_1\lambda}+1}^{\ceil{\frac{\lambda}2}+1}
    \sum_{b^{\mu-1} \leq m < b^{\mu}}
    \abs{\sum_{\substack{n\\ b^{\lambda-1} \leq mn < t}} z_n \,f(\alpha,mn)}
    \\
    \nonumber
    &\leq
    \sum_{\mu = \floor{\beta_1\lambda}+1}^{\ceil{\frac{\lambda}2}+1}
    \sup_{t\in\left[b^{\lambda-1},b^{\lambda}\right]} \sup_{(z_n)}
    \sum_{b^{\mu-1} \leq m < b^{\mu}}
    \abs{\sum_{\substack{n\\ b^{\lambda-1} \leq mn < t}} z_n \,f(\alpha,mn)}
    .
  \end{align}
  Combining~\eqref{eq:maj_sum_m_sum_n_I} and
  \eqref{eq:maj_gen_type_I}, we obtain
  \begin{align*}
    \MoveEqLeft
    \sum_{\alpha \in \mathcal{A}}
      W(\alpha)
    \sup_{t\in\left[b^{\lambda-1},b^{\lambda}\right]}
    S_I(\alpha,t)
    \\
    &
      \leq
      \sum_{\mu=1}^{\floor{\beta_1\lambda}}
      \sum_{\alpha \in \mathcal{A}}
      W(\alpha)
      \sup_{t\in\left[b^{\lambda-1},b^{\lambda}\right]}
      \sum_{b^{\mu-1} \leq m < b^{\mu}}
      \abs{
      \sum_{\substack{n\\ b^{\lambda-1}\leq mn < t}}
    f(\alpha,mn)
    }
    &
      \leq 
      \lambda U
      .
  \end{align*}
  Combining~\eqref{eq:maj_sum_m_sum_n_II} and
  \eqref{eq:maj_gen_type_II}, since 
  \begin{math}
    \ceil{\frac{\lambda}2}+1 \leq \beta_2 \lambda,
  \end{math}
  we obtain
  \begin{align*}
    \MoveEqLeft
    \sum_{\alpha \in \mathcal{A}}
      W(\alpha)
      \sup_{t\in\left[b^{\lambda-1},b^{\lambda}\right]}
      S_{II}(\alpha,t)
    \\
    &
      \leq
      \sum_{\mu = \floor{\beta_1\lambda}+1}^{\ceil{\frac{\lambda}2}+1}
      \sum_{\alpha \in \mathcal{A}}
      W(\alpha)
      \sup_{t\in\left[b^{\lambda-1},b^{\lambda}\right]}
      \sup_{(z_n)}
      \sum_{b^{\mu-1} \leq m < b^{\mu}}
      \abs{
      \sum_{\substack{n\\ b^{\lambda-1}\leq mn < t}}
    z_nf(\alpha,mn)
    }
    \\
    &\leq \lambda U
      .
  \end{align*}
  By~\eqref{eq:decomp_sum_lambda_f_alpha}, it follows that
  \begin{displaymath}
    \sum_{\alpha \in \mathcal{A}}
    W(\alpha)
    \sup_{t\in\left[b^{\lambda-1},b^{\lambda}\right]}
    \abs{\sum_{b^{\lambda-1} \leq n < t}
      \Lambda(n)
      f(\alpha,n)}
    \leq 
    7 \lambda (\log b^{\lambda}) U,
  \end{displaymath}
  which completes the proof.
\end{proof}

\section{Miscellaneous definitions and lemmas}\label{section:lemmas}

\begin{lemma}\label{lemma:van-der-corput}
  For all complex numbers $z_1,\ldots,z_N$ 
  and all integers $k\geq 1$ and $R\geq 1$ we have
  \begin{multline}\label{eq:van-der-corput}
    \abs{\sum_{1\le n\le N} z_n}^2
    \\
    \leq
    \frac{N+kR-k}{R}
    \
    \Re
    \Bigg(
    \sum_{1\le n\le N} \abs{z_n}^2
    + 2
    \sum_{1\le r< R}\left(1-\frac{r}{R}\right)
    \
    \sum_{1\le n\le N-kr} z_{n+kr} \conjugate{z_n} 
    \Bigg),
  \end{multline}
  where $\Re(z)$ denotes the real part of $z$.
\end{lemma}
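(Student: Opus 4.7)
The plan is to establish this as the classical van der Corput inequality, using Cauchy--Schwarz on a suitable shifted average. Throughout, extend the sequence by setting $z_n = 0$ for $n \notin \{1,\ldots,N\}$.

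The first step is to exploit the invariance of the sum under shifting the index. For every $r \in \{0,1,\ldots,R-1\}$, we have
\begin{displaymath}
  \sum_{1 \le n \le N} z_n = \sum_{m \in \Z} z_{m+kr},
\end{displaymath}
since the extension by zero makes the actual range immaterial. Summing over $r$ and restricting $m$ to the interval $\mathcal{I}$ of the $N+k(R-1)$ consecutive integers that support at least one of the shifted sequences, we obtain
\begin{displaymath}
  R \sum_{1 \le n \le N} z_n
  = \sum_{m \in \mathcal{I}} \sum_{r=0}^{R-1} z_{m+kr}.
\end{displaymath}
Applying the Cauchy--Schwarz inequality on the outer sum over $m$ yields
\begin{displaymath}
  R^2 \abs{\sum_{1 \le n \le N} z_n}^2
  \le (N + k(R-1)) \sum_{m \in \mathcal{I}} \abs{\sum_{r=0}^{R-1} z_{m+kr}}^2.
\end{displaymath}

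The next step is to expand the inner square and reindex. Writing
\begin{displaymath}
  \sum_{m \in \mathcal{I}} \abs{\sum_{r=0}^{R-1} z_{m+kr}}^2
  = \sum_{r,s=0}^{R-1} \sum_{m \in \mathcal{I}} z_{m+kr}\conjugate{z_{m+ks}},
\end{displaymath}
the diagonal terms $r=s$ contribute $R \sum_{n} \abs{z_n}^2$ (after the substitution $n=m+kr$ and using that $z_n$ vanishes outside $[1,N]$). For the off-diagonal terms, I group pairs by $d = s-r$: for each $d$ with $1 \le d \le R-1$, there are $R-d$ pairs $(r,s)$ with $s-r = d$, and likewise $R-d$ pairs with $r-s=d$. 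Using the substitution $n = m+\min(kr,ks)$ and the identity
\begin{displaymath}
  \sum_{n} z_{n}\conjugate{z_{n+kd}}
  = \conjugate{\sum_{n} z_{n+kd}\conjugate{z_{n}}},
\end{displaymath}
the contribution from $d$ and $-d$ combines into $2\Re \sum_{1 \le n \le N-kd} z_{n+kd}\conjugate{z_n}$. Summing over $d$ gives
\begin{displaymath}
  \sum_{m \in \mathcal{I}} \abs{\sum_{r=0}^{R-1} z_{m+kr}}^2
  = R \sum_{1 \le n \le N} \abs{z_n}^2
  + 2 \Re \sum_{r=1}^{R-1} (R-r) \sum_{1 \le n \le N-kr} z_{n+kr}\conjugate{z_n}.
\end{displaymath}

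Substituting this back into the Cauchy--Schwarz bound and dividing by $R^2$ produces exactly the factor $(N+kR-k)/R$ in front, together with the weights $(R-r)/R = 1 - r/R$ inside. Since the diagonal sum $\sum_n \abs{z_n}^2$ is already real, pulling $\Re$ outside is harmless and yields \eqref{eq:van-der-corput}.

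The argument is entirely mechanical; the only step that requires any care is the bookkeeping in the reindexing, specifically verifying that the outer index $m$ can be freed to run over $\mathcal{I}$ thanks to the zero extension, and that the pairing of $(r,s)$ and $(s,r)$ off-diagonals yields the real part with the correct multiplicities $R-r$. No other estimate or external result is needed.
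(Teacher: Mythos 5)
Your proof is correct and complete: the zero-extension, the count of $N+kR-k$ supporting indices $m$, the Cauchy--Schwarz step, and the diagonal/off-diagonal bookkeeping with multiplicities $R-d$ all check out, and the weights $1-r/R$ and the leading factor come out exactly as in \eqref{eq:van-der-corput}. The paper itself does not prove the lemma but only cites Lemma~17 of Mauduit--Rivat (2009); your argument is precisely the standard one behind that reference, so there is nothing to add.
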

\begin{proof}
  This is a variant of the van der Corput inequality,
  see for example in Lemma 17 of \cite{mauduit-rivat-2009}.
\end{proof}

\begin{lemma}\label{lemma:sum_inverse_sinus}
  Let $(a,m)\in\Z^2$ with $m\ge 1$, $\delta=\gcd(a,m)$ and $b\in\R$. 
  Let $(M,N)\in\Z^2$ with $N\geq 1$.
  For all real numbers $U>0$ we have
  \begin{multline}
    \label{eq:sum_inverse_sinus}
    \sum_{M+1\le n\le M+N} 
    \min \left( U, \abs{\sin \pi \tfrac{an+b}{m}}^{-1} \right)
    \\
    \leq
    \ceil{\frac{\delta N}{m}}
    \left(
    \min\left( 
      U, \abs{\sin \pi\tfrac{\delta\, \norm{b/\delta}}{m} }^{-1} 
    \right)
    + 
    \frac{2\, m}{\pi \delta} \log \frac{2 \, m}{\delta}
  \right)
  .
  \end{multline}
\end{lemma}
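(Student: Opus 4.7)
The plan is to exploit periodicity. Set $m' = m/\delta$ and $a' = a/\delta$, so that $\gcd(a',m')=1$ and the summand, viewed as a function of $n$, has period $m'$. Partition $\{M+1,\ldots,M+N\}$ into $\ceil{\delta N/m}=\ceil{N/m'}$ consecutive blocks of length at most $m'$; by non-negativity of the summand, bounding each (possibly incomplete) block by a full period of length $m'$ reduces the problem to estimating the sum over a single complete period, and produces the prefactor $\ceil{\delta N/m}$ in \eqref{eq:sum_inverse_sinus}.

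On a complete period, since $\gcd(a',m')=1$, the multiset of values $y_k := (an+b)/m \bmod 1$ is exactly $\mathcal{V} = \{(k+\fracp{b/\delta})/m' \bmod 1 : 0 \leq k < m'\}$, namely an arithmetic progression on the circle $\R/\Z$ of step $1/m'$. A direct computation shows that the unique element $y_{k_0} \in \mathcal V$ closest to $\Z$ lies at distance $\norm{b/\delta}/m' = \delta\norm{b/\delta}/m$, so its contribution is at most $\min(U, |\sin(\pi\delta\norm{b/\delta}/m)|^{-1})$. This gives the first term in the bracket.

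For the remaining $m'-1$ indices I simply drop the $\min$. These values lie at distance at least $1/(2m')$ from $\Z$, with distances taking the form $(j\pm\norm{b/\delta})/m'$ for integers $1 \leq j \leq \ceil{m'/2}$. A classical estimate --- exploiting the monotonicity of $1/\sin(\pi x)$ on $(0,1/2)$ and the symmetry $x\leftrightarrow 1-x$ to dominate the remaining sum by $2m'\int_{1/(2m')}^{1/2} dx/\sin(\pi x)$, then using the antiderivative $(1/\pi)\log|\tan(\pi x/2)|$ --- yields $\sum_{k\neq k_0} |\sin\pi y_k|^{-1} \leq (2m'/\pi)\log(2m') = (2m/(\pi\delta))\log(2m/\delta)$, which is the second summand in the bracket.

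The only mildly delicate point in the whole argument is securing the sharp constant $2/\pi$ in the logarithmic term. The cruder elementary bound $|\sin\pi x| \geq 2\norm{x}$ would instead produce a constant of order $1$, so the integral comparison through the explicit antiderivative of $1/\sin(\pi x)$ is essential; everything else is a routine partition-and-periodicity argument.
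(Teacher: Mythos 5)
Your reduction to a single complete period (via $m'=m/\delta$, $a'=a/\delta$ and the prefactor $\ceil{N/m'}=\ceil{\delta N/m}$) and your identification of the closest point at distance $\norm{b/\delta}/m'$ match the paper's first step exactly. The gap is in the bound for the remaining $m'-1$ terms: the claimed domination
\begin{displaymath}
  \sum_{k\neq k_0}\abs{\sin \pi y_k}^{-1}
  \;\leq\;
  2m'\int_{1/(2m')}^{1/2}\frac{dx}{\sin \pi x}
  \;=\;
  \frac{2m'}{\pi}\log\cot\frac{\pi}{4m'}
\end{displaymath}
is false. Take $m'=2$ and $\norm{b/\delta}=\tfrac12$: the two points are $\tfrac14$ and $\tfrac34$, the single remaining term is $\abs{\sin(3\pi/4)}^{-1}=\sqrt2\approx 1.414$, while the integral equals $\tfrac{4}{\pi}\log\cot\tfrac{\pi}{8}\approx 1.122$; the same failure occurs for $m'=3,4$ with $\norm{b/\delta}=\tfrac12$. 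The structural reason is that the sorted distances of the remaining points only satisfy $d_i\geq i/(2m')$, and the correct integral comparison for the decreasing function $f(x)=1/\sin(\pi x)$ reads $\sum_{i\geq 1} f(i/(2m'))\leq f(1/(2m')) + 2m'\int_{1/(2m')}^{1/2}f$. The boundary term $1/\sin(\pi/(2m'))\geq \tfrac{2m'}{\pi}\approx 0.64\,m'$ cannot be absorbed, because the slack between $\tfrac{2m'}{\pi}\log(2m')$ and the integral is only about $\tfrac{2m'}{\pi}\log\tfrac{\pi}{2}\approx 0.29\,m'$. Run correctly, your route yields roughly $\tfrac{2m'}{\pi}\bigl(\log(2m')+0.66\bigr)$, not $\tfrac{2m'}{\pi}\log(2m')$. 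You are right that this is the delicate point of the lemma; it is precisely where your sketch does not close.

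The target bound for the remaining sum is nevertheless true, and the paper obtains it by citing Lemma 6 of \cite{mauduit-rivat-2010}, whose single-period estimate is
\begin{math}
  \min(U,\cdot)+\frac{1}{\sin(\pi/(2m'))}+\frac{2m'}{\pi}\log\frac{2m'}{\pi}
\end{math}
--- note $\log\frac{2m'}{\pi}$ rather than your $\log\cot\frac{\pi}{4m'}\approx\log\frac{4m'}{\pi}$, a gain of $\log 2$ coming from a finer splitting --- after which the boundary term is absorbed using the numerical inequality $\frac{\pi/4}{\sin(\pi/4)}\leq \log\pi$ (valid for $m'\geq 2$). So either invoke that lemma as the paper does, or reproduce its sharper bookkeeping; the ``classical estimate'' as you state it is quantitatively insufficient. (A cosmetic remark: the closest element need not be unique, e.g.\ when $\norm{b/\delta}=\tfrac12$, but this does not affect the argument.)
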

\begin{proof}
  Let $a'=a/\delta$, $m'=m/\delta$ and $b'=b/\delta$.
  The function
  \begin{displaymath}
    n \mapsto
    \min \left( U, \abs{\sin \pi \tfrac{an+b}{m}}^{-1} \right)
    =
    \min \left( U, \abs{\sin \pi \tfrac{a'n+b'}{m'}}^{-1} \right)
  \end{displaymath}
  is periodic modulo $m'$.
  Extending the summation if necessary by a few terms to reach a
  length multiple of $m'$,
  we have
  \begin{displaymath}
    \sum_{M+1\le n\le M+N} 
    \min \left( U, \abs{\sin \pi \tfrac{an+b}{m}}^{-1} \right)
    \leq
    \ceil{\frac{N}{m'}}
    \sum_{n=0}^{m'-1} 
    \min \left( U, \abs{\sin \pi \tfrac{a'n+b'}{m'}}^{-1} \right)
    .
  \end{displaymath}
  If $m'=1$, then \eqref{eq:sum_inverse_sinus} holds immediately.
  If $m'\geq 2$, by \cite[Lemma 6]{mauduit-rivat-2010}
  we have
  \begin{align*}
    \sum_{n=0}^{m'-1} 
    &
      \min \left( U, \abs{\sin \pi \tfrac{a'n+b'}{m'}}^{-1} \right)    
    \\
    &
      \leq 
      \min\left( 
      U, \abs{\sin \pi\tfrac{\norm{b'}}{m'} }^{-1} 
      \right)
      + 
      \frac{2m'}{\pi}
      \left(
      \frac{\frac{\pi}{2 m'}}{\sin\frac{\pi}{2 m'}}
      +
      \log \frac{2m'}{\pi}
      \right)
    \\
    &
      \leq 
      \min\left( 
      U, \abs{\sin \pi\tfrac{\norm{b'}}{m'} }^{-1} 
      \right)
      + 
      \frac{2m'}{\pi}
      \left(
      \frac{\frac{\pi}{4}}{\sin\frac{\pi}{4}}
      +
      \log \frac{2m'}{\pi}
      \right)
    \\
    &
      \leq 
      \min\left( 
      U, \abs{\sin \pi\tfrac{\norm{b'}}{m'} }^{-1} 
      \right)
      + 
      \frac{2\, m'}{\pi} \log (2\, m')
      ,
  \end{align*}
  which completes the proof of \eqref{eq:sum_inverse_sinus}.
\end{proof}

\begin{lemma}\label{lemma:gcd-sum}
  Let $m\geq 1$ be an integer and $A\geq 1$ a real number.
  We have
  \begin{equation}\label{eq:gcd-sum}
    \frac1A \sum_{1\leq a \leq A} \gcd(a,m)
    \leq
    \tau(m)
    .
  \end{equation}    
\end{lemma}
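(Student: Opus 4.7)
The plan is to bound the sum by classifying the values of $a$ according to their gcd with $m$. I would write
\begin{displaymath}
  \sum_{1\leq a \leq A} \gcd(a,m)
  = \sum_{d \mid m} d \cdot \abs{\{1\leq a \leq A :\gcd(a,m)=d\}},
\end{displaymath}
and then drop the ``$=$'' condition to the weaker divisibility ``$d \mid a$,'' yielding the upper bound
\begin{displaymath}
  \sum_{1\leq a \leq A} \gcd(a,m)
  \leq \sum_{d \mid m} d \cdot \abs{\{1\leq a \leq A : d \mid a\}}
  = \sum_{d \mid m} d \floor{A/d}
  \leq \sum_{d \mid m} d \cdot \frac{A}{d}
  = A \tau(m).
\end{displaymath}
Dividing by $A$ gives \eqref{eq:gcd-sum}.

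An equivalent route, if one prefers a cleaner identity, is to use $\gcd(a,m) = \sum_{d \mid \gcd(a,m)} \varphi(d)$, swap summations to get $\sum_{d \mid m} \varphi(d) \floor{A/d} \leq A \sum_{d \mid m} \varphi(d)/d$, and then observe that $\varphi(d)/d \leq 1$ for each divisor $d$, so the sum is at most $\tau(m)$. Both approaches are elementary and there is no real obstacle; the statement is essentially a one-line consequence of the divisor structure. I would pick whichever derivation reads more smoothly in context, probably the first since it avoids introducing $\varphi$ and keeps the bookkeeping entirely in terms of $\tau$.
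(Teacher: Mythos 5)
Your first derivation is exactly the paper's proof: filter by $d=\gcd(a,m)$, relax the condition $\gcd(a,m)=d$ to $d\mid a$, and bound $\sum_{d\mid m} d\floor{A/d}$ by $A\,\tau(m)$. The argument is correct and matches the paper's approach, so nothing further is needed.
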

\begin{proof}
  We have
  \begin{displaymath}
    \sum_{1\leq a \leq A} \gcd(a,m)
    =
    \sum_{\substack{d\dv m\\ d\leq A}} d
    \sum_{\substack{1\leq a\leq A\\ \gcd(a,m)=d}} 1
    \leq
    \sum_{\substack{d\dv m\\ d\leq A}} d
    \sum_{\substack{1\leq a\leq A\\ d \dv a}} 1
    =
    \sum_{\substack{d\dv m\\ d\leq A}} d \floor{\frac{A}{d}}
    \leq
    A\ \tau(m),
  \end{displaymath}
  and \eqref{eq:gcd-sum} follows.
\end{proof}

\begin{lemma}\label{lemma:sigma-z}
  For any $z\in\C$, the arithmetic function
  $\sigma_z(n) = \sum_{d \dv n} d^z$ is multiplicative.\\
  For integers $b \geq 2$ and $\lambda \geq 1$, we have
  \begin{equation}\label{eq:tau-b-lambda}
    \left(1+\lambda\right)^{\omega(b)}
    \leq
    \sigma_0\left(b^{\lambda}\right)
    =
    \tau\left(b^{\lambda}\right)
    \leq \tau(b) \lambda^{\omega(b)}    
    .
  \end{equation}
  For $z\in\R$, $z<0$,
  we have the following upper bound,
  independent of $\lambda$:
  \begin{equation}\label{eq:sigma-z<0-b-lambda}
    \sigma_z\left(b^{\lambda}\right)
    =
    \sum_{d\dv b^\lambda} d^z
    <
    \prod_{p\dv b} \frac1{1-p^z}
    ,
  \end{equation}
  and for $z\in\R$, $z>0$,
  \begin{equation}\label{eq:sigma-z>0-b-lambda}
    \sigma_z\left(b^{\lambda}\right)
    <
    b^{z\lambda} \prod_{p\dv b} \frac1{1-p^{-z}}
    .
  \end{equation}
  Moreover, for $z\in\R$, $z<0$,
  we have the following upper bound,
  independent of $\lambda$:
  \begin{equation}\label{eq:tau-sigma-z-b-lambda}
    \sum_{d\dv b^\lambda} \tau(d) \, d^z
    <
    \prod_{p\dv b} \frac1{\left(1-p^z\right)^2}
    .
  \end{equation}
  
\end{lemma}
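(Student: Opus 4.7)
The proof plan is a routine calculation reduced prime-by-prime via multiplicativity. First I would note that $\sigma_z$ is the Dirichlet convolution $\mathbf{1} * (n\mapsto n^z)$ of two multiplicative functions, hence multiplicative. Writing $b = \prod_{p \mid b} p^{v_p(b)}$, this yields
\begin{displaymath}
  \sigma_z\left(b^{\lambda}\right)
  = \prod_{p \mid b} \sigma_z\left(p^{v_p(b)\lambda}\right)
  = \prod_{p \mid b} \sum_{j=0}^{v_p(b)\lambda} p^{jz},
\end{displaymath}
which is the common starting point for all four asserted bounds.

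For \eqref{eq:tau-b-lambda}, specializing to $z = 0$ gives $\tau(b^{\lambda}) = \prod_{p \mid b}(v_p(b)\lambda + 1)$. The lower bound $(1+\lambda)^{\omega(b)} \leq \tau(b^{\lambda})$ follows from $v_p(b) \geq 1$ for each $p \mid b$, and the upper bound from the elementary inequality $v_p(b)\lambda + 1 \leq (v_p(b)+1)\lambda$, valid for $\lambda \geq 1$, which rearranges the product into $\lambda^{\omega(b)} \tau(b)$.

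For \eqref{eq:sigma-z<0-b-lambda}, since $z < 0$ implies $0 < p^z < 1$, each finite geometric sum is strictly dominated by its infinite counterpart $(1-p^z)^{-1}$, and taking the product over $p \mid b$ is the claimed bound. For \eqref{eq:sigma-z>0-b-lambda}, I would exploit the divisor involution $d \leftrightarrow b^{\lambda}/d$ on the set of divisors of $b^{\lambda}$ to write $\sigma_z(b^{\lambda}) = b^{z\lambda}\, \sigma_{-z}(b^{\lambda})$ and then invoke the previous case with exponent $-z < 0$.

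For \eqref{eq:tau-sigma-z-b-lambda}, I would observe that $n \mapsto \tau(n) n^z$ is multiplicative (product of two multiplicative functions), hence so is the divisor sum on the left. The local factor at a prime $p \mid b$ is $\sum_{j=0}^{v_p(b)\lambda}(j+1)p^{jz}$, strictly dominated for $z < 0$ by the convergent series $\sum_{j=0}^{\infty}(j+1)p^{jz} = (1-p^z)^{-2}$, and the product over $p \mid b$ gives the stated bound. None of these steps presents a real obstacle; the only care needed is ensuring strictness of the geometric-series bounds, which is automatic because each truncation omits strictly positive tail terms, and recalling the involution argument for the $z > 0$ case rather than attempting a direct summation.
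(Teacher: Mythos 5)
Your proposal is correct and follows essentially the same route as the paper: multiplicativity via Dirichlet convolution, prime-by-prime reduction, the bound $\nu\lambda+1\leq(\nu+1)\lambda$ for \eqref{eq:tau-b-lambda}, domination of the finite geometric (and arithmetico-geometric) sums by their infinite counterparts for \eqref{eq:sigma-z<0-b-lambda} and \eqref{eq:tau-sigma-z-b-lambda}, and the divisor involution $\sigma_z(n)=n^z\sigma_{-z}(n)$ for \eqref{eq:sigma-z>0-b-lambda}. No gaps.
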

\begin{proof}
  The function $\sigma_z$ is multiplicative as the Dirichlet
  convolution of two multiplicative functions.

  For $p$ prime and integers $\nu\geq 1$, $\lambda\geq 1$ we have
  \begin{displaymath}
    \left(1+\lambda\right)^{\omega\left(p^\nu\right)}
    =
    1+\lambda
    \leq
    \tau\left(p^{\nu\lambda}\right)
    = 1+\nu\lambda
    \leq (1+\nu)  \lambda
    = \tau\left(p^\nu\right) \lambda^{\omega\left(p^\nu\right)}    
  \end{displaymath}
  and \eqref{eq:tau-b-lambda} follows by multiplicativity.

  For $p$ prime, integers $\nu\geq 1$, $\lambda\geq 1$
  and $z\in\R$, $z<0$ we have $0<p^z<1$ and
  \begin{displaymath}
    \sigma_z\left(p^{\nu \lambda}\right)
    =
    \sum_{j=0}^{\nu \lambda} p^{jz}
    <
    \sum_{j=0}^\infty p^{jz}
    =
    \frac1{1-p^z},
  \end{displaymath}
  and \eqref{eq:sigma-z<0-b-lambda} follows by multiplicativity.

  For $z\in\R$, $z>0$ we have for any integer $n\geq 1$,
  \begin{displaymath}
    \sigma_z(n) = \sum_{d \dv n} \left(\frac{n}{d}\right)^z
    = n^z \sigma_{-z}(n)
    ,
  \end{displaymath}
  and \eqref{eq:sigma-z>0-b-lambda}
  follows from \eqref{eq:sigma-z<0-b-lambda}.
  
  The arithmetic function
  \begin{math}
    f_z(n) = \sum_{d \dv n} \tau(d) d^z
  \end{math}
  is multiplicative as the Dirichlet convolution of two multiplicative
  functions. 
  For $p$ prime, integers $\nu\geq 1$, $\lambda\geq 1$
  and $z\in\R$, $z<0$ we have $0<p^z<1$ and
  \begin{displaymath}
    f_z\left(p^{\nu \lambda}\right)
    =
    \sum_{j=0}^{\nu \lambda} \tau(p^j) p^{jz}
    <
    \sum_{j=0}^\infty (j+1) p^{jz}
    =
    \frac1{\left(1-p^z\right)^2},
  \end{displaymath}
  and \eqref{eq:tau-sigma-z-b-lambda} follows by multiplicativity.
\end{proof}

\begin{lemma}\label{lemma:sum-function-fractions-gcd}
  For any integer $\mu\geq 1$
  and any function $\Phi:[0,1]\to\R^+$ we have
  \begin{displaymath}
    \sum_{b^{\mu-1} \leq m < b^\mu}
    \frac{1}{m}
    \sum_{0\leq k<m}
    \Phi\left(\frac{k}{m}\right)
    \leq
    b
    \sum_{m'<b^{\mu}}
    \frac{1}{m'}
    \sum_{\substack{0\leq k'<m'\\\gcd(k',m')=1}}
    \Phi\left(\frac{k'}{m'}\right)
    .
  \end{displaymath}
\end{lemma}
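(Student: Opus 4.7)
The plan is to reduce every fraction $k/m$ appearing on the left-hand side to its lowest terms, then swap the order of summation and apply two elementary estimates. Setting $d = \gcd(k,m)$ (with the convention $\gcd(0,m) = m$, so that the term $k=0$ is absorbed into the pair $(k',m')=(0,1)$), and writing $k = dk'$, $m = dm'$, one gets the reindexing
\begin{equation*}
\sum_{0 \le k < m} \Phi\!\left(\frac{k}{m}\right) = \sum_{m' \mid m}\ \sum_{\substack{0 \le k' < m' \\ \gcd(k', m') = 1}} \Phi\!\left(\frac{k'}{m'}\right).
\end{equation*}

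After inserting this identity into the left-hand side and interchanging the two outer summations, the left-hand side becomes
\begin{equation*}
\sum_{m' < b^\mu}\ \sum_{\substack{0 \le k' < m' \\ \gcd(k', m') = 1}} \Phi\!\left(\frac{k'}{m'}\right) \sum_{\substack{b^{\mu-1} \le m < b^\mu \\ m' \mid m}} \frac{1}{m}.
\end{equation*}
A priori the outer variable $m'$ is constrained to divide some $m \in [b^{\mu-1}, b^\mu)$, but since $\Phi \ge 0$ the outer sum may be harmlessly extended to all $m' < b^\mu$, as appears in the desired conclusion.

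It then remains to bound the innermost sum by $b/m'$, which is where the factor $b$ in the statement will come from. I would combine the inequality $1/m \le b\, b^{-\mu}$, valid because $m \ge b^{\mu-1}$, with the count $\abs{\{m : b^{\mu-1} \le m < b^\mu,\ m' \mid m\}} \le b^\mu/m'$; their product is exactly $b/m'$. Plugging this back into the previous display yields the right-hand side of the lemma and finishes the proof. There is no serious obstacle here: the argument is a reindexing by $\gcd$ followed by two crude inequalities, with the factor $b$ arising precisely from the length ratio $b^\mu / b^{\mu-1}$ of the $b$-adic block.
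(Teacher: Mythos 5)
Your argument is correct and is essentially the paper's own proof: both decompose by $d=\gcd(k,m)$ (equivalently, reduce $k/m$ to lowest terms), interchange the summations, bound $1/m$ by $b\,b^{-\mu}$ using $m\geq b^{\mu-1}$, and bound the number of admissible $m$ (or $d$) by $b^{\mu}/m'$. The only cosmetic difference is that the paper parametrizes the interchange by the gcd $d$ while you parametrize by the reduced denominator $m'$; these are the same computation.
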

\begin{proof}
  Using $m\geq b^{\mu-1}$ and introducing $d = \gcd(k,m)$, we have
  \begin{displaymath}
    \sum_{b^{\mu-1} \leq m < b^\mu}
    \frac{1}{m}
    \sum_{0\leq k<m}
    \Phi\left(\frac{k}{m}\right)
    \leq
    \frac{1}{b^{\mu-1}}
    \sum_{1\leq d < b^\mu}
    \sum_{\substack{b^{\mu-1} \leq m < b^\mu\\ d\dv m}}
    \sum_{\substack{0\leq k<m\\\gcd(k,m)=d}}
    \Phi\left(\frac{k}{m}\right)
    .
  \end{displaymath}
  Writing $k=d k'$ and $m = d m'$ this is at most
  \begin{align*}
    \MoveEqLeft[4]
    \frac{1}{b^{\mu-1}}
    \sum_{1\leq d<b^{\mu}}
      \sum_{\frac{b^{\mu-1}}{d} \leq m' < \frac{b^\mu}{d}}
    \sum_{\substack{0\leq k'<m'\\\gcd(k',m')=1}}
    \Phi\left(\frac{k'}{m'}\right)
    \\
    &
      \leq
    \frac{1}{b^{\mu-1}}
    \sum_{1\leq m'<b^{\mu}}
    \sum_{d< \frac{b^{\mu}}{m'}}
    \sum_{\substack{0\leq k'<m'\\\gcd(k',m')=1}}
    \Phi\left(\frac{k'}{m'}\right)
    \\
    &\leq
    \frac{1}{b^{\mu-1}}
    \sum_{m'<b^{\mu}}
    \frac{b^\mu}{m'}
    \sum_{\substack{0\leq k'<m'\\\gcd(k',m')=1}}
    \Phi\left(\frac{k'}{m'}\right)
    ,
  \end{align*}
  and the result follows.
\end{proof}

\begin{lemma}\label{lemma:Euler-MacLaurin-order-1}
  Let $f: \R \to \R$ be a $1$-periodic continuous function of bounded
  variation on $[0,1]$, with total variation $V_f$ on $[0,1]$.
  For any integer $N\geq 1$ we have
  \begin{equation}\label{eq:Euler-MacLaurin-order-1}
    \abs{
      \frac1N
      \sum_{n=1}^N f\left( \frac{n}{N} \right)
      -
      \int_0^1 f(t) \, dt
    }
    \leq \frac1{2N} V_f
    .
  \end{equation}
\end{lemma}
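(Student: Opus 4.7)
The plan is to view this as a quantitative comparison between a Riemann sum at the nodes $n/N$ and the integral, and to gain the factor $1/2$ by exploiting the periodicity of $f$ (which forces the total mass of $df$ over a period to vanish).

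First, I would rewrite the Riemann sum as a Stieltjes integral. Let $S_N(t) = \lfloor Nt\rfloor / N$ for $t\in[0,1]$, which is a right-continuous step function with jumps of size $1/N$ at each point $n/N$, $1\leq n\leq N$. Then
\begin{displaymath}
  \frac{1}{N}\sum_{n=1}^N f\!\left(\frac{n}{N}\right)
  =
  \int_{0}^{1} f(t)\,dS_N(t).
\end{displaymath}
Since $f$ is continuous and of bounded variation, the Riemann--Stieltjes integration by parts formula applies and gives, after noticing that $S_N(0)=0$ and $S_N(1)=1$,
\begin{displaymath}
  \int_{0}^{1} f(t)\,dS_N(t)
  =
  f(1) - \int_{0}^{1} S_N(t)\,df(t).
\end{displaymath}
On the other hand the standard integration by parts yields $\int_0^1 f(t)\,dt = f(1) - \int_0^1 t\,df(t)$. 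Subtracting the two identities, the boundary terms cancel and I obtain the key identity
\begin{displaymath}
  \frac{1}{N}\sum_{n=1}^N f\!\left(\frac{n}{N}\right) - \int_{0}^{1} f(t)\,dt
  =
  \int_{0}^{1}\bigl( t - S_N(t) \bigr)\,df(t)
  =
  \frac{1}{N}\int_{0}^{1} \{Nt\}\,df(t).
\end{displaymath}

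Next I would use periodicity to centre the kernel $\{Nt\}$ around $1/2$. Because $f$ is $1$-periodic and continuous, the Stieltjes measure $df$ has vanishing total mass on $[0,1]$: $\int_0^1 df(t) = f(1)-f(0)=0$. Therefore subtracting the constant $1/2$ inside the integral has no effect, and
\begin{displaymath}
  \int_{0}^{1}\{Nt\}\,df(t)
  =
  \int_{0}^{1}\Bigl(\{Nt\}-\tfrac{1}{2}\Bigr)\,df(t).
\end{displaymath}
Since $\abs{\{Nt\}-\tfrac{1}{2}}\leq \tfrac{1}{2}$ for all $t$, the total variation bound for Stieltjes integrals gives
\begin{displaymath}
  \abs{\int_{0}^{1}\{Nt\}\,df(t)} \leq \tfrac{1}{2}\,V_f,
\end{displaymath}
which, inserted in the previous identity, yields exactly \eqref{eq:Euler-MacLaurin-order-1}.

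The only slightly delicate point is the justification of Riemann--Stieltjes integration by parts in the presence of the step function $S_N$: one must check that $f$ and $S_N$ share no common discontinuity so that $\int f\,dS_N + \int S_N\,df = [fS_N]_0^1$ is unambiguous. Since $f$ is continuous on $\R$, this is immediate, and the factor $\tfrac{1}{2}$ (rather than $1$) comes entirely from the periodicity trick in the last display.
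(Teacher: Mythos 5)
Your proof is correct and follows essentially the same route as the paper: both arguments are a Stieltjes integration by parts against the centered sawtooth kernel $\{Nt\}-\tfrac12$, with periodicity used to make the centering (or, equivalently in the paper's version, to kill the boundary term). The only cosmetic difference is that the paper builds the $\tfrac12$ into the kernel before integrating by parts, whereas you add it afterwards via $\int_0^1 df = f(1)-f(0)=0$.
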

\begin{proof}
  Using Stieltjes integral, by partial summation we have
  \begin{multline*}
    \sum_{n=1}^N f\left( \frac{n}{N} \right)
    -
    N \int_0^1 f\left( t \right) \, dt
    =
    \int_{0^+}^{1^+} f(t)
    \,
    d\left(\floor{Nt}+\frac12-Nt\right)
    \\
    =
    \left[f(t) \left(\floor{Nt}+\frac12-Nt\right) \right]_{0^+}^{1^+} 
    -
    \int_0^1 
    \left(\floor{Nt}+\frac12-Nt\right)
    \,
    df(t)
    \\
    =
    -
    \int_0^1 
    \left(\floor{Nt}+\frac12-Nt\right)
    \,
    df(t)
    ,
  \end{multline*}
  hence, denoting by $V_f(t)$ the total variation of $f$ on $[0,t]$,
  \begin{displaymath}
    \abs{
      \sum_{n=1}^N f\left( \frac{n}{N} \right)
      -
      N \int_0^1 f\left( t \right) \, dt
    }
    \leq
    \frac12 \int_0^1 dV_f(t)
    =
    \frac12 V_f
    ,
  \end{displaymath}
  which gives \eqref{eq:Euler-MacLaurin-order-1}.
\end{proof}

\begin{definition}\label{definition:well-spaced}
  Given $\delta>0$,
  we say that a sequence $(x_1,\ldots,x_N)\in\R^N$ is $\delta$ well
  spaced modulo $1$ if $\norm{x_i-x_j}\geq \delta$
  for all $1\leq i<j\leq N$.
\end{definition}

\begin{lemma}\label{lemma:sobolev-gallagher}
  Let $f: \R \to \R$ be a $1$-periodic continuous function of bounded
  variation on $[0,1]$, with total variation $V_f$ on $[0,1]$.
  For $\delta>0$
  and any sequence $(x_1,\ldots,x_N) \in \R^N$
  which is $\delta$ well spaced modulo~$1$, we have
  \begin{displaymath}
    \sum_{n=1}^N \abs{f(x_n)}
    \leq \frac{1}{\delta} \int_0^1 \abs{f(u)}
    du + \frac12 V_f
    .
  \end{displaymath}
\end{lemma}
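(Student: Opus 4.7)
The plan is to follow the classical Sobolev--Gallagher strategy. Since the $x_n$ are $\delta$ well spaced on the circle $\R/\Z$, there are at most $1/\delta$ of them and the closed arcs $I_n := [x_n-\delta/2,\, x_n+\delta/2]$ (taken modulo $1$) have pairwise disjoint interiors. The starting point is the trivial identity
\begin{equation*}
  \delta\, f(x_n)
  =
  \int_{I_n} f(u)\,du
  +
  \int_{I_n} \bigl( f(x_n) - f(u) \bigr)\,du.
\end{equation*}
After dividing by $\delta$ and summing over $n$, the first summand gives $\delta^{-1}\int_0^1 \abs{f(u)}\,du$ by disjointness and $1$-periodicity, so all the work lies in the second summand.

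For the latter I would write, exactly as in the proof of Lemma~\ref{lemma:Euler-MacLaurin-order-1}, $f(x_n)-f(u) = \int_u^{x_n} df(t)$ in the Stieltjes sense, and exchange the order of integration by Fubini. A direct computation yields the explicit piecewise linear, continuous and odd kernel
\begin{equation*}
  \phi_n(t)
  =
  \bigl(t - x_n + \tfrac{\delta}{2}\bigr)\, \one_{[x_n-\delta/2,\, x_n]}(t)
  -
  \bigl(x_n + \tfrac{\delta}{2} - t\bigr)\, \one_{[x_n,\, x_n+\delta/2]}(t),
\end{equation*}
supported on $I_n$ and satisfying $\norminf{\phi_n} \le \delta/2$, so that
\begin{equation*}
  \Bigl|\int_{I_n}\bigl(f(x_n)-f(u)\bigr)\,du\Bigr|
  =
  \Bigl|\int_{I_n} \phi_n(t)\,df(t)\Bigr|
  \le
  \frac{\delta}{2}\, V_{I_n}(f),
\end{equation*}
where $V_{I_n}(f)$ is the variation of $f$ on the arc $I_n$. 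Dividing by $\delta$ and summing over $n$, the pairwise disjointness of the $I_n$ gives $\sum_n V_{I_n}(f) \le V_f$, which yields the announced inequality.

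The only delicate point, and the reason this lemma is slightly subtler than Lemma~\ref{lemma:Euler-MacLaurin-order-1}, is the sharp constant $\tfrac12$ in front of $V_f$. Replacing $f(x_n)-f(u)$ by the cruder pointwise bound $V_{I_n}(f)$ directly would produce the constant $1$ instead; it is precisely the averaging achieved by the Fubini exchange that turns the uniform bound into a kernel of mean size $\delta/2$ and recovers the factor one half. Everything else is routine bookkeeping using the periodicity of $f$ and the geometric fact that $N$ disjoint arcs of length $\delta$ fit inside a circle of length $1$.
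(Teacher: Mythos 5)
Your proof is correct. The paper does not reprove this lemma but simply cites Lemma~2.2 of \cite{DMRSS_reversible_primes}, whose argument is exactly the averaging-over-$I_n$ plus Fubini computation you carry out (the kernel $\phi_n$ with $\norminf{\phi_n}\le\delta/2$ is what produces the constant $\tfrac12$ there as well), so your proposal matches the intended proof.
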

\begin{proof}
  This generalized version of the Sobolev-Gallagher inequality
  is Lemma 2.2 of \cite{DMRSS_reversible_primes}.
\end{proof}

\begin{lemma}[Bernstein-Zygmund] \label{lemma:bernstein-zygmund}
  If $N\geq 0$ is an integer and 
  \begin{math}
    P(t) = \sum_{n=-N}^N a_n \e(n t)
  \end{math}
  is a trigonometric polynomial with complex coefficients $a_n$,
  then we have Bernstein's inequality:
  \begin{displaymath}
    \max_{t\in[0,1]} \abs{P'(t)} 
    \leq
    2\pi N\max_{t\in[0,1]} \abs{P(t)}
    ,
  \end{displaymath}  
  and, for any real number $p\geq 1$,
  we have Zygmund's inequality:
  \begin{displaymath}
    \left(\int_0^1 \abs{P'(t)}^p dt\right)^{1/p}
    \leq 2\pi N
    \left(\int_0^1 \abs{P(t)}^p dt\right)^{1/p}
    .
  \end{displaymath}
\end{lemma}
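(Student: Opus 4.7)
The plan is to reduce both inequalities to the classical statements for $2\pi$-periodic trigonometric polynomials and then deduce them from M.~Riesz's interpolation formula for the derivative.

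First I would rescale. Setting $Q(u) = P(u/(2\pi))$, the function $Q$ is a $2\pi$-periodic trigonometric polynomial $Q(u) = \sum_{n=-N}^{N} a_n e^{inu}$ of degree at most $N$ in the usual sense, and the chain rule gives $P'(t) = 2\pi\, Q'(2\pi t)$. Consequently Bernstein's inequality in the statement is equivalent to $\max_{[0,2\pi]}|Q'| \leq N \max_{[0,2\pi]}|Q|$, and Zygmund's inequality is equivalent to $\|Q'\|_{L^p[0,2\pi]} \leq N\,\|Q\|_{L^p[0,2\pi]}$; the factor $2\pi$ in the conclusions comes purely from the Jacobian in the substitution $u = 2\pi t$ (and matches cleanly after changing variables in the $L^p$ integral).

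Next I would invoke M.~Riesz's identity: for any trigonometric polynomial $Q$ of degree at most $N$ on $\R$, one has the pointwise representation
$$Q'(u) \;=\; \sum_{k=1}^{2N} \mu_k\, Q(u + \tau_k),$$
with nodes $\tau_k = \pi(2k-1)/(2N)$ and weights $\mu_k = (-1)^{k+1}/\bigl(4N\sin^2(\tau_k/2)\bigr)$, which satisfy $\sum_{k=1}^{2N}|\mu_k| = N$. This identity is verified by checking it against the basis $\{e^{inu}\}_{|n|\le N}$, for which the right-hand side reduces to a finite discrete Fourier identity singling out the factor $in$. This is the only nontrivial ingredient and is entirely classical, so in practice the proof cites a standard reference such as Zygmund's treatise.

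Given the Riesz formula, both inequalities follow in a single line. For Bernstein, bound each translate by $\max|Q|$ and sum: $|Q'(u)| \leq \sum_k |\mu_k|\, \max|Q| = N \max|Q|$. For Zygmund, apply Minkowski's inequality in $L^p$ together with the translation invariance of Lebesgue measure on the circle: $\|Q'\|_{L^p} \leq \sum_k |\mu_k|\,\|Q(\cdot+\tau_k)\|_{L^p} = N\,\|Q\|_{L^p}$. Undoing the rescaling $P(t)=Q(2\pi t)$ restores precisely the constant $2\pi N$ in both conclusions. The main (and essentially only) obstacle is the Riesz interpolation formula itself; all remaining steps are routine manipulations with norms and change of variable.
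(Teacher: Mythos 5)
Your proof is correct: the rescaling to $2\pi$-periodic polynomials, the M.~Riesz interpolation formula with $\sum_k|\mu_k|=N$, and the Minkowski/translation-invariance step are exactly the classical argument, which is the content of the reference (Zygmund, Chapter X) that the paper simply cites for this lemma. Nothing further is needed.
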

\begin{proof}
  See \cite[Chapter X, (3.18)]{zygmund-2002}.
  For more general inequalities involving entire functions of
  exponential type $\tau$,
  see \cite[Theorem 11.3.3 page 211]{boas-1954}.
\end{proof}
\begin{lemma}[Analytic form of the large sieve]
  \label{lemma:large-sieve}
  If $x_1,\ldots,x_R$ are real numbers $\delta$ well spaced modulo~$1$
  ($0<\delta\leq \frac12$),
  $M\in\Z$ and $a_{M+1},\ldots,a_{M+N} \in\C$ then
\begin{displaymath}
  \sum_{r=1}^R \abs{\sum_{n=M+1}^{M+N} a_n \e(n x_r)}^2
  \leq \left(N-1+\frac{1}{\delta}\right) \sum_{n=M+1}^{M+N} \abs{a_n}^2.
\end{displaymath} 
\end{lemma}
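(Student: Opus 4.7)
The plan is to prove this classical sharp large sieve inequality following the approach of Selberg and Montgomery--Vaughan, by combining a duality argument with the Beurling--Selberg extremal majorant.

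First, I would invoke duality. Defining the linear operator $T\colon\C^N\to\C^R$ by $(Ta)_r=\sum_n a_n\e(nx_r)$, the inequality asserts $\norm{T}^2\leq N-1+1/\delta$, which is equivalent to the same bound on $\norm{T^*}^2$. Explicitly, it suffices to prove the dual statement: for all complex numbers $b_1,\ldots,b_R$,
\begin{displaymath}
  \sum_{n=M+1}^{M+N} \abs{\sum_{r=1}^R b_r \e(-n x_r)}^2
  \leq \left(N-1+\frac{1}{\delta}\right) \sum_{r=1}^R \abs{b_r}^2.
\end{displaymath}

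Second, I would bound the dual expression by introducing the Beurling--Selberg majorant: an entire function $F\colon\R\to[0,\infty)$ whose Fourier transform $\widehat F$ is supported in $[-\delta,\delta]$, satisfying $F(x)\geq\one_{[M+1,M+N]}(x)$ for all $x\in\R$, with $\int_\R F(x)\,dx=N-1+1/\delta$. Majorizing, extending the range of summation to all of $\Z$, and expanding the square yields
\begin{displaymath}
  \sum_{n=M+1}^{M+N} \abs{\sum_r b_r \e(-nx_r)}^2
  \leq \sum_{n\in\Z} F(n) \abs{\sum_r b_r \e(-nx_r)}^2
  = \sum_{r,s} b_r \overline{b_s} \sum_{n\in\Z} F(n)\, \e(-n(x_r-x_s)).
\end{displaymath}
The Poisson summation formula rewrites the inner sum as $\sum_{k\in\Z}\widehat F(x_r-x_s+k)$. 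For $r=s$ this equals $\widehat F(0)=\int_\R F= N-1+1/\delta$. For $r\neq s$, the well-spacing condition $\norm{x_r-x_s}\geq\delta$ forces $\abs{x_r-x_s+k}\geq\delta$ for every $k\in\Z$, so that by the support hypothesis on $\widehat F$ each term vanishes (any boundary coincidence being absorbed by a harmless limiting argument, using a majorant of Fourier support $[-\delta+\varepsilon,\delta-\varepsilon]$ and letting $\varepsilon\to 0^+$). The double sum therefore collapses to the diagonal and yields the desired constant.

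The main technical obstacle, and indeed the heart of the lemma, is the existence of a nonnegative majorant with the three stated properties simultaneously; this is the Beurling--Selberg extremal construction, whose extremality is precisely what delivers the sharp constant $N-1+1/\delta$ (as opposed to the weaker constants of the form $cN+1/\delta$, $c>1$, that would follow more easily from combining Bernstein--Zygmund with Sobolev--Gallagher). Since the construction is a well-known piece of harmonic analysis, I would cite a standard reference (for instance Montgomery's \emph{Ten Lectures on the Interface Between Analytic Number Theory and Harmonic Analysis}, or Chapter~7 of Iwaniec--Kowalski) rather than reproduce it.
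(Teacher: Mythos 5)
Your proposal is correct: the duality reduction plus the Beurling--Selberg majorant with $\int_{\R}F=N-1+\delta^{-1}$ and Poisson summation is precisely Selberg's proof of the sharp large sieve constant. The paper does not reprove the lemma but simply cites \cite[Theorem~3]{montgomery-1978}, where this very argument (alongside the Montgomery--Vaughan Hilbert-inequality proof) is presented, so your route matches the paper's intended source.
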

\begin{proof}
  See \cite[Theorem 3]{montgomery-1978}.
\end{proof}
\begin{lemma}\label{lemma:concavity-of-Dirichlet-kernel}
  For any integer $b\geq 2$,
  we define the function $K_b\in C^\infty(\R,\R)$ by
  \begin{equation}\label{eq:definition-Kb}
    \forall x\in\R\setminus\Z,\
    K_b(x) = \frac{\sin(\pi b x)}{b \sin(\pi x)}
    ;\quad
    \forall n\in\Z,\ K_b(n) = (-1)^{n(b-1)}
    .
  \end{equation}
  The function $K_b$ is decreasing on $[0,(2b-2)^{-1}]$,
  concave on $[0,(2b-2)^{-1}]$ and
  \begin{equation}\label{eq:Kb-logarithmic-derivative}
    \forall x \in \left]0,(2b-2)^{-1} \right[,\
    \frac{K'_b}{K_b}(x) = \pi b \cot(\pi b x) - \pi \cot(\pi x)
    .
  \end{equation}
  Moreover, 
  if $b$ is odd then $K_b$ is $1$-periodic and we have
  \begin{equation}\label{eq:Kb-for-b-odd}
    \forall x\in \R,\
    K_b(x) 
    = \frac{1}{b} + \frac{2}{b}
    \sum_{\ell = 1}^{(b-1)/2} \cos(2\pi \ell x)
    ,
  \end{equation}
  and if $b$ is even then $K_b$ is $2$-periodic with
  $K_b(x+1)=-K_b(x)$ for any $x\in\R$,
  and we have
  \begin{equation}\label{eq:Kb-for-b-even}
    \forall x\in \R,\
    K_b(x) 
    =
    \frac{2}{b}
    \sum_{0\leq \ell<b/2}
    \cos\left(2\pi \left(\ell+\frac12\right) x \right)
    .
  \end{equation}
\end{lemma}
\begin{proof}
  The function $K_b$ belongs to $C^\infty(\R,\R)$
  as the restriction on $\R$ of the entire function
  \begin{displaymath}
    z \mapsto \frac{\sin(\pi b z)}{b \sin(\pi z)}.
  \end{displaymath}
  For $b$ odd and $x\in\R\setminus\Z$ we have
  \begin{align*}
    K_b(x)
    &
      =
      \frac{\e\left(\frac{b x}{2}\right)-\e\left(-\frac{b x}{2}\right)}{
      b \left(\e\left(\frac{x}{2}\right)-\e\left(-\frac{x}{2}\right)\right)}
      =
      \frac{1}{b}
      \sum_{\ell = -(b-1)/2}^{(b-1)/2} \e( \ell x)
    \\
    &
      =
      \frac{1}{b}
      +
      \frac{2}{b} \sum_{\ell = 1}^{(b-1)/2} \cos(2\pi \ell x)
      ,
  \end{align*}
  which, by continuity, gives \eqref{eq:Kb-for-b-odd}.
  For $x\in [0,(2b-2)^{-1}]$,
  so that $0\leq 2 \ell x \leq \frac{b-1}{2b-2} \leq \frac12$,
  this implies that $K_b$ is a sum of decreasing and concave functions
  on $[0,(2b-2)^{-1}]$.   
  Hence, for $b$ odd,
  $K_b$ is decreasing and concave on $[0,(2b-2)^{-1}]$.  

  For $b$ even, let $b'=b/2$.
  For $x\in\R\setminus\Z$ we have
  \begin{align*}
    K_b(x) = \frac{\cos(\pi b' x)\sin(\pi b' x)}{b' \sin(\pi x)}
    &
      =
    \Re\left(
      \frac{
        \e\left( \frac{b' x}{2}\right)
        \left(
          \e\left( \frac{b' x}{2}\right)
          - \e\left(- \frac{b' x}{2}\right) \right)}
      {b' \left(
          \e\left( \frac{x}{2}\right) - \e\left(- \frac{x}{2}\right)
        \right)}
    \right)
    \\
    &
      =
    \Re\left(
      \frac{
        \e\left( \frac{x}{2}\right)
        \left( \e\left( b' x\right) - 1  \right)}
      {b' \left(
          \e\left( x\right) - 1 \right)
        }
    \right)
  \end{align*}
  so that
  \begin{displaymath}
    K_b(x) 
    =
    \Re\left(
      \frac{\e\left( \frac{x}{2}\right)}{b'}
      \sum_{0\leq \ell<b'} \e\left( \ell x\right)
    \right)
    =
    \frac{2}{b}
    \sum_{0\leq \ell<b/2}
    \cos\left(2\pi \left(\ell+\frac12\right) x \right)
    ,
  \end{displaymath}
  which, by continuity, gives \eqref{eq:Kb-for-b-even}.
  For $x\in [0,(2b-2)^{-1}]$,
  so that
  \begin{displaymath}
    0\leq 2 \left( \ell +\frac12 \right) x
    \leq 2 \left( \frac{b}{2} -1 +\frac12 \right) \frac{1}{2b-2}
    \leq \frac{b-1}{2b-2} \leq \frac12,
  \end{displaymath}
  this implies that $K_b$ is a sum of decreasing and concave functions
  on $[0,(2b-2)^{-1}]$.
  Hence, for $b$ even,
  $K_b$ is decreasing and concave on $[0,(2b-2)^{-1}]$.
\end{proof}

It might be useful to notice that for $x\in\R$ we have
\begin{equation}\label{eq:Kb-geometric-sum}
  \abs{K_b(x)} = \abs{\frac1b \sum_{u=0}^{b-1} \e(ux)}
\end{equation}
and that $K_b^2$ is Féjer's kernel:
\begin{equation}\label{eq:Fejer-kernel}
  K_b^2(x)
  =
  \frac1b
  \sum_{\abs{h} < b} \left(1-\frac{\abs{h}}{b}\right) \e(h x) 
  =
  \frac1b
  +
  \frac2b
  \sum_{h=1}^{b-1} \left(1-\frac{h}{b}\right) \cos(2\pi h x)
  .
\end{equation}

\section{Fourier transform}\label{section:fourier-transform}

We define for any integer $\lambda\geq 0$ and any
$(\alpha,\vartheta)\in\R^2$, the exponential sum
\begin{equation}\label{eq:definition-F}
  \mathcal{F}_\lambda(\alpha,\vartheta)
  =
  \frac{1}{b^\lambda}
  \sum_{0\leq n < b^\lambda}
  \e\left(\alpha R_\lambda(n)-\vartheta n\right)
  .
\end{equation}
We make the trivial observation that
\begin{equation}
  \label{symetry_F}
  \conjugate{\mathcal{F}_\lambda(\alpha,\vartheta)}
  =
  \mathcal{F}_\lambda(-\alpha,-\vartheta)
  =
  \mathcal{F}_\lambda(\vartheta,\alpha)
\end{equation}
and
\begin{equation}
  \label{trivial_bound_F}
  \abs{\mathcal{F}_\lambda(\alpha,\vartheta)} \leq 1.
\end{equation}

\begin{remark}
  For $b=2$ and $\lambda\geq 2$, the exponential sum
  $\mathcal{F}_\lambda(\alpha,\vartheta)$
  defined by~\eqref{eq:definition-F} is connected to
  $F_\lambda(\alpha,\vartheta)$ defined
  in~\cite{DMRSS_reversible_primes} via:
  \begin{equation*}
    F_\lambda(\alpha,\vartheta)
    =
    \e\left((\alpha-\vartheta)(2^{\lambda-1}+1)\right)
    \mathcal{F}_{\lambda-2}(2\alpha,2\vartheta).
  \end{equation*}
\end{remark}

\subsection{Product formula}

\begin{lemma}[Product formula]\label{lemma:FT-product}
  For any integer $\lambda\geq 0$
  and $(\alpha, \vartheta) \in \R^2$, we have
  \begin{equation}\label{eq:FT-product-formula}
    \abs{\mathcal{F}_\lambda(\alpha,\vartheta)}
    =
    \prod_{j=0}^{\lambda-1}
    \abs{K_b\left( \alpha b^{\lambda-1-j} - \vartheta  b^j\right)}
    ,
  \end{equation}
  with the convention that an empty product is equal to $1$.
\end{lemma}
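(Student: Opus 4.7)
The plan is to exploit the additive structure of the digit expansion: since every integer $n\in\{0,\dots,b^\lambda-1\}$ is uniquely written as $n=\sum_{j=0}^{\lambda-1}\varepsilon_j\,b^j$ with digits $\varepsilon_j\in\{0,\dots,b-1\}$, and since by definition $R_\lambda(n)=\sum_{j=0}^{\lambda-1}\varepsilon_j\,b^{\lambda-1-j}$, the linear combination $\alpha R_\lambda(n)-\vartheta n$ splits digit by digit:
\begin{equation*}
  \alpha R_\lambda(n)-\vartheta n
  =
  \sum_{j=0}^{\lambda-1}
  \varepsilon_j
  \bigl(\alpha b^{\lambda-1-j}-\vartheta b^j\bigr).
\end{equation*}

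First I would use this identity to rewrite the exponential $\e(\alpha R_\lambda(n)-\vartheta n)$ as the product over $j$ of $\e\!\bigl(\varepsilon_j(\alpha b^{\lambda-1-j}-\vartheta b^j)\bigr)$. Next, noting that the map $n\mapsto(\varepsilon_0,\dots,\varepsilon_{\lambda-1})$ is a bijection from $\{0,\dots,b^\lambda-1\}$ onto $\{0,\dots,b-1\}^\lambda$, the sum in the definition \eqref{eq:definition-F} factors as an iterated product of independent one-digit sums:
\begin{equation*}
  \mathcal{F}_\lambda(\alpha,\vartheta)
  =
  \frac{1}{b^\lambda}
  \prod_{j=0}^{\lambda-1}
  \sum_{u=0}^{b-1}
  \e\!\bigl(u(\alpha b^{\lambda-1-j}-\vartheta b^j)\bigr)
  =
  \prod_{j=0}^{\lambda-1}
  \frac{1}{b}
  \sum_{u=0}^{b-1}
  \e\!\bigl(u(\alpha b^{\lambda-1-j}-\vartheta b^j)\bigr).
\end{equation*}

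Taking moduli and applying the identity \eqref{eq:Kb-geometric-sum}, namely $\bigl|\frac{1}{b}\sum_{u=0}^{b-1}\e(ux)\bigr|=\abs{K_b(x)}$, to each factor with $x=\alpha b^{\lambda-1-j}-\vartheta b^j$ yields exactly \eqref{eq:FT-product-formula}. The case $\lambda=0$ is handled by the empty product convention, since then $\mathcal{F}_0(\alpha,\vartheta)=1$ by \eqref{eq:definition-F}.

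There is no real obstacle: the whole argument is just the observation that $R_\lambda$ is a digit-wise linear functional in the same digits that encode $n$, so the exponential sum factorizes as a tensor product of one-digit geometric sums. The only minor point to mention is the well-definedness of the decomposition (each $n<b^\lambda$ has a unique $\lambda$-digit expansion, including leading zeros, which is consistent with our definition of $R_\lambda$).
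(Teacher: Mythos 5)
Your proof is correct and follows essentially the same route as the paper's: write $n$ in its $\lambda$-digit expansion, observe that $\alpha R_\lambda(n)-\vartheta n$ splits digit by digit, factor the sum over $n$ into a product of one-digit geometric sums, and take moduli using \eqref{eq:Kb-geometric-sum}. The treatment of the $\lambda=0$ case via the empty-product convention also matches the paper.
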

\begin{proof}
  For $\lambda = 0$, we have
  $\abs{\mathcal{F}_\lambda(\alpha,\vartheta)}=1$ while the right hand
  side of \eqref{eq:FT-product-formula} is an empty product equal to
  $1$ by convention.
  For $\lambda\geq 1$, by writing
  \begin{equation*}
    n = \sum_{j=0}^{\lambda-1} \varepsilon_j \, b^j
    \quad
    \text{ and }
    \quad
    R_\lambda(n) = \sum_{j=0}^{\lambda-1} \varepsilon_j  \, b^{\lambda-1-j}
  \end{equation*}
  we obtain
  \begin{equation*}
    \mathcal{F}_\lambda(\alpha,\vartheta)
    =
    \frac{1}{b^\lambda}
    \prod_{j=0}^{\lambda-1}
    \sum_{\varepsilon_j=0}^{b-1}
    \e\left(
      \varepsilon_j
      \left(\alpha b^{\lambda-1-j} - \vartheta b^j\right)
    \right)
    , 
  \end{equation*}
  which, taking the modulus and using \eqref{eq:Kb-geometric-sum},
  leads to \eqref{eq:FT-product-formula}.
\end{proof}
\begin{lemma}[Splitting product formula]\label{FT-splitting-product}
  For integers $0\leq \lambda' \leq \lambda$
  and $(\alpha, \vartheta) \in \R^2$, we have 
  \begin{equation}\label{eq:FT-splitting-product}
    \abs{\mathcal{F}_\lambda(\alpha,\vartheta)}
    =
    \abs{\mathcal{F}_{\lambda'}(\alpha b^{\lambda-\lambda'},\vartheta)}
    \cdot
    \abs{\mathcal{F}_{\lambda-\lambda'}(\alpha,\vartheta b^{\lambda'})}
    .
  \end{equation}
\end{lemma}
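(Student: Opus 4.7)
The plan is to reduce everything to the Product formula of Lemma~\ref{lemma:FT-product} and then a straightforward reindexing of a finite product. First I would dispose of the trivial cases $\lambda' = 0$ and $\lambda' = \lambda$: in each case one of the two factors on the right-hand side of \eqref{eq:FT-splitting-product} is of the form $|\mathcal{F}_{0}(\cdot,\cdot)|$, which equals $1$ as an empty product (cf.\ the convention in Lemma~\ref{lemma:FT-product}), and the other factor reduces to $|\mathcal{F}_{\lambda}(\alpha,\vartheta)|$.

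For $1 \le \lambda' \le \lambda-1$, I would apply \eqref{eq:FT-product-formula} to each of the three quantities appearing in \eqref{eq:FT-splitting-product}. Starting from
\begin{displaymath}
  \abs{\mathcal{F}_\lambda(\alpha,\vartheta)}
  =
  \prod_{j=0}^{\lambda-1}
  \abs{K_b\!\left( \alpha\, b^{\lambda-1-j} - \vartheta\, b^j \right)},
\end{displaymath}
I would split the range of $j$ at $\lambda'$ into $\{0,\ldots,\lambda'-1\}$ and $\{\lambda',\ldots,\lambda-1\}$. The first block, using $\alpha b^{\lambda-1-j} = (\alpha b^{\lambda-\lambda'})\, b^{\lambda'-1-j}$, is exactly
\begin{displaymath}
  \prod_{j=0}^{\lambda'-1}
  \abs{K_b\!\left( (\alpha b^{\lambda-\lambda'})\, b^{\lambda'-1-j} - \vartheta\, b^j \right)}
  =
  \abs{\mathcal{F}_{\lambda'}(\alpha b^{\lambda-\lambda'},\vartheta)}
\end{displaymath}
by a second application of \eqref{eq:FT-product-formula}. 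For the second block, I would reindex by $k = j - \lambda'$, so that $j$ running over $\{\lambda',\ldots,\lambda-1\}$ corresponds to $k$ running over $\{0,\ldots,\lambda-\lambda'-1\}$; the generic factor becomes
\begin{displaymath}
  \abs{K_b\!\left( \alpha\, b^{(\lambda-\lambda')-1-k} - (\vartheta\, b^{\lambda'})\, b^k \right)},
\end{displaymath}
and a final application of \eqref{eq:FT-product-formula} identifies the product of these factors with $\abs{\mathcal{F}_{\lambda-\lambda'}(\alpha,\vartheta b^{\lambda'})}$.

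There is no serious obstacle: the content of the statement is entirely captured by the Product formula, and the only thing to verify is the bookkeeping of the exponents of $b$ inside each Dirichlet kernel factor. The splitting is consistent because the exponents $(\lambda-1-j,\, j)$ appearing in \eqref{eq:FT-product-formula} partition naturally into the two ``halves'' $\{(\lambda-1-j, j) : 0 \le j < \lambda'\}$ and $\{(\lambda-1-j, j) : \lambda' \le j < \lambda\}$, each of which matches, after a simple substitution, the exponent pattern of one of the two factors on the right of \eqref{eq:FT-splitting-product}.
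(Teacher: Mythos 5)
Your proof is correct and follows essentially the same route as the paper: apply the product formula of Lemma~\ref{lemma:FT-product}, split the product over $j$ at $\lambda'$, and reindex each block to recognize the two factors. The separate treatment of the cases $\lambda'=0$ and $\lambda'=\lambda$ is harmless but unnecessary, since the empty-product convention already covers them.
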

\begin{proof}
  By \eqref{eq:FT-product-formula} we have
  \begin{align*}
    &
      \abs{\mathcal{F}_\lambda(\alpha,\vartheta)}
    \\
    &
      =
      \prod_{j=0}^{\lambda'-1}
      \abs{K_b\left( \alpha b^{\lambda-1-j} - \vartheta  b^j\right)}
      \prod_{j=\lambda'}^{\lambda-1}
      \abs{K_b\left( \alpha b^{\lambda-1-j} - \vartheta  b^j\right)}
    \\
    &
      =
      \prod_{j=0}^{\lambda'-1}
      \abs{K_b\left(
      \alpha b^{\lambda-\lambda'} b^{\lambda'-1-j} - \vartheta b^j
      \right)}
      \prod_{j'=0}^{\lambda-\lambda'-1}
      \abs{K_b\left(
      \alpha b^{\lambda-1-\lambda'-j'} - \vartheta b^{\lambda'} b^{j'}
      \right)}
      ,
  \end{align*}
  which, again by \eqref{eq:FT-product-formula},
  gives \eqref{eq:FT-splitting-product}.
\end{proof}~

\begin{lemma}\label{lemma:F2-difference}
  For any $(\alpha,\vartheta,\vartheta')\in\R^3$
  and any integer $\lambda\geq 0$,
  we have
  \begin{equation}
    \label{eq:F2-difference}
    \abs{
      \abs{\mathcal{F}_\lambda(\alpha,\vartheta)}^2
      -
      \abs{\mathcal{F}_\lambda(\alpha,\vartheta')}^2
    }
    \leq
    \frac{2 \pi }{3} b^\lambda
    \norm{\vartheta-\vartheta'}
    .
  \end{equation}
\end{lemma}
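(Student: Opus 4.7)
The plan is to realize $g(\vartheta) := \abs{\mathcal{F}_\lambda(\alpha,\vartheta)}^2$ as a $1$-periodic trigonometric polynomial in $\vartheta$, compute its derivative explicitly, and then use the mean value theorem together with periodicity to pass from $\abs{\vartheta-\vartheta'}$ to $\norm{\vartheta-\vartheta'}$.

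First I would square the definition \eqref{eq:definition-F} and expand:
\begin{displaymath}
  g(\vartheta)
  =
  \frac{1}{b^{2\lambda}}
  \sum_{0\le n,m<b^\lambda}
  \e\bigl(\alpha(R_\lambda(n)-R_\lambda(m))\bigr)\,\e\bigl(-\vartheta(n-m)\bigr).
\end{displaymath}
Grouping the summands according to the integer $k=n-m$ and writing
\begin{displaymath}
  c_k(\alpha)
  =
  \sum_{\substack{0\le n,m<b^\lambda \\ n-m=k}}
  \e\bigl(\alpha(R_\lambda(n)-R_\lambda(m))\bigr),
  \qquad
  \abs{c_k(\alpha)}\le b^\lambda-\abs{k},
\end{displaymath}
we obtain for $\abs{k}<b^\lambda$ the Fourier representation
\begin{displaymath}
  g(\vartheta) = \frac{1}{b^{2\lambda}} \sum_{\abs{k}<b^\lambda} c_k(\alpha)\,\e(-k\vartheta),
\end{displaymath}
which is plainly $1$-periodic and smooth in $\vartheta$.

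Next I would differentiate term by term and apply the triangle inequality:
\begin{displaymath}
  \abs{g'(\vartheta)}
  \le
  \frac{2\pi}{b^{2\lambda}}
  \sum_{\abs{k}<b^\lambda} \abs{k}\,(b^\lambda-\abs{k}).
\end{displaymath}
The inner sum is elementary: splitting the positive and negative indices and using the standard formulas for $\sum k$ and $\sum k^2$,
\begin{displaymath}
  \sum_{\abs{k}<b^\lambda} \abs{k}(b^\lambda-\abs{k})
  =
  2\sum_{k=1}^{b^\lambda-1} k(b^\lambda-k)
  =
  \frac{b^\lambda(b^{2\lambda}-1)}{3},
\end{displaymath}
hence
\begin{displaymath}
  \abs{g'(\vartheta)}
  \le
  \frac{2\pi (b^{2\lambda}-1)}{3\, b^\lambda}
  \le
  \frac{2\pi}{3}\,b^\lambda.
\end{displaymath}

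Finally I would conclude by exploiting the $1$-periodicity of $g$. Choose $k_0\in\Z$ such that $\abs{\vartheta-\vartheta'-k_0}=\norm{\vartheta-\vartheta'}$; then $g(\vartheta')=g(\vartheta'+k_0)$, and the mean value theorem applied on the segment $[\vartheta,\vartheta'+k_0]$ (or its reverse) yields
\begin{displaymath}
  \abs{g(\vartheta)-g(\vartheta')}
  \le
  \Bigl(\sup_{\tau\in\R} \abs{g'(\tau)}\Bigr)\,\norm{\vartheta-\vartheta'}
  \le
  \frac{2\pi}{3}\,b^\lambda\,\norm{\vartheta-\vartheta'},
\end{displaymath}
which is exactly \eqref{eq:F2-difference}. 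There is no real obstacle here: Bernstein's inequality (Lemma~\ref{lemma:bernstein-zygmund}) combined with $\abs{g}\le 1$ would already give the weaker constant $2\pi$, so the only point requiring care is the explicit evaluation of $\sum_{\abs{k}<b^\lambda}\abs{k}(b^\lambda-\abs{k})$ in order to recover the sharper factor $2\pi/3$.
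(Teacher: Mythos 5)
Your proof is correct and follows essentially the same route as the paper: both expand $\abs{\mathcal{F}_\lambda}^2$ as a double sum over pairs $(n,m)$, reduce the estimate to the combinatorial sum $\sum_{\abs{k}<b^\lambda}\abs{k}(b^\lambda-\abs{k})=\tfrac{b^\lambda(b^{2\lambda}-1)}{3}$, and invoke periodicity to replace $\abs{\vartheta-\vartheta'}$ by $\norm{\vartheta-\vartheta'}$. Your framing via the derivative of the trigonometric polynomial and the mean value theorem is just a repackaging of the paper's direct bound $\abs{\e(-\vartheta k)-\e(-\vartheta' k)}\le 2\pi\abs{k}\,\abs{\vartheta-\vartheta'}$, so the two arguments are the same in substance.
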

\begin{proof}
  By periodicity we may assume that
  \begin{math}
    \vartheta-\frac12 \leq \vartheta' < \vartheta+\frac12
    ,
  \end{math}
  so that
  \begin{math}
    \norm{\vartheta-\vartheta'}
    =
    \abs{\vartheta-\vartheta'}
    .
  \end{math}
  
  By \eqref{eq:definition-F}
  we have
  \begin{displaymath}
    \abs{\mathcal{F}_\lambda(\alpha,\vartheta)}^2
    =
    \frac{1}{b^{2\lambda}}
    \sum_{0\leq n_1 < b^\lambda}
    \sum_{0\leq n_2 < b^\lambda}
    \e\left(
      \alpha \left(R_\lambda(n_1)-R_\lambda(n_2)\right)
      -\vartheta (n_1-n_2)
    \right)
    ,
  \end{displaymath}
  hence
  \begin{multline*}
    \abs{\mathcal{F}_\lambda(\alpha,\vartheta)}^2
    -
    \abs{\mathcal{F}_\lambda(\alpha,\vartheta')}^2
    \\
    =
    \frac{1}{b^{2\lambda}}
    \sum_{0\leq n_1 < b^\lambda}
    \sum_{0\leq n_2 < b^\lambda}
    \e\left(
      \alpha \left(R_\lambda(n_1)-R_\lambda(n_2)\right)
    \right)
    \\
    \left(
      \e\left( - \vartheta (n_1-n_2) \right)
      -
      \e\left( - \vartheta' (n_1-n_2) \right)
    \right)
    ,
  \end{multline*}
  thus
  \begin{align*}
    \abs{
      \abs{\mathcal{F}_\lambda(\alpha,\vartheta)}^2
      -
      \abs{\mathcal{F}_\lambda(\alpha,\vartheta')}^2
    }
    &\leq
    \frac{1}{b^{2\lambda}}
    \sum_{0\leq n_1 < b^\lambda}
    \sum_{0\leq n_2 < b^\lambda}
    \abs{
      -2i\pi
      \int_{\vartheta' (n_1-n_2)}^{\vartheta (n_1-n_2)} \e(-t) \, dt
    }
    \\
    &\leq
      \frac{2 \pi \abs{\vartheta-\vartheta'}}{b^{2\lambda}}
      \sum_{0\leq n_1 < b^\lambda}
      \sum_{0\leq n_2 < b^\lambda}
      \abs{n_1-n_2}
    \\
    & \quad =
      \frac{4 \pi \abs{\vartheta-\vartheta'}}{b^{2\lambda}}
      \sum_{0\leq n_1 < b^\lambda}
      \frac{n_1(n_1+1)}{2}
    \\
    & \quad =
      \frac{4 \pi \abs{\vartheta-\vartheta'}}{b^{2\lambda}}
      \frac{(b^\lambda-1)b^\lambda(b^\lambda+1)}{6}
    \\
    & \quad 
      \leq
      \frac{2 \pi }{3} b^\lambda
      \abs{\vartheta-\vartheta'} 
      ,
  \end{align*}
  which, remembering that
  \begin{math}
    \norm{\vartheta-\vartheta'}
    =
    \abs{\vartheta-\vartheta'}
    ,
  \end{math}
  gives \eqref{eq:F2-difference}.
\end{proof}

\bigskip

\subsection{Uniform bounds}~

\begin{lemma}\label{lemma:majoration-U}
  For any integer $b\geq 2$ and any real number $t$
  such that $\norm{t} \leq \frac 1b$, we have
  \begin{equation}\label{eq:majoration-U}
    \abs{K_b(t)}
    \leq
    \exp\left(-\,\frac{\pi^2}{6}(b^2-1)\norm{t}^2\right)
    .
  \end{equation}
\end{lemma}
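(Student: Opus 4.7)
The plan is to exploit Euler's infinite product expansion of the sine function, which allows one to write $\log K_b(t)$ as a series of non-positive terms whose leading term is exactly $-\tfrac{\pi^2}{6}(b^2-1)t^2$.

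First I would reduce the problem to $t \in [0, 1/b)$. By Lemma~\ref{lemma:concavity-of-Dirichlet-kernel}, $|K_b|$ is $1$-periodic (directly when $b$ is odd, and because $K_b(t+1)=-K_b(t)$ when $b$ is even), and it is manifestly even since $\sin(\pi x)/\sin(\pi y)$ is odd in each variable. The hypothesis $\norm{t}\le 1/b$ therefore reduces us to $t\in[0,1/b]$, and on this interval $K_b(t)\ge 0$. The case $t=1/b$ is trivial because $K_b(1/b)=0$ while the right-hand side of \eqref{eq:majoration-U} is positive, so I may assume $t\in[0,1/b)$.

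Next I would invoke Euler's product formula $\sin(\pi x)=\pi x\prod_{n\ge 1}(1-x^2/n^2)$ to obtain
\begin{equation*}
  K_b(t)
  =
  \frac{\sin(\pi b t)}{b\sin(\pi t)}
  =
  \prod_{n=1}^{\infty}
  \frac{1 - b^2 t^2/n^2}{1 - t^2/n^2}
  .
\end{equation*}
On $[0,1/b)$ each factor is strictly positive, so the logarithm is well defined. Expanding $\log(1-u)=-\sum_{k\ge 1}u^k/k$ for $|u|<1$ in each factor and swapping the summations, which is justified by absolute convergence since $0\le bt<1$, yields
\begin{equation*}
  \log K_b(t)
  =
  -\sum_{k=1}^{\infty}
  \frac{\zeta(2k)\,(b^{2k}-1)}{k}\, t^{2k}
  .
\end{equation*}

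Every term in this series is non-positive, so retaining only the $k=1$ term gives the upper bound
\begin{equation*}
  \log K_b(t) \le -\zeta(2)(b^2-1)\,t^2 = -\frac{\pi^2}{6}(b^2-1)\,t^2,
\end{equation*}
and exponentiating yields \eqref{eq:majoration-U}. There is no real obstacle here; the only point that requires a moment's care is the reduction to $[0,1/b)$ using the periodicity properties recorded in Lemma~\ref{lemma:concavity-of-Dirichlet-kernel}, together with handling the boundary point $t=1/b$ separately since the series representation of $\log K_b$ breaks down there.
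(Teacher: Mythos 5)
Your proof is correct. Note that the paper does not actually prove this lemma: it simply cites \cite{barat-martin-mauduit-rivat} and \cite{tuxanidy-panario-2024} for the improvement of the original Mauduit--Rivat bound. Your argument is a clean, self-contained derivation, and it makes transparent where the constant comes from: writing
\begin{displaymath}
  K_b(t)=\prod_{n\geq 1}\frac{1-b^2t^2/n^2}{1-t^2/n^2},
  \qquad
  \log K_b(t)=-\sum_{k\geq 1}\frac{\zeta(2k)\left(b^{2k}-1\right)}{k}\,t^{2k},
\end{displaymath}
exhibits $-\zeta(2)(b^2-1)t^2=-\tfrac{\pi^2}{6}(b^2-1)t^2$ as the leading term of a series with all terms non-positive, so the bound follows by truncation. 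All the supporting steps check out: each factor of the Euler product is strictly positive for $0\leq t<1/b$ because $bt<1\leq n$; the interchange of the two summations is legitimate since every term has the same sign (or, quantitatively, the double sum is dominated by $-\zeta(2)\log(1-b^2t^2)<\infty$); the reduction to $t\in[0,1/b]$ via evenness and the $1$-periodicity of $\abs{K_b}$ is exactly as recorded in Lemma~\ref{lemma:concavity-of-Dirichlet-kernel}; and the endpoint $t=1/b$, where $K_b$ vanishes, is handled separately. This is essentially the argument one expects behind the cited lemmas, so there is nothing to object to.
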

\begin{proof}
  This improvement of \cite[Lemme 3]{mauduit-rivat-2009} can be found
  in \cite[Lemme 11]{barat-martin-mauduit-rivat} and was noticed
  independently by Tuxanidy and Panario
  in \cite[Lemma 5.6]{tuxanidy-panario-2024}.
\end{proof}

\begin{lemma}\label{lemma:max-cross-norms}
  For any integer $b\geq 2$ and $(\alpha,\vartheta)\in\R^2$ we have
  \begin{equation}
    \label{eq:max-cross-norms}
    \max\left(
      \norm{\alpha b - \vartheta},
      \norm{\alpha - \vartheta b}
    \right)
    \geq
    \frac{
      \max\left(
        \norm{\alpha \left( b^2-1 \right)},
        \norm{\vartheta \left( b^2-1 \right)}
      \right)
    }
    {b+1}
    .
  \end{equation}
\end{lemma}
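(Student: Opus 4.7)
The plan is to exploit a simple linear identity connecting the pair $(\alpha b - \vartheta,\,\alpha - \vartheta b)$ with $(\alpha(b^2-1),\,\vartheta(b^2-1))$ and then use the basic properties of the distance-to-integers norm, namely $\norm{x+y}\leq\norm{x}+\norm{y}$ and $\norm{nx}\leq |n|\norm{x}$ for $n\in\Z$.

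First I would observe the two algebraic identities
\begin{equation*}
  \alpha\left(b^2-1\right) = b\left(\alpha b - \vartheta\right) - \left(\alpha - \vartheta b\right),
  \qquad
  \vartheta\left(b^2-1\right) = \left(\alpha b - \vartheta\right) - b\left(\alpha - \vartheta b\right),
\end{equation*}
which are checked by direct expansion. Since $b\in\Z$, applying $\norm{\cdot}$ and using subadditivity together with $\norm{b\,x}\leq b\norm{x}$ yields
\begin{equation*}
  \norm{\alpha\left(b^2-1\right)}
  \leq b \norm{\alpha b - \vartheta} + \norm{\alpha - \vartheta b}
  \leq (b+1)\max\!\left(\norm{\alpha b - \vartheta},\,\norm{\alpha - \vartheta b}\right),
\end{equation*}
and similarly
\begin{equation*}
  \norm{\vartheta\left(b^2-1\right)}
  \leq \norm{\alpha b - \vartheta} + b\norm{\alpha - \vartheta b}
  \leq (b+1)\max\!\left(\norm{\alpha b - \vartheta},\,\norm{\alpha - \vartheta b}\right).
\end{equation*}

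Taking the maximum of these two bounds on the left and dividing by $b+1$ gives the claimed inequality \eqref{eq:max-cross-norms}. There is no substantive obstacle here: the whole argument rests on spotting the two one-line identities above, after which the conclusion follows immediately from the triangle inequality for the fractional-distance norm. The only thing worth double-checking is the integrality of the coefficients $b$ and $\pm 1$, which is needed so that $\norm{b x}\leq b\norm{x}$ applies.
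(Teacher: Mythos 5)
Your proof is correct and rests on the same underlying identity as the paper's (the paper writes $u = vb + \vartheta(b^2-1)$, which is your second identity rearranged), combined with the triangle inequality and $\norm{bx}\leq b\norm{x}$. Your version is slightly more streamlined in that it states both identities symmetrically and avoids the paper's case distinction on whether $\norm{\alpha b-\vartheta}$ is small, but it is essentially the same argument.
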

\begin{proof}
  We proceed as in \cite[Lemme 6]{mauduit-rivat-2009}.
  Let
  \begin{displaymath}
    \delta = \frac{\norm{\vartheta \left(b^2-1 \right)}}{b+1}
    ,\quad
    u = \alpha b - \vartheta
    ,\quad
    v = \alpha - \vartheta b
    .
  \end{displaymath}
  Since
  \begin{math}
    u = v b + \vartheta \left(b^2-1 \right),
  \end{math}
  if $\norm{u} < \delta$, then
  \begin{align*}
    b \norm{v} \geq \norm{v b}
    =
    \norm{u - \vartheta\left(b^2-1 \right)}
    \geq
    \norm{\vartheta\left(b^2-1 \right)}
    -
    \norm{u}
    \geq
    \delta (b+1) - \delta = \delta b
    .
  \end{align*}
  Whether $\norm{u} < \delta$
  or $\norm{u} \geq \delta$,
  it follows that
  \begin{displaymath}
    \max\left(
      \norm{\alpha b-\vartheta},
      \norm{\alpha-\vartheta b}
    \right)
    =
    \max\left(
      \norm{u},
      \norm{v}
    \right)
    \geq \delta = \frac{\norm{\vartheta \left( b^2-1 \right)}}{b+1}.
  \end{displaymath}
  Exchanging $\alpha$ and $\vartheta$,
  we get \eqref{eq:max-cross-norms}.
\end{proof}

\begin{lemma}\label{lemme:produit-U}
  For any integer $b\geq 2$ and any $x \in \R$, we have
  \begin{equation}\label{eq:produit-U}
    \max_{t\in \R}
    \abs{ K_b\left(x-t\right) K_b\left(x-bt\right) }
    \leq
    K_b\left(\frac{\norm{(b-1)x}}{b+1}\right)
    .
  \end{equation}
\end{lemma}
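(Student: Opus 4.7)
Set $\delta = \|(b-1)x\|/(b+1)$, noting $\delta \leq 1/(2(b+1))$. The strategy is to reduce the two-variable product bound to a one-variable estimate on $|K_b|$. For any $t \in \R$, write $\alpha = x/(b+1)$ and $\vartheta = t - \alpha$, so that $\alpha b - \vartheta = x-t$, $\alpha - \vartheta b = x-bt$, and $(b^2-1)\alpha = (b-1)x$. Applying Lemma \ref{lemma:max-cross-norms} to the pair $(\alpha,\vartheta)$ yields
\[
\max\bigl(\norm{x-t},\norm{x-bt}\bigr)
\geq \frac{\norm{\alpha(b^2-1)}}{b+1}
= \frac{\norm{(b-1)x}}{b+1}
= \delta.
\]
Using the trivial bound $\abs{K_b}\leq 1$ on whichever factor has the small-norm argument, the lemma reduces to the one-variable claim that for $0\leq \delta\leq 1/(2(b+1))$ and any $y\in\R$ with $\norm{y}\geq \delta$ one has $\abs{K_b(y)}\leq K_b(\delta)$.

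By evenness of $K_b$ and $1$-periodicity of $\abs{K_b}$, it suffices to consider $y\in[\delta,1/2]$, which I split at $y=1/b$ (recall $\delta<1/b$). On $[\delta,1/b]$ I first strengthen Lemma \ref{lemma:concavity-of-Dirichlet-kernel} by showing that $K_b$ is positive and strictly decreasing on the whole interval $[0,1/b]$. A short sign analysis of
\[
K_b'(y)
= \frac{\pi}{b\sin^2(\pi y)}\bigl(b\cos(\pi b y)\sin(\pi y)-\sin(\pi b y)\cos(\pi y)\bigr)
\]
handles this: on $(0,1/(2b))$ all four trigonometric factors are positive and the sign of the bracket equals that of $b\tan(\pi y)-\tan(\pi b y)$, which is $\leq 0$ by convexity of $\tan$ on $(0,\pi/2)$; on $(1/(2b),1/b)$ the signs $\cos(\pi b y)<0$, $\sin(\pi b y)>0$, $\cos(\pi y)>0$ make each of the two summands negative. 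This yields $\abs{K_b(y)}=K_b(y)\leq K_b(\delta)$ throughout $[\delta,1/b]$. On $[1/b,1/2]$ I use the crude bound $\abs{K_b(y)}\leq 1/(b\sin(\pi/b))$, and compare to $K_b(\delta)\geq K_b(1/(2(b+1)))=\cos(\pi/(2(b+1)))/(b\sin(\pi/(2(b+1))))$ which follows from the monotonicity just established. The remaining inequality simplifies to $\sin(\pi/b)\geq\tan(\pi/(2(b+1)))$, and for $b\geq 2$ this is elementary: $\sin x\geq (2/\pi)x$ on $[0,\pi/2]$ gives $\sin(\pi/b)\geq 2/b$, while $\tan x\leq (4/\pi)x$ on $[0,\pi/4]$ gives $\tan(\pi/(2(b+1)))\leq 2/(b+1)<2/b$.

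The main obstacle is precisely this last step: $\abs{K_b}$ loses monotonicity beyond $1/b$, so one must verify that its secondary peaks on $[1/b,1/2]$ do not exceed $K_b(\delta)$. The threshold $\delta\leq 1/(2(b+1))$, which is automatic from $\norm{(b-1)x}\leq 1/2$, is exactly what makes this comparison succeed, and it is where the asymmetry between $1/b$ (from the worst sine denominator) and $1/(2(b+1))$ (from Lemma \ref{lemma:max-cross-norms}) must be reconciled.
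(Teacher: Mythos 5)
Your proof is correct. The paper gives no argument here — it simply cites \cite[Lemme 6]{mauduit-rivat-2009} — and your reconstruction follows the same route as that source: reduce via the substitution $\alpha=x/(b+1)$, $\vartheta=t-\alpha$ and Lemma \ref{lemma:max-cross-norms} to the one-variable statement that $\abs{K_b(y)}\leq K_b(\delta)$ whenever $\norm{y}\geq\delta$ with $\delta\leq\tfrac{1}{2(b+1)}$. The two points that genuinely need checking — extending the monotonicity of $K_b$ from $[0,(2b-2)^{-1}]$ (all the paper's Lemma \ref{lemma:concavity-of-Dirichlet-kernel} gives) to all of $[0,1/b]$, and verifying that the secondary peaks on $[1/b,1/2]$ stay below $K_b\left(\tfrac{1}{2(b+1)}\right)=\tfrac{1}{b}\cot\tfrac{\pi}{2b+2}$ via $\tan\tfrac{\pi}{2(b+1)}\leq\tfrac{2}{b+1}<\tfrac{2}{b}\leq\sin\tfrac{\pi}{b}$ — are both handled correctly, so nothing is missing.
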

\begin{proof}
  This is \cite[Lemme 6]{mauduit-rivat-2009}.
\end{proof}

For $\lambda\geq 0$ and $\alpha \in \R$, we define
\begin{equation}\label{eq:definition-G}
  G_\lambda(\alpha)
  =
  \prod_{j=0}^{\lambda-1}
  K_b \left( \frac{\norm{\alpha b^j}}{b+1}\right)
  > 0
  .
\end{equation}

\begin{lemma}\label{lemma:norm-lowerbound}
  For any integer $b\geq 2$ and any $\alpha \in \R\setminus\Z$, the
  integer
  \begin{displaymath}
    j_0(\alpha) = 
    \floor{\frac{\log\frac{b}{(b+1)\norm{\alpha}}}{\log b}}
  \end{displaymath}
  satisfies
  \begin{displaymath}
    \norm{\alpha b^{j_0(\alpha)}} \geq \frac{1}{b+1}.
  \end{displaymath}
\end{lemma}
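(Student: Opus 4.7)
The plan is to reduce to the case $\alpha\in (0,1/2]$, unfold the floor function to get a two-sided bound on $\alpha b^{j_0(\alpha)}$, and conclude by a short case distinction on whether $\alpha b^{j_0(\alpha)}$ lies below or above $1/2$. There is no serious obstacle here: the statement is essentially a direct rewriting of the definition of $j_0(\alpha)$.

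First I would use that $\norm{\cdot}$ is even and $1$-periodic, so $\norm{\alpha b^j}=\norm{\norm{\alpha}\, b^j}$ for every integer $j\geq 0$, and both $j_0(\alpha)$ and $j_0(\norm{\alpha})$ coincide since $\norm{\alpha}$ only depends on $\alpha\bmod 1$. We may therefore assume $\alpha=\norm{\alpha}\in\left]0,\tfrac12\right]$. In particular $\alpha\leq \tfrac12<\tfrac{b}{b+1}$, so $\tfrac{b}{(b+1)\alpha}\geq 1$ and the floor $j_0(\alpha)$ is a well-defined non-negative integer.

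Next, from $j_0(\alpha)\leq \log\!\bigl(\tfrac{b}{(b+1)\norm{\alpha}}\bigr)/\log b<j_0(\alpha)+1$ I deduce
\[
  b^{j_0(\alpha)}\leq \frac{b}{(b+1)\alpha}<b^{j_0(\alpha)+1},
\]
which rearranges into the two-sided bound
\[
  \frac{1}{b+1}<\alpha\, b^{j_0(\alpha)}\leq \frac{b}{b+1}.
\]

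In particular $\alpha\, b^{j_0(\alpha)}\in \left]0,1\right[$, so $\norm{\alpha\, b^{j_0(\alpha)}}$ equals $\alpha\, b^{j_0(\alpha)}$ when $\alpha\, b^{j_0(\alpha)}\leq \tfrac12$ and $1-\alpha\, b^{j_0(\alpha)}$ otherwise. In the first case $\norm{\alpha\, b^{j_0(\alpha)}}=\alpha\, b^{j_0(\alpha)}>\tfrac{1}{b+1}$; in the second case $\norm{\alpha\, b^{j_0(\alpha)}}=1-\alpha\, b^{j_0(\alpha)}\geq 1-\tfrac{b}{b+1}=\tfrac{1}{b+1}$. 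Either way the desired lower bound holds, completing the proof.
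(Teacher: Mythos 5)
Your proof is correct and follows essentially the same route as the paper: reduce to $\alpha=\norm{\alpha}$ by parity and periodicity (using $j_0\geq 0$ so that $b^{j_0}\in\Z$), unfold the floor to get $\frac{1}{b+1}<\norm{\alpha}\,b^{j_0}\leq\frac{b}{b+1}$, and conclude. The only difference is that you spell out explicitly the final case distinction on whether $\norm{\alpha}\,b^{j_0}$ exceeds $\tfrac12$, which the paper leaves implicit in the observation $\frac{b}{b+1}=1-\frac{1}{b+1}$.
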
 
\begin{proof}
  Since $\norm{\alpha}\le 1/2$, we have $j_0 = j_0(\alpha) \geq 0$.
  Therefore $b^{j_0} \in \Z$ and we have
  \begin{math}
    \norm{\alpha b^{j_0}} = \norm{\norm{\alpha} b^{j_0}}
  \end{math}
  by parity and periodicity.  By definition of $j_0$, we have
  \begin{displaymath}
    \frac{1}{b+1} < \norm{\alpha} b^{j_0} 
    \leq
    \frac{b}{b+1} = 1-\frac{1}{b+1}, 
  \end{displaymath}
  which gives the result.
\end{proof}

\begin{lemma}\label{lemma:G-lambda-upperbound}
  For any integers $b\geq 2$, $\lambda \geq 1$
  and any $\alpha \in \R$,
  if
  \begin{displaymath}
    \alpha_0 = \min_{0\leq j \leq \lambda-1} \norm{\alpha b^{j}} > 0 
    ,\qquad
    J = 
    1 + \floor{\frac{\log\frac{b}{(b+1)\alpha_0}}{\log b}}
    ,
  \end{displaymath}
  then
  \begin{equation}\label{eq:G-lambda-upperbound}
    G_\lambda(\alpha)
    \leq
    K_b\left( (b+1)^{-2} \right)^{\floor{\frac{\lambda}{J}}}
    .
  \end{equation}
  In particular, for any integers
  $b\geq 2$, $\lambda\geq 1$, $d\geq 2$, $h\in\Z$
  such that
  \begin{math}
    b^{\lambda-1} h \not\equiv 0 \bmod d,
  \end{math}
  we have
  \begin{equation}
    \label{eq:uniform-upperbound-of-G-for-denominator-d}
    \abs{G_\lambda\left(\frac{h}{d}\right)}
    \leq
    K_b\left( (b+1)^{-2} \right)^{\frac{\lambda \log 2}{2\log d} - 1}
    \leq
    \frac{\pi}{2} \,
    b^{- \frac{2 \Upsilon_b}{\log d}\lambda}
    ,
  \end{equation}
  with
  \begin{equation}\label{eq:definition-Upsilon_b}
    \Upsilon_b
    =
    \frac{- (\log 2) \log K_b((b+1)^{-2})}{4 \log b}
    > 0
    .
  \end{equation}
\end{lemma}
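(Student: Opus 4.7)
The plan for \eqref{eq:G-lambda-upperbound} is to partition the factors of the product defining $G_\lambda(\alpha)$ into $\floor{\lambda/J}$ consecutive blocks of length $J$, starting at the indices $i_s = sJ$ for $s=0,\ldots,\floor{\lambda/J}-1$, and to keep the leftover factors aside. Within each block $[i_s, i_s+J-1] \subseteq [0,\lambda-1]$, I want to pinpoint one index $j_s$ at which $\norm{\alpha b^{j_s}} \geq 1/(b+1)$. Once that is done, the monotonicity of $K_b$ on $[0,(2b-2)^{-1}]$ (Lemma \ref{lemma:concavity-of-Dirichlet-kernel}) yields $K_b(\norm{\alpha b^{j_s}}/(b+1)) \leq K_b((b+1)^{-2})$, while the remaining factors in the block are bounded trivially by $K_b(0)=1$. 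Taking the product over the $\floor{\lambda/J}$ blocks then gives \eqref{eq:G-lambda-upperbound}.

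The existence of such a $j_s$ is exactly what Lemma \ref{lemma:norm-lowerbound} provides, applied to $\alpha' = \alpha b^{i_s}$: it returns an integer $j_0(\alpha') \geq 0$ with $\norm{\alpha' b^{j_0(\alpha')}} \geq 1/(b+1)$. The explicit formula for $j_0$ combined with $\norm{\alpha'} = \norm{\alpha b^{i_s}} \geq \alpha_0$ (which holds since $i_s \in [0,\lambda-1]$) gives $j_0(\alpha') \leq J-1$, so $j_s = i_s + j_0(\alpha')$ indeed belongs to the block $[i_s, i_s+J-1]$. The condition $\norm{\alpha b^{j_s}}/(b+1) \in [0,(2b-2)^{-1}]$ needed to invoke monotonicity of $K_b$ is automatic since $\norm{\cdot}\leq 1/2$ and $b+1 \geq 2b-2$.

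For the specialization $\alpha = h/d$ in \eqref{eq:uniform-upperbound-of-G-for-denominator-d}, I first observe that the hypothesis $b^{\lambda-1}h \not\equiv 0 \bmod d$ forces $d \nmid h b^j$ for every $j \in \{0,\ldots,\lambda-1\}$ (any such divisibility would, after multiplication by $b^{\lambda-1-j}$, imply $d \mid h b^{\lambda-1}$), so $\alpha_0 \geq 1/d > 0$. Plugging this into the definition of $J$ gives $J \leq 1 + \log(bd/(b+1))/\log b \leq 1 + \log d/\log 2 \leq 2\log d/\log 2$, the last step using $d\geq 2$. Hence $\floor{\lambda/J} \geq \lambda/J - 1 \geq (\lambda \log 2)/(2\log d) - 1$, and raising $K_b((b+1)^{-2}) \in (0,1]$ to this exponent delivers the first inequality of \eqref{eq:uniform-upperbound-of-G-for-denominator-d}.

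For the final inequality, the definition \eqref{eq:definition-Upsilon_b} of $\Upsilon_b$ is arranged precisely so that $K_b((b+1)^{-2})^{(\lambda\log 2)/(2\log d)} = b^{-2\Upsilon_b \lambda/\log d}$, leaving only the stray factor $K_b((b+1)^{-2})^{-1}$ to be bounded by $\pi/2$, equivalently $K_b((b+1)^{-2}) \geq 2/\pi$. Since $b/(b+1)^2 \leq 1/2$ for $b\geq 2$, the standard bounds $\sin(\pi y) \geq 2y$ and $\sin(\pi y) \leq \pi y$ on $[0,1/2]$ yield this lower bound by a one-line computation. No single step is a serious obstacle: the heart of the proof is the block decomposition combined with one application of Lemma \ref{lemma:norm-lowerbound} per block, and everything else is bookkeeping.
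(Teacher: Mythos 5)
Your proof is correct and follows essentially the same route as the paper's: decompose the product defining $G_\lambda$ into $\floor{\lambda/J}$ blocks of length $J$, extract from each block one index with $\norm{\alpha b^j}\geq (b+1)^{-1}$ via Lemma~\ref{lemma:norm-lowerbound}, bound the remaining factors by $1$, and then specialize with $\alpha_0\geq 1/d$ and the bound $K_b((b+1)^{-2})\geq 2/\pi$. One harmless slip: your justification that $\norm{\alpha b^{j_s}}/(b+1)\leq (2b-2)^{-1}$ invokes ``$b+1\geq 2b-2$'', which fails for $b\geq 4$; the correct (and still trivial) reason is that $\norm{\cdot}/(b+1)\leq \tfrac{1}{2(b+1)}\leq \tfrac{1}{2b-2}$ since $2(b+1)\geq 2b-2$.
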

\begin{proof}
  For
  \begin{math}
    0\leq k < K = \floor{\frac{\lambda}{J}}
    ,
  \end{math}
  using Lemma \ref{lemma:norm-lowerbound}
  we have $0\leq j_0(\alpha b^{kJ}) < J $
  and
  \begin{displaymath}
    \norm{\alpha b^{kJ+j_0(\alpha b^{k J})}}
    \geq \frac{1}{b+1}
    .
  \end{displaymath}
  Writing
  \begin{displaymath}
    G_\lambda(\alpha)
    \leq
    \prod_{0\leq k < K}
    \prod_{0\leq j < J}
    K_b \left( \frac{\norm{\alpha b^{k J+j}}}{b+1}\right)
    \leq
    \prod_{0\leq k < K}
    K_b\left( \frac{\norm{\alpha b^{kJ+j_0(\alpha b^{k J})}}}{b+1} \right)
    ,  
  \end{displaymath}
   since $K_b$ is decreasing on $[0,\frac{1}{2(b+1)}]$, it follows that
  \begin{displaymath}
    G_\lambda(\alpha)
    \leq
    \prod_{0\leq k < K}
    K_b\left( (b+1)^{-2} \right)
    =
    K_b\left( (b+1)^{-2} \right)^K
    ,
  \end{displaymath}
  which gives \eqref{eq:G-lambda-upperbound}.

  Since
  \begin{math}
    b^{\lambda-1} h \not\equiv 0 \bmod d,
  \end{math}
  for $0\leq j \leq \lambda-1$
  we have
  \begin{math}
    b^j h \not\equiv 0 \bmod d,
  \end{math}
  hence
  \begin{displaymath}
    \alpha_0
    =
    \min_{0\leq j \leq \lambda-1}
    \norm{\frac{h}{d}b^j}
    \geq
    \frac1d
    .
  \end{displaymath}
  By \eqref{eq:G-lambda-upperbound} it follows that
  \begin{displaymath}
    \abs{G_\lambda\left(\frac{h}{d}\right)}
    \leq
    K_b\left( (b+1)^{-2} \right)^{\floor{\frac{\lambda}{J}}}    
    ,
  \end{displaymath}
  where, remembering that $b\geq 2$ and $d\geq 2$,
  \begin{align*}
    J
    =
    1 + \floor{\frac{\log\frac{b}{(b+1)\alpha_0}}{\log b}}
    &
      \leq
    1 + \frac{\log\frac{b d}{b+1}}{\log b}
    \\
    &
      \leq
    1 + \frac{\log d}{\log b}
    =
    \left(\frac{1}{\log d}+\frac{1}{\log b}\right)
    \log d
    \leq
    \frac{2\log d}{\log 2}
    ,
  \end{align*}
  so that
  \begin{displaymath}
    \floor{\frac{\lambda}{J}}
    >
    \frac{\lambda}{J} - 1
    \geq
    \frac{\lambda \log 2}{2\log d} - 1
    ,
  \end{displaymath}
  which proves the left hand side inequality of
  \eqref{eq:uniform-upperbound-of-G-for-denominator-d}.
  Observing that
  \begin{equation}\label{eq:minoration-Kb-(b+1)^-2}
    K_b\left((b+1)^{-2}\right)^{-1}
    =
    \frac{b\sin \left(\pi(b+1)^{-2}\right)}{\sin \left(\pi b(b+1)^{-2}\right)}
    \leq
    \frac{\pi b(b+1)^{-2}}{2  b(b+1)^{-2}}
    =\frac{\pi}{2}
    ,
  \end{equation}
  we get the right hand side inequality of
  \eqref{eq:uniform-upperbound-of-G-for-denominator-d}.
\end{proof}

\begin{lemma}\label{lemma:pointwise-upperbound-of-F}
  For any integer $\lambda\geq 1$ and any $(\alpha,\vartheta)\in\R^2$,
  we have
  \begin{equation}\label{eq:uniform-upperbound-of-F}
    \abs{\mathcal{F}_\lambda(\alpha,\vartheta)}
    \leq
    \min\left(
      G_{\lambda-1}^{1/2}(\alpha (b^2-1))
      ,
      G_{\lambda-1}^{1/2}(\vartheta (b^2-1))
    \right)
    .
  \end{equation}
  For $(u,v)\in [0,1]^2$ with $u+v=1$, we have
  \begin{equation}\label{eq:combined-upperbound-of-F}
    \abs{\mathcal{F}_\lambda(\alpha,\vartheta)}
    \leq
    G_{\lambda-1}^{u/2}(\alpha (b^2-1))
    \
    G_{\lambda-1}^{v/2}(\vartheta (b^2-1))
    .
  \end{equation}
\end{lemma}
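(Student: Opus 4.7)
The plan is to start from the product formula of Lemma~\ref{lemma:FT-product}, pair consecutive factors, and apply Lemma~\ref{lemme:produit-U} to each pair. The gain from \eqref{eq:produit-U} is precisely a factor of $K_b$ whose argument involves $b^2-1$, which matches the shape of $G_{\lambda-1}(\alpha(b^2-1))$.

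Concretely, write $a_j = \alpha b^{\lambda-1-j} - \vartheta b^j$, so that by \eqref{eq:FT-product-formula},
\[
  \abs{\mathcal{F}_\lambda(\alpha,\vartheta)}
  =
  \prod_{j=0}^{\lambda-1} \abs{K_b(a_j)}.
\]
For each $j\in\{0,\ldots,\lambda-2\}$, solving $x-t=a_j$ and $x-bt=a_{j+1}$ gives
\[
  (b-1)x = b\,a_j - a_{j+1} = \alpha\, b^{\lambda-2-j}(b^2-1),
\]
so Lemma~\ref{lemme:produit-U} yields
\[
  \abs{K_b(a_j)\,K_b(a_{j+1})}
  \leq
  K_b\!\left(\frac{\norm{\alpha\, b^{\lambda-2-j}(b^2-1)}}{b+1}\right).
\]
Multiplying these $\lambda-1$ inequalities and reindexing $i=\lambda-2-j$ on the right produces exactly $G_{\lambda-1}(\alpha(b^2-1))$ by the definition \eqref{eq:definition-G}. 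A direct count shows that the resulting left-hand side equals
\[
  \prod_{j=0}^{\lambda-2} \abs{K_b(a_j)\,K_b(a_{j+1})}
  =
  \frac{\left(\prod_{j=0}^{\lambda-1}\abs{K_b(a_j)}\right)^{2}}{\abs{K_b(a_0)}\,\abs{K_b(a_{\lambda-1})}}
  =
  \frac{\abs{\mathcal{F}_\lambda(\alpha,\vartheta)}^{2}}{\abs{K_b(a_0)}\,\abs{K_b(a_{\lambda-1})}},
\]
and the trivial bound $\abs{K_b}\le 1$ applied to the two endpoint factors gives
$\abs{\mathcal{F}_\lambda(\alpha,\vartheta)}^{2} \leq G_{\lambda-1}(\alpha(b^2-1))$.
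The symmetric estimate $\abs{\mathcal{F}_\lambda(\alpha,\vartheta)}^{2} \leq G_{\lambda-1}(\vartheta(b^2-1))$ follows at once from the exchange $(\alpha,\vartheta)\leftrightarrow(\vartheta,\alpha)$ recorded in \eqref{symetry_F}, and \eqref{eq:uniform-upperbound-of-F} is obtained by taking the minimum.

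For \eqref{eq:combined-upperbound-of-F}, since $u+v=1$ one simply writes $\abs{\mathcal{F}_\lambda(\alpha,\vartheta)} = \abs{\mathcal{F}_\lambda(\alpha,\vartheta)}^{u}\cdot\abs{\mathcal{F}_\lambda(\alpha,\vartheta)}^{v}$ and applies \eqref{eq:uniform-upperbound-of-F} to each factor. I do not anticipate any serious obstacle: the whole argument is an algebraic manipulation once Lemma~\ref{lemme:produit-U} is in hand, and the only bookkeeping subtlety is controlling the two endpoint factors $\abs{K_b(a_0)}$ and $\abs{K_b(a_{\lambda-1})}$ that remain uncompensated by the telescoping, for which the trivial bound suffices.
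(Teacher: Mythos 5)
Your argument is correct and is essentially the paper's own proof: square the product formula, pair consecutive factors $K_b(a_j)K_b(a_{j+1})$, apply Lemma~\ref{lemme:produit-U} with the same choice of $x$ and $t$, absorb the two uncompensated endpoint factors via $\abs{K_b}\leq 1$, and use \eqref{symetry_F} plus the splitting $\abs{\mathcal{F}_\lambda}=\abs{\mathcal{F}_\lambda}^{u}\abs{\mathcal{F}_\lambda}^{v}$ for the second inequality. The only difference is that you make the double-counting of the interior factors explicit, which the paper leaves implicit.
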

\begin{proof}
  By Lemma~\ref{lemma:FT-product},
  \begin{equation*}
    \abs{\mathcal{F}_\lambda(\alpha,\vartheta)}
    =
    \prod_{j=0}^{\lambda-1}
    \abs{K_b \left( \alpha b^{\lambda-1-j} - \vartheta b^j\right)}
    ,
  \end{equation*}
  hence
  \begin{equation*}
    \abs{\mathcal{F}_\lambda(\alpha,\vartheta)}^2
    \leq
    \prod_{j=0}^{\lambda-2}
    \abs{
      K_b \left( \alpha b^{\lambda-1-j}  - \vartheta  b^j\right)
      K_b \left( \alpha b^{\lambda-2-j}  - \vartheta  b^{j+1}\right)
      }
  \end{equation*}
  and by Lemma~\ref{lemme:produit-U}
  with $x = \alpha b^{\lambda-2-j}(b+1)$ and
  $t=\alpha b^{\lambda-2-j} + \vartheta b^j$,
  \begin{equation*}
    \abs{\mathcal{F}_\lambda(\alpha,\vartheta)}^2
    \leq
    \prod_{j=0}^{\lambda-2}
    K_b \left( \frac{\norm{\alpha b^{\lambda-2-j} (b^2-1)}}{b+1}\right)
    = G_{\lambda-1}(\alpha(b^2-1)).
  \end{equation*}
  By \eqref{symetry_F} we may exchange $\alpha$ and $\vartheta$,
  and we obtain \eqref{eq:uniform-upperbound-of-F}.
  
  For $(u,v)\in [0,1]^2$ with $u+v=1$,
  by \eqref{eq:uniform-upperbound-of-F} we have
  \begin{align*}
    \abs{\mathcal{F}_\lambda(\alpha,\vartheta)}^u
    &\leq
    G_{\lambda-1}^{u/2}(\alpha (b^2-1))
    \\
    \abs{\mathcal{F}_\lambda(\alpha,\vartheta)}^v
    &\leq
    G_{\lambda-1}^{v/2}(\vartheta (b^2-1))
    ,
  \end{align*}
   and the product gives \eqref{eq:combined-upperbound-of-F}.
\end{proof}

\begin{lemma}\label{lemma:uniform-upperbound-of-F-for-denominator-d}
  For any integers $\lambda\geq 2$, $d\geq 2$ and $h\in\Z$
  such that
  \begin{displaymath}
    b^{\lambda-2} \left(b^2-1\right) h \not\equiv 0 \bmod d,
  \end{displaymath}
  and any $\vartheta\in\R$,
  we have
  \begin{equation}
    \label{eq:uniform-upperbound-of-F-for-denominator-d}
    \abs{\mathcal{F}_\lambda\left(\frac{h}{d},\vartheta\right)}
    \leq
    K_b\left( (b+1)^{-2} \right)^{\frac{\lambda \log 2}{4\log d} - \frac34}    
    \leq
    \left( \frac{\pi}{2} \right)^{3/4}\,
    b^{-\Upsilon_b\frac{\lambda}{\log d}}
    ,
  \end{equation}
  where $\Upsilon_b$ is
  defined by \eqref{eq:definition-Upsilon_b}.
\end{lemma}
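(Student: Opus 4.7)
The plan is to reduce the bound on $\mathcal{F}_\lambda(h/d,\vartheta)$ to a bound on $G_{\lambda-1}$ by means of Lemma~\ref{lemma:pointwise-upperbound-of-F}, and then to invoke Lemma~\ref{lemma:G-lambda-upperbound} with a suitable choice of parameters.

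First I would apply the first inequality of Lemma~\ref{lemma:pointwise-upperbound-of-F} with $\alpha = h/d$, keeping only the bound in terms of the first variable (the bound in $\vartheta$ is useless here since $\vartheta$ is arbitrary):
\begin{equation*}
  \abs{\mathcal{F}_\lambda(h/d,\vartheta)}
  \leq
  G_{\lambda-1}^{1/2}\!\left(\tfrac{(b^2-1)h}{d}\right).
\end{equation*}
Next I would apply Lemma~\ref{lemma:G-lambda-upperbound} with $\lambda$ replaced by $\lambda-1$ and $h$ replaced by $(b^2-1)h$. The non-vanishing hypothesis required by that lemma is precisely $b^{(\lambda-1)-1}(b^2-1)h\not\equiv 0\bmod d$, which matches our assumption. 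This gives
\begin{equation*}
  G_{\lambda-1}\!\left(\tfrac{(b^2-1)h}{d}\right)
  \leq
  K_b\!\left((b+1)^{-2}\right)^{\frac{(\lambda-1)\log 2}{2\log d}-1}.
\end{equation*}

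Taking the square root yields the exponent $\frac{(\lambda-1)\log 2}{4\log d}-\frac12$. To reach the cleaner exponent announced in the statement, I would use that $K_b((b+1)^{-2})<1$ (so increasing the exponent weakens the bound) together with $d\geq 2$, which forces $\frac{\log 2}{4\log d}\leq\frac14$. This yields
\begin{equation*}
  \frac{(\lambda-1)\log 2}{4\log d}-\frac12
  =
  \frac{\lambda\log 2}{4\log d}-\frac{\log 2}{4\log d}-\frac12
  \geq
  \frac{\lambda\log 2}{4\log d}-\frac34,
\end{equation*}
which gives the first inequality of \eqref{eq:uniform-upperbound-of-F-for-denominator-d}.

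For the second inequality I would split the bound as $K_b((b+1)^{-2})^{-3/4}\cdot K_b((b+1)^{-2})^{\frac{\lambda\log 2}{4\log d}}$ and estimate each factor separately: the first is at most $(\pi/2)^{3/4}$ by \eqref{eq:minoration-Kb-(b+1)^-2}, and for the second I would use the definition \eqref{eq:definition-Upsilon_b} of $\Upsilon_b$, which rewrites $\log K_b((b+1)^{-2})=-4\Upsilon_b(\log b)/\log 2$, so that
\begin{equation*}
  K_b\!\left((b+1)^{-2}\right)^{\frac{\lambda\log 2}{4\log d}}
  =
  \exp\!\left(-\frac{\Upsilon_b\,\lambda\,\log b}{\log d}\right)
  =
  b^{-\Upsilon_b\lambda/\log d}.
\end{equation*}
Combining these two factors delivers the stated bound. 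No step is really the main obstacle here: the lemma is essentially a book-keeping consequence of the preceding two lemmas, and the only subtlety is the passage from the exponent $\frac{(\lambda-1)\log 2}{4\log d}-\frac12$ that falls out naturally to the slightly cleaner $\frac{\lambda\log 2}{4\log d}-\frac34$ announced, which is handled by the elementary inequality $\log d\geq\log 2$.
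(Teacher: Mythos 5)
Your proposal is correct and follows exactly the paper's route: bound $\abs{\mathcal{F}_\lambda(h/d,\vartheta)}$ by $G_{\lambda-1}^{1/2}((b^2-1)h/d)$ via Lemma~\ref{lemma:pointwise-upperbound-of-F}, invoke \eqref{eq:uniform-upperbound-of-G-for-denominator-d} with $\lambda-1$ and $(b^2-1)h$ (whose non-vanishing hypothesis is precisely the lemma's assumption), pass from the exponent $\frac{(\lambda-1)\log 2}{4\log d}-\frac12$ to $\frac{\lambda\log 2}{4\log d}-\frac34$ using $d\geq 2$, and finish with \eqref{eq:minoration-Kb-(b+1)^-2} and the definition of $\Upsilon_b$. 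All steps check out.
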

\begin{proof}
  By Lemma \ref{lemma:pointwise-upperbound-of-F},
  for any $\vartheta\in\R$,
  we have
  \begin{displaymath}
    \abs{\mathcal{F}_\lambda\left(\frac{h}{d},\vartheta\right)}
    \leq
    G_{\lambda-1}^{1/2}\left(\frac{h}{d}(b^2-1)\right)
    .
  \end{displaymath}
  Since
  \begin{math}
    b^{\lambda-2} \left(b^2-1\right) h \not\equiv 0 \bmod d,
  \end{math}
  by \eqref{eq:uniform-upperbound-of-G-for-denominator-d},
  remembering that $d\geq 2$,
  it follows that
  \begin{displaymath}
    \abs{\mathcal{F}_\lambda\left(\frac{h}{d},\vartheta\right)}
    \leq
    K_b\left( (b+1)^{-2} \right)^{\frac{(\lambda-1) \log 2}{4\log d}-\frac12}
    \leq
    K_b\left( (b+1)^{-2} \right)^{\frac{\lambda \log 2}{4\log d}-\frac34}
    ,
  \end{displaymath}
  which proves the left hand side inequality
  of~\eqref{eq:uniform-upperbound-of-F-for-denominator-d}.
  Remembering \eqref{eq:minoration-Kb-(b+1)^-2},
  we get the right hand side inequality of
  \eqref{eq:uniform-upperbound-of-F-for-denominator-d}.
\end{proof}

\subsection{Incomplete sums}

\begin{lemma}\label{lemma:reverse-split}
  For integers $0\leq \lambda'\leq \lambda$,
  $0\leq n_1 < b^{\lambda'}$ and $n_2 \geq 0$
  we have
  \begin{equation} \label{eq:reverse-split}
    R_\lambda(n_1 + b^{\lambda'} n_2)
    =
    b^{\lambda-\lambda'} R_{\lambda'}(n_1)
    +
    R_{\lambda-\lambda'}(n_2)
    .
  \end{equation}
\end{lemma}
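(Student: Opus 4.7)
The plan is to prove this directly from the definitions by identifying how the digits of $n_1 + b^{\lambda'} n_2$ decompose into those of $n_1$ and $n_2$. Since the condition $0\leq n_1 < b^{\lambda'}$ forces the base-$b$ expansion of $n_1$ to live entirely in positions $0,\ldots,\lambda'-1$, adding $b^{\lambda'} n_2$ does not disturb those low positions, and the digits of $n_2$ appear shifted by $\lambda'$.

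First I would record the digit identities
\begin{equation*}
  \varepsilon_j(n_1 + b^{\lambda'} n_2)
  =
  \begin{cases}
    \varepsilon_j(n_1) & \text{if } 0\leq j < \lambda',\\
    \varepsilon_{j-\lambda'}(n_2) & \text{if } j\geq \lambda',
  \end{cases}
\end{equation*}
which follow immediately from the uniqueness of the base-$b$ expansion combined with the bound $n_1 < b^{\lambda'}$ (so that no carry reaches position $\lambda'$).

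Next I would plug these into the definition of $R_\lambda$. Splitting the sum at $j=\lambda'$,
\begin{equation*}
  R_\lambda(n_1 + b^{\lambda'} n_2)
  =
  \sum_{j=0}^{\lambda'-1} \varepsilon_j(n_1)\, b^{\lambda-1-j}
  +
  \sum_{j=\lambda'}^{\lambda-1} \varepsilon_{j-\lambda'}(n_2)\, b^{\lambda-1-j}.
\end{equation*}
The first sum factors as $b^{\lambda-\lambda'}\sum_{j=0}^{\lambda'-1}\varepsilon_j(n_1)\,b^{\lambda'-1-j} = b^{\lambda-\lambda'} R_{\lambda'}(n_1)$, and the substitution $i = j-\lambda'$ turns the second sum into $\sum_{i=0}^{\lambda-\lambda'-1}\varepsilon_i(n_2)\,b^{\lambda-\lambda'-1-i} = R_{\lambda-\lambda'}(n_2)$. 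Adding the two pieces yields \eqref{eq:reverse-split}.

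There is essentially no obstacle here: the only subtlety to be mindful of is handling the degenerate cases $\lambda'=0$ and $\lambda'=\lambda$, but these are consistent with the conventions $R_0(n)=0$ and the empty sums, so the formula still holds.
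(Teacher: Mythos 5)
Your proof is correct and follows essentially the same route as the paper: identify the digits of $n_1+b^{\lambda'}n_2$ as the digits of $n_1$ in positions $<\lambda'$ and the shifted digits of $n_2$ above, split the defining sum of $R_\lambda$ at $j=\lambda'$, and reindex. The paper states exactly this digit decomposition in one line, so there is nothing to add.
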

\begin{proof}
  It is enough to observe that
  \begin{displaymath}
    R_\lambda(n_1 + b^{\lambda'} n_2)
    =
    \sum_{j=0}^{\lambda'-1} \varepsilon_j(n_1) b^{\lambda-1-j}
    +
    \sum_{j=0}^{\lambda-\lambda'-1} \varepsilon_{j}(n_2) b^{\lambda-\lambda'-1-j}
    ,
  \end{displaymath}
  and we get \eqref{eq:reverse-split}.
\end{proof}

The following lemma permits to handle an incomplete sum:
\begin{lemma}\label{lemma:b-adic-splitting-of-type-I-sums}
  For any integers $b\geq 2$, $\lambda\geq 0$,
  and any $(\alpha,\vartheta)\in\R^2$, 
  $x\in\R$ with $1 \leq x \leq b^\lambda$,
  \begin{equation}\label{eq:b-adic-splitting-of-type-I-sums}
    \abs{
      \sum_{0\leq \ell < x}
      \e\left(\alpha R_\lambda(\ell) - \vartheta \ell\right)
    }
    \leq
    (b-1)
    \sum_{0\leq \lambda' \leq \frac{\log x}{\log b}}
    b^{\lambda'}
    \abs{
      \mathcal{F}_{\lambda'}\left(\alpha b^{\lambda-\lambda'},\vartheta\right)
    }
    .
  \end{equation}
\end{lemma}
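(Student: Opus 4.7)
The plan is to perform a standard greedy $b$-adic decomposition of the range of summation and then apply Lemma~\ref{lemma:reverse-split} on each block. Let $N = \abs{\{\ell\in\Z: 0\leq \ell<x\}}$, so that $N\leq \lceil x\rceil\leq b^\lambda$, and write its base-$b$ expansion $N = \sum_{j\geq 0} \varepsilon_j b^j$ with $\varepsilon_j\in\{0,\ldots,b-1\}$. The interval of integers $\{0,1,\ldots,N-1\}$ decomposes as the disjoint union
\begin{displaymath}
  \{0,\ldots,N-1\}
  =
  \bigsqcup_{\lambda'\geq 0}
  \bigsqcup_{k=0}^{\varepsilon_{\lambda'}-1}
  \left[A_{\lambda'}+k b^{\lambda'},\, A_{\lambda'}+(k+1)b^{\lambda'}\right),
  \quad
  A_{\lambda'} = \sum_{j>\lambda'} \varepsilon_j b^j,
\end{displaymath}
where each block has length $b^{\lambda'}$ and starts at a multiple of $b^{\lambda'}$ smaller than $b^\lambda$.

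Next I would treat a single block $[c, c+b^{\lambda'})$ where $c = b^{\lambda'} c'$ with $0\leq c' < b^{\lambda-\lambda'}$. Applying Lemma~\ref{lemma:reverse-split} with $n_1 = n\in\{0,\ldots,b^{\lambda'}-1\}$ and $n_2 = c'$ yields
\begin{displaymath}
  R_\lambda(c+n) = b^{\lambda-\lambda'} R_{\lambda'}(n) + R_{\lambda-\lambda'}(c'),
\end{displaymath}
so the block sum factors as
\begin{displaymath}
  \sum_{n=0}^{b^{\lambda'}-1}
  \e\bigl(\alpha R_\lambda(c+n) - \vartheta(c+n)\bigr)
  =
  \e\bigl(\alpha R_{\lambda-\lambda'}(c') - \vartheta c\bigr)
  \sum_{n=0}^{b^{\lambda'}-1}
  \e\bigl(\alpha b^{\lambda-\lambda'} R_{\lambda'}(n) - \vartheta n\bigr).
\end{displaymath}
By definition~\eqref{eq:definition-F} of $\mathcal{F}_{\lambda'}$, the modulus of this block sum equals $b^{\lambda'}\abs{\mathcal{F}_{\lambda'}(\alpha b^{\lambda-\lambda'},\vartheta)}$.

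Finally, summing over all blocks via the triangle inequality and using $\varepsilon_{\lambda'}\leq b-1$ gives
\begin{displaymath}
  \abs{\sum_{0\leq \ell<x}\e\bigl(\alpha R_\lambda(\ell)-\vartheta \ell\bigr)}
  \leq
  \sum_{\lambda'\geq 0} \varepsilon_{\lambda'}\,
  b^{\lambda'} \abs{\mathcal{F}_{\lambda'}(\alpha b^{\lambda-\lambda'},\vartheta)}
  \leq
  (b-1)\sum_{\lambda':\varepsilon_{\lambda'}\neq 0}
  b^{\lambda'} \abs{\mathcal{F}_{\lambda'}(\alpha b^{\lambda-\lambda'},\vartheta)}.
\end{displaymath}
Only indices $\lambda'$ with $\varepsilon_{\lambda'}\neq 0$ contribute, and any such index satisfies $b^{\lambda'}\leq N\leq x$, i.e.\ $\lambda'\leq \log x/\log b$, which yields the stated range of summation. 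There is no real obstacle here: the argument is essentially bookkeeping, and the only substantive input is the additive splitting of $R_\lambda$ provided by Lemma~\ref{lemma:reverse-split}, which neatly separates the block sum into the constant phase and a complete generating sum recognized as $\mathcal{F}_{\lambda'}$.
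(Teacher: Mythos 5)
Your argument is correct in substance and follows essentially the same route as the paper's proof: a greedy $b$-adic block decomposition of the summation range, the factorisation of each complete block via Lemma~\ref{lemma:reverse-split}, and the triangle inequality with the digit bound $\varepsilon_{\lambda'}\leq b-1$; the paper merely organises the decomposition recursively (peeling off the top block and iterating on the remainder) rather than writing out all digits of $N$ at once. The only blemish is your final claim $b^{\lambda'}\leq N\leq x$: for non-integer $x$ one has $N=\ceil{x}>x$, so an index with $\varepsilon_{\lambda'}\neq 0$ need not satisfy $\lambda'\leq \log x/\log b$ (e.g.\ $b=2$, $x=3.5$, $N=4$); but this boundary defect is equally present in the paper's own recursive argument, and it is immaterial in the applications, where the summation over $\lambda'$ is in any case extended to the full range $0\leq\lambda'\leq\lambda$.
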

\begin{proof}
  The upper bound~\eqref{eq:b-adic-splitting-of-type-I-sums} is
  clearly true for $x = b^{\lambda}$. So we can assume that
  $x<b^{\lambda}$.  Writing
  \begin{math}
    \lambda' = \floor{\frac{\log x}{\log b}},
  \end{math}
  we have $b^{\lambda'} \leq x < b^\lambda$, hence $\lambda' < \lambda$.
  We write $x = y b^{\lambda'}+x'$ with
  $y \in \{0,\ldots,b-1\}$ and $0\leq x'<b^{\lambda'}$.
  The left hand-side of \eqref{eq:b-adic-splitting-of-type-I-sums} is
  at most
  \begin{multline*}
    \abs{
      \sum_{0\leq k < y}
      \sum_{0\leq \ell < b^{\lambda'}}
      \e\left(
        \alpha R_\lambda(k b^{\lambda'}+ \ell)
        - \vartheta (k b^{\lambda'}+\ell)\right)
    }
    \\
    +
    \abs{
      \sum_{0\leq \ell < x'}
      \e\left(
        \alpha R_\lambda(y b^{\lambda'} +\ell)
        - \vartheta (yb^{\lambda'}+\ell)\right)
    }
    .
  \end{multline*}
  By Lemma~\ref{lemma:reverse-split},
  for $k\in \{0,\ldots,b-1\}$ and $\ell \in \{0,\ldots,b^{\lambda'}-1\}$ 
  we have
  \begin{displaymath}
    R_\lambda(k b^{\lambda'} + \ell)
    = R_{\lambda-\lambda'}(k) + R_{\lambda'}(\ell) b^{\lambda-\lambda'}
    ,
  \end{displaymath}
  which permits to get for the left hand-side of
  \eqref{eq:b-adic-splitting-of-type-I-sums}
  the following upper bound
  \begin{displaymath}
    y
    \abs{
      \sum_{0\leq \ell < b^{\lambda'}}
      \e\left(\alpha R_{\lambda'}(\ell) b^{\lambda-\lambda'} - \vartheta \ell\right)
    }
    +
    \abs{
      \sum_{0\leq \ell < x'}
      \e\left(\alpha R_{\lambda'}(\ell) b^{\lambda-\lambda'} - \vartheta \ell\right)
    }
    .
  \end{displaymath}
  Remembering that $y\leq b-1$ and applying this process again to
  $x'$ and so on, we obtain
  \begin{displaymath}
    \abs{
      \sum_{0\leq \ell < x}
      \e\left(\alpha R_\lambda(\ell) - \vartheta \ell\right)
    }
    \leq
    (b-1)
    \sum_{0\leq \lambda' \leq \frac{\log x}{\log b}}
    \abs{
      \sum_{0\leq \ell < b^{\lambda'}}
      \e\left(
        \alpha b^{\lambda-\lambda'} R_{\lambda'}(\ell) - \vartheta \ell\right)
    }    
    ,    
  \end{displaymath}
  which is \eqref{eq:b-adic-splitting-of-type-I-sums}.
\end{proof}

\bigskip

\subsection{Mean values}~

\begin{lemma}\label{lemma:L2-mean-value}
  Let $b\geq 2$ and $\lambda \geq 0$ be integers.
  For $(\alpha,t)\in\R^2$ we have
  \begin{equation}
    \label{eq:L2-mean-value}
    \sum_{0\leq h < b^\lambda}
    \abs{
      \mathcal{F}_{\lambda}\left(\alpha,t+\frac{h}{b^{\lambda}}\right)
    }^2
    = 1
    .
  \end{equation}
\end{lemma}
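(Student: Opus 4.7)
The plan is to recognize the left-hand side as a Parseval (Plancherel) identity for the discrete Fourier transform on $\Z/b^\lambda\Z$. Essentially, $\mathcal{F}_\lambda(\alpha,\vartheta)$ is (up to modulating by $\e(\alpha R_\lambda(n))$) the DFT of an indicator function of length $b^\lambda$, and shifting $\vartheta$ by $h/b^\lambda$ for $0\le h<b^\lambda$ traces out a full set of dual frequencies.

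Concretely, I would expand the square using definition \eqref{eq:definition-F}:
\begin{equation*}
  \abs{\mathcal{F}_\lambda(\alpha,\vartheta)}^2
  =
  \frac{1}{b^{2\lambda}}
  \sum_{0\le n_1,n_2<b^\lambda}
  \e\bigl(\alpha\bigl(R_\lambda(n_1)-R_\lambda(n_2)\bigr)\bigr)
  \e\bigl(-\vartheta(n_1-n_2)\bigr),
\end{equation*}
then substitute $\vartheta=t+h/b^\lambda$ and sum over $h$. Swapping the order of summation, the inner sum becomes
\begin{equation*}
  \sum_{0\le h<b^\lambda}\e\!\left(-\frac{h(n_1-n_2)}{b^\lambda}\right)
  =
  b^\lambda\,\mathbf{1}_{n_1\equiv n_2\bmod b^\lambda}.
\end{equation*}
Since the pairs $(n_1,n_2)$ range in $\{0,\dots,b^\lambda-1\}^2$, the congruence forces $n_1=n_2$, in which case $R_\lambda(n_1)=R_\lambda(n_2)$ and $n_1-n_2=0$, so all exponentials collapse to $1$.

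Collecting everything, the double sum contributes $b^\lambda$ diagonal terms, each weighted by $b^\lambda$ from the $h$-sum, giving $b^{2\lambda}$ in the numerator against $b^{2\lambda}$ in the denominator, hence the total equals $1$. There is no real obstacle here; the only thing to check carefully is that the variable $t$ drops out entirely (it does, because $t(n_1-n_2)=0$ on the surviving diagonal), which is what makes the identity independent of both $\alpha$ and $t$.
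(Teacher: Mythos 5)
Your proposal is correct and follows exactly the paper's own argument: expand the square via \eqref{eq:definition-F}, sum over $h$ and use orthogonality to reduce to the diagonal $n_1=n_2$, where all phases (including the $t$-dependence) vanish. Nothing to add.
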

\begin{proof}
  By \eqref{eq:definition-F},
  the left hand side of \eqref{eq:L2-mean-value} is equal to
  \begin{displaymath}
    \frac{1}{b^{2\lambda}}
    \sum_{n_1=0}^{b^\lambda-1}
    \sum_{n_2=0}^{b^\lambda-1}
    \sum_{h=0}^{b^\lambda-1} 
    \e\left(
      \alpha \left(R_\lambda(n_1)-R_\lambda(n_2)\right)
      - t (n_1-n_2)
      - \frac{h}{b^{\lambda}} (n_1-n_2)
    \right),
  \end{displaymath}
  and by orthogonality we have $n_1=n_2$
  and we obtain~\eqref{eq:L2-mean-value}.
\end{proof}

\begin{lemma}\label{lemma:L2-mean-value-in-alpha}
  For any integers $b\geq 2$ and $\lambda \geq 1$,
  for any sequence $\vartheta_1,\ldots,\vartheta_N$
  of real numbers
  which is $b^{-\lambda}$ well spaced modulo~$1$,
  for any $\alpha\in\R$,
  we have
  \begin{equation}\label{eq:L2-mean-value-in-alpha}
    \sum_{n=1}^N
    \abs{\mathcal{F}_{\lambda}\left(\alpha,\vartheta_n\right)}^2
    \leq
    2
    .
  \end{equation}
\end{lemma}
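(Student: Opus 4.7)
The plan is to recognize this as a direct consequence of the analytic form of the large sieve (Lemma~\ref{lemma:large-sieve}) applied to the Fourier expansion of $\mathcal{F}_\lambda$.

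First I would expand, using definition \eqref{eq:definition-F}, the inner quantity as a trigonometric polynomial in $\vartheta$:
\begin{equation*}
  \mathcal{F}_\lambda(\alpha,\vartheta)
  =
  \frac{1}{b^\lambda} \sum_{0\leq m < b^\lambda}
  a_m \, \e(-\vartheta m),
  \qquad
  a_m := \e\left(\alpha R_\lambda(m)\right),
\end{equation*}
which is a trigonometric polynomial of length $b^\lambda$ with coefficients $a_m$ of modulus $1$.

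Next, since the points $(\vartheta_n)_{1\le n\le N}$ are $b^{-\lambda}$ well spaced modulo $1$, so are the points $(-\vartheta_n)_{1\le n\le N}$. Note that $b^{-\lambda}\leq \frac12$ because $\lambda\geq 1$ and $b\geq 2$, so Lemma~\ref{lemma:large-sieve} applies with $\delta=b^{-\lambda}$, $M=-1$, $N\leftarrow b^\lambda$, yielding
\begin{equation*}
  \sum_{n=1}^N
  \abs{\sum_{0\leq m < b^\lambda} a_m \, \e(-\vartheta_n m)}^2
  \leq
  \left(b^\lambda - 1 + b^\lambda\right)
  \sum_{0\leq m<b^\lambda} \abs{a_m}^2
  \leq
  2\, b^{2\lambda},
\end{equation*}
since $\abs{a_m}=1$ for every $m$. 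Dividing by $b^{2\lambda}$ gives \eqref{eq:L2-mean-value-in-alpha}.

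There is no real obstacle here; the only point worth checking is that the spacing condition $\delta\le\tfrac12$ required by the large sieve is satisfied, and that the Fourier expansion of $\mathcal{F}_\lambda(\alpha,\cdot)$ has $b^\lambda$ unimodular coefficients, both of which are immediate. This proof also shows that the constant $2$ is essentially optimal and parallels the ``complete'' $L^2$ identity of Lemma~\ref{lemma:L2-mean-value}, which corresponds to the special case of an arithmetic progression of $b^\lambda$ equally spaced points.
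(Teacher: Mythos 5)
Your proof is correct and is essentially the paper's own argument: both apply the analytic large sieve (Lemma~\ref{lemma:large-sieve}) to the trigonometric polynomial $\mathcal{F}_\lambda(\alpha,\cdot)$ of length $b^\lambda$ with unimodular coefficients and spacing $\delta=b^{-\lambda}$, obtaining the bound $(2b^\lambda-1)\,b^\lambda\,b^{-2\lambda}\leq 2$. The extra checks you record (that $\delta\leq\frac12$ and that the coefficients have modulus one) are exactly the points left implicit in the paper.
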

\begin{proof}
  By the large sieve inequality (Lemma \ref{lemma:large-sieve}), we have
  \begin{displaymath}
    \sum_{n=1}^N
    \abs{\mathcal{F}_{\lambda}\left(\alpha,\vartheta_n\right)}^2
    \leq
    \left(b^\lambda - 1 + b^\lambda\right) b^\lambda 
    b^{-2\lambda},
  \end{displaymath}
  and \eqref{eq:L2-mean-value-in-alpha} follows.
\end{proof}

The following two lemmas will permit us to introduce the exponent
$\eta_b$ which will be used to bound $L^1$ norms.

\begin{lemma}\label{lemma:psi-b}
  For any integer $b\geq 2$ and $x\in\R$
  we have
  \begin{equation}
    \label{eq:majoration-psi-b}
    \psi_b(x)
    :=
    \sum_{r=1}^b \abs{K_b\left( x + \frac{r}{b} \right)}
    \leq
    \psi_b\left(\frac1{2b}\right)
    \leq
    \frac{2}{\pi} \log \left( c_\psi b \right)
    ,
  \end{equation}
  with
  \begin{displaymath}
    4.86
    <
    c_\psi
    =
    \frac{2}{\pi} 
    \exp\left(   \frac{2 \frac{\pi}{10} }{\sin \frac{\pi}{10}} \right)
    <
    4.87,
  \end{displaymath}
  and we have the exact values
  \begin{equation}\label{eq:psi_b-exact-values}
    \psi_2\left(\frac14\right) = \sqrt2,\
    \psi_3\left(\frac16\right) = \frac53,\
    \psi_4\left(\frac18\right) = \left(2+\sqrt2\right)^{1/2}
    .
  \end{equation}
\end{lemma}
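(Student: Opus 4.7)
The plan is to reduce the study to the symmetric point by periodicity and parity, establish that the maximum is attained there, and then bound the value explicitly.

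First I would show that $\psi_b$ is $1/b$-periodic and even. Lemma~\ref{lemma:concavity-of-Dirichlet-kernel} yields $K_b(x+1)=(-1)^{b+1}K_b(x)$, so $\abs{K_b}$ is $1$-periodic, and the evenness of $K_b$ transfers to $\psi_b$. The $1/b$-periodicity follows because shifting $x$ by $1/b$ merely permutes the summands of $\psi_b(x)=\sum_{r=1}^b \abs{K_b(x+r/b)}$ (the index wrapping uses $\abs{K_b(x+1+1/b)}=\abs{K_b(x+1/b)}$). Combined these give $\psi_b(1/b-x)=\psi_b(x)$, so $x=1/(2b)$ is a critical point and the study reduces to $[0,1/(2b)]$.

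Second (the main obstacle), I would show that $\psi_b$ is increasing on $[0,1/(2b)]$, so the maximum is at $1/(2b)$. On $(0,1/(2b))$ all factors $\sin(\pi(x+r/b))$ are strictly positive, so
\[
\psi_b(x)=\frac{\sin(\pi bx)}{b}\sum_{r=0}^{b-1}\frac{1}{\sin(\pi(x+r/b))}.
\]
Computing $\psi_b'(x)/\psi_b(x)$ and invoking the classical cotangent identity $\sum_{r=0}^{b-1}\cot(\pi(x+r/b))=b\cot(\pi bx)$, the logarithmic derivative reduces (up to a positive factor) to the double sum
\[
\sum_{0\le r<s\le b-1}\frac{\cos(\pi(x+r/b))+\cos(\pi(x+s/b))}{\sin(\pi(x+r/b))\sin(\pi(x+s/b))}.
\]
Grouping pairs under the involution $r\leftrightarrow b-1-r$ induced by $x\leftrightarrow 1/b-x$, and using that the reflection about $1/(2b)$ is a symmetry, one establishes positivity of this sum on $(0,1/(2b))$, giving the monotonicity. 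The explicit values in \eqref{eq:psi_b-exact-values} follow by direct evaluation; for instance for $b=4$, $\psi_4(1/8)=\tfrac12(\csc(\pi/8)+\csc(3\pi/8))=\sqrt{2}\,(\sin(\pi/8)+\cos(\pi/8))=\sqrt{2+\sqrt{2}}$.

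Third, for the logarithmic upper bound, use
\[
\psi_b(1/(2b))=\frac1b\sum_{r=0}^{b-1}\frac{1}{\sin(\pi(2r+1)/(2b))}
\]
and split the sum according to whether $t_r:=(2r+1)/(2b)$ lies in $(0,1/10]$, $(1/10,9/10)$, or $[9/10,1)$. For $t_r\le 1/10$, the monotonicity of $\sin(\pi t)/(\pi t)$ on $(0,1/2]$ gives $1/\sin(\pi t_r)\le \frac{(\pi/10)/\sin(\pi/10)}{\pi t_r}$; the symmetric tail is treated via $t_r\leftrightarrow 1-t_r$. For $t_r\in(1/10,9/10)$, one has $1/\sin(\pi t_r)\le 1/\sin(\pi/10)$. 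Integral comparison $\sum_{r\le R}1/(2r+1)\le 1+(1/2)\log(2R+1)$ then yields the $(2/\pi)\log b$ main term from the two symmetric pole regions, and carefully collecting the remaining constants (absorbing one factor of $(\pi/10)/\sin(\pi/10)$ per pole into the logarithm) produces the claimed bound with $c_\psi=(2/\pi)\exp(2(\pi/10)/\sin(\pi/10))$.
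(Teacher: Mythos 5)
Your step 3 contains a genuine quantitative gap: the decomposition you propose cannot yield the stated bound, which is quite tight (e.g.\ for $b=10$ the true value is $2.428\ldots$ against a bound of $2.472\ldots$). First, on the two ``pole'' regions you replace $\csc(\pi t_r)$ by $\frac{C}{\pi t_r}$ with $C=\frac{\pi/10}{\sin(\pi/10)}>1$; after the comparison $\sum_{r\le R}\frac1{2r+1}\le 1+\frac12\log(2R+1)$ this produces a main term $\frac{2C}{\pi}\log b$, and the excess $\frac{2(C-1)}{\pi}\log b$ tends to infinity with $b$ --- a multiplicative loss on the logarithm cannot be ``absorbed into the constant $c_\psi$''. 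Second, and worse, your middle region $(1/10,9/10)$ contains about $0.8\,b$ indices, each bounded by $\csc(\pi/10)\approx 3.24$, so it alone contributes about $2.59$ to $\psi_b(\frac1{2b})$, which already exceeds the entire constant budget $\frac2\pi\log c_\psi\approx 1.01$. The correct route (the one behind \cite[Lemme~14]{mauduit-rivat-2010}, which the paper simply invokes) is to isolate only the two extreme terms $r=0$ and $r=b-1$, giving $\frac{2}{b\sin\frac{\pi}{2b}}$, and to bound the remaining $b-2$ terms by convexity of $t\mapsto\csc(\pi t)$ against $\int_{1/b}^{1-1/b}\csc(\pi t)\,dt=\frac2\pi\log\cot\frac{\pi}{2b}\le\frac2\pi\log\frac{2b}{\pi}$, which produces the coefficient $\frac2\pi$ exactly; the factor $\exp\left(\frac{2\,\pi/10}{\sin(\pi/10)}\right)$ in $c_\psi$ then comes from bounding $\frac{\pi/(2b)}{\sin(\pi/(2b))}\le\frac{\pi/10}{\sin(\pi/10)}$ for $b\ge 5$, and the cases $b=2,3,4$ (where this last step fails) must be checked numerically using the exact values \eqref{eq:psi_b-exact-values} --- a step your write-up omits.

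Two further remarks. Your step 2 (maximality of $\psi_b$ at $\frac1{2b}$) is stated as a sketch: the positivity of the paired terms under the involution $r\leftrightarrow b-1-r$ is asserted, not proved. It is in fact true --- writing $u=\frac{\theta_r+\theta_s}{2}$, $A=\pi\left(2x+\frac{b-1}{b}\right)<\pi$, one checks that whenever $\cos u<0$ the partner pair has a larger positive numerator $\cos(A-u)>\abs{\cos u}$ and a smaller positive denominator --- but this verification is the whole content of the step and must be written out; note the paper avoids it entirely by citing \cite[Lemme~14]{mauduit-rivat-2010}. Your step 1 and the exact values \eqref{eq:psi_b-exact-values} are correct.
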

\begin{proof}
  By \cite[Lemme 14]{mauduit-rivat-2010},
  for any integer $b\geq 2$ and $x\in\R$
  we have
  \begin{displaymath}
    \psi_b(x)
    \leq
    \psi_b\left(\frac1{2b}\right)
    \leq
    \frac{2}{b \sin \frac{\pi}{2 b}}
    +
    \frac{2}{\pi} \log \frac{2 b}{\pi}
    =
    \frac{2}{\pi}
    \left(
      \frac{2 \frac{\pi}{2b} }{\sin \frac{\pi}{2 b}}
      +
      \log \frac{2 b}{\pi}
      \right)
    .
  \end{displaymath}
  Since $t \mapsto \frac{\sin t}{t}$ is decreasing on $[0,\pi/10]$,
  it follows that \eqref{eq:majoration-psi-b} holds for $b\geq 5$.

  The exact values \eqref{eq:psi_b-exact-values} are given by
  \cite[Lemme 14]{mauduit-rivat-2010}.
  They permit us to check by elementary numerical computations
  that \eqref{eq:majoration-psi-b}
  also holds for $2\leq b\leq 4$,
  and this completes the proof of \eqref{eq:majoration-psi-b}.
\end{proof}

\begin{lemma}\label{lemma:eta_b-properties}
  For any integer $b\geq 2$,
  let
  \begin{equation}\label{eq:definition-eta_b}
    \eta_2 = \frac{\log\left(2+\sqrt2\right)}{4\log 2}
    \quad
    \text{ and }
    \quad
    \forall b\geq 3,\
    \eta_b = \frac{\log \psi_b\left(\frac{1}{2 b} \right)}{\log b}
    ,
  \end{equation}
  so that
  \begin{displaymath}
    2^{\eta_2}= \left(2+\sqrt2\right)^{1/4}
    \quad
    \text{ and }
    \quad
    \forall b\geq 3,\
    \psi_b\left(\frac{1}{2 b} \right)
    =
    b^{\eta_b}
    .
  \end{displaymath}
  For any integer $b\geq 5$ we have
  \begin{equation}\label{eq:upperbound-eta-b}
    0 < \eta_b
    <
    \eta_2 = \eta_4 = \frac{\log \left(2+\sqrt2\right)}{4\log 2}
    < \eta_3 = \frac{\log 5}{\log 3} - 1 < 0.465
    .
  \end{equation}
\end{lemma}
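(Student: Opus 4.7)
The plan is to handle separately the explicit identities, the positivity of $\eta_b$, and the inequality $\eta_b<\eta_2$ for $b\ge 5$. The identities in the chain are essentially bookkeeping from the exact evaluations in \eqref{eq:psi_b-exact-values}: from $\psi_4(1/8)=(2+\sqrt 2)^{1/2}$ one reads off $\eta_4=\log((2+\sqrt 2)^{1/2})/\log 4=\log(2+\sqrt 2)/(4\log 2)=\eta_2$, and from $\psi_3(1/6)=5/3$ one obtains $\eta_3=\log(5/3)/\log 3=\log 5/\log 3-1$. The two numerical inequalities $\eta_3<0.465$ and $\eta_2<\eta_3$ are then verified directly from these closed-form expressions (for instance, $\eta_2<\eta_3$ is equivalent to $\log 3\cdot\log(2+\sqrt 2)<4\log 2\cdot\log(5/3)$, which is purely numerical).

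For the lower bound $\eta_b>0$ when $b\ge 5$, I would first expand $\psi_b(1/(2b))$ in closed form. Using $\sin(\pi b(1/(2b)+r/b))=\sin(\pi/2+\pi r)=(-1)^r$ in the definition \eqref{eq:definition-Kb} of $K_b$ gives
\begin{equation*}
  \psi_b\left(\frac{1}{2b}\right)
  =\sum_{r=1}^{b}\abs{K_b\left(\tfrac{1}{2b}+\tfrac{r}{b}\right)}
  =\frac{1}{b}\sum_{r=0}^{b-1}\frac{1}{\sin(\pi(2r+1)/(2b))}.
\end{equation*}
Each sine argument lies in $(0,\pi)$, so each summand is $\geq 1$, and at least one (corresponding to the argument furthest from $\pi/2$) is strictly greater than $1$. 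Hence the sum exceeds $b$, giving $\psi_b(1/(2b))>1$ and $\eta_b>0$.

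For the upper bound $\eta_b<\eta_2$ when $b\ge 5$, the key step is to combine Lemma \ref{lemma:psi-b}, which yields $\psi_b(1/(2b))\leq (2/\pi)\log(c_\psi b)$, with the inequality $(2/\pi)\log(c_\psi b)<b^{\eta_2}$. Setting
\begin{equation*}
  f(b)=b^{\eta_2}-\tfrac{2}{\pi}\log(c_\psi b),
\end{equation*}
I would differentiate to get $f'(b)>0 \Longleftrightarrow b^{\eta_2}>2/(\pi\eta_2)\approx 1.44$, which is easily satisfied for all $b\ge 3$ since already $3^{\eta_2}\approx 1.63$. Thus $f$ is strictly increasing on $[5,+\infty)$, and it suffices to verify $f(5)>0$ by direct evaluation.

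The main obstacle is that $f(5)>0$ turns out to be numerically very tight: both $5^{\eta_2}$ and $(2/\pi)\log(5c_\psi)$ are approximately $2.03$--$2.04$, leaving a margin of order $10^{-2}$. If a generic rigorous numerical bound feels insufficient, a safer back-up is to evaluate $\psi_5(1/10)$ exactly via the surd identities $\sin(\pi/10)=(\sqrt 5-1)/4$ and $\sin(3\pi/10)=(\sqrt 5+1)/4$, giving the closed form $\psi_5(1/10)=(4\sqrt 5+1)/5$, and then check the sharper inequality $\log((4\sqrt 5+1)/5)<\eta_2\log 5$ algebraically. The tightness at $b=5$ is the only delicate case; once past it, the gap $b^{\eta_2}-(2/\pi)\log(c_\psi b)$ grows rapidly and monotonicity of $f$ takes care of all larger bases uniformly.
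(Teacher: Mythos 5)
Your proof is correct and follows essentially the same route as the paper: the chain of identities comes from the exact values \eqref{eq:psi_b-exact-values}, and the bound $\eta_b<\eta_2$ for $b\ge 5$ is obtained from \eqref{eq:majoration-psi-b} combined with a monotonicity argument in $b$ that reduces everything to a numerical check at $b=5$ (the paper states the monotonicity for the ratio $\log\left(\tfrac{2}{\pi}\log(c_\psi b)\right)/\log b$ rather than for your difference $f$, which is equivalent). Your explicit argument for $\eta_b>0$ and the exact evaluation $\psi_5\left(\tfrac1{10}\right)=\tfrac{4\sqrt{5}+1}{5}$ as a safeguard against the tightness at $b=5$ are both correct and are sensible additions to what the paper leaves implicit.
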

\begin{proof}
  The definition of $\eta_b$ is taken from
  \cite[Lemme 14 and Lemme 18]{mauduit-rivat-2010}.
  The upper bounds \eqref{eq:upperbound-eta-b}
  follow from \eqref{eq:majoration-psi-b},
  \eqref{eq:psi_b-exact-values}
  and the fact that
  \begin{math}
    b\mapsto
    \log\left( \frac{2}{\pi} \log \left( c_\psi b \right) \right)
    / \log b
  \end{math}
  is decreasing for $b\geq 5$,
  or may be deduced from the proof
  of (84) in \cite[Lemme~14]{mauduit-rivat-2010}.
\end{proof}

\begin{lemma}\label{lemma:L1-norm-in-alpha-of-F}
  For any integers $b\geq 2$ and $\lambda\geq 0$,
  and $\eta_b$ defined by \eqref{eq:definition-eta_b},
  we have
  \begin{equation}
    \label{eq:L1-norm-in-alpha-of-F}
    \int_0^1
    \abs{\mathcal{F}_{\lambda}\left(\alpha,\vartheta\right)}
    \, d\vartheta
    \leq
    b^{(\eta_b-1)\lambda}
    .
  \end{equation}
\end{lemma}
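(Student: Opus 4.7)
The plan is to prove \eqref{eq:L1-norm-in-alpha-of-F} by a $k$-step recursion in $\lambda$ based on the Splitting product formula (Lemma~\ref{FT-splitting-product}). For an integer $k\in\{1,\ldots,\lambda\}$, writing
\[
|\mathcal{F}_\lambda(\alpha,\vartheta)| = |\mathcal{F}_k(\alpha b^{\lambda-k},\vartheta)|\cdot|\mathcal{F}_{\lambda-k}(\alpha,\vartheta b^k)|,
\]
integrating over $\vartheta\in[0,1]$, substituting $u=\vartheta b^k$, splitting $[0,b^k)$ into unit subintervals and using the $1$-periodicity of $\vartheta\mapsto |\mathcal{F}_{\lambda-k}(\alpha,\vartheta)|$, I obtain the identity
\[
I_\lambda(\alpha):=\int_0^1 |\mathcal{F}_\lambda(\alpha,\vartheta)|\,d\vartheta = \frac{1}{b^k}\int_0^1 |\mathcal{F}_{\lambda-k}(\alpha,v)|\,\Sigma_k(\alpha b^{\lambda-k},v)\,dv,
\]
with $\Sigma_k(\beta,v):=\sum_{r=0}^{b^k-1}|\mathcal{F}_k(\beta,(r+v)/b^k)|$. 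Setting $M_k:=\sup_{\beta,v}\Sigma_k(\beta,v)$, I get the recursion $I_\lambda(\alpha)\le (M_k/b^k)\,I_{\lambda-k}(\alpha)$, which, combined with the base case $I_0(\alpha)=1$ (empty product in Lemma~\ref{lemma:FT-product}), yields the bound as soon as $M_k\le b^{k\eta_b}$.

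For $b\ge 3$ I would take $k=1$. Since $|\mathcal{F}_1(\beta,(r+v)/b)|=|K_b(\beta-(r+v)/b)|$, the $1$-periodicity of $|K_b|$ gives, after the reindexing $r\mapsto b-r$, the identity $\Sigma_1(\beta,v)=\psi_b(\beta-v/b)$. Lemma~\ref{lemma:psi-b} then yields $M_1\le \psi_b((2b)^{-1})=b^{\eta_b}$ by the very definition \eqref{eq:definition-eta_b} of $\eta_b$, so iterating $I_\lambda(\alpha)\le b^{\eta_b-1}I_{\lambda-1}(\alpha)$ from $I_0=1$ proves \eqref{eq:L1-norm-in-alpha-of-F}.

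The case $b=2$ is more delicate, because $\psi_2(1/4)=\sqrt 2$ would only yield $I_\lambda\le 2^{-\lambda/2}$, which is weaker than the target since $2^{2\eta_2}=\sqrt{2+\sqrt 2}<2$. I would therefore run the same scheme with $k=2$. Using $K_2(x)=\cos\pi x$ and Lemma~\ref{lemma:FT-product} to expand $|\mathcal{F}_2(\beta,(r+v)/4)|=|\cos\pi(2\beta-(r+v)/4)|\,|\cos\pi(\beta-(r+v)/2)|$, grouping the four terms according to the parity of $r$, and using $|a+b|+|a-b|=2\max(|a|,|b|)$ to handle $|\cos(\phi-\pi/4)|+|\cos(\phi-3\pi/4)|$, I expect the closed form
\[
\Sigma_2(\beta,v) = A(|\cos\phi|+|\sin\phi|) + B\sqrt 2\,\max(|\cos\phi|,|\sin\phi|),
\]
with $A=|\cos\pi(\beta-v/2)|$, $B=|\sin\pi(\beta-v/2)|$ (so that $A^2+B^2=1$) and $\phi=\pi(2\beta-v/4)$. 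A Cauchy--Schwarz step in $(A,B)$ followed by the parametrisation $(\cos\theta,\sin\theta)$ of the pair $(\max,\min)(|\cos\phi|,|\sin\phi|)$ with $\theta\in[0,\pi/4]$ reduces the optimisation to $\max_{\theta}[1+\sqrt 2\sin(2\theta+\pi/4)]=1+\sqrt 2$, yielding $M_2\le\sqrt{2+\sqrt 2}=2^{2\eta_2}$. The resulting recursion $I_\lambda\le 2^{2(\eta_2-1)}I_{\lambda-2}$ iterates from $I_0=1$ to cover even $\lambda$, while odd $\lambda$ is handled via the base case $I_1=\int_0^1|\cos\pi u|\,du=2/\pi$, which satisfies $I_1\le 2^{\eta_2-1}$ by the elementary inequality $(4/\pi)^4\le 2+\sqrt 2$.

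The main technical obstacle is the explicit trigonometric optimisation in base $2$ that produces the sharp constant $\sqrt{2+\sqrt 2}$; for $b\ge 3$, the computation reduces cleanly via $\Sigma_1=\psi_b(\cdot)$ to Lemma~\ref{lemma:psi-b} and the definition of $\eta_b$.
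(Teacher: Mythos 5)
Your proof is correct and follows essentially the same route as the paper: a one-step recursion via $\Sigma_1=\psi_b$ combined with Lemma~\ref{lemma:psi-b} for $b\geq 3$, and a two-step recursion for $b=2$ producing the same constant $\sqrt{2+\sqrt{2}}=2^{2\eta_2}$, with odd $\lambda$ handled by $\int_0^1\abs{\cos \pi u}\,du=2/\pi\leq 2^{\eta_2-1}$. (The paper organizes the base-$2$ optimisation via Cauchy--Schwarz and the $L^2$ identity \eqref{eq:L2-mean-value} instead of your explicit $\max(\abs{\cos\phi},\abs{\sin\phi})$ parametrisation, which is an equivalent computation; note the harmless slip that your optimisation target should read $2+\sqrt{2}\sin(2\theta+\pi/4)$, whose maximum $2+\sqrt{2}$ is what yields $M_2\leq\sqrt{2+\sqrt{2}}$.)
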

\begin{proof}
  For $\lambda=0$ both sides of \eqref{eq:L1-norm-in-alpha-of-F} are
  equal to $1$.
  Let us assume first that $b\geq 3$.
  For $\lambda\geq 1$,
  by \eqref{eq:FT-splitting-product} and the fact that
  \begin{math}
    \abs{\mathcal{F}_{\lambda-1}(\alpha,\cdot)}
  \end{math}
  is $1$-periodic, we have
  \begin{align*}
    \int_0^1
    \abs{\mathcal{F}_{\lambda}\left(\alpha,\vartheta\right)}
    \, d\vartheta
    &=
    \int_0^1
    \abs{\mathcal{F}_{\lambda-1}(\alpha,\vartheta b)}
    \cdot
    \abs{\mathcal{F}_{1}(\alpha b^{\lambda-1},\vartheta)}
    \, d\vartheta
    \\
    &
      =
      \sum_{r=0}^{b-1}
      \int_0^{\frac1b}
      \abs{\mathcal{F}_{\lambda-1}\left(
      \alpha,\left(\frac{r}{b} + \vartheta\right) b \right)
      }
      \cdot
      \abs{\mathcal{F}_{1}\left(
      \alpha b^{\lambda-1},\frac{r}{b}+ \vartheta\right)
      }
      \, d\vartheta
    \\
    &
      =
      \int_0^{1}
      \abs{\mathcal{F}_{\lambda-1}\left(
      \alpha, \vartheta' \right)
      }
      \frac1b
      \sum_{r=0}^{b-1}
      \abs{\mathcal{F}_{1}\left(
      \alpha b^{\lambda-1},\frac{r+\vartheta'}{b} \right)
      }
      \, d\vartheta'
      .
  \end{align*}
  By \eqref{eq:FT-product-formula}, \eqref{eq:majoration-psi-b}
  and \eqref{eq:definition-eta_b},
  \begin{align*}
    \frac1b
    \sum_{r=0}^{b-1}
    \abs{\mathcal{F}_{1}\left(
        \alpha b^{\lambda-1},\frac{r+\vartheta'}{b} \right)
    }
    &=
    \frac1b
    \sum_{r=0}^{b-1}
    \abs{K_b\left( \alpha b^{\lambda-1} - \frac{r+\vartheta'}{b} \right)}
    \\
    &=
    \frac1b
    \psi_b\left( \alpha b^{\lambda-1} - \frac{\vartheta'}{b} \right)
    \leq
    \frac{b^{\eta_b}}{b}
    ,
  \end{align*}
  hence
  \begin{displaymath}
    \int_0^1
    \abs{\mathcal{F}_{\lambda}\left(\alpha,\vartheta\right)}
    \, d\vartheta
    \leq
    b^{\eta_b-1}
    \int_0^1
    \abs{\mathcal{F}_{\lambda-1}\left(\alpha,\vartheta\right)}
    \, d\vartheta
    ,
  \end{displaymath}
  and by induction
  \begin{displaymath}
    \int_0^1
    \abs{\mathcal{F}_{\lambda}\left(\alpha,\vartheta\right)}
    \, d\vartheta
    \leq
    b^{(\eta_b-1)\lambda}
    \int_0^1
    \abs{\mathcal{F}_{0}\left(\alpha,\vartheta\right)}
    \, d\vartheta
    =
    b^{(\eta_b-1)\lambda}
    ,
  \end{displaymath}
  which gives \eqref{eq:L1-norm-in-alpha-of-F} for $b\geq 3$.

  Let us assume now that $b=2$.
  For $\lambda\geq 2$,
  by \eqref{eq:FT-splitting-product} and the fact that
  \begin{math}
    \abs{\mathcal{F}_{\lambda-2}(\alpha,\cdot)}
  \end{math}
  is $1$-periodic, we have
  \begin{align*}
    \int_0^1
    &
      \abs{\mathcal{F}_{\lambda}\left(\alpha,\vartheta\right)}
    \, d\vartheta
    \\
    &=
    \int_0^1
    \abs{\mathcal{F}_{\lambda-2}(\alpha,\vartheta 2^2)}
    \cdot
    \abs{\mathcal{F}_{2}(\alpha 2^{\lambda-2},\vartheta)}
    \, d\vartheta
    \\
    &
      =
      \sum_{r=0}^{2^2-1}
      \int_0^{1/2^2}
      \abs{\mathcal{F}_{\lambda-2}\left(
      \alpha,\left(\frac{r}{2^2} + \vartheta\right) 2^2 \right)
      }
      \cdot
      \abs{\mathcal{F}_{2}\left(
      \alpha 2^{\lambda-2},\frac{r}{2^2}+ \vartheta\right)
      }
      \, d\vartheta
      ,
  \end{align*}
  hence
  \begin{multline}\label{eq:integral-induction-formula-base-2}
    \int_0^1
    \abs{\mathcal{F}_{\lambda}\left(\alpha,\vartheta\right)}
    \, d\vartheta
    \\
    =
    \int_0^{1}
    \abs{\mathcal{F}_{\lambda-2}\left(
        \alpha, \vartheta' \right)
    }
    \frac1{2^2}
    \sum_{r=0}^{2^2-1}
    \abs{\mathcal{F}_{2}\left(
        \alpha 2^{\lambda-2},\frac{r+\vartheta'}{2^2} \right)
    }
    \, d\vartheta'
    .
  \end{multline}
  We have
  \begin{align*}
    &
    \sum_{r=0}^{2^2-1}
      \abs{\mathcal{F}_{2}\left(
    \alpha 2^{\lambda-2},\frac{r+\vartheta'}{2^2} \right)
    }
    \\
    &=
    \left(
      \abs{\mathcal{F}_{1}\left(
          \alpha 2^{\lambda-1},\frac{\vartheta'}{2^2} \right)
      }
      +
      \abs{\mathcal{F}_{1}\left(
          \alpha 2^{\lambda-1},\frac{2+\vartheta'}{2^2} \right)
      }
    \right)
    \abs{\mathcal{F}_{1}\left(
        \alpha 2^{\lambda-2},\frac{\vartheta'}{2} \right)
    }
    \\
    &+
    \left(
      \abs{\mathcal{F}_{1}\left(
          \alpha 2^{\lambda-1},\frac{1+\vartheta'}{2^2} \right)
      }
      +
      \abs{\mathcal{F}_{1}\left(
          \alpha 2^{\lambda-1},\frac{3+\vartheta'}{2^2} \right)
      }
    \right)
    \abs{\mathcal{F}_{1}\left(
        \alpha 2^{\lambda-2},\frac{1+\vartheta'}{2} \right)
    }
    .
  \end{align*}
  By \eqref{eq:L2-mean-value} we have
  \begin{displaymath}
    \abs{\mathcal{F}_{1}\left(
        \alpha 2^{\lambda-2},\frac{\vartheta'}{2} \right)
    }^2
    +
    \abs{\mathcal{F}_{1}\left(
        \alpha 2^{\lambda-2},\frac{1+\vartheta'}{2} \right)
    }^2
    = 1,
  \end{displaymath}
  hence by Cauchy-Schwarz 
  \begin{multline*}
    \left(
      \sum_{r=0}^{2^2-1}
      \abs{\mathcal{F}_{2}\left(
          \alpha 2^{\lambda-2},\frac{r+\vartheta'}{2^2} \right)
      }
    \right)^2
    \\
    \leq
    \left(
      \abs{\mathcal{F}_{1}\left(
          \alpha 2^{\lambda-1},\frac{\vartheta'}{2^2} \right)
      }
      +
      \abs{\mathcal{F}_{1}\left(
          \alpha 2^{\lambda-1},\frac{2+\vartheta'}{2^2} \right)
      }
    \right)^2
    \\
    +
    \left(
      \abs{\mathcal{F}_{1}\left(
          \alpha 2^{\lambda-1},\frac{1+\vartheta'}{2^2} \right)
      }
      +
      \abs{\mathcal{F}_{1}\left(
          \alpha 2^{\lambda-1},\frac{3+\vartheta'}{2^2} \right)
      }
    \right)^2
    ,
  \end{multline*}
  which,
  expanding the square, using \eqref{eq:L2-mean-value} again
  and \eqref{eq:FT-product-formula},
  gives
  \begin{multline*}
    \left(
      \sum_{r=0}^{2^2-1}
      \abs{\mathcal{F}_{2}\left(
          \alpha 2^{\lambda-2},\frac{r+\vartheta'}{2^2} \right)
      }
    \right)^2
    \\
    \leq
    1
    +
    2\abs{\cos\pi\left(
        \alpha 2^{\lambda-1}-\frac{\vartheta'}{2^2} \right)
    }
    \abs{\sin\pi\left(
        \alpha 2^{\lambda-1}-\frac{\vartheta'}{2^2} \right)
    }
    \\
    +
    1
    +
    2\abs{\cos\pi\left(
        \alpha 2^{\lambda-1}-\frac{1+\vartheta'}{2^2} \right)
    }
    \abs{\sin\pi\left(
        \alpha 2^{\lambda-1}-\frac{1+\vartheta'}{2^2} \right)
    }
    ,
  \end{multline*}
  hence
  \begin{multline*}
    \left(
      \sum_{r=0}^{2^2-1}
      \abs{\mathcal{F}_{2}\left(
          \alpha 2^{\lambda-2},\frac{r+\vartheta'}{2^2} \right)
      }
    \right)^2
    \\
    \leq
    2
    +
    \abs{\sin\pi\left(
        \alpha 2^{\lambda}-\frac{\vartheta'}{2} \right)
    }
    +
    \abs{\cos\pi\left(
        \alpha 2^{\lambda}-\frac{\vartheta'}{2} \right)
    }
    \leq 2+\sqrt2
    .
  \end{multline*}
  Inserting this in \eqref{eq:integral-induction-formula-base-2}
  and using \eqref{eq:definition-eta_b}, we deduce
  \begin{multline*}
    \int_0^1
    \abs{\mathcal{F}_{\lambda}\left(\alpha,\vartheta\right)}
    \, d\vartheta
    \leq
    \frac{\left(2+\sqrt2\right)^{1/2}}{4}
    \int_0^1
    \abs{\mathcal{F}_{\lambda-2}\left(\alpha,\vartheta\right)}
    \, d\vartheta
    \\
    =
    2^{2(\eta_2-1)}
    \int_0^1
    \abs{\mathcal{F}_{\lambda-2}\left(\alpha,\vartheta\right)}
    \, d\vartheta
    ,
  \end{multline*}
  and by induction
  \begin{displaymath}
    \int_0^1
    \abs{\mathcal{F}_{\lambda}\left(\alpha,\vartheta\right)}
    \, d\vartheta
    \leq
    2^{2\floor{\frac{\lambda}{2}}(\eta_2-1)}
    \int_0^1
    \abs{\mathcal{F}_{\lambda-2\floor{\frac{\lambda}{2}}}\left(
        \alpha,\vartheta\right)}
    \, d\vartheta
    .
  \end{displaymath}
  If $\lambda$ is even
  then $\floor{\frac{\lambda}{2}}=\frac{\lambda}{2}$,
  $\lambda-2\floor{\frac{\lambda}{2}}=0$,
  and we get \eqref{eq:L1-norm-in-alpha-of-F}.
  \\
  If $\lambda$ is odd
  then $\floor{\frac{\lambda}{2}}=\frac{\lambda-1}{2}$,
  $\lambda-2\floor{\frac{\lambda}{2}}=1$ and,
  using \eqref{eq:definition-eta_b},
  \begin{displaymath}
    \int_0^1
    \abs{\mathcal{F}_{1}\left(\alpha,\vartheta\right)}
    \, d\vartheta
    =
    \int_0^1
    \abs{\cos\pi\left(\alpha-\vartheta\right)}
    \, d\vartheta
    =
    \frac{2}{\pi}
    <
    \frac{\left(2+\sqrt2\right)^{1/4}}{2}
    =
    2^{\eta_2-1},
  \end{displaymath}
  and since
  \begin{math}
    \left(2\floor{\frac{\lambda}{2}} + 1\right)(\eta_2-1)
    =
    \left(\lambda-1 + 1\right)(\eta_2-1)
    =
    \lambda(\eta_2-1),
  \end{math}
  we get \eqref{eq:L1-norm-in-alpha-of-F}.
\end{proof}

\begin{lemma}\label{lemma:L1-mean-value-in-alpha}
  For any integers $b\geq 2$, $\lambda \geq 0$, 
  and any sequence $(\vartheta_1,\ldots,\vartheta_N) \in \R^N$
  which is $\delta>0$ well spaced modulo~$1$,
  we have, for $\alpha\in\R$,
  \begin{equation}\label{eq:L1-mean-value-in-alpha}
    \sum_{n=1}^N
    \abs{\mathcal{F}_{\lambda}\left(\alpha,\vartheta_n\right)}
    \leq
    \left( \frac{1}{\delta} + \frac{\pi}{2} b^{\lambda} \right)
    \int_0^1
    \abs{\mathcal{F}_{\lambda}\left(\alpha,\vartheta\right)}
    \, d\vartheta
    \leq
    \left( \frac{1}{\delta b^{\lambda}} + \frac{\pi}{2}  \right)
    b^{\eta_b\lambda}
    .
  \end{equation}
\end{lemma}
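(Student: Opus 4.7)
The plan is to combine the generalized Sobolev--Gallagher inequality of Lemma~\ref{lemma:sobolev-gallagher}, applied to $f(\vartheta) = \abs{\mathcal{F}_\lambda(\alpha,\vartheta)}$, with an $L^1$ Bernstein-type bound on its total variation $V_f$. From \eqref{eq:definition-F}, $\mathcal{F}_\lambda(\alpha,\cdot)$ is a $1$-periodic trigonometric polynomial in $\vartheta$ whose spectrum lies in $\{-(b^\lambda-1),\ldots,0\}$. After multiplication by the unimodular factor $\e((b^\lambda-1)\vartheta/2)$, which preserves the modulus, the function $G(\vartheta) := \e((b^\lambda-1)\vartheta/2)\,\mathcal{F}_\lambda(\alpha,\vartheta)$ has symmetric spectrum of half-width $(b^\lambda-1)/2$, and Zygmund's $L^1$ inequality (Lemma~\ref{lemma:bernstein-zygmund}) applied to $G$ yields
\begin{displaymath}
\int_0^1 \abs{G'(\vartheta)}\,d\vartheta
\;\leq\; \pi(b^\lambda-1) \int_0^1 \abs{\mathcal{F}_\lambda(\alpha,\vartheta)}\,d\vartheta.
\end{displaymath}

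I would then combine this with the standard pointwise bound $\abs{(\abs{G})'} \leq \abs{G'}$, valid almost everywhere, together with $\abs{G} = \abs{\mathcal{F}_\lambda(\alpha,\cdot)}$, to conclude
\begin{displaymath}
V_f \;=\; V_{\abs{G}} \;\leq\; \int_0^1 \abs{G'(\vartheta)}\,d\vartheta \;\leq\; \pi(b^\lambda-1) \int_0^1 \abs{\mathcal{F}_\lambda(\alpha,\vartheta)}\,d\vartheta.
\end{displaymath}
Substituting into Lemma~\ref{lemma:sobolev-gallagher} and using $\pi(b^\lambda-1)/2 \leq \pi b^\lambda/2$ then produces the first stated inequality. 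The second inequality follows at once by factoring $b^\lambda$ out of the coefficient and invoking Lemma~\ref{lemma:L1-norm-in-alpha-of-F} to bound $\int_0^1 \abs{\mathcal{F}_\lambda(\alpha,\vartheta)}\,d\vartheta \leq b^{(\eta_b-1)\lambda}$.

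The main technical point will be the spectral-centering step: when $b^\lambda-1$ is odd (i.e., $b$ even), the shifted function $G$ has half-integer frequencies and is only $2$-periodic, so Lemma~\ref{lemma:bernstein-zygmund} does not apply verbatim to $G$. In that case I would invoke the classical $L^1$ Bernstein inequality for entire functions of exponential type $\pi(b^\lambda-1)$; since the relation $G(\vartheta+1)=-G(\vartheta)$ forces $\abs{G}$ and $\abs{G'}$ to be $1$-periodic, integrating over $[0,1]$ rather than the full period $[0,2]$ produces no loss in the constant. Without this centering, a direct application of Zygmund's inequality to $\mathcal{F}_\lambda(\alpha,\cdot)$ with its one-sided spectrum of degree $b^\lambda-1$ would only yield the weaker constant $\pi b^\lambda$ in place of the stated $\tfrac{\pi}{2}b^\lambda$, which is why this factor-of-two saving is the only nontrivial ingredient of the argument.
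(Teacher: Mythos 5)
Your proof is correct and follows essentially the same route as the paper: Sobolev--Gallagher applied to $\abs{\mathcal{F}_\lambda(\alpha,\cdot)}$ combined with Zygmund's $L^1$ Bernstein inequality after centering the spectrum, then Lemma~\ref{lemma:L1-norm-in-alpha-of-F}. The only difference is cosmetic: the paper centers by the integer re-indexing $n=\ell+\floor{b^\lambda/2}$, which keeps integer frequencies and $1$-periodicity so that Zygmund's inequality applies verbatim with degree $\floor{b^\lambda/2}$, whereas your modulation by $\e((b^\lambda-1)\vartheta/2)$ forces the (correctly handled) detour through half-integer frequencies when $b$ is even.
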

\begin{proof}
  If $\lambda=0$, it is enough to observe that
  since $(\vartheta_1,\ldots,\vartheta_N) \in \R^N$
  is $\delta$ well spaced modulo~$1$,
  we have
  \begin{math}
    N \leq \frac1{\delta}
    .
  \end{math}
  We assume from now that $\lambda\geq 1$.
  By \eqref{eq:definition-F} we have
  \begin{math}
    \abs{\mathcal{F}_\lambda(\alpha,\vartheta)}
    =
    \abs{P_{\alpha,\lambda}(\vartheta)}
  \end{math}
  where
  \begin{displaymath}
    P_{\alpha,\lambda}(\vartheta)
    =
    \frac{1}{b^\lambda}
    \sum_{- \frac{b^\lambda}{2} \leq \ell < \frac{b^\lambda}{2}}
    \e\left(
      \alpha R_\lambda\left(\ell + \floor{\frac{b^\lambda}{2}} \right)
      -\vartheta \ell \right)    
  \end{displaymath}
  is a trigonometric polynomial of degree
  $\floor{\frac{b^\lambda}{2}}$.
  
  By Lemma~\ref{lemma:sobolev-gallagher} we have
  \begin{displaymath}
    \sum_{n=1}^N
    \abs{\mathcal{F}_{\lambda}\left(\alpha,\vartheta_n\right)}
    =
    \sum_{n=1}^N
    \abs{P_{\alpha,\lambda}\left(\vartheta_n\right)}
    \leq
    \frac{1}{\delta} 
    \int_0^1
    \abs{P_{\alpha,\lambda}(\vartheta)}
    \, d\vartheta
    +
    \frac12 
    V_{P_{\alpha,\lambda}}
    .
  \end{displaymath}
  and by Lemma~\ref{lemma:bernstein-zygmund} we have
  \begin{displaymath}
    V_{P_{\alpha,\lambda}}
    =
    \int_0^1
    \abs{P'_{\alpha,\lambda}(\vartheta)}
    \, d\vartheta
    \leq
    2\pi
    \floor{\frac{b^\lambda}{2}}
    \int_0^1
    \abs{P_{\alpha,\lambda}(\vartheta)}
    \, d\vartheta
    ,
  \end{displaymath}
  hence
  \begin{displaymath}
    \sum_{n=1}^N
    \abs{\mathcal{F}_{\lambda}\left(\alpha,\vartheta_n\right)}
    \leq
    \left(
      \frac{1}{\delta}
      +
      \frac{2\pi}{2}
      \floor{\frac{b^\lambda}{2}}
    \right)
    \int_0^1
    \abs{\mathcal{F}_\lambda(\alpha,\vartheta)}
    \, d\vartheta
    ,
  \end{displaymath}
  which, combined with Lemma \ref{lemma:L1-norm-in-alpha-of-F},
  leads to \eqref{eq:L1-mean-value-in-alpha}.
\end{proof}

\begin{lemma}\label{lemma:L1-norm-modular-via-sobolev-gallagher}
  Let $b\geq 2$ and $0\leq \delta \leq \lambda$ be integers.
  Let $d \dv b^{\lambda-\delta}$ with $\gcd(d,b)<b$
  and $\delta_1\geq 1$
  such that $b^{\delta_1-1} \leq d < b^{\delta_1}$.
  For $a\in\Z$ and $\alpha\in\R$,
  we have
  \begin{multline}
    \label{eq:L1-norm-modular-via-sobolev-gallagher}
    \sum_{\substack{0\leq h< b^{\lambda}\\ h \equiv a \bmod d b^\delta}}
    \abs{
      \mathcal{F}_{\lambda}\left(\alpha,\frac{h}{b^{\lambda}}\right)
    }
    \\
    \leq
    \left( b + \frac{\pi}{2} \right)
    \frac{b^{\eta_b(\lambda-\delta)}}{d^{\eta_b}}
    \abs{
      \mathcal{F}_{\delta}\left(\alpha,\frac{a}{b^{\delta}}\right)
    }
    G_{\delta_1-1}^{1/2}\left(\alpha b^{\delta}\left(b^2-1\right)\right)
    .
  \end{multline}  
\end{lemma}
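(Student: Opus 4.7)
The plan is to apply the splitting product formula (Lemma~\ref{FT-splitting-product}) twice: first to isolate the $h$-independent factor $\abs{\mathcal{F}_\delta(\alpha,a/b^\delta)}$, and second to peel off a factor bounded by $G_{\delta_1-1}^{1/2}(\alpha b^\delta(b^2-1))$; what remains is an $L^1$ mean value over a well-spaced set of frequencies, which is handled by Lemma~\ref{lemma:L1-mean-value-in-alpha}.

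First I would apply \eqref{eq:FT-splitting-product} with $\lambda' = \lambda-\delta$ to write
\begin{displaymath}
  \abs{\mathcal{F}_\lambda(\alpha,h/b^\lambda)}
  =
  \abs{\mathcal{F}_{\lambda-\delta}(\alpha b^\delta, h/b^\lambda)}
  \cdot
  \abs{\mathcal{F}_{\delta}(\alpha, h/b^\delta)}.
\end{displaymath}
Since $h \equiv a \bmod d b^\delta$ forces $h \equiv a \bmod b^\delta$, and $\mathcal{F}_\delta(\alpha,\cdot)$ is $1$-periodic, the second factor equals $\abs{\mathcal{F}_{\delta}(\alpha, a/b^\delta)}$ for every admissible $h$ and comes out of the sum. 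Writing $h = a + k d b^\delta$ with $0 \leq k < m := b^{\lambda-\delta}/d$ (which is an integer since $d \mid b^{\lambda-\delta}$) gives the frequencies $\vartheta_k = a/b^\lambda + k/m$, which are $1/m$ well spaced modulo~$1$.

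Next I would split $\lambda - \delta = (\lambda-\delta-\delta_1) + \delta_1$ via \eqref{eq:FT-splitting-product} applied to $\mathcal{F}_{\lambda-\delta}(\alpha b^\delta,\cdot)$, obtaining
\begin{displaymath}
  \abs{\mathcal{F}_{\lambda-\delta}(\alpha b^\delta, \vartheta)}
  =
  \abs{\mathcal{F}_{\lambda-\delta-\delta_1}(\alpha b^{\delta+\delta_1}, \vartheta)}
  \cdot
  \abs{\mathcal{F}_{\delta_1}(\alpha b^\delta, \vartheta\, b^{\lambda-\delta-\delta_1})}.
\end{displaymath}
Here the splitting is admissible because the hypothesis $\gcd(d,b)<b$ together with $d\dv b^{\lambda-\delta}$ rules out $d=b^{\lambda-\delta}$, hence $\delta_1\leq \lambda-\delta$. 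The crucial point is to place the $\mathcal{F}_{\delta_1}$ factor on the $\vartheta$-side so that Lemma~\ref{lemma:pointwise-upperbound-of-F} produces the desired $\alpha$-dependent bound
\begin{displaymath}
  \abs{\mathcal{F}_{\delta_1}(\alpha b^\delta, \vartheta b^{\lambda-\delta-\delta_1})}
  \leq
  G_{\delta_1-1}^{1/2}(\alpha b^\delta(b^2-1)),
\end{displaymath}
which is independent of $k$ and also factors out of the sum.

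The residual sum $\sum_{k=0}^{m-1}\abs{\mathcal{F}_{\lambda-\delta-\delta_1}(\alpha b^{\delta+\delta_1},\vartheta_k)}$ is then controlled by Lemma~\ref{lemma:L1-mean-value-in-alpha} applied at level $\ell = \lambda-\delta-\delta_1$ with spacing $1/m$, giving
\begin{displaymath}
  \sum_{k=0}^{m-1}\abs{\mathcal{F}_{\ell}(\alpha b^{\delta+\delta_1},\vartheta_k)}
  \leq
  \left(\frac{m}{b^\ell} + \frac{\pi}{2}\right) b^{\eta_b \ell}
  =
  \left(\frac{b^{\delta_1}}{d} + \frac{\pi}{2}\right) b^{\eta_b(\lambda-\delta-\delta_1)}.
\end{displaymath}
To match the announced form it remains to show $(b^{\delta_1}/d + \pi/2)\,(d/b^{\delta_1})^{\eta_b} \leq b + \pi/2$. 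Setting $t = d/b^{\delta_1} \in [1/b,1)$ and using $0<\eta_b<1$ from \eqref{eq:upperbound-eta-b}, I would observe that $(d/b^{\delta_1})^{\eta_b}\leq 1$, whence the $\pi/2$ term contributes at most $\pi/2$, while $(b^{\delta_1}/d)^{1-\eta_b}\leq b^{1-\eta_b}\leq b$ handles the remaining term.

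The main obstacle is the \emph{choice of splitting}: only by putting $\mathcal{F}_{\delta_1}$ on the $\vartheta$-side of the second application of Lemma~\ref{FT-splitting-product} does one obtain the argument $\alpha b^\delta(b^2-1)$ in $G_{\delta_1-1}^{1/2}$ (rather than $\alpha b^{\lambda-\delta_1}(b^2-1)$, which would not be uniform in $k$ in a useful way); and the final interpolation step hinges on $\eta_b$ lying in $(0,1)$, which is exactly the content of \eqref{eq:upperbound-eta-b}.
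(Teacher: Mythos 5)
Your proof is correct and follows essentially the same route as the paper: the same two applications of the splitting product formula (first peeling off $\abs{\mathcal{F}_\delta(\alpha,a/b^\delta)}$, then the $\mathcal{F}_{\delta_1}$ block bounded via Lemma~\ref{lemma:pointwise-upperbound-of-F} by $G_{\delta_1-1}^{1/2}(\alpha b^\delta(b^2-1))$), followed by Lemma~\ref{lemma:L1-mean-value-in-alpha} on the $d/b^{\lambda-\delta}$ well-spaced frequencies and the same final arithmetic using $b^{\delta_1-1}\le d<b^{\delta_1}$. The only point you gloss over is the degenerate case $\delta=\lambda$ (where $d=1$, $\delta_1=1>\lambda-\delta$ and your inequality $\delta_1\le\lambda-\delta$ fails), which the paper dispatches separately since the claim is then trivial.
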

\begin{proof}
  If $\delta=\lambda$ then $d=1$ and $\delta_1=1$,
  hence \eqref{eq:L1-norm-modular-via-sobolev-gallagher} holds.
  We assume from now that $0\leq \delta < \lambda$.
  Since $d \dv b^{\lambda-\delta}$ with $\gcd(d,b)<b$
  we have $b^{\delta_1-1} \leq d < b^{\lambda-\delta}$,
  hence $\delta_1 \leq \lambda-\delta$.
  
  We may assume without loss of generality
  that $0\leq a < d b^\delta$.
  
  If $h \equiv a \bmod d b^\delta$ then $h \equiv a \bmod b^\delta$,
  hence by~\eqref{eq:FT-splitting-product}
  with $\lambda'=\lambda-\delta$
  we have
  \begin{align*}
    \abs{\mathcal{F}_\lambda\left(\alpha,\frac{h}{b^{\lambda}}\right)}
    &=
    \abs{\mathcal{F}_{\lambda-\delta}\left(
        \alpha b^{\delta},\frac{h}{b^{\lambda}}
      \right)}
    \cdot
    \abs{\mathcal{F}_{\delta}\left(\alpha,\frac{h}{b^{\delta}} \right)}
    \\
    &=
    \abs{\mathcal{F}_{\lambda-\delta}\left(
        \alpha b^{\delta},\frac{h}{b^{\lambda}}
      \right)}
    \cdot
    \abs{\mathcal{F}_{\delta}\left(\alpha,\frac{a}{b^{\delta}} \right)}
    ,
  \end{align*}
  hence
  \begin{displaymath}
    \sum_{\substack{0\leq h< b^{\lambda}\\ h \equiv a \bmod d b^\delta}}
    \abs{\mathcal{F}_\lambda\left(\alpha,\frac{h}{b^{\lambda}}\right)}
    =
    \abs{\mathcal{F}_{\delta}\left(\alpha,\frac{a}{b^{\delta}} \right)}
    \sum_{\substack{0\leq h< b^{\lambda}\\ h \equiv a \bmod d b^\delta}}
    \abs{\mathcal{F}_{\lambda-\delta}\left(
        \alpha b^{\delta},\frac{h}{b^{\lambda}}
      \right)}
    .
  \end{displaymath}
  Writing
  $h = a+k d b^\delta$,
  remembering that $d\dv b^{\lambda-\delta}$
  we have
  \begin{displaymath}
    \abs{
      \mathcal{F}_{\lambda-\delta}\left(
        \alpha b^{\delta}, \frac{h}{b^{\lambda}}\right)
    }
    =
    \abs{
      \mathcal{F}_{\lambda-\delta}\left(
        \alpha b^{\delta}, \frac{a+k d b^\delta}{b^{\lambda}}\right)
    }
    =
    \abs{
      \mathcal{F}_{\lambda-\delta}\left(
        \alpha b^{\delta},
        \frac{a}{b^{\lambda}}+\frac{k}{b^{\lambda-\delta}/d}\right)
    }
    ,
  \end{displaymath}  
  hence
  \begin{displaymath}
    \sum_{\substack{0\leq h< b^{\lambda}\\ h \equiv a \bmod d b^\delta}}
    \abs{
      \mathcal{F}_{\lambda-\delta}\left(
        \alpha b^{\delta},\frac{h}{b^{\lambda}}\right)
    }
    =
    \sum_{0\leq k < \frac{b^{\lambda-\delta}}{d}}
    \abs{
      \mathcal{F}_{\lambda-\delta}\left(
        \alpha b^{\delta},
        \frac{a}{b^{\lambda}}+\frac{k}{b^{\lambda-\delta}/d}\right)
    }
    .
  \end{displaymath}  
  By~\eqref{eq:FT-splitting-product} we have
  \begin{multline*}
    \abs{
      \mathcal{F}_{\lambda-\delta}\left(
        \alpha b^{\delta},
        \frac{a}{b^{\lambda}}+\frac{k}{b^{\lambda-\delta}/d}\right)
    }
    \\
    =
    \abs{
      \mathcal{F}_{\lambda-\delta-\delta_1}\left(
        \alpha b^{\delta+\delta_1},
        \frac{a}{b^{\lambda}} + \frac{k}{b^{\lambda-\delta}/d}
      \right)
    }
    \abs{
      \mathcal{F}_{\delta_1}\left(
        \alpha b^{\delta},
        \frac{a}{b^{\delta+\delta_1}} + \frac{k d}{b^{\delta_1}}
      \right)
    }
    ,
  \end{multline*}
  and by \eqref{eq:uniform-upperbound-of-F}
  \begin{displaymath}
    \abs{
      \mathcal{F}_{\delta_1}\left(
        \alpha b^{\delta},
        \frac{a}{b^{\delta+\delta_1}}
        +\frac{k d}{b^{\delta_1}}\right)
    }
    \leq
    G_{\delta_1-1}^{1/2}\left(\alpha b^{\delta} \left(b^2-1\right) \right)
    ,
  \end{displaymath}
  which is independent of $k$.
  For $0\leq k < b^{\lambda-\delta}/d$,
  the points
  \begin{math}
      \frac{k}{b^{\lambda-\delta}/d}
  \end{math}
  are
  \begin{math}
    \frac{d}{b^{\lambda-\delta}}
  \end{math}
  well spaced modulo $1$. 
  By Lemma~\ref{lemma:L1-mean-value-in-alpha} we have
  \begin{multline*}
    \sum_{0\leq k < \frac{b^{\lambda-\delta}}{d}}
    \abs{
      \mathcal{F}_{\lambda-\delta-\delta_1}\left(
        \alpha b^{\delta+\delta_1},
        \frac{a}{b^{\lambda}} + \frac{k}{b^{\lambda-\delta}/d}
      \right)
    }
    \\
    \leq
    \left(\frac{b^{\delta_1}}{d} + \frac{\pi}{2} \right)
    b^{\eta_b\left(\lambda-\delta-\delta_1\right)}
    \leq
    \left( b + \frac{\pi}{2} \right)
    \frac{b^{\eta_b\left(\lambda-\delta\right)}}{d^{\eta_b}}
    ,
  \end{multline*}
  and gathering the previous estimates we get
  \eqref{eq:L1-norm-modular-via-sobolev-gallagher}.
\end{proof}

\bigskip

\subsection{$L^\kappa$ norms}~

We recall that the Gauss error function $\erf(x)$ 
and Gauss error complementary function $\erfc(x)$ are defined by
\begin{equation}
  \label{eq:definition-gauss-error-function}
  \erf(x) = \frac{2}{\sqrt\pi} \int_0^x e^{-u^2} \, du
  = 1 - \erfc(x).
\end{equation}
\begin{lemma}\label{lemma:maximum-Tb}
  For any integer $b\geq 2$ and any real number $\kappa>0$
  the function $T_{b,\kappa}$ defined on $\R$ by
  \begin{equation}\label{eq:definition-Tb}
    T_{b,\kappa}\left(\alpha\right)
    =
    \frac{1}{b} \sum_{\ell = 0}^{b-1}
    K_b^{\kappa}\left(\frac{1}{b+1} \norm{\frac{\alpha+\ell}{b}}
    \right)
    \in \left] 0,1 \right]
  \end{equation}
  is $1$-periodic, even,
  and satisfies, taking
  \begin{math}
        B = \frac{\pi^2}{6} \frac{b-1}{b+1},
  \end{math}
  \begin{equation}
    \label{eq:max-Tb}
    \frac1b
    \leq
    \max_{\alpha \in \R} T_{b,\kappa}\left(\alpha\right)
    \leq
    \frac1b
    +
    \sqrt{\frac{\pi}{B \kappa}}
    -
    \sqrt{\frac{\pi}{B \kappa}}
    \erfc\left(\frac12 \sqrt{B \kappa}\right)
    -
    \frac1b K_b^{\kappa}\left(\frac{1}{2b+2}\right)
    .
  \end{equation}
  In particular
  \begin{equation}
    \label{eq:max-Tb-simple}
    \frac1b
    \leq
    \max_{\alpha \in \R} T_{b,\kappa}\left(\alpha\right)
    <
    \frac1b
    +
    \sqrt{\frac{6 (b+1)}{\pi (b-1) \, \kappa}}
    .
  \end{equation}
  Moreover, for $0 < \kappa \leq 1$, we have
  \begin{equation}\label{eq:max-Tb-small-kappa}
    \max_{\alpha \in \R}
    T_{b,\kappa}\left(\alpha\right)
    =
    T_{b,\kappa}\left(\frac{b+1}{2}\right)
    < 1.
  \end{equation}
\end{lemma}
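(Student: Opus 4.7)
I would first establish the $1$-periodicity and evenness of $T_{b,\kappa}$, both of which are immediate from the definition: replacing $\alpha$ by $\alpha+1$ (resp.\ by $-\alpha$) induces a bijection on the summation index $\ell\in\Z/b\Z$, while $\norm{\cdot}$ is $1$-periodic and even. For the lower bound, evaluating at $\alpha=0$ gives $T_{b,\kappa}(0)\geq \frac{1}{b}K_b^\kappa(0) = \frac{1}{b}$, since $K_b\geq 0$ on $[0,\frac{1}{2(b+1)}]$ ensures the remaining terms are nonnegative. By periodicity and evenness, the upper bounds need only be shown for $\alpha\in[0,\tfrac{1}{2}]$.

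For this range, I would set $t_\ell = \norm{(\alpha+\ell)/b}\in[0,\tfrac{1}{2}]$ and, since $t_\ell/(b+1)\leq \frac{1}{2(b+1)}\leq \frac{1}{b}$, apply Lemma~\ref{lemma:majoration-U} to obtain $K_b^\kappa(t_\ell/(b+1))\leq \exp(-B\kappa\, t_\ell^2)$. A direct computation shows that the signed representatives in $[-\tfrac{1}{2},\tfrac{1}{2})$ of the $b$ values $(\alpha+\ell)/b$ are precisely $\bigl\{(\alpha+k)/b : k\in\{-\lfloor b/2\rfloor,\dots,\lceil b/2\rceil-1\}\bigr\}$, so
\[
  \sum_{\ell=0}^{b-1}\exp(-B\kappa\, t_\ell^2) \;=\; \sum_{k} F(\alpha+k), \qquad F(y)=\exp(-B\kappa\, y^2/b^2).
\]
The $k=0$ contribution is at most $1$. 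For every $k\neq 0$, the function $F$ is monotone on the unit interval adjacent to $\alpha+k$ on its side toward the origin, and attains its minimum there at $\alpha+k$; hence $F(\alpha+k)\leq \int F$ over that interval. Summing the resulting $b-1$ unit-length integrals and substituting $u=y/b$ yields an integral over a sub-interval of $[-\tfrac{1}{2},\tfrac{1}{2}]$ of length $1-1/b$; the complementary missing piece has length $1/b$, lies in $\{|u|\leq \tfrac{1}{2}\}$, and so its integral is at least $\frac{1}{b}e^{-B\kappa/4}$. Therefore
\[
  \sum_k F(\alpha+k) \;\leq\; 1 + b\int_{-1/2}^{1/2} e^{-B\kappa u^2}\,du - e^{-B\kappa/4}.
\]
Dividing by $b$, recognising $\int_{-1/2}^{1/2}e^{-B\kappa u^2}\,du = \sqrt{\pi/(B\kappa)}\bigl(1-\erfc(\tfrac{1}{2}\sqrt{B\kappa})\bigr)$, and relaxing $-\tfrac{1}{b}e^{-B\kappa/4}\leq -\tfrac{1}{b}K_b^\kappa(\tfrac{1}{2(b+1)})$ (via a final application of Lemma~\ref{lemma:majoration-U}) produces~\eqref{eq:max-Tb}. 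The bound~\eqref{eq:max-Tb-simple} then follows by extending the integral to all of $\R$ (dropping both negative corrections) and substituting $B=\pi^2(b-1)/(6(b+1))$.

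For~\eqref{eq:max-Tb-small-kappa}, the strict inequality $T_{b,\kappa}(\alpha)<1$ holds for every $\alpha$ since at most one $\ell$ can satisfy $\norm{(\alpha+\ell)/b}=0$, leaving at least $b-1$ summands strictly below $K_b^\kappa(0)=1$. To single out $\alpha=(b+1)/2$ as the maximiser when $\kappa\in(0,1]$, I would use that $K_b^\kappa$ is concave on $[0,(2b-2)^{-1}]$ (composition of the increasing concave $x\mapsto x^\kappa$ with the concave $K_b$ from Lemma~\ref{lemma:concavity-of-Dirichlet-kernel}); a first-derivative check identifies $(b+1)/2 \pmod 1$ as a critical point, and a Schur-concavity/majorisation argument comparing the multiset $\{t_\ell\}$ at this $\alpha$ to that at any competitor in $[0,\tfrac{1}{2}]$ gives global maximality. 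The hard part will be the boundary accounting producing the $-\frac{1}{b}K_b^\kappa(\frac{1}{2(b+1)})$ correction in~\eqref{eq:max-Tb}, which requires precisely locating the $u$-mass missing from $[-\tfrac{1}{2},\tfrac{1}{2}]$ and a clean lower bound $e^{-B\kappa/4}$ on its integral; identifying the maximiser in~\eqref{eq:max-Tb-small-kappa} is the second substantive difficulty, because $\sum_\ell t_\ell$ depends on $\alpha$ when $b$ is odd, forcing case analysis on the parity of~$b$ in the majorisation step.
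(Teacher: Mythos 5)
Your proposal is correct and reaches exactly the bound \eqref{eq:max-Tb}, but it takes a genuinely different route for the main upper bound. The paper first applies its generalized Sobolev--Gallagher inequality (Lemma~\ref{lemma:sobolev-gallagher}) to the $1$-periodic function $t\mapsto K_b^{\kappa}(\norm{t}/(b+1))$, whose total variation is $2\bigl(1-K_b^{\kappa}(\tfrac{1}{2b+2})\bigr)$ — this is where the correction term $-\tfrac1b K_b^{\kappa}(\tfrac{1}{2b+2})$ comes from — and only afterwards majorizes the resulting integral by the Gaussian via Lemma~\ref{lemma:majoration-U}. You instead apply the Gaussian majorant pointwise first and then run a bespoke term-versus-integral comparison on $\sum_k e^{-B\kappa(\alpha+k)^2/b^2}$, recovering the same correction through the inequality $K_b^{\kappa}(\tfrac{1}{2b+2})\le e^{-B\kappa/4}$ (a second use of Lemma~\ref{lemma:majoration-U}). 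Both arguments are valid and give identical constants; the paper's buys brevity by reusing an existing lemma, yours is self-contained and makes transparent where each loss occurs. Two small imprecisions in your write-up: the set of signed representatives $\{(\alpha+k)/b\}$ can shift by one index at the boundary (e.g.\ $b$ odd, $\alpha=\tfrac12$), and for $k=-1$ the comparison interval $[\alpha-1,\alpha]$ contains $0$, so $F$ is not monotone there — but in both cases the property you actually use (that $F(\alpha+k)$ is the minimum of $F$ over the adjacent unit interval, and that the rescaled union stays inside $[-\tfrac12,\tfrac12]$) still holds, so the argument goes through.

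For \eqref{eq:max-Tb-small-kappa} your plan has the right ingredients (concavity of $K_b^{\kappa}$ for $\kappa\le 1$, a symmetry/pairing argument, case analysis on the parity of $b$), and this is essentially what the paper does. One caution: a literal Schur-concavity/majorisation argument does not apply directly, precisely because — as you note — $\sum_\ell t_\ell$ varies with $\alpha$ for odd $b$. The clean fix (and the paper's actual proof) is to avoid majorisation altogether: pair the terms symmetrically about the candidate maximiser and use only the midpoint concavity inequality $\tfrac12 K_b^{\kappa}(x-h)+\tfrac12 K_b^{\kappa}(x+h)\le K_b^{\kappa}(x)$ on $[0,(2b-2)^{-1}]$, handling the single unpaired term (odd $b$) by the monotonic decrease of $K_b^{\kappa}$. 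With that adjustment your outline becomes a complete proof.
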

\begin{proof}
  Since $t\mapsto \norm{t}$ is $1$-periodic and even,
  it is easy to check that the function $T_{b,\kappa}$
  is also $1$-periodic and even.
  To prove the lower bound given by \eqref{eq:max-Tb}, it is enough
  to observe that:
  \begin{displaymath}
    \frac1b =
    \frac1b K_b^{\kappa}\left(0 \right)
    \leq
    T_{b,\kappa}\left(0\right)
    \leq
    \max_{\alpha \in \R} T_{b,\kappa}\left(\alpha\right)
    .
  \end{displaymath}
  Let us prove the upper bound given by \eqref{eq:max-Tb}.
  The function
  \begin{math}
    t \mapsto K_b^{\kappa}\left(\frac{\norm{t}}{b+1}\right)
  \end{math}
  is a $1$-periodic continuous function of bounded
  variation on $\left[-\frac12,\frac12\right]$,
  increasing on $[-\frac12,0]$ and decreasing on $[0,\frac12]$
  (by Lemma~\ref{lemma:concavity-of-Dirichlet-kernel}),
  with total variation
  \begin{displaymath}
    V
    =
    2
    \left(
      K_b^{\kappa}\left(0\right)
      -
      K_b^{\kappa}\left(\frac{1}{2b+2}\right)
    \right)
    =
    2
    \left(
      1
      -
      K_b^{\kappa}\left(\frac{1}{2b+2}\right)
    \right)
    .
  \end{displaymath}
  Let $\alpha\in\R$.
  The sequence $\left(\frac{\alpha+\ell}{b}\right)_{0\leq \ell<b}$
  is $\frac1b$ well spaced modulo $1$, hence
  by Lemma~\ref{lemma:sobolev-gallagher} it follows that
  \begin{displaymath}
   b \, T_{b,\kappa}\left(\alpha\right)
   \leq
   b \int_{-1/2}^{1/2} K_b^{\kappa}\left(\frac{t}{b+1}\right) \, dt
   +
   \frac{V}{2}
   .
  \end{displaymath}
  By \eqref{eq:majoration-U} we deduce
  \begin{displaymath}
   b \, T_{b,\kappa}\left(\alpha\right)
   \leq
   b \int_{-1/2}^{1/2}
   \exp\left(
     -\,\frac{\pi^2}{6}(b^2-1)
     \kappa
     \left(\frac{t}{b+1}\right)^2
   \right)
   \, dt
   +
   1 - K_b^{\kappa}\left(\frac{1}{2b+2}\right)
   .
  \end{displaymath}
  Taking
  \begin{math}
    u = t \sqrt{B \kappa},
  \end{math}
  we obtain
  \begin{displaymath}
   b \, T_{b,\kappa}\left(\alpha\right)
   \leq
   \frac{b}{\sqrt{B \kappa}}
   \int_{-\frac12 \sqrt{B \kappa}}^{\frac12 \sqrt{B \kappa}}
   \exp\left(- \, u^2 \right)
   \, du
   +
   1 - K_b^{\kappa}\left(\frac{1}{2b+2}\right)
   ,
  \end{displaymath}
  and, using \eqref{eq:definition-gauss-error-function},
  we get the upper bound of \eqref{eq:max-Tb}.
  
  The upper bound of \eqref{eq:max-Tb-simple} follows from the fact
  that $\erfc>0$ and
  \begin{math}
    K_b^{\kappa}\left(\frac{1}{2b+2}\right) > 0
    .
  \end{math}
  
  Let us now prove \eqref{eq:max-Tb-small-kappa}.
  By symmetry and periodicity
  we can assume that $0\leq \alpha \leq 1/2$.
  By Lemma~\ref{lemma:concavity-of-Dirichlet-kernel},
  $K_b$ is concave on $[0,(2b-2)^{-1}]$,
  hence for $0 < \kappa \leq 1$, we have on $\left[0,(2b-2)^{-1}\right[$:
  \begin{displaymath}
    \left(K_b^\kappa\right)''
    =
    \kappa(\kappa-1) K_b^{\kappa-2} \left(K'_b\right)^2
    +
    \kappa K_b^{\kappa-1} K''_b
    \leq 0
    ,
  \end{displaymath}
  thus $K_b^\kappa$ is concave on $[0,(2b-2)^{-1}]$.
  
  Let us first assume that $b$ is odd.
  We have
  \begin{align*}
    &T_{b,\kappa}\left(\alpha\right)
      =
      \frac{1}{b}
      K_b^\kappa\left(\frac{1}{b+1} \norm{\frac{\alpha}{b}} \right)
    \\
    &\qquad\qquad
      +
      \frac{1}{b} \sum_{\ell = 1}^{(b-1)/2}
      \left(
      K_b^\kappa\left(
      \frac{1}{b+1} \norm{\frac{\alpha+\ell}{b}}
      \right)
      +
      K_b^\kappa\left(
      \frac{1}{b+1} \norm{\frac{\alpha-\ell}{b}}
      \right)
      \right)
    \\
    &=
      \frac{1}{b}
      K_b^\kappa\left(\frac{\alpha}{b(b+1)} \right)
      +
      \frac{1}{b}
      \sum_{\ell = 1}^{(b-1)/2}
    \left(
      K_b^\kappa\left( \frac{\ell+\alpha}{b(b+1)} \right)
      +
      K_b^\kappa\left( \frac{\ell-\alpha}{b(b+1)} \right)
    \right)
    .
  \end{align*}
  Since
  \begin{displaymath}
    0 \leq
    \frac{\ell-\alpha}{b(b+1)}
    \leq
    \frac{\ell+\alpha}{b(b+1)}
    \leq
    \frac{1}{2(b+1)}
    \leq
    \frac{1}{2(b-1)}
    ,
  \end{displaymath}
  and $K_b^\kappa$ is concave on $[0,(2b-2)^{-1}]$,
  we have
  \begin{displaymath}
    \frac12 K_b^\kappa\left( \frac{\ell-\alpha}{b(b+1)} \right)
    +
    \frac12 K_b^\kappa\left( \frac{\ell+\alpha}{b(b+1)} \right)
    \leq
    K_b^\kappa\left( \frac{\ell}{b(b+1)} \right)
    .
  \end{displaymath}
  Moreover,
  by Lemma~\ref{lemma:concavity-of-Dirichlet-kernel},
  $K_b^\kappa$ is decreasing on $[0,(2b-2)^{-1}]$.
  We deduce, for $b$ odd, that
  \begin{displaymath}
    T_{b,\kappa}\left(\alpha\right)
    \leq
    \frac{1}{b}
    K_b^\kappa\left( 0 \right)
    +
    \frac{2}{b} \sum_{\ell = 1}^{(b-1)/2}
    K_b^\kappa\left( \frac{\ell}{b(b+1)} \right)
    =
    T_{b,\kappa}\left(0\right)
    =
    T_{b,\kappa}\left(\frac{b+1}{2}\right)
    < 1
    .
  \end{displaymath}
  
  Let us now assume that $b$ is even.
  We have
  \begin{align*}
    T_{b,\kappa}\left(\alpha\right)
    &=
      \frac{1}{b} \sum_{\ell = 1}^{b/2}
      \left(
      K_b^\kappa\left(
      \frac{1}{b+1} \norm{\frac{\alpha+\ell-1}{b}}
      \right)
      +
      K_b^\kappa\left(
      \frac{1}{b+1} \norm{\frac{\alpha-\ell}{b}}
      \right)
      \right)
    \\
    &=
      \frac{1}{b}
      \sum_{\ell = 1}^{b/2}
    \left(
      K_b^\kappa\left( \frac{\ell-1+\alpha}{b(b+1)} \right)
      +
      K_b^\kappa\left( \frac{\ell-\alpha}{b(b+1)} \right)
    \right)
    \\
    &=
      \frac{1}{b}
      \sum_{\ell = 1}^{b/2}
    \left(
      K_b^\kappa\left( \frac{\ell-\frac12+\left(\alpha-\frac12\right)}{b(b+1)} \right)
      +
      K_b^\kappa\left( \frac{\ell-\frac12-\left(\alpha-\frac12\right)}{b(b+1)} \right)
    \right)
    .
  \end{align*}
  Since $K_b^\kappa$ is concave on $[0,(2b-2)^{-1}]$,
  we deduce, for $b$ even, that
  \begin{displaymath}
    T_{b,\kappa}\left(\alpha\right)
    \leq
    \frac{2}{b}
    \sum_{\ell = 1}^{b/2}
    K_b^\kappa\left( \frac{\ell-\frac12}{b(b+1)} \right)
    =
    T_{b,\kappa}\left(\frac12\right)
    =
    T_{b,\kappa}\left(\frac{b+1}{2}\right)
    < 1
    .
    \qedhere
  \end{displaymath}
\end{proof}

\begin{lemma}\label{lemma:approximation-Tb}
  For any integer $b\geq 2$ and any real number $\kappa>0$
  we have for any $\alpha\in\R$
  \begin{multline}\label{eq:approximation-Tb}
    \abs{
      T_{b,\kappa}\left(\alpha\right)
      -
      \int_{-1/2}^{1/2}
      K_b^{\kappa}\left( \frac{t}{b+1} \right)
      \, dt
    }
    \\
    \leq
    \frac{1}{b}
    \left(1-\left(\frac1b\cot\frac{\pi}{2b+2}\right)^\kappa\right)
    \leq
    \frac{1}{b}
    \left(1-\frac{2^\kappa}{\pi^\kappa}\right)
    .
  \end{multline}
  For $\kappa=1$, the integral above satisfies
  \begin{equation}\label{eq:integral-approximation-Tb}
    \abs{
      \int_{-1/2}^{1/2}
      K_b\left( \frac{t}{b+1} \right)
      \, dt
      -
      \int_{-1/2}^{1/2}
      \frac{\sin \pi t}{\pi t}
      \, dt
    }
    \leq
    \frac{\pi^2}{24 b} + \frac{\pi^2}{3} \frac{b+1}{b(2b+1)}
    .
  \end{equation}
\end{lemma}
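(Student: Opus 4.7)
My plan is to prove the two inequalities of the lemma by separate arguments.

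For \eqref{eq:approximation-Tb}, I interpret $T_{b,\kappa}(\alpha)$ as a Riemann sum for the integral on the right-hand side. The function $\tilde f(u) := K_b^\kappa(\norm{u}/(b+1))$ is $1$-periodic and even, and satisfies $T_{b,\kappa}(\alpha)=\frac{1}{b}\sum_{\ell=0}^{b-1}\tilde f\bigl(\tfrac{\alpha+\ell}{b}\bigr)$ and $\int_0^1\tilde f(u)\,du=\int_{-1/2}^{1/2} K_b^\kappa(t/(b+1))\,dt$. By Lemma~\ref{lemma:concavity-of-Dirichlet-kernel}, $K_b$ is decreasing on $[0,(2b-2)^{-1}]$, so $\tilde f$ decreases on $[0,1/2]$ from $1$ to $K_b^\kappa(1/(2b+2))$ and, by symmetry about $1/2$, has total variation $V_{\tilde f}=2(1-K_b^\kappa(1/(2b+2)))$. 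Applying Lemma~\ref{lemma:Euler-MacLaurin-order-1} to the translate $u\mapsto\tilde f(u+\alpha/b)$ (which is $1$-periodic with the same variation and integral as $\tilde f$) yields $\abs{T_{b,\kappa}(\alpha)-\int_0^1\tilde f}\leq V_{\tilde f}/(2b)$. This gives the first inequality once I observe that $K_b(1/(2b+2))=\cot(\pi/(2b+2))/b$, which follows from $\sin(\pi b/(2b+2))=\cos(\pi/(2b+2))$. For the second inequality it suffices to prove $K_b(1/(2b+2))\geq 2/\pi$; setting $\theta=\pi/(2b+2)\in(0,\pi/6]$, this amounts to $\pi(1-\theta\cot\theta)\leq 2\theta$, and combining $\sin\theta-\theta\cos\theta=\int_0^\theta u\sin u\,du\leq\theta^3/3$ with $\sin\theta\geq 2\theta/\pi$ yields $1-\theta\cot\theta\leq\pi\theta^2/6$, so the inequality reduces to $\theta\leq 12/\pi^2$, clearly satisfied.

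For \eqref{eq:integral-approximation-Tb}, the crucial step is the algebraic identity, with $\phi:=\pi t/(b+1)$,
\begin{equation*}
K_b\!\left(\tfrac{t}{b+1}\right)-\frac{\sin\pi t}{\pi t}=\frac{1}{b}\left(\frac{\sin\pi t}{\pi t}-\cos\pi t\right)-\frac{\sin\pi t}{b}\cdot\frac{\int_0^{\phi}u\sin u\,du}{\phi\sin\phi},
\end{equation*}
which I derive by expanding $\sin(\pi bt/(b+1))=\sin(\pi t-\phi)=\sin\pi t\cos\phi-\cos\pi t\sin\phi$, using $\sin\phi-\phi\cos\phi=\int_0^\phi u\sin u\,du$, and rearranging. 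Each summand is then controlled separately. For the first, starting from $\sin\pi t/(\pi t)-\cos\pi t=\int_0^1(\cos(\pi tu)-\cos\pi t)\,du$, the product-to-sum formula and $|\sin x|\leq|x|$ give $|\sin\pi t/(\pi t)-\cos\pi t|\leq\pi^2 t^2/3$; integrating over $[-1/2,1/2]$ and dividing by $b$ gives $\pi^2/(36b)\leq\pi^2/(24b)$. For the second, $|\int_0^\phi u\sin u\,du|\leq|\phi|^3/3$ (from $|u\sin u|\leq u^2$) and $|\sin\phi|\geq 2|\phi|/\pi$ (valid since $|\phi|\leq\pi/(2b+2)\leq\pi/6$) give $|\sin\pi t\cdot\int_0^\phi u\sin u\,du/(\phi\sin\phi)|\leq\pi^3 t^2/(6(b+1))$, so the integral divided by $b$ is $\pi^3/(72b(b+1))$; this is bounded by $\pi^2(b+1)/(3b(2b+1))$ since the equivalent inequality $\pi(2b+1)\leq 24(b+1)^2$ holds with slack $(24-\pi)(2b+1)+24b^2\geq 0$. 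Summing the two contributions completes the proof.

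The main obstacle is finding the right decomposition for \eqref{eq:integral-approximation-Tb}. A direct triangle-inequality attack via the naive splitting $K_b(t/(b+1))-\sin\pi t/(\pi t)=\sin\pi t\cdot g(t)-\cos\pi t/b$ with $g(t)=\cos\phi/(b\sin\phi)-1/(\pi t)$ gives an $O(1/b)$ bound with leading constant roughly $\pi/2+2/\pi\approx 2.2$, numerically close to but not smaller than the target $5\pi^2/24\approx 2.06$, and fails to produce the claimed $1/(b(b+1))$ correction term. The identity above separates the leading $O(1/b)$ piece from an $O(1/b^2)$ remainder in such a way that both summands enjoy a pointwise $O(t^2)$ bound uniformly in $b$, which is precisely what is needed to obtain the right constants.
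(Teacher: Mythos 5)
Your proof is correct. For \eqref{eq:approximation-Tb} you follow essentially the same route as the paper: the Euler--Maclaurin/total-variation bound of Lemma~\ref{lemma:Euler-MacLaurin-order-1} applied to the translate of $u\mapsto K_b^\kappa(\norm{u}/(b+1))$, the same evaluation $K_b\left(\tfrac{1}{2b+2}\right)=\tfrac1b\cot\tfrac{\pi}{2b+2}$, and an elementary verification of $K_b\left(\tfrac{1}{2b+2}\right)\geq \tfrac2\pi$ (the paper uses $\sin u\leq u$ and $\cos u\geq 1-u^2/2$ directly; your route via $\sin\theta-\theta\cos\theta=\int_0^\theta u\sin u\,du\leq\theta^3/3$ is an equally valid variant). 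For \eqref{eq:integral-approximation-Tb} your decomposition is genuinely different: the paper expands $\pi u\cot\pi u$ as a power series with Bernoulli numbers, reduces the remainder to $2\zeta(2)\sum_{m\geq1}\int(t/(b+1))^{2m}\,dt$, and evaluates the geometric tail via $\artanh$, whereas you isolate the same leading term $\tfrac1b\left(\tfrac{\sin\pi t}{\pi t}-\cos\pi t\right)$ through the closed-form identity built on $\sin\phi-\phi\cos\phi=\int_0^\phi u\sin u\,du$ and then bound both pieces pointwise by $O(t^2)$. Your version is more elementary (no special functions or series manipulation) and in fact yields slightly sharper constants ($\pi^2/(36b)$ in place of $\pi^2/(24b)$, and $\pi^3/(72b(b+1))$ in place of $\tfrac{\pi^2}{3}\tfrac{b+1}{b(2b+1)}$), at the cost of the one nontrivial algebraic identity, which I have checked and which is correct; the paper's series approach is more systematic and would extend to higher-order corrections if needed.
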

\begin{proof}
  Taking
  \begin{math}
    f(t) =
    K_b^{\kappa}\left( \frac{1}{b+1} \norm{\frac{\alpha}{b}+t} \right)
    ,
  \end{math}
  by Lemma~\ref{lemma:Euler-MacLaurin-order-1}
  we have
  \begin{displaymath}
    \abs{
      T_{b,\kappa}\left(\alpha\right)
      -
      \int_{-1/2}^{1/2}
      K_b^{\kappa}\left( \frac{t}{b+1} \right)
      \, dt
    }
    \leq
    \frac{V}{2b}
    ,
  \end{displaymath}
  where $V$ is the total variation of the function $f$.
  By periodicity,
  $V$ is also the total variation of the function
  \begin{math}
    t \mapsto K_b^{\kappa}\left(\frac{\norm{t}}{b+1}\right)
    .
  \end{math}
  By the proof of Lemma~\ref{lemma:maximum-Tb},
  \begin{displaymath}
    V
    =
    2
    \left(
      K_b^{\kappa}\left(0\right)
      -
      K_b^{\kappa}\left(\frac{1}{2b+2}\right)
    \right)
    =
    2
    \left(
      1
      -
      K_b^{\kappa}\left(\frac{1}{2b+2}\right)
    \right)
    .
  \end{displaymath}
  We have
  \begin{displaymath}
    K_b\left(\frac{1}{2b+2}\right)
    =
    \frac{\sin\left(\frac\pi{2}-\frac{\pi}{2b+2}\right)}{b\sin\frac{\pi}{2b+2}}
    =
    \frac1b \cot\frac{\pi}{2b+2}
    ,
  \end{displaymath}
  which gives the first inequality of \eqref{eq:approximation-Tb}.
  Using $\sin u \leq u$ and $\cos u \geq 1-\frac{u^2}{2}$,
  it follows that
  \begin{displaymath}
    K_b\left(\frac{1}{2b+2}\right)
    \geq
    \frac{2b+2}{\pi b} \left( 1 - \frac{\pi^2}{2(2b+2)^2} \right)
    =
    \frac{2}{\pi}
    + \frac{8(b+1)-\pi^2}{2\pi b(2b+2)}
    \geq
    \frac{2}{\pi}
    ,
  \end{displaymath}
  which gives the second inequality of~\eqref{eq:approximation-Tb}.

  Let us now prove \eqref{eq:integral-approximation-Tb}.
  For $t\in[-1/2,1/2]$ we have
  \begin{displaymath}
    K_b\left(\frac{t}{b+1}\right)
    =
    \frac{\sin\left(\pi t -\frac{\pi t}{b+1}\right)}{b\sin\frac{\pi t}{b+1}}
    =
    \frac1b \sin(\pi t) \cot\frac{\pi t}{b+1}
    -
    \frac1b \cos(\pi t)
  \end{displaymath}
  and, using the power series expansion
  (see e.g. \cite[Proposition 9.1.4]{cohen-NT-II-2007})
  \begin{displaymath}
    \pi u \cot \pi u
    =
    1
    -
    \sum_{m=1}^\infty
    \abs{B_{2m}} \frac{(2\pi)^{2m}}{(2m)!} u^{2m}
    \qquad
    (\abs{u}<1)
  \end{displaymath}
  where $(B_n)$ denotes the sequence of Bernoulli numbers,
  which satisfy
  \begin{displaymath}
    \abs{B_{2m}}
    =
    \frac{2 (2m)! \zeta(2m)}{(2\pi)^{2m}}
  \end{displaymath}
  (see e.g. \cite[Corollary 9.1.21]{cohen-NT-II-2007}) we get
  \begin{multline*}
    K_b\left(\frac{t}{b+1}\right)
    \\
    =
    \frac{b+1}b 
    \frac{\sin(\pi t)}{\pi t}
    -
    \frac{b+1}b 
    \frac{\sin(\pi t)}{\pi t}
    \sum_{m=1}^\infty
    2 \zeta(2m)
    \left(\frac{t}{b+1}\right)^{2m}
    -
    \frac1b \cos(\pi t)
    .
  \end{multline*}
  It follows that
  \begin{multline*}
    \abs{
      \int_{-1/2}^{1/2} K_b\left(\frac{t}{b+1}\right) \, dt
      -
      \int_{-1/2}^{1/2} \frac{\sin(\pi t)}{\pi t} \, dt
    }
    \\
    \leq
    \abs{
      \frac{1}b
      \int_{-1/2}^{1/2}
      \left( \frac{\sin(\pi t)}{\pi t} - \cos (\pi t) \right)
      \, dt
    }
    +
    \frac{b+1}b
    2 \zeta(2)
    \int_{-1/2}^{1/2}
    \sum_{m=1}^\infty
    \left(\frac{t}{b+1}\right)^{2m}
    \, dt
    .
  \end{multline*}
  Since for $u\in\R$ (by continuity for $u=0$) we have
  \begin{math}
    1 - \frac{u^2}{6} \leq \frac{\sin u}{u} \leq 1
  \end{math}
  and
  \begin{math}
    1 - \frac{u^2}{2} \leq \cos u \leq 1
  \end{math}
  which gives
  \begin{displaymath}
    - \frac{u^2}{6} \leq \frac{\sin u}{u} - \cos u \leq \frac{u^2}{2}
  \end{displaymath}
  we obtain
  \begin{displaymath}
    \abs{
      \frac{1}b
      \int_{-1/2}^{1/2}
      \left( \frac{\sin(\pi t)}{\pi t} - \cos (\pi t) \right)
      \, dt
    }
    \leq
    \frac{1}b
    \int_{-1/2}^{1/2}
    \frac{(\pi t)^2}{2} 
    \, dt
    =
    \frac{\pi^2}{24 b}
    ,
  \end{displaymath}
  and
  \begin{align*}
    \int_{-1/2}^{1/2}
    \sum_{m=1}^\infty
    \left(\frac{t}{b+1}\right)^{2m}
    \, dt
    &=
    2 (b+1)
    \int_{0}^{1/(2b+2)}
    \sum_{m=1}^\infty
    u^{2m}
    \, du
    \\
    &=
    2 (b+1)
    \left( \artanh\left(\frac{1}{2b+2}\right) - \frac{1}{2b+2} \right)
    \\
    &=
    2 (b+1)
    \left(
      \frac12
      \log \frac{1+\frac{1}{2b+2}}{1-\frac{1}{2b+2}}
      - \frac{1}{2b+2}
    \right)
    \\
    &=
    2 (b+1)
    \left(
      \frac12
      \log \left( 1 + \frac{2}{2b+1} \right)
      - \frac{1}{2b+2}
    \right)
    \\
    &\leq
    2 (b+1)
    \left(
      \frac{1}{2b+1} - \frac{1}{2b+2}
    \right)
    =
    \frac{1}{2b+1}
    ,
  \end{align*}
  and since $\zeta(2)=\pi^2/6$ we get \eqref{eq:integral-approximation-Tb}.
\end{proof}

\begin{lemma}\label{lemma:Lkappa-norm-of-G}
  For any integers $b\geq 2$ and $0\leq\lambda\leq \lambda'$
  and any real number $\kappa>0$,
  and for any $1$-periodic function $\Phi:\R\to\R^+$ integrable on $[0,1]$,
  we have
  \begin{equation}\label{eq:Lkappa-norm-of-G}
    \int_0^1
    G_\lambda^{\kappa}(\alpha) \,
    \Phi\left(\alpha b^{\lambda'}\right)
    d\alpha
    \leq
    \left(\max_{\alpha\in\R} T_{b,\kappa}\left(\alpha\right) \right)^\lambda
    \norm{\Phi}_1
    =
    b^{-\zeta_{b,\kappa} \lambda}
    \norm{\Phi}_1
    ,
  \end{equation}
  where $T_{b,\kappa}$ is defined by \eqref{eq:definition-Tb}
  and
  \begin{equation}\label{eq:definition-zeta_b}
    \zeta_{b,\kappa}
    =
    \frac{
      - \log \left(
        \max_{\alpha\in\R} T_{b,\kappa}\left(\alpha\right)
      \right)}{\log b}
    \geq 0
    .
  \end{equation}
  We notice that for $0<\kappa\leq 1$,
  by \eqref{eq:max-Tb-small-kappa} we have $\zeta_{b,\kappa} > 0$,
  and 
  for
  \begin{math}
    \kappa \geq \frac{6 b^2(b+1)}{\pi (b-1)^3}
    ,
  \end{math}
  by \eqref{eq:max-Tb-simple}
  we also have $\zeta_{b,\kappa} > 0$.
\end{lemma}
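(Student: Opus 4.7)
The argument is a clean induction on $\lambda$ via the multiplicative structure of $G_\lambda$, together with the change of variables that makes $T_{b,\kappa}$ appear naturally. I would first dispose of the base case $\lambda=0$: then $G_0\equiv 1$, so the left-hand side of \eqref{eq:Lkappa-norm-of-G} equals $\int_0^1\Phi(\alpha b^{\lambda'})\,d\alpha=\|\Phi\|_1$ by the $1$-periodicity of $\Phi$ (when $\lambda'\geq 1$; the case $\lambda'=0$ is trivial), and the prefactor $(\max T_{b,\kappa})^0=1$.

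For the induction step ($\lambda\geq 1$, hence $\lambda'\geq 1$), I would isolate the first factor in the definition \eqref{eq:definition-G} to write
\begin{equation*}
G_\lambda(\alpha)
=
K_b\!\left(\frac{\|\alpha\|}{b+1}\right)
\,
G_{\lambda-1}(\alpha b),
\end{equation*}
split $[0,1]=\bigcup_{\ell=0}^{b-1}[\ell/b,(\ell+1)/b]$, and set $\alpha=(\ell+\beta)/b$ in each piece. Since $\|\alpha b^{j}\|$, $G_{\lambda-1}$ and $\Phi$ are all $1$-periodic and $b^{j}\ell$ is an integer for every $j\geq 0$, the factors $G_{\lambda-1}^{\kappa}(\alpha b)$ and $\Phi(\alpha b^{\lambda'})$ become $G_{\lambda-1}^{\kappa}(\beta)$ and $\Phi(\beta b^{\lambda'-1})$ respectively, while the $K_b$-factor aggregates to
\begin{equation*}
\frac{1}{b}\sum_{\ell=0}^{b-1}
K_b^{\kappa}\!\left(\frac{1}{b+1}\left\|\frac{\beta+\ell}{b}\right\|\right)
= T_{b,\kappa}(\beta).
\end{equation*}
Bounding $T_{b,\kappa}(\beta)\leq\max_{\alpha\in\R}T_{b,\kappa}(\alpha)$ and applying the induction hypothesis (with $\lambda$ replaced by $\lambda-1$ and $\lambda'$ by $\lambda'-1$, which still satisfy $0\leq\lambda-1\leq\lambda'-1$) yields exactly \eqref{eq:Lkappa-norm-of-G}.

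Finally, for the strict positivity assertions on $\zeta_{b,\kappa}$: when $0<\kappa\leq 1$, Lemma~\ref{lemma:maximum-Tb} gives $\max_\alpha T_{b,\kappa}(\alpha)=T_{b,\kappa}((b+1)/2)<1$, hence $\zeta_{b,\kappa}>0$ by \eqref{eq:definition-zeta_b}. For large $\kappa$, I would use \eqref{eq:max-Tb-simple}: the condition $\kappa\geq 6b^2(b+1)/(\pi(b-1)^3)$ is equivalent to $\sqrt{6(b+1)/(\pi(b-1)\kappa)}\leq (b-1)/b$, so that $\max_\alpha T_{b,\kappa}(\alpha)<1/b+(b-1)/b=1$, again forcing $\zeta_{b,\kappa}>0$.

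\textbf{Main obstacle.} The proof is essentially bookkeeping around the product formula, so there is no deep obstacle; the only point requiring a bit of care is ensuring that the change of variables legitimately reduces the exponent in $\Phi(\alpha b^{\lambda'})$ to $\Phi(\beta b^{\lambda'-1})$ at each induction step, which is precisely where the hypothesis $\lambda\leq\lambda'$ is used.
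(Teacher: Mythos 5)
Your proof is correct and follows essentially the same route as the paper: peel off the $j=0$ factor of $G_\lambda$, use the $\tfrac1b$-periodicity of $G_{\lambda-1}^{\kappa}(\alpha b)$ (and the integrality of $b^{\lambda'-1}\ell$) to aggregate the $K_b$-factor into $T_{b,\kappa}$, bound by its maximum, and iterate down to $\int_0^1\Phi(\alpha b^{\lambda'-\lambda})\,d\alpha=\norm{\Phi}_1$. The positivity checks for $\zeta_{b,\kappa}$ also match the paper's use of \eqref{eq:max-Tb-small-kappa} and \eqref{eq:max-Tb-simple}.
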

\begin{proof}
  For integers $1\leq \lambda\leq \lambda'$ and $\alpha\in\R$, we have
  \begin{equation*}
    G_\lambda^{\kappa}(\alpha)
    =
    K_b^{\kappa}\left(\frac{\norm{\alpha}}{b+1}\right)
    G_{\lambda-1}^{\kappa}(\alpha b).
  \end{equation*}
  Since $\alpha \mapsto G_{\lambda-1}^{\kappa}(\alpha b)$ has period
  $\frac{1}{b}$, we obtain
  \begin{align*}
    \int_0^1
    &
      G_\lambda^{\kappa}(\alpha)
    \Phi\left(\alpha b^{\lambda'}\right)
    d\alpha
    \\
    &=
    \sum_{\ell = 0}^{b-1} \int_0^{1/b}
    K_b^{\kappa}\left(\frac{1}{b+1} \norm{\alpha+\frac{\ell}{b}} \right)
    G_{\lambda-1}^{\kappa}(\alpha b)
    \Phi\left(\alpha b^{\lambda'}\right)
    d\alpha
    \\
    &= 
    \int_0^1
    T_{b,\kappa}\left(\alpha\right)
    G_{\lambda-1}^{\kappa}(\alpha)
    \Phi\left(\alpha b^{\lambda'-1}\right)
    d\alpha
    ,
  \end{align*}
  hence
  \begin{displaymath}
    \int_0^1 G_\lambda^{\kappa}(\alpha)
    \Phi\left(\alpha b^{\lambda'}\right)
    d\alpha
    \leq
    \left(\max_{\alpha\in\R} T_{b,\kappa}\left(\alpha\right) \right)
    \int_0^1
    G_{\lambda-1}^{\kappa}(\alpha)
    \Phi\left(\alpha b^{\lambda'-1}\right)
    d\alpha
  \end{displaymath}
  and by induction we get
  \begin{displaymath}
    \int_0^1 G_\lambda^{\kappa}(\alpha)
    \Phi\left(\alpha b^{\lambda'}\right)
    d\alpha
    \leq
    \left(\max_{\alpha\in\R} T_{b,\kappa}\left(\alpha\right) \right)^\lambda
    \int_0^1
    \Phi\left(\alpha b^{\lambda'-\lambda}\right)
    d\alpha
    .
  \end{displaymath}
  Since $b\geq 2$ is an integer, we have
  \begin{displaymath}
    \int_0^1
    \Phi\left(\alpha b^{\lambda'-\lambda}\right)
    d\alpha
    =
    b^{\lambda-\lambda'}
    \int_0^{b^{\lambda'-\lambda}}
    \Phi\left(u\right)
    du
    =
    \norm{\Phi}_1,
  \end{displaymath}
  and the result follows.
\end{proof}

\begin{lemma}\label{lemma:L_1_norm_of-G'}
  For any integers $b\geq 2$ and  $\lambda\geq 1$
  and any real number $\kappa >0$,
  $G_{\lambda}^{\kappa}$ admits almost everywhere a derivative
  $\left(G_{\lambda}^{\kappa} \right)'$ which is integrable
  and satisfies
  \begin{displaymath}
    \norm{\left(G_{\lambda}^{\kappa} \right)'}_1
    \leq
    \kappa \,
    b^\lambda \,
    \norm{G_{\lambda}^{\kappa}}_1
    .
  \end{displaymath}
\end{lemma}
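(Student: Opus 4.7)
The plan is to reduce the $L^1$ bound to an almost-everywhere pointwise bound on the logarithmic derivative $(\log G_\lambda)'$. Each factor $K_b(\norm{\alpha b^j}/(b+1))$ is Lipschitz in $\alpha$ and bounded below by $K_b\bigl((2b+2)^{-1}\bigr)>0$, so $G_\lambda$ is Lipschitz and bounded below by a positive constant; hence $G_\lambda^\kappa$ is Lipschitz for every $\kappa>0$, and in particular absolutely continuous with integrable derivative. Outside the finite exceptional set where some $\alpha b^j\in\frac{1}{2}\Z$, the chain rule gives
\[
(G_\lambda^\kappa)'(\alpha)
=
\kappa\,G_\lambda^\kappa(\alpha)\sum_{j=0}^{\lambda-1}\frac{\epsilon_j(\alpha)\,b^j}{b+1}\cdot\frac{K_b'(u_j)}{K_b(u_j)},
\]
with $u_j=\norm{\alpha b^j}/(b+1)\in[0,(2b+2)^{-1}]$ and $\epsilon_j(\alpha)\in\{-1,+1\}$ determined by the local sign of the derivative of $\norm{\cdot}$.

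The core technical step is the pointwise estimate $|K_b'(u)/K_b(u)|\leq b^2-1$ on $[0,(2b+2)^{-1}]$. By Lemma~\ref{lemma:concavity-of-Dirichlet-kernel} the quantity $g(u):=-K_b'(u)/K_b(u)=\pi\cot(\pi u)-\pi b\cot(\pi bu)$ is non-negative on this interval. A short computation gives $g'(u)>0$ on $(0,1/(2b))$: this amounts to $b\sin(\pi u)>\sin(\pi bu)$, which follows from the identity $\sin(\pi bu)=\sin(\pi u)\,U_{b-1}(\cos\pi u)$ together with $U_{b-1}(x)<b$ for $|x|<1$, where $U_{b-1}$ denotes the Chebyshev polynomial of the second kind. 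Hence $g$ is maximised at the endpoint, where a trigonometric manipulation using $\pi b/(2(b+1))=\pi/2-\pi/(2(b+1))$ yields
\[
g\bigl((2b+2)^{-1}\bigr)
=
\pi\cdot\frac{(b+1)\cos\bigl(\pi/(b+1)\bigr)-(b-1)}{\sin\bigl(\pi/(b+1)\bigr)}
\leq
\pi(b+1).
\]
The estimate $\pi(b+1)\leq b^2-1$ holds as soon as $b\geq 5$; the cases $b\in\{2,3,4\}$ are verified by direct evaluation, giving respectively $\pi/\sqrt 3,\,\pi(4-2\sqrt 2),$ and $\pi\cot(\pi/10)-4\pi\tan(\pi/10)$, each strictly below $b^2-1$.

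Given the pointwise bound, the triangle inequality delivers
\[
|(\log G_\lambda)'(\alpha)|
\leq
\frac{b^2-1}{b+1}\sum_{j=0}^{\lambda-1}b^j
=(b-1)\cdot\frac{b^\lambda-1}{b-1}
=b^\lambda-1<b^\lambda,
\]
hence $|(G_\lambda^\kappa)'(\alpha)|\leq \kappa b^\lambda G_\lambda^\kappa(\alpha)$ almost everywhere, and integrating over $[0,1]$ completes the proof. The main obstacle is obtaining the sharp uniform bound $|K_b'/K_b|\leq b^2-1$: the crude estimate $\pi\cot(\pi u)\leq 1/u$ blows up near $0$, so one has to exploit the cancellation with $\pi b\cot(\pi bu)$; the asymptotic bound $\pi(b+1)$ that one obtains this way just fails for $b\in\{2,3,4\}$, which therefore must be checked individually.
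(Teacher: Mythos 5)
Your proof is correct and follows essentially the same route as the paper's: an almost-everywhere pointwise bound
$\abs{(G_\lambda^\kappa)'}\le\kappa\, G_\lambda^\kappa\sum_{j}\frac{b^j}{b+1}\abs{\tfrac{K_b'}{K_b}(u_j)}$,
reduced via monotonicity of $\abs{K_b'/K_b}$ on the relevant interval to the endpoint bound $\abs{\tfrac{K_b'}{K_b}}\bigl(\tfrac{1}{2b+2}\bigr)\le b^2-1$, whence $\sum_j \frac{b^j}{b+1}(b^2-1)\cdot\frac{b+1}{b^2-1}$ sums to $b^\lambda-1<b^\lambda$. The only differences are cosmetic: the paper obtains the monotonicity from the concavity statement of Lemma~\ref{lemma:concavity-of-Dirichlet-kernel} and bounds the endpoint by $\pi\cot\frac{\pi}{2b+2}\le 2b+2\le b^2-1$ (so only $b=2$ needs a separate check), whereas you differentiate $-K_b'/K_b$ directly and use the slightly weaker endpoint bound $\pi(b+1)$, forcing individual checks for $b\in\{2,3,4\}$; your explicit Lipschitz justification of the absolute continuity of $G_\lambda^\kappa$ (needed for $\kappa<1$, and resting on the positive lower bound $K_b((2b+2)^{-1})^\lambda$) is a point the paper leaves implicit.
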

\begin{proof}
  By \eqref{eq:definition-G}, for $\lambda\geq 1$ and $\alpha\in\R$,
  we have
  \begin{displaymath}
    G_{\lambda}(\alpha)
    =
    \prod_{j=0}^{\lambda-1}
    K_b \left( \frac{\norm{\alpha b^j}}{b+1}\right)
    ,
  \end{displaymath}
  where $K_b$ is defined by \eqref{eq:definition-Kb}.
  Hence, for almost all $\alpha \in \R$, we have
  \begin{displaymath}
    \abs{\left(G_{\lambda}^{\kappa} \right)'(\alpha)}
    \leq
    \abs{G_{\lambda}^{\kappa}(\alpha)}
    \sum_{j=0}^{\lambda-1}
    \kappa \frac{b^j}{b+1}
    \abs{\frac{K'_b}{K_b}\left(\frac{\norm{\alpha b^j}}{b+1}\right)}
    ,
  \end{displaymath}
  and by Lemma~\ref{lemma:concavity-of-Dirichlet-kernel},
  $K_b$ is positive and decreasing on $[0,(2b-2)^{-1}]$,
  $K'_b$ is negative and decreasing on $[0,(2b-2)^{-1}]$, so that
  $\abs{K'_b}$ is increasing on $[0,(2b-2)^{-1}]$.
  It follows that $\abs{K'_b/K_b}$ is increasing
  on $\left[0,(2b-2)^{-1}\right[$.
  Since
  \begin{math}
    \norm{\alpha b^j} \leq \frac{1}{2}
    ,
  \end{math}
  we deduce that
  \begin{displaymath}
    \abs{\frac{K'_b}{K_b}\left(\frac{\norm{\alpha b^j}}{b+1}\right)}
    \leq
    \abs{\frac{K'_b}{K_b}\left(\frac{1}{2b+2}\right)}
  \end{displaymath}
  and
  \begin{align*}
    \abs{\left(G_{\lambda}^{\kappa} \right)'(\alpha)}
    \leq &
    \abs{G_{\lambda}^{\kappa}(\alpha)}
    \abs{\frac{K'_b}{K_b}\left(\frac{1}{2b+2}\right)}
    \frac{\kappa}{b+1}
    \sum_{j=0}^{\lambda-1} b^j
    \\
    &=
    \kappa 
    \abs{G_{\lambda}^{\kappa}(\alpha)}
    \abs{\frac{K'_b}{K_b}\left(\frac{1}{2b+2}\right)}
    \cdot
    \frac{b^{\lambda} -1}{b^2-1}
    .
  \end{align*}
  For $b\geq 3$ we have by \eqref{eq:Kb-logarithmic-derivative}
  \begin{align*}
    \abs{\frac{K'_b}{K_b}\left(\frac{1}{2b+2}\right)}
    &=
    - \frac{K'_b}{K_b}\left( \frac1{2b+2}\right)
    =
    \pi \cot\frac{\pi}{2b+2}
    -
    \pi b \cot\frac{\pi b}{2b+2}
    \\
    &\leq
    \pi \cot\frac{\pi}{2b+2}
    \leq 2b+2
    \leq b^2-1
    ,
  \end{align*}
  while for $b=2$ we have
  \begin{displaymath}
    \abs{\frac{K'_b}{K_b}\left(\frac{1}{2b+2}\right)}
    =
    \abs{\frac{K'_2}{K_2}\left(\frac{1}{6}\right)}
    =
    \frac{\pi}{\sqrt{3}} < b^2-1 = 3
    .
  \end{displaymath}
  Hence the result holds for any integer $b\geq 2$.
\end{proof}

\begin{lemma}\label{lemma:sobolev-gallagher-for-G}
  For any integers $b\geq 2$ and $\delta \geq 1$,
  any real number $\kappa>0$ 
  and any sequence $(x_1,\ldots,x_N) \in \R^N$
  which is $b^{-\delta}$ well spaced modulo~$1$, we have
  \begin{displaymath}
    \sum_{n=1}^N G_\delta^\kappa(x_n)
    \leq
    \left( \frac{\kappa}{2}+1 \right)
    b^\delta \norm{G_\delta^\kappa}_1
    \leq
    \left( \frac{\kappa}{2}+1 \right)
    b^{(1-\zeta_{b,\kappa})\delta}
    ,
  \end{displaymath}
  where $\zeta_{b,\kappa}$ is defined by \eqref{eq:definition-zeta_b}.
\end{lemma}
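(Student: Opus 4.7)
The plan is to apply the generalized Sobolev--Gallagher inequality (Lemma~\ref{lemma:sobolev-gallagher}) to the function $f = G_\delta^{\kappa}$ with spacing parameter $b^{-\delta}$, and then bound the two resulting terms using the lemmas already proved in this section.

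First I would verify the hypotheses: the function $G_\delta^{\kappa}$ is $1$-periodic because each factor $K_b(\norm{\alpha b^j}/(b+1))$ is $1$-periodic in $\alpha$ (since $b^j\in\Z$ and $\norm{\cdot}$ is $1$-periodic), continuous as a finite product of continuous functions (recall from Lemma~\ref{lemma:concavity-of-Dirichlet-kernel} that $K_b\in C^\infty$), and of bounded variation since by Lemma~\ref{lemma:L_1_norm_of-G'} it is absolutely continuous with integrable weak derivative. In particular, its total variation on $[0,1]$ satisfies
\begin{displaymath}
  V_{G_\delta^{\kappa}}
  =
  \norm{\left(G_\delta^{\kappa}\right)'}_1
  \leq
  \kappa\, b^{\delta}\, \norm{G_\delta^{\kappa}}_1.
\end{displaymath}

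Next, applying Lemma~\ref{lemma:sobolev-gallagher} with $\delta$ (spacing) replaced by $b^{-\delta}$ yields
\begin{displaymath}
  \sum_{n=1}^N G_\delta^{\kappa}(x_n)
  \leq
  b^{\delta} \int_0^1 G_\delta^{\kappa}(u)\,du
  + \frac{1}{2} V_{G_\delta^{\kappa}}
  \leq
  b^{\delta}\norm{G_\delta^{\kappa}}_1
  + \frac{\kappa}{2} b^{\delta}\norm{G_\delta^{\kappa}}_1
  =
  \left(\frac{\kappa}{2}+1\right) b^{\delta} \norm{G_\delta^{\kappa}}_1,
\end{displaymath}
which is the first claimed inequality.

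Finally, to pass from the first to the second inequality, I would invoke Lemma~\ref{lemma:Lkappa-norm-of-G} with the constant density $\Phi\equiv 1$ (so that $\norm{\Phi}_1 = 1$) and $\lambda' = \lambda = \delta$, which gives $\norm{G_\delta^{\kappa}}_1 \leq b^{-\zeta_{b,\kappa}\delta}$. Substituting this bound into the previous inequality produces the upper bound $(\kappa/2+1)\,b^{(1-\zeta_{b,\kappa})\delta}$. The argument is essentially a direct assembly of the previously established results; the only subtlety is the absolute continuity of $G_\delta^{\kappa}$, which is already handled by Lemma~\ref{lemma:L_1_norm_of-G'}, so no serious obstacle arises.
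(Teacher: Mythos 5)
Your proposal is correct and follows essentially the same route as the paper: apply Lemma~\ref{lemma:sobolev-gallagher} to $G_\delta^\kappa$, control the total variation via Lemma~\ref{lemma:L_1_norm_of-G'} (absolute continuity giving $V_{G_\delta^\kappa}=\norm{(G_\delta^\kappa)'}_1\leq\kappa\,b^\delta\norm{G_\delta^\kappa}_1$), and conclude with Lemma~\ref{lemma:Lkappa-norm-of-G} applied to $\Phi\equiv 1$. No gaps.
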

\begin{proof}
  By Lemma~\ref{lemma:L_1_norm_of-G'},
  $G_\delta^\kappa$ is of bounded variation on $[0,1]$,
  and by Lemma~\ref{lemma:sobolev-gallagher} we have
  \begin{displaymath}
    \sum_{n=1}^N \abs{G_\delta^\kappa(x_n)}
    \leq
    b^{\delta} \int_0^1 \abs{G_\delta^\kappa(u)} \, du
    + \frac12 \int_0^1 \abs{\left(G_\delta^\kappa\right)'(u)} \, du
    ,
  \end{displaymath}
  so that by Lemma~\ref{lemma:L_1_norm_of-G'}, 
  we get the first inequality
  \begin{displaymath}
    \sum_{n=1}^N G_\delta^\kappa(x_n)
    \leq
    \left( \frac{\kappa}{2}+1 \right)
    b^\delta \norm{G_\delta^\kappa}_1
    ,
  \end{displaymath}
  and the second inequality follows by Lemma~\ref{lemma:Lkappa-norm-of-G}
  with $\Phi=1$.
\end{proof}

\begin{lemma}\label{lemma:sobolev-gallagher-combined-with-holder}
  Let $b\geq 2$, $\delta\geq 1$, $\delta'\geq 1$ and $\kappa\geq 1$
  be integers such that $(\kappa+1)\delta \leq\delta'$,
  and let $(x_1,\ldots,x_N) \in \R^N$ be such that for any integer
  $\ell\geq 0$, the sequence
  \begin{math}
    (x_1 b^{\ell\delta},\ldots, x_N b^{\ell\delta})
  \end{math}
  is $b^{-\delta}$ well spaced modulo~$1$.
  Then
  \begin{multline}\label{eq:sobolev-gallagher-combined-with-holder}
    \sum_{n=1}^N G_{\delta '}^{1/2}(x_n)
    \leq
    \frac{\sqrt{3}}{2}
    b^{(1-\zeta_{b,1})\delta/2} \,
    (\kappa+2)^{1/2}
    b^{(1-\zeta_{b,\kappa})\delta/2} \,
    \\
    \leq
    \frac{\sqrt{3}}{2}
    b^{(1-\zeta_{b,1})\delta/2} \,
    (\kappa+2)^{1/2}
    \left(
      1
      +
      \sqrt{\frac{6\, b^2(b+1)}{\pi (b-1) \, \kappa}}
    \right)^{\delta/2}
    .
  \end{multline}
\end{lemma}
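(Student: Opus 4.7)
The plan is to exploit the product structure of $G_{\delta'}$ and the hypothesis $(\kappa+1)\delta \leq \delta'$ to split $G_{\delta'}^{1/2}$ into two factors, then bound each by the previous Sobolev-Gallagher lemma (Lemma~\ref{lemma:sobolev-gallagher-for-G}) applied with two different exponents. Concretely, since $K_b \leq 1$ and $(\kappa+1)\delta \leq \delta'$, dropping factors in the product defining $G_{\delta'}$ (see~\eqref{eq:definition-G}) yields the pointwise majoration
\begin{equation*}
  G_{\delta'}^{1/2}(x) \leq G_{\delta}^{1/2}(x) \, G_{\kappa\delta}^{1/2}(x b^\delta).
\end{equation*}
Applying the Cauchy--Schwarz inequality to $\sum_n G_{\delta'}^{1/2}(x_n)$ then separates the two factors:
\begin{equation*}
  \sum_{n=1}^N G_{\delta'}^{1/2}(x_n)
  \leq
  \left(\sum_{n=1}^N G_{\delta}(x_n)\right)^{1/2}
  \left(\sum_{n=1}^N G_{\kappa\delta}(x_n b^\delta)\right)^{1/2}.
\end{equation*}

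For the first sum, I apply Lemma~\ref{lemma:sobolev-gallagher-for-G} directly with exponent $\kappa=1$ to the $b^{-\delta}$ well spaced sequence $(x_n)$, giving $\sum_n G_\delta(x_n) \leq \tfrac{3}{2} \, b^{(1-\zeta_{b,1})\delta}$. For the second sum, I use again the product formula $G_{\kappa\delta}(\alpha) = \prod_{\ell=0}^{\kappa-1} G_\delta(\alpha b^{\ell\delta})$, and then the AM-GM inequality in the form
\begin{equation*}
  \prod_{\ell=0}^{\kappa-1} G_\delta\!\left(x_n b^{(\ell+1)\delta}\right)
  \leq
  \frac{1}{\kappa} \sum_{\ell=0}^{\kappa-1} G_\delta^{\kappa}\!\left(x_n b^{(\ell+1)\delta}\right),
\end{equation*}
to convert the unfriendly product into a manageable sum of $\kappa$-th powers. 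By hypothesis, each shifted sequence $(x_n b^{(\ell+1)\delta})_n$ is still $b^{-\delta}$ well spaced modulo $1$, so Lemma~\ref{lemma:sobolev-gallagher-for-G} applied with exponent $\kappa$ gives, for each $\ell$, the uniform bound $\sum_n G_\delta^{\kappa}(x_n b^{(\ell+1)\delta}) \leq \bigl(\tfrac{\kappa}{2}+1\bigr) b^{(1-\zeta_{b,\kappa})\delta}$. Summing over $\ell$ and dividing by $\kappa$ yields $\sum_n G_{\kappa\delta}(x_n b^\delta) \leq \tfrac{\kappa+2}{2} \, b^{(1-\zeta_{b,\kappa})\delta}$.

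Inserting both bounds back into the Cauchy--Schwarz estimate produces exactly the first inequality claimed, with the constant $\sqrt{3(\kappa+2)}/2 = \tfrac{\sqrt{3}}{2}(\kappa+2)^{1/2}$. To obtain the second inequality, I replace $b^{-\zeta_{b,\kappa}} = \max_\alpha T_{b,\kappa}(\alpha)$ (its definition in~\eqref{eq:definition-zeta_b}) by the upper bound~\eqref{eq:max-Tb-simple} from Lemma~\ref{lemma:maximum-Tb}, which gives $b^{1-\zeta_{b,\kappa}} \leq 1 + \sqrt{6 b^2(b+1)/(\pi(b-1)\kappa)}$, and raise to the power $\delta/2$. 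I do not anticipate a real obstacle here: the only delicate point is the AM-GM trick that exchanges a product of $G_\delta$'s at shifted arguments for a $\kappa$-th power at a single argument, making the hypothesis on well-spacing of all the shifted sequences play its role. The gain comes precisely from the fact that $\zeta_{b,\kappa}$ grows with $\kappa$ (cf.~\eqref{eq:max-Tb-simple}), so the second factor contributes a saving that is uniform across shifts.
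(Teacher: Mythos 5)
Your proof is correct and follows essentially the same route as the paper: the same block decomposition $G_{\delta'}\leq\prod_{\ell=0}^{\kappa}G_{\delta}(\cdot\, b^{\ell\delta})$, the same appeal to Lemma~\ref{lemma:sobolev-gallagher-for-G} with exponents $1$ and $\kappa$ on the $b^{-\delta}$ well spaced shifted sequences, and the same constants; the paper merely separates the $\kappa+1$ factors in one stroke via Hölder with exponents $\left(\tfrac12,\tfrac1{2\kappa},\ldots,\tfrac1{2\kappa}\right)$, whereas you use Cauchy--Schwarz followed by a pointwise AM--GM, which is an equivalent bookkeeping of the same estimate.
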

\begin{proof}
  Since $0\leq K_b \leq 1$ on $\left[0,(2b+2)^{-1}\right]$,
  we may drop some terms
  in the product \eqref{eq:definition-G},
  and we get for any $\alpha\in\R$:
  \begin{displaymath}
    G_{\delta'}(\alpha)
    =
    \prod_{j=0}^{\delta'-1}
    K_b\left ( \frac{\norm{\alpha b^j}}{b+1}\right )
    \leq
    \prod_{\ell=0}^\kappa
    \prod_{j=\ell \delta}^{(\ell +1)\delta -1}
    K_b\left( \frac{\norm{\alpha b^j}}{b+1} \right)
    =
    \prod_{\ell=0}^\kappa
    G_{\delta}\left( \alpha b^{\ell\delta} \right)
    .
  \end{displaymath}
  Using Hölder's inequality with exponents
  \begin{math}
    \left(\frac12,\frac{1}{2\kappa}, \ldots ,\frac{1}{2\kappa}\right)
    \in \R^{\kappa+1}
    ,
  \end{math}
  this permits to write
  \begin{align*}
    \sum_{n=1}^N G_{\delta '}^{1/2}(x_n)
    &\leq
    \sum_{n=1}^N 
    \prod_{\ell=0}^\kappa
    G_{\delta}^{1/2}\left( x_n b^{\ell\delta} \right)
    \\
    &\leq
    \left( \sum_{n=1}^N G_{\delta}(x_n) \right)^{1/2}
    \prod_{\ell=1}^\kappa
    \left(
      \sum_{n=1}^N G_{\delta}^{\kappa}\left(x_n b^{\ell\delta}\right)
    \right)^{1/(2\kappa)}
    .
  \end{align*}
  Since the sequence $(x_1,\ldots,x_N)$
  is $b^{-\delta}$ well spaced modulo $1$,
  by Lemma~\ref{lemma:sobolev-gallagher-for-G} we have
  \begin{displaymath}
    \sum_{n=1}^N G_{\delta}(x_n)
    \leq
    \frac32 \,
    b^{\delta} \norm{ G_{\delta}}_1
    \leq
    \frac32 \,
    b^{(1-\zeta_{b,1})\delta}
    ,
  \end{displaymath}
  and since the sequence $(x_1 b^{\ell\delta},\ldots,x_N b^{\ell\delta})$
  is $b^{-\delta}$ well spaced modulo $1$,
  by Lemma~\ref{lemma:sobolev-gallagher-for-G} we have
  \begin{displaymath}
    \sum_{n=1}^N G_{\delta}^\kappa(x_n b^{\ell\delta})
    \leq
    \left( \frac{\kappa}{2}+1\right)
    b^{\delta}\norm{ G_{\delta}^\kappa}_1
    \leq
    \left( \frac{\kappa}{2}+1\right)
    b^{(1-\zeta_{b,\kappa})\delta}
    .
  \end{displaymath}
  It follows that
    \begin{displaymath}
    \sum_{n=1}^N G_{\delta '}^{1/2}(x_n)
    \leq
    \frac{\sqrt{3}}{2}
    b^{(1-\zeta_{b,1})\delta/2} \,
    (\kappa+2)^{1/2}
    b^{(1-\zeta_{b,\kappa})\delta/2} \,
    ,
  \end{displaymath}
  giving the first inequality
  of~\eqref{eq:sobolev-gallagher-combined-with-holder}.
  
  By \eqref {eq:definition-zeta_b} and \eqref{eq:max-Tb-simple},
  we have
  \begin{displaymath}
    b^{-\zeta_{b,\kappa}}
    \leq
    \frac1b
    +
    \sqrt{\frac{6 (b+1)}{\pi (b-1) \, \kappa}}
    ,
  \end{displaymath}
  which leads to the second inequality
  of~\eqref{eq:sobolev-gallagher-combined-with-holder}.
\end{proof}

\section{Palindromic interlude}\label{section:palindromes}

Proposition 7.1 of Tuxanidy and Panario \cite{tuxanidy-panario-2024},
after renormalisation and taking $\lambda=N-1$, $\kappa=K$,
states that for integers $b\geq 2$, $\lambda\geq 1$ and $\kappa\geq 2$
we have
\begin{displaymath}
  \int_0^1
  \prod_{j=0}^{\lambda-1}
  K_b^{2\kappa}\left( \alpha b^{j+1} + \alpha b^{2\lambda+1-j} \right) \, d\alpha
  \leq
  b^{2\kappa-2\lambda}
  \left(1+O\left(\frac1{\sqrt{\kappa}}+\frac{b^2}{\kappa}\right)\right)^{2\lambda-2}
  .
\end{displaymath}
We can save the factor $b^{2\kappa}$ by proving the following
explicit upper bound:
\begin{proposition}\label{prop:maj_int_prod_K_b}
  For any integers $b\geq 2$, $\lambda\geq 1$,
  and any real number $\kappa>0$ we have
  \begin{align*}
    \int_0^1
    \prod_{j=0}^{\lambda-1}
    \abs{
    K_b\left( \alpha b^{j+1} + \alpha b^{2\lambda+1-j} \right)
    }^{2\kappa}
    \, d\alpha
    &\leq
    \left(
      \max_{\alpha\in\R} T_{b,\frac{\kappa}{2}}\left(\alpha\right)
    \right)^{2\lambda-2}
    =
    b^{-\zeta_{b,\frac{\kappa}{2}}(2\lambda-2)}
    \\
    &\leq
    \left(
      \frac1b
      +
      \sqrt{\frac{12 (b+1)}{\pi (b-1) \, \kappa}}
    \right)^{2\lambda-2}
    .
  \end{align*}
\end{proposition}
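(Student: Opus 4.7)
The plan is to express the integrand as $\abs{\mathcal{F}_\lambda}^{2\kappa}$ for a suitable choice of arguments, bound it via the symmetric pointwise estimate \eqref{eq:combined-upperbound-of-F}, and then iterate Lemma~\ref{lemma:Lkappa-norm-of-G} twice.

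First, by the product formula \eqref{eq:FT-product-formula}, taking $\alpha' = b^{\lambda+2}\alpha$ and $\vartheta' = -b\alpha$ one checks that $\alpha' b^{\lambda-1-j} - \vartheta' b^j = \alpha(b^{2\lambda+1-j} + b^{j+1})$, so the integrand equals $\abs{\mathcal{F}_\lambda(b^{\lambda+2}\alpha, -b\alpha)}^{2\kappa}$. Applying \eqref{eq:combined-upperbound-of-F} with the symmetric choice $u = v = 1/2$ and raising to the $2\kappa$-th power gives
\[
\abs{\mathcal{F}_\lambda(\alpha', \vartheta')}^{2\kappa}
\leq
G_{\lambda-1}^{\kappa/2}\!\left(\alpha'(b^2-1)\right)\,
G_{\lambda-1}^{\kappa/2}\!\left(\vartheta'(b^2-1)\right),
\]
so that, after substitution and using that $G_{\lambda-1}$ is even, the integrand is bounded pointwise by $G_{\lambda-1}^{\kappa/2}(b^{\lambda+2}(b^2-1)\alpha) \cdot G_{\lambda-1}^{\kappa/2}(b(b^2-1)\alpha)$.

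Integrating over $\alpha \in [0,1]$ and performing the integer rescaling $\beta = b(b^2-1)\alpha$ (which preserves the value of the integral by $1$-periodicity of the integrand), the upper bound becomes
\[
\int_0^1 G_{\lambda-1}^{\kappa/2}(\beta)\, G_{\lambda-1}^{\kappa/2}\!\left(b^{\lambda+1}\beta\right)\, d\beta.
\]
I then apply Lemma~\ref{lemma:Lkappa-norm-of-G} with $\Phi(x) = G_{\lambda-1}^{\kappa/2}(x)$ (which is $1$-periodic, non-negative and continuous) and parameters $(\lambda_{\text{lem}}, \lambda'_{\text{lem}}, \kappa_{\text{lem}}) = (\lambda-1, \lambda+1, \kappa/2)$, which yields
\[
\int_0^1 G_{\lambda-1}^{\kappa/2}(\beta)\, G_{\lambda-1}^{\kappa/2}\!\left(b^{\lambda+1}\beta\right) d\beta
\leq
\Bigl(\max_{\alpha\in\R} T_{b,\kappa/2}(\alpha)\Bigr)^{\!\lambda-1}\, \norm{G_{\lambda-1}^{\kappa/2}}_1.
\]
A second application of Lemma~\ref{lemma:Lkappa-norm-of-G} with $\Phi \equiv 1$ bounds $\norm{G_{\lambda-1}^{\kappa/2}}_1 \leq (\max_\alpha T_{b,\kappa/2}(\alpha))^{\lambda-1}$, and combining gives the desired $(\max_\alpha T_{b,\kappa/2}(\alpha))^{2\lambda-2}$. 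The middle equality in the proposition is then the definition \eqref{eq:definition-zeta_b} of $\zeta_{b,\kappa/2}$, and the last inequality follows from \eqref{eq:max-Tb-simple} applied with $\kappa$ replaced by $\kappa/2$.

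The key technical point, which is the only non-obvious step, is the choice $u = v = 1/2$ in \eqref{eq:combined-upperbound-of-F}. The naive choice $u = 1$, $v = 0$ only gives the weaker bound $(\max T_{b,\kappa})^{\lambda-1}$, which by the Cauchy--Schwarz inequality $(\max T_{b,\kappa/2})^2 \leq \max T_{b,\kappa}$ is in general larger than the claimed $(\max T_{b,\kappa/2})^{2\lambda-2}$. The symmetric split distributes the $K_b$-factors across two disjoint blocks of exponents $\{0, \ldots, \lambda-2\}$ and $\{\lambda+1, \ldots, 2\lambda-1\}$ of total cardinality $2(\lambda-1)$, and it is the gap between these blocks that allows the second independent iteration of Lemma~\ref{lemma:Lkappa-norm-of-G} and thereby doubles the exponent of $\max_\alpha T_{b,\kappa/2}(\alpha)$.
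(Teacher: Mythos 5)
Your proof is correct and is essentially identical to the paper's own argument: both rewrite the integrand as $\abs{\mathcal{F}_\lambda(\alpha b^{\lambda+2},-\alpha b)}^{2\kappa}$ via the product formula, apply \eqref{eq:combined-upperbound-of-F} with the symmetric split $u=v=\tfrac12$, rescale by $b(b^2-1)$ using periodicity, and then apply Lemma~\ref{lemma:Lkappa-norm-of-G} twice (once with $\Phi=G_{\lambda-1}^{\kappa/2}$, once with $\Phi=1$). No further comment is needed.
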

\begin{proof}
  By \eqref{eq:FT-product-formula}
  we have
  \begin{displaymath}
    \prod_{j=0}^{\lambda-1}
    \abs{K_b\left( \alpha b^{j+1} + \alpha b^{2\lambda+1-j} \right)}^{2\kappa}
    =
    \abs{\mathcal{F}_\lambda\left(\alpha b^{\lambda+2},-\alpha b\right)}^{2\kappa}
    ,
  \end{displaymath}
  and by \eqref{eq:combined-upperbound-of-F} 
  \begin{displaymath}
    \abs{\mathcal{F}_\lambda\left(\alpha b^{\lambda+2},-\alpha b\right)}^{2\kappa}
    \leq
    G_{\lambda-1}^{\kappa/2}\left(\alpha b^{\lambda+2} (b^2-1)\right)
    \
    G_{\lambda-1}^{\kappa/2}\left(\alpha b (b^2-1)\right)
    .
  \end{displaymath}
  Since $b\geq 2$ is an integer and $G_{\lambda-1}$ is $1$-periodic,
  we have
  \begin{align*}
    \int_0^1
    &
      G_{\lambda-1}^{\kappa/2}\left(\alpha b^{\lambda+2} (b^2-1)\right)
    \,
    G_{\lambda-1}^{\kappa/2}\left(\alpha b (b^2-1)\right)
    d\alpha
    \\
    &=
    \frac{1}{b (b^2-1)}
    \int_0^{b (b^2-1)}
    G_{\lambda-1}^{\kappa/2}\left(u b^{\lambda+1} \right)
    \,
    G_{\lambda-1}^{\kappa/2}\left(u \right)
    du
    \\
    &=
    \int_0^{1}
    G_{\lambda-1}^{\kappa/2}\left(u b^{\lambda+1} \right)
    \,
    G_{\lambda-1}^{\kappa/2}\left(u \right)
    du
    .
  \end{align*}
  By  Lemma~\ref{lemma:Lkappa-norm-of-G}
  with $\Phi = G_{\lambda-1}^{\kappa/2}$
  we have
  \begin{displaymath}
    \int_0^{1}
    G_{\lambda-1}^{\kappa/2}\left(u b^{\lambda+1} \right)
    \,
    G_{\lambda-1}^{\kappa/2}\left(u \right)
    du
    \leq
    \left(
      \max_{\alpha\in\R} T_{b,\frac{\kappa}{2}}\left(\alpha\right)
    \right)^{\lambda-1}
    \norm{G_{\lambda-1}^{\kappa/2}}_1
    ,
  \end{displaymath}
  hence again by  Lemma~\ref{lemma:Lkappa-norm-of-G}
  with $\Phi = 1$
  we deduce
  \begin{displaymath}
    \int_0^{1}
    G_{\lambda-1}^{\kappa/2}\left(u b^{\lambda+1} \right)
    \,
    G_{\lambda-1}^{\kappa/2}\left(u \right)
    du
    \leq
    \left(
      \max_{\alpha\in\R} T_{b,\frac{\kappa}{2}}\left(\alpha\right)
    \right)^{2\lambda-2}
    ,
  \end{displaymath}
  and the result follows by \eqref{eq:max-Tb-simple}
  and \eqref{eq:definition-zeta_b}.
\end{proof}

\section{Type II sums}\label{section:type-II-sums}

Let $D\geq 2$ be a real number, 
\begin{equation}\label{eq:definition-A-d}
  \mathcal{A}_D
  =
  \left\{\frac{h}{d}:
    d\leq D,\,
    \gcd(d,b(b^2-1))=1,\,
    1\leq h< d,\, \gcd(h,d)=1\right\},
\end{equation}
and for $\alpha = \frac{h}{d} \in \mathcal{A}_D$, let
\begin{equation}\label{eq:def-W}
  W(\alpha) = \frac{1}{d}.
\end{equation}

Our goal is to estimate the sums arising
from~\eqref{eq:maj_gen_type_II} with $\mathcal{A}$ replaced by
$\mathcal{A}_D$, $W(\alpha)$ defined by \eqref{eq:def-W} and
$f(\alpha,n)$ replaced by $\e(\alpha R_{\lambda}(n))$.

Let $\lambda\geq 2$, $\mu \geq 1$ and $\nu\geq 1$ be integers such that
\begin{equation}
  \label{eq:definition-lambda}
  \lambda = \mu + \nu
  .
\end{equation}
For any integer $\kappa \geq 1$, we define
\begin{equation}\label{eq:definition-I-lambda}
  I_\kappa = \{b^{\kappa-1},\ldots,b^{\kappa}-1\}.
\end{equation}
For $I_\mu$ and $I_\lambda$ defined by \eqref{eq:definition-I-lambda},
a non empty subset $J_\lambda$ of $I_\lambda$ and a sequence
$\mathbf{z}=(z_n)$ of complex numbers of modulus at most $1$, we
consider
\begin{displaymath}
  S_{II}(\alpha,\lambda,\mu,J_{\lambda},\mathbf{z})
  =
  \sum_{m\in I_\mu}
  \abs{
    \sum_{\substack{n\\ mn\in J_\lambda}} z_n \e(\alpha R_{\lambda}(mn))
  }
\end{displaymath}
and
\begin{equation}\label{S_II_alpha_mu_lambda}
  S_{II}(\alpha,\lambda,\mu)
  =
  \sup_{J_\lambda}
  \sup_{\mathbf{z}}
  S_{II}(\alpha,\lambda,\mu,J_{\lambda},\mathbf{z}).
\end{equation}

We will obtain in Lemma~\ref{lemma:conclusion-S_II-individual} an
individual upper bound of $S_{II}(\alpha,\lambda,\mu)$ for specific
values of $\alpha$ and in
Lemma~\ref{lemma:conclusion-S_II-sum-on-alpha}, an upper bound of the
sum
\begin{equation}\label{eq:initial-sum-to-estimate-from-typeII-sums}
  \sum_{\alpha\in\A_D} W(\alpha)  S_{II}(\alpha,\lambda,\mu)
\end{equation}
under the condition that $D=b^{\xi\lambda}$ for some $\xi>0$ and
\begin{equation}\label{eq:condition-mu-nu-typeII}
  \beta_1 \lambda \leq \mu \leq \beta_2 \lambda
  ,
\end{equation}
for some positive real numbers $\beta_1$ and $\beta_2$. In order to
apply Lemma~\ref{lemma:maj_sum_alpha_vaughan}, in
Section~\ref{section:conclusion_expo_sums}, we will choose $\beta_1$
and $\beta_2$ such that
\begin{equation}
  \label{eq:conditions-beta1-beta2-typeII}
  0< \beta_1 < \frac13,\qquad \frac12 < \beta_2 < 1.
\end{equation}

\subsection{Removing the upper digits}
By the Cauchy-Schwarz inequality we have
\begin{equation}\label{eq:majoration-SII-after-Cauchy-Schwarz}
  S_{II}(\alpha,\lambda,\mu,J_{\lambda},\mathbf{z})^2
  \leq
  b^\mu
  \sum_{m\in I_\mu} 
  \abs{\sum_{\substack{n\\ mn\in J_\lambda}} z_n \e(\alpha R_{\lambda}(mn))}^2
  .
\end{equation}
If we expand the square directly, a difference of $R_\lambda(\cdot)$
will appear without control on the variables.
Using van der Corput's inequality (Lemma~\ref{lemma:van-der-corput})
will enable us to take advantage of the relative size of the variables.

For $m\in I_\mu$ and $mn\in J_\lambda$,
we have $n\in I_\nu \cup I_{\nu+1}$:
\begin{displaymath}
  b^{\nu-1} =  b^{\lambda-\mu-1} \leq \frac{b^{\lambda-1}}{m} 
  \leq
  n < \frac{b^\lambda}{m} \leq b^{\lambda-\mu+1} = b^{\nu+1}
  .
\end{displaymath}
Therefore the number of $n$'s in \eqref{eq:majoration-SII-after-Cauchy-Schwarz}
is at most $b^{\nu+1}-b^{\nu-1}$.
Let $\rho$ be an integer such that $0 \leq \rho \leq \nu-1$.
Using Lemma~\ref{lemma:van-der-corput} with $R = b^{\rho}$, we obtain
\begin{multline*}
  S_{II}(\alpha,\lambda,\mu,J_{\lambda},\mathbf{z})^2
  \leq
  b^\mu
  \sum_{m\in I_\mu}
  \frac{b^{\nu+1}}{b^{\rho}}\,
  \Re
  \Bigg(
    b^{\nu+1}
    +
    2   \sum_{r=1}^{b^{\rho}-1}  \left(1 - \frac{r}{b^{\rho}}\right)
    \\
    \sum_{\substack{n\\ mn\in J_\lambda \\ m(n+r) \in J_\lambda}}
    z_{n+r} \conjugate{z_n}
    \e(\alpha (R_{\lambda}(m(n+r))- R_{\lambda}(mn)))
  \Bigg)
  ,
\end{multline*}
hence
\begin{equation}\label{eq:maj_SIIcarre_T}
  S_{II}(\alpha,\lambda,\mu,J_{\lambda},\mathbf{z})^2
  \leq
  \frac{b^{2\mu+2\nu+2}}{b^{\rho}}
  +
  \frac{2 \, b^{\mu+\nu+1}}{b^{\rho}}
  \sum_{r=1}^{b^{\rho}-1} 
  T_1(\alpha,\lambda,\mu,J_\lambda,r)
\end{equation}
where
\begin{equation}
  \label{eq:def_T_sum}
  T_1(\alpha,\lambda,\mu,J_\lambda,r)
  =
  \sum_{b^{\nu-1}\leq n < b^{\nu+1}}
  \abs{
    \sum_{\substack{m\in I_\mu\\mn \in J_\lambda\\m(n+r)\in J_\lambda}}
    \e(\alpha (R_{\lambda}(m(n+r))- R_{\lambda}(mn)))
  }
  .
\end{equation}

\begin{lemma}[Carry property]\label{lemma:carry-property}
  For any integers $\mu\geq 1$, $\nu\geq 1$,
  $\rho\geq 1$, $\rho'\geq 1$,
  such that $\rho+\rho'<\nu$,
  the set
  $\mathcal{E}_{\mu,\nu,\rho,\rho'}$ of pairs
  \begin{math}
    (m,n) \in I_\mu\times \{b^{\nu-1},\ldots,b^{\nu+1}-1\}
  \end{math}
  such that
  \begin{displaymath}
    \exists (j,r) \in
    \{ \mu+\rho+\rho',\ldots, \mu+\nu-1 \} \times \{ 1,\ldots,b^{\rho}-1 \},
    \
    \varepsilon_j(mn+mr) \neq \varepsilon_j(mn)
  \end{displaymath}
  is of size
  \begin{math}
    \abs{\mathcal{E}_{\mu,\nu,\rho,\rho'}} = O(b^{\mu+\nu-\rho'}).  
  \end{math}
\end{lemma}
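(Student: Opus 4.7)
The plan is to combine a carry propagation analysis with an elementary interval counting argument. Since $m \in I_\mu$ and $1 \le r < b^\rho$, we have $mr < b^{\mu+\rho}$, so $\varepsilon_k(mr) = 0$ for every $k \ge \mu+\rho$. Writing $mn+mr$ as a base-$b$ addition and letting $c_k \in \{0,1\}$ denote the carry coming into position $k$, for $k \ge \mu+\rho$ we have $\varepsilon_k(mn+mr) \equiv \varepsilon_k(mn) + c_k \pmod{b}$. Hence a discrepancy $\varepsilon_j(mn+mr) \ne \varepsilon_j(mn)$ at some $j \in \{\mu+\rho+\rho',\ldots,\mu+\nu-1\}$ forces $c_j = 1$; descending from $j$ down to $\mu+\rho$, each step of the recursion requires $\varepsilon_k(mn) = b-1$ and $c_k = 1$ (since $\varepsilon_k(mr) = 0$ makes $\varepsilon_k(mn)+c_k \ge b$ the only way to propagate the carry). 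Consequently, every $(m,n) \in \mathcal{E}_{\mu,\nu,\rho,\rho'}$ satisfies the $(j,r)$-independent condition
\[
\varepsilon_k(mn) = b-1 \text{ for all } k \in \{\mu+\rho,\ldots,\mu+\rho+\rho'-1\}.
\]

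It then remains to count pairs $(m,n) \in I_\mu \times \{b^{\nu-1},\ldots,b^{\nu+1}-1\}$ such that $mn$ has this fixed block of $\rho'$ consecutive digits equal to $b-1$. This amounts to $mn$ lying in the union of intervals
\[
\mathcal{I} = \bigcup_{j \ge 0} \bigl[\, j\, b^{\mu+\rho+\rho'} + (b^{\rho'}-1)\, b^{\mu+\rho},\ (j+1)\, b^{\mu+\rho+\rho'}\, \bigr),
\]
of length $b^{\mu+\rho}$ separated by gaps of length $b^{\mu+\rho+\rho'}-b^{\mu+\rho}$. For fixed $m \in I_\mu$, dividing by $m \in [b^{\mu-1},b^\mu)$ yields $n$-intervals of length at most $b^{\rho+1}$ with spacing at least $b^{\rho+\rho'}$. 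Since $\rho+\rho' < \nu$, the window $[b^{\nu-1},b^{\nu+1})$ meets at most $O(b^{\nu-\rho-\rho'})$ such intervals, each containing $O(b^\rho)$ integers, giving $O(b^{\nu-\rho'})$ admissible $n$ for each $m$. Summing over the $b^\mu-b^{\mu-1}$ values of $m$ produces the claimed bound $O(b^{\mu+\nu-\rho'})$.

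The only delicate ingredient is the carry analysis: the vanishing of $\varepsilon_k(mr)$ beyond position $\mu+\rho-1$, combined with the fact that base-$b$ addition of two numbers produces carries in $\{0,1\}$, forces any discrepancy at height $j \ge \mu+\rho+\rho'$ to be transmitted through a run of $\rho'$ consecutive carries, which is only possible when the intervening digits of $mn$ are all $b-1$. The interval counting step is then routine, and the hypothesis $\rho+\rho'<\nu$ is used precisely to keep the spacing $b^{\rho+\rho'}$ between admissible $n$-intervals strictly smaller than the length $b^{\nu+1}$ of the $n$-window, so that the number of relevant intervals is genuinely bounded by a power of $b$ less than $b^\nu$.
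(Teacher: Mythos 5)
Your proof is correct and follows essentially the same route as the paper: first show that a digit discrepancy at height $\geq \mu+\rho+\rho'$ forces $\varepsilon_k(mn)=b-1$ for all $k\in\{\mu+\rho,\ldots,\mu+\rho+\rho'-1\}$, then count the pairs $(m,n)$ satisfying this digit condition. You supply the carry-propagation details that the paper merely asserts, and you organize the final count as intervals in $n$ rather than as the paper's congruence count of the lower part $a$ modulo $m$ for each admissible upper part $a'$, but these are the same elementary estimate.
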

\begin{proof}
  We proceed as in \cite[section~4]{mauduit-rivat-RS-primes}.
  If $(m,n)\in \mathcal{E}_{\mu,\nu,\rho,\rho'}$
  then $\varepsilon_j (mn)=b-1$
  for all $j\in\{ \mu +\rho,\ldots ,\mu+\rho +\rho'-1\}$.
  Thus we can find integers $a,a'$ such that $mn=a+b^{\mu+\rho} a'$ with
  $0\le a <b^{\mu+\rho}$, $a'\in\mathcal{B}$ where $\mathcal{B}$ is
  the set of the integers $a'$ with $0\leq a'< b^{\nu -\rho +1}$ such that
  $\varepsilon_j (a')=b-1$ for all $0\le j <\rho'$.
  Since the elements of $\mathcal{B}$ have $\rho'$ fixed digits,
  $|\mathcal{B}| = b^{\nu-\rho-\rho'+1}$.
  We can thus bound $\abs{\mathcal{E}_{\mu,\nu,\rho,\rho'}}$ as follows
  \begin{align*}
    \abs{\mathcal{E}_{\mu,\nu,\rho,\rho'}}
    &\leq
    \sum_{b^{\mu -1}\le m < b^\mu}\sum_{a'\in \mathcal{B}}
    \sum_{\substack{0\leq a < b^{\mu +\rho}\\ a+b^{\mu+\rho}a'\equiv 0\bmod m}} 1
    \\
    &\ll 
    \sum_{b^{\mu -1}\le m < b^\mu}
    \frac{b^{\mu +\rho}\abs{\mathcal{B}}}{m}\ll b^{\mu+\nu -\rho'}
    ,
  \end{align*}
  as expected.
\end{proof}

Let us first bound $T_1(\alpha,\lambda,\mu,J_\lambda,r)$.
Let $\rho'$ be a (small) positive integer
such that
\begin{equation}
  \label{eq:condition-rho-rho'}
  \rho + \rho' < \nu
  ,
\end{equation}
and let
\begin{equation}\label{eq:definition-mu2}
  \mu_2=\mu + \rho + \rho'.
\end{equation}
Using
\eqref{eq:reverse-split} with $\lambda'=\mu_2$
and $\mathcal{E}_{\mu,\nu,\rho,\rho'}$ as defined in
Lemma~\ref{lemma:carry-property},
it follows that for
\begin{math}
  (m,n) \in (I_\mu\times \{b^{\nu-1},\ldots,b^{\nu+1}-1\})
  \setminus\mathcal{E}_{\mu,\nu,\rho,\rho'}
\end{math}
we have
\begin{displaymath}
  R_\lambda(mn)
  =
  b^{\lambda-\mu_2}  R_{\mu_2}(mn)
  +
  \sum_{j=\mu_2}^{\lambda-1} \varepsilon_j(mn) b^{\lambda-1-j}
  ,
\end{displaymath}
and
\begin{displaymath}
  R_\lambda(mn+mr)
  =
  b^{\lambda-\mu_2}  R_{\mu_2}(mn+mr)
  +
  \sum_{j=\mu_2}^{\lambda-1} \varepsilon_j(mn) b^{\lambda-1-j}
  ,
\end{displaymath}
hence
\begin{displaymath}
  R_\lambda(mn+mr) -   R_\lambda(mn)
  =
  b^{\lambda-\mu_2}
  \left( R_{\mu_2}(mn+mr) - R_{\mu_2}(mn) \right)
  .
\end{displaymath}
We deduce that
\begin{equation}\label{eq:link_T1_T2}
  T_1(\alpha,\lambda,\mu,J_\lambda,r)
  =
  T_2(\alpha,\lambda,\mu,\mu_2,J_\lambda,r)
  + O\left( b^{\lambda-\rho'} \right)
  ,
\end{equation}
with
\begin{multline}\label{eq:definition-T2}
  T_2(\alpha,\lambda,\mu,\mu_2,J_\lambda,r)
  \\
  =
  \sum_{b^{\nu-1}\leq n < b^{\nu+1}}
  \abs{
    \sum_{\substack{m\in I_\mu\\mn \in J_\lambda\\m(n+r)\in J_\lambda}}
    \e\left(\alpha b^{\lambda-\mu_2} (R_{\mu_2}(m(n+r))- R_{\mu_2}(mn))\right)
  }.
\end{multline}
It follows from \eqref{eq:maj_SIIcarre_T} that
\begin{equation}\label{eq:maj_SIIcarre_T2}
  S_{II}(\alpha,\lambda,\mu,J_{\lambda},\mathbf{z})^2
  \ll
  \frac{b^{2\lambda}}{b^{\rho}}
  +
  \frac{b^{2\lambda}}{b^{\rho'}}
  +
  b^{\lambda-\rho}
  \sum_{r=1}^{b^{\rho}-1} 
  T_2(\alpha,\lambda,\mu,\mu_2,J_\lambda,r)
  .
\end{equation}

\begin{remark}
  The use of \eqref{eq:maj_SIIcarre_T2} in order to estimate
  \eqref{eq:initial-sum-to-estimate-from-typeII-sums}
  will produce an error term
  \begin{displaymath}
    \left(
      \frac{b^{\lambda}}{b^{\rho/2}}
      +
      \frac{b^{\lambda}}{b^{\rho'/2}}
    \right)
    \sum_{\alpha\in\A_D} W(\alpha)
    \gg
    D
    \left(
      \frac{b^{\lambda}}{b^{\rho/2}}
      +
      \frac{b^{\lambda}}{b^{\rho'/2}}
    \right)
    ,
  \end{displaymath}
  which limits the size of $D$ to
  \begin{displaymath}
    D \ll b^{\frac12\min(\rho,\rho')}
    .
  \end{displaymath}
\end{remark}

\subsection{Toward exponential sums}
Let us focus on the inner term of~\eqref{eq:definition-T2}.
Using the periodicity of $R_{\mu_2}(\cdot)$, we have
\begin{multline*}
  \e\left(\alpha   b^{\lambda-\mu_2}
  \left( R_{\mu_2}(m(n+r)) - R_{\mu_2}(mn) \right)\right)
  \\
  =
  \frac{1}{b^{2\mu_2}}
  \sum_{\substack{0\leq k_1< b^{\mu_2}\\ 0\leq h_1< b^{\mu_2}}}
  \sum_{\substack{0\leq k_2< b^{\mu_2}\\ 0\leq h_2< b^{\mu_2}}}
  \e(\alpha b^{\lambda-\mu_2} (R_{\mu_2}(k_2)- R_{\mu_2}(k_1)))
  \\
  \e\left(\frac{-h_2(k_2-m(n+r)) + h_1(k_1-mn)}{b^{\mu_2}}\right)
  .
\end{multline*}
By \eqref{eq:definition-F} this rewrites as
\begin{multline*}
  \e\left(
  \alpha b^{\lambda-\mu_2} (R_{\mu_2}(m(n+r))- R_{\mu_2}(mn))
  \right)
  =
  \\
  \sum_{\substack{0\leq h_1< b^{\mu_2}\\ 0\leq h_2< b^{\mu_2}}}
  \!
  \mathcal{F}_{\mu_2}\left(
    \alpha b^{\lambda-\mu_2},\frac{h_2}{b^{\mu_2}}\right)
  \conjugate{\mathcal{F}_{\mu_2}\left(
      \alpha b^{\lambda-\mu_2},\frac{h_1}{b^{\mu_2}}\right)}
  \e\left(\frac{h_2m(n+r) - h_1mn}{b^{\mu_2}}\right),  
\end{multline*}
which leads to
\begin{multline}\label{eq:Fourier-analysis}
  \e\left(
  \alpha b^{\lambda-\mu_2} (R_{\mu_2}(m(n+r))- R_{\mu_2}(mn))
  \right)
  =
  \\
  \sum_{\substack{0\leq h_3< b^{\mu_2}\\0\leq h_2< b^{\mu_2}}}
  \mathcal{F}_{\mu_2}\left(
    \alpha b^{\lambda-\mu_2},\frac{h_2}{b^{\mu_2}}\right)
  \conjugate{\mathcal{F}_{\mu_2}\left(
      \alpha b^{\lambda-\mu_2},\frac{h_2-h_3}{b^{\mu_2}}\right)}
  \e\left(\frac{h_3 m n + h_2 m r}{b^{\mu_2}}\right)
  .
\end{multline}
Summing over $m$, we have to bound above a geometric sum over $m$
and we get
\begin{multline*}
  \abs{
    \sum_{\substack{m\in I_\mu\\mn \in J_\lambda,\ m(n+r)\in J_\lambda}}
    \e\left(
    \alpha b^{\lambda-\mu_2} (R_{\mu_2}(m(n+r))- R_{\mu_2}(mn))
    \right)
    }
  \\
  \leq
  \sum_{0\leq h_3< b^{\mu_2}}
  \sum_{0\leq h_2< b^{\mu_2}}
  \abs{
    \mathcal{F}_{\mu_2}\left(
      \alpha b^{\lambda-\mu_2},\frac{h_2}{b^{\mu_2}}\right)
    \mathcal{F}_{\mu_2}\left(
      \alpha b^{\lambda-\mu_2},\frac{h_2-h_3}{b^{\mu_2}}\right)
  }
  \\
  \min\left(
    b^{\mu},
    \abs{\sin\pi\left(\frac{h_3 n + h_2 r}{b^{\mu_2}}\right)}^{-1}
  \right).
\end{multline*}
By Lemma~\ref{lemma:sum_inverse_sinus}, writing
$\gcd(h_3,b^{\mu_2}) = d b^\delta$ with $0\leq \delta \leq \mu_2$ and
$d\dv b^{\mu_2-\delta}$, $\gcd(d,b)<b$ (notice that if $b$ is a prime
number then $d=1$), we have
\begin{multline*}
  \sum_{b^{\nu-1}\leq n < b^{\nu+1}}
  \min\left(
    b^{\mu},
    \abs{\sin\pi\left(\frac{h_3 n + h_2 r}{b^{\mu_2}}\right)}^{-1}
  \right)
  \ll
  \\
  \left(d^{-1}b^{-\delta}+b^{\nu-\mu_2}\right)
  \left(
    d b^\delta
    \min\left(
    b^{\mu},
    \abs{
      \sin\left(
        \frac{\pi d b^\delta}{b^{\mu_2}}
        \norm{\frac{h_2 r}{d b^\delta}}\right)}^{-1}
  \right)
    + b^{\mu_2} \log b^{\mu_2}\right).
\end{multline*}
It follows from \eqref{eq:definition-T2} that
\begin{multline*}
  T_2(\alpha,\lambda,\mu,\mu_2,J_\lambda,r)
  \\
  \ll
  \sum_{0\leq h_2< b^{\mu_2}}
  \abs{
    \mathcal{F}_{\mu_2}\left(\alpha b^{\lambda-\mu_2},\frac{h_2}{b^{\mu_2}}\right)
  }
  \sum_{0\leq \delta \leq \mu_2}
  \sum_{\substack{d \dv  b^{\mu_2-\delta}\\ \gcd(d,b)<b}}
  \left(d^{-1}b^{-\delta}+b^{\nu-\mu_2}\right)
  \\
  \left(
    d b^\delta
    \min\left(
      b^{\mu},
      \abs{
        \sin\left(
          \frac{\pi d}{b^{\mu_2-\delta}}\norm{\frac{h_2 r}{d b^\delta}}
        \right)}^{-1}
    \right)
    + b^{\mu_2} \log b^{\mu_2}
  \right)
  \\
  \sum_{\substack{0\leq h_3< b^{\mu_2}\\ \gcd(h_3,b^{\mu_2})=d b^\delta}}
  \abs{
    \mathcal{F}_{\mu_2}\left(
      \alpha b^{\lambda-\mu_2},\frac{h_2-h_3}{b^{\mu_2}}
    \right)
  }.
\end{multline*}
If $h_2 \equiv a \bmod d b^\delta$, by adding some terms we have
\begin{displaymath}
  \sum_{\substack{0\leq h_3< b^{\mu_2}\\ \gcd(h_3,b^{\mu_2})=d b^\delta}}
  \abs{
    \mathcal{F}_{\mu_2}\left(
      \alpha b^{\lambda-\mu_2},\frac{h_2-h_3}{b^{\mu_2}}
    \right)
  }
  \leq
  \sum_{\substack{0\leq h< b^{\mu_2}\\ h \equiv a \bmod d b^\delta}}
  \abs{
    \mathcal{F}_{\mu_2}\left(
      \alpha b^{\lambda-\mu_2},\frac{h}{b^{\mu_2}}
    \right)
  }
  .
\end{displaymath}
Filtering modulo $d b^\delta$ we get
\begin{multline*}
  T_2(\alpha,\lambda,\mu,\mu_2,J_\lambda,r)
  \ll
  \sum_{0\leq \delta \leq \mu_2}
  \sum_{\substack{d \dv  b^{\mu_2-\delta}\\ \gcd(d,b)<b}}
  \left(d^{-1}b^{-\delta}+b^{\nu-\mu_2}\right)
  \\
  \sum_{0\leq a < d b^\delta}
  \left(
    d b^\delta
    \min\left(
      b^{\mu},
      \abs{
        \sin\left(
          \frac{\pi d}{b^{\mu_2-\delta}}\norm{\frac{a r}{d b^\delta}}
        \right)}^{-1}
    \right)
    + b^{\mu_2} \log b^{\mu_2}
  \right)
  \\
  \left(
    \sum_{\substack{0\leq h< b^{\mu_2}\\ h \equiv a \bmod d b^\delta}}
    \abs{
      \mathcal{F}_{\mu_2}\left(
        \alpha b^{\lambda-\mu_2},\frac{h}{b^{\mu_2}}
      \right)
    }
  \right)^2.
\end{multline*}
By Lemma~\ref{lemma:L1-norm-modular-via-sobolev-gallagher}, this leads to
\begin{multline*}
  T_2(\alpha,\lambda,\mu,\mu_2,J_\lambda,r)
  \ll
  \sum_{0\leq \delta \leq \mu_2}
  \sum_{\substack{d \dv  b^{\mu_2-\delta}\\ \gcd(d,b)<b}}
  \left(d^{-1}b^{-\delta}+b^{\nu-\mu_2}\right)
  \\
  \sum_{0\leq a < d b^\delta}
  \left(
    d b^\delta
    \min\left(
      b^{\mu},
      \abs{
        \sin\left(
          \frac{\pi d}{b^{\mu_2-\delta}}\norm{\frac{a r}{d b^\delta}}
        \right)}^{-1}
    \right)
    + b^{\mu_2} \log b^{\mu_2}
  \right)
  \\
  \frac{b^{2\eta_b(\mu_2-\delta)}}{d^{2\eta_b}}
  \abs{
    \mathcal{F}_{\delta}\left(
      \alpha b^{\lambda-\mu_2},\frac{a}{b^{\delta}}
    \right)
  }^2.
\end{multline*}
Writing $a = a' + k b^\delta$ we get
\begin{multline*}
  T_2(\alpha,\lambda,\mu,\mu_2,J_\lambda,r)
  \ll
  \sum_{0\leq \delta \leq \mu_2}
  \sum_{\substack{d \dv  b^{\mu_2-\delta}\\ \gcd(d,b)<b}}
  \left(d^{-1}b^{-\delta}+b^{\nu-\mu_2}\right)
  \\
  \sum_{0\leq a' < b^\delta}
  \sum_{0\leq k < d}
  \left(
    d b^\delta
    \min\left(
      b^{\mu},
      \abs{
        \sin\left(
          \frac{\pi d}{b^{\mu_2-\delta}}
          \norm{\frac{a' r}{d b^\delta}+\frac{k r}{d}}
        \right)}^{-1}
    \right)
    + b^{\mu_2} \log b^{\mu_2}
  \right)
  \\
  \frac{b^{2\eta_b(\mu_2-\delta)}}{d^{2\eta_b}}
  \abs{
    \mathcal{F}_{\delta}\left(
      \alpha b^{\lambda-\mu_2},\frac{a'}{b^{\delta}}
    \right)
  }^2.
\end{multline*}
The function sinus is concave over $[0,\pi]$, hence for $t\in\R$
\begin{displaymath}
  \sin\left(
    \frac{\pi d}{b^{\mu_2-\delta}}
    \norm{\frac{k r + t}{d}}
  \right)
  \geq
  \frac{d}{b^{\mu_2-\delta}}
  \sin\left(
    \pi \norm{\frac{k r + t}{d}}
  \right)
  =
  \frac{d}{b^{\mu_2-\delta}}
  \abs{
    \sin \left( \pi \frac{k r + t}{d} \right)
  }
  ,
\end{displaymath}
and
\begin{multline*}
  \sum_{0\leq k < d}
  \left(
    d b^\delta
    \min\left(
      b^{\mu},
      \abs{
        \sin\left(
          \frac{\pi d}{b^{\mu_2-\delta}}
          \norm{\frac{k r + t}{d}}
        \right)}^{-1}
    \right)
    + b^{\mu_2} \log b^{\mu_2}
  \right)
  \\
  \leq
  \sum_{0\leq k < d}
    d b^\delta
    \min\left(
      b^{\mu},
      \frac{b^{\mu_2-\delta}}{d}
      \abs{
        \sin\pi\left( \frac{k r + t}{d} \right)}^{-1}
    \right)
  + d b^{\mu_2} \log b^{\mu_2}
  \\
  \leq
  b^{\mu_2}
  \sum_{0\leq k < d}
  \min\left(
    \frac{d}{b^{\mu_2-\delta}} b^{\mu},
    \abs{
      \sin\pi\left( \frac{k r + t}{d} \right)}^{-1}
  \right)
  + d b^{\mu_2} \log b^{\mu_2}
  .
\end{multline*}
By Lemma~\ref{lemma:sum_inverse_sinus}, this is
\begin{multline*}
  \ll
  b^{\mu_2}
  \gcd(r,d)
  \min\left(
    \frac{d}{b^{\mu_2-\delta}} b^{\mu},
    \abs{
      \sin\pi\left( \frac{\gcd(r,d)}{d} \norm{\frac{t}{\gcd(r,d)}} \right)
    }^{-1}
  \right)
  \\
  +
  b^{\mu_2}
  d \log d
  + d b^{\mu_2} \log b^{\mu_2}
\end{multline*}
which is
\begin{displaymath}
  \ll
  b^{\mu_2}
  d
  \min\left(
    \frac{\gcd(r,d) b^{\mu}}{b^{\mu_2-\delta}} ,
    \abs{
      \sin\left( \pi \norm{\frac{t}{\gcd(r,d)}} \right)
    }^{-1}
  \right)
  +
  b^{\mu_2}
  d \log d
  + d b^{\mu_2} \log b^{\mu_2}.
\end{displaymath}
It follows, taking $t=a'r/b^\delta$,
\begin{multline*}
  T_2(\alpha,\lambda,\mu,\mu_2,J_\lambda,r)
  \\
  \ll
  \sum_{0\leq \delta \leq \mu_2} b^{\mu_2+2\eta_b(\mu_2-\delta)}
  \sum_{\substack{d \dv  b^{\mu_2-\delta}\\ \gcd(d,b)<b}} d^{1-2\eta_b}
  \left(d^{-1}b^{-\delta}+b^{\nu-\mu_2}\right)
  \\
  \sum_{0\leq a' < b^\delta}
  \left(
    \min\left(
      \frac{\gcd(r,d) b^{\mu}}{b^{\mu_2-\delta}} ,
      \abs{
        \sin\left( \pi \norm{\frac{a'r}{\gcd(r,d) b^\delta}} \right)
      }^{-1}
    \right)
    +  \log b^{\mu_2}
  \right)
  \\
  \abs{
    \mathcal{F}_{\delta}\left(
      \alpha b^{\lambda-\mu_2},\frac{a'}{b^{\delta}}
    \right)
  }^2
  .
\end{multline*}
Since
\begin{displaymath}
  \sum_{0\leq a' < b^\delta}
  \abs{
    \mathcal{F}_{\delta}\left(
      \alpha b^{\lambda-\mu_2},\frac{a'}{b^{\delta}}
    \right)
  }^2
  = 1,
\end{displaymath}
isolating the error term produced by $\log b^{\mu_2}$
and splitting $  \left(d^{-1}b^{-\delta}+b^{\nu-\mu_2}\right)$,
we get
\begin{multline}\label{eq:estimate-T2-T3-initial}
  T_2(\alpha,\lambda,\mu,\mu_2,J_\lambda,r)
  \ll
  T_3(\alpha,\lambda,\mu,\mu_2,r)
  +
  T_4(\alpha,\lambda,\mu,\mu_2,r,0,\mu_2)\\
  +
  \left( \log b^{\mu_2} \right)
  \sum_{0\leq \delta \leq \mu_2} b^{\mu_2+2\eta_b(\mu_2-\delta)}
  \sum_{\substack{d \dv  b^{\mu_2-\delta}\\ \gcd(d,b)<b}} d^{1-2\eta_b}
  \left(d^{-1}b^{-\delta}+b^{\nu-\mu_2}\right)
\end{multline}
with
\begin{multline}\label{eq:definition-T3}
  T_3(\alpha,\lambda,\mu,\mu_2,r)
  =
  \sum_{0\leq \delta \leq \mu_2}
  b^{\mu_2+2\eta_b(\mu_2-\delta)-\delta}
  \sum_{\substack{d \dv  b^{\mu_2-\delta}\\ \gcd(d,b)<b}}
  d^{-2\eta_b}
  \\
  \sum_{0\leq a' < b^\delta}
  \min\left(
    \gcd(r,d) b^{\mu+\delta-\mu_2} ,
    \abs{
      \sin\left( \pi \norm{\frac{a'r}{\gcd(r,d) b^\delta}} \right)
    }^{-1}
  \right)
  \\
  \abs{
    \mathcal{F}_{\delta}\left(
      \alpha b^{\lambda-\mu_2},\frac{a'}{b^{\delta}}
    \right)
  }^2
  .
\end{multline}
and
\begin{multline}\label{eq:definition-T4}
  T_4(\alpha,\lambda,\mu,\mu_2,r,\delta_{-},\delta_{+})
  =
  \sum_{\delta_{-}\leq \delta \leq \delta_{+}}
  b^{\nu+2\eta_b(\mu_2-\delta)}  
  \sum_{\substack{d \dv  b^{\mu_2-\delta}\\ \gcd(d,b)<b}} d^{1-2\eta_b}
  \\
  \sum_{0\leq a' < b^\delta}
  \min\left(
    \gcd(r,d) b^{\mu+\delta-\mu_2} ,
    \abs{
      \sin\left( \pi \norm{\frac{a'r}{\gcd(r,d) b^\delta}} \right)
    }^{-1}
  \right)
  \\
  \abs{
    \mathcal{F}_{\delta}\left(
      \alpha b^{\lambda-\mu_2},\frac{a'}{b^{\delta}}
    \right)
  }^2
  .
\end{multline}

\subsection{The contribution of the error term in \eqref{eq:estimate-T2-T3-initial}}

Since $-2\eta_b <0$, by \eqref{eq:sigma-z<0-b-lambda} we have
the following upper bound, independent of $\mu_2-\delta$:
\begin{equation}\label{eq:maj_sum_d^-2eta}
  \sum_{\substack{d \dv  b^{\mu_2-\delta}\\ \gcd(d,b)<b}}
  d^{-2\eta_b} 
  \leq \prod_{p \dv b} \frac{1}{1-p^{-2\eta_b}}
  .
\end{equation}
It follows that the contribution of the term involving $d^{-1}b^{-\delta}$
in \eqref{eq:estimate-T2-T3-initial}
is 
\begin{displaymath}
  \sum_{0\leq \delta \leq \mu_2} b^{\mu_2+2\eta_b(\mu_2-\delta)}
  \sum_{\substack{d \dv  b^{\mu_2-\delta}\\ \gcd(d,b)<b}} d^{1-2\eta_b}
  d^{-1}b^{-\delta}
  \ll_b
  b^{\mu_2(1+2\eta_b)}
  .
\end{displaymath}

If $d \dv b^{\mu_2-\delta}$ and $\gcd(d,b)<b$ then for any prime
number $p$ we have
\begin{align*}
  v_p(\gcd(d,b^{\mu_2-\delta}))
  &=
  \min\left( v_p(d), (\mu_2-\delta) v_p(b)\right)
  \\
  &\leq
  (\mu_2-\delta)
  \min\left( v_p(d), v_p(b)\right)
  =
  v_p(\gcd(d,b)^{\mu_2-\delta})
\end{align*}
hence
\begin{displaymath}
  d = \gcd(d,b^{\mu_2-\delta})
  \leq
  \gcd(d,b)^{\mu_2-\delta}
  \leq \left(\frac{b}{P^-(b)}\right)^{\mu_2-\delta}
\end{displaymath}
and
\begin{align}\label{eq:maj_sum_d^1-2eta}
  \sum_{\substack{d \dv  b^{\mu_2-\delta}\\ \gcd(d,b)<b}}
  d^{1-2\eta_b} 
  &\leq
  \left(\frac{b}{P^-(b)}\right)^{(1-2\eta_b)(\mu_2-\delta)}
  \sum_{\substack{d \dv  b^{\mu_2-\delta}\\ \gcd(d,b)<b}} 1
  \\
  \nonumber
  &\leq
  \left(\frac{b}{P^-(b)}\right)^{(1-2\eta_b)(\mu_2-\delta)}
  \tau(b^{\mu_2-\delta})
  .
\end{align}
The contribution of the term involving $b^{\nu-\mu_2}$
in \eqref{eq:estimate-T2-T3-initial}
is bounded by
\begin{multline*}
  b^{\nu-\mu_2}
  \sum_{0\leq \delta \leq \mu_2}
  b^{\mu_2+2\eta_b(\mu_2-\delta)}
  \sum_{\substack{d \dv  b^{\mu_2-\delta}\\ \gcd(d,b)<b}} d^{1-2\eta_b}
  \\
  \ll
  b^{\nu-\mu_2}
  \sum_{0\leq \delta \leq \mu_2}
  b^{\mu_2+2\eta_b(\mu_2-\delta)}
  \left(\frac{b}{P^-(b)}\right)^{(1-2\eta_b)(\mu_2-\delta)}
  \tau(b^{\mu_2-\delta}) 
  \\
  \ll
  b^{\nu}
  \tau(b^{\mu_2})
  \sum_{0\leq \delta \leq \mu_2}
  \left(\frac{b}{P^-(b)^{(1-2\eta_b)}}\right)^{\mu_2-\delta}
  \ll
  \frac{b^{\nu+\mu_2}}{P^-(b)^{(1-2\eta_b)\mu_2}}
  \tau(b^{\mu_2}) 
  .
\end{multline*}

Bounding by $\gcd(r,d) b^{\mu+\delta-\mu_2}$
the minimum in the definition of $T_3$
we get
\begin{multline*}
  T_3(\alpha,\lambda,\mu,\mu_2,r)
  \ll
  \sum_{0\leq \delta \leq \mu_2} b^{\mu_2+2\eta_b(\mu_2-\delta)-\delta}
  \sum_{\substack{d \dv b^{\mu_2-\delta}\\ \gcd(d,b)<b}}
  d^{-2\eta_b}
  \\
  \sum_{0\leq a' < b^\delta}
  \gcd(r,d) b^{\mu+\delta-\mu_2}
  \abs{ \mathcal{F}_{\delta}\left( \alpha
      b^{\lambda-\mu_2},\frac{a'}{b^{\delta}} \right) }^2
  ,
\end{multline*}
which, by Lemma~\ref{lemma:L2-mean-value}
and Lemma~\ref{lemma:gcd-sum}
gives
\begin{displaymath}
  \frac{1}{b^\rho} \sum_{r=1}^{b^{\rho}-1}
  T_3(\alpha,\lambda,\mu,\mu_2,r)
  \ll
  \sum_{0\leq \delta \leq \mu_2}
  b^{\mu+2\eta_b(\mu_2-\delta)}
  \sum_{\substack{d \dv b^{\mu_2-\delta}\\ \gcd(d,b)<b}}
  d^{-2\eta_b}
  \tau(d)
  .
\end{displaymath}
Using \eqref{eq:tau-sigma-z-b-lambda} we get
\begin{displaymath}
  \frac{1}{b^\rho} \sum_{r=1}^{b^{\rho}-1}
  T_3(\alpha,\lambda,\mu,\mu_2,r)
  \ll_b
  b^{\mu+2\eta_b\mu_2}
  .
\end{displaymath}

Since $\mu+2\eta_b\mu_2 \leq (1+2\eta_b)\mu_2$, we deduce that
\begin{multline}\label{eq:estimate-T2-T3-final}
  \frac{1}{b^\rho} \sum_{r=1}^{b^{\rho}-1}
  T_2(\alpha,\lambda,\mu,\mu_2,J_\lambda,r)
  \\
  \ll
  \frac{1}{b^\rho} \sum_{r=1}^{b^{\rho}-1}
  T_4(\alpha,\lambda,\mu,\mu_2, r,0,\mu_2)
  \qquad
  \\
  +
  b^{\mu_2(1+2\eta_b)} \log b^{\mu_2}
  +
  \frac{b^{\nu+\mu_2}}{P^-(b)^{(1-2\eta_b)\mu_2}}
  \tau(b^{\mu_2}) \log b^{\mu_2}
  .
\end{multline}

\subsection{The contribution of  $T_4$ for small $\delta$}
Let $\delta_0$ be an integer to be chosen later such that
\begin{equation}
  \label{eq:condition-delta0}
  \rho+\rho' \leq \delta_0 < \mu_2
  .
\end{equation}
Bounding by $\gcd(r,d) b^{\mu+\delta-\mu_2}$
the minimum in the definition of $T_4$
given by \eqref{eq:definition-T4}
we get
\begin{multline*}
  T_4(\alpha,\lambda,\mu,\mu_2, r,0,\delta_0)
  \ll
  \sum_{0\leq \delta \leq \delta_0}
  b^{\nu+2\eta_b(\mu_2-\delta)}
  \sum_{\substack{d \dv b^{\mu_2-\delta}\\ \gcd(d,b)<b}}
  d^{1-2\eta_b}
  \\
  \sum_{0\leq a' < b^\delta}
  \gcd(r,d) b^{\mu+\delta-\mu_2}
  \abs{ \mathcal{F}_{\delta}\left( \alpha
      b^{\lambda-\mu_2},\frac{a'}{b^{\delta}} \right) }^2
  ,
\end{multline*}
which, by Lemma~\ref{lemma:L2-mean-value}
and Lemma~\ref{lemma:gcd-sum}
gives
\begin{displaymath}
  \frac{1}{b^\rho} \sum_{r=1}^{b^{\rho}-1}
  T_4(\alpha,\lambda,\mu,\mu_2, r,0,\delta_0)
  \ll
  \sum_{0\leq \delta \leq \delta_0}
  b^{\lambda+2\eta_b(\mu_2-\delta)+\delta-\mu_2}
  \sum_{\substack{d \dv b^{\mu_2-\delta}\\ \gcd(d,b)<b}}
  d^{1-2\eta_b}
  \tau(d)
  .
\end{displaymath}
Since $d \dv b^{\mu_2}$ we have $\tau(d) \leq \tau(b^{\mu_2})$, hence
by \eqref{eq:maj_sum_d^1-2eta}
we deduce
\begin{multline*}
  \frac{1}{b^\rho} \sum_{r=1}^{b^{\rho}-1}
  T_4(\alpha,\lambda,\mu,\mu_2, r,0,\delta_0)
  \\
  \ll
  \tau\left(b^{\mu_2}\right)
  \sum_{0\leq \delta \leq \delta_0}
  b^{\lambda+2\eta_b(\mu_2-\delta)+\delta-\mu_2}
  \left(\frac{b}{P^-(b)}\right)^{(1-2\eta_b)(\mu_2-\delta)}
  \tau(b^{\mu_2-\delta})
  .
\end{multline*}
which, using $\tau(b^{\mu_2-\delta}) \leq \tau(b^{\mu_2})$,
gives
\begin{multline*}
  \frac{1}{b^\rho} \sum_{r=1}^{b^{\rho}-1}
  T_4(\alpha,\lambda,\mu,\mu_2, r,0,\delta_0)
  \\
  \ll
  \left(\tau\left(b^{\mu_2}\right)\right)^2
  \sum_{0\leq \delta \leq \delta_0}
  \frac{b^{\lambda}}{P^-(b)^{(1-2\eta_b)(\mu_2-\delta)}}
  \ll
  \left(\tau\left(b^{\mu_2}\right)\right)^2
  \frac{b^{\lambda}}{P^-(b)^{(1-2\eta_b)(\mu_2-\delta_0)}}
  .
\end{multline*}

Remembering \eqref{eq:estimate-T2-T3-final}, we obtain
\begin{align*}
  \frac{1}{b^\rho}
  &
    \sum_{r=1}^{b^{\rho}-1}
  T_2(\alpha,\lambda,\mu,\mu_2,J_\lambda,r)
  \\
  &
    \ll
    b^{\mu_2(1+2\eta_b)} \log b^{\mu_2}
  +
  \frac{b^{\nu+\mu_2}}{P^-(b)^{(1-2\eta_b)\mu_2}}
  \tau(b^{\mu_2}) \log b^{\mu_2}
  \\
  & \quad +
  \left(\tau\left(b^{\mu_2}\right)\right)^2
  \frac{b^\lambda}{P^-(b)^{(1-2\eta_b)(\mu_2-\delta_0)}}
  +
  \frac{1}{b^\rho} \sum_{r=1}^{b^{\rho}-1}
  T_4(\alpha,\lambda,\mu,\mu_2, r,\delta_0+1,\mu_2)
  .
\end{align*}
Therefore by \eqref{eq:maj_SIIcarre_T2}, we get
\begin{multline}\label{eq:maj_SIIcarre_end}
  S_{II}(\alpha,\lambda,\mu)^2
  \\
  \ll
  b^{2\lambda-\rho}
  +
  b^{2\lambda-\rho'}
  +
  b^{\lambda}
  b^{\mu_2(1+2\eta_b)} \log b^{\mu_2}
  +
  \frac{b^{2\lambda+\rho+\rho'}}{P^-(b)^{(1-2\eta_b)\mu_2}}
  \tau(b^{\mu_2}) \log b^{\mu_2}
  \\
  +
  \left(\tau\left(b^{\mu_2}\right)\right)^2
  \frac{b^{2\lambda}}{P^-(b)^{(1-2\eta_b)(\mu_2-\delta_0)}}
  +
  \frac{b^\lambda}{b^\rho} \sum_{r=1}^{b^{\rho}-1}
  T_4(\alpha,\lambda,\mu,\mu_2, r,\delta_0+1,\mu_2)
  .
\end{multline}

\subsection{The contribution of  $T_4$ for large $\delta$}
This is the most difficult part of the study of the Type II sums.
It will be done in
Section~\ref{section:critical-contribution-of-Type-II-sums}.

\section{The critical contribution of the Type II sums}
\label{section:critical-contribution-of-Type-II-sums}
We first observe that by \eqref{eq:condition-delta0}
and \eqref{eq:definition-mu2},
for $\delta \geq \delta_0+1$
we have $\mu+\delta-\mu_2\geq 1$.

In order to control the size of the minimum appearing in the
definition of $T_4$ given by~\eqref{eq:definition-T4},
for any integers $\delta'\geq 0$ and $r'\geq 1$, we introduce
\begin{displaymath}
  \mathcal{E}(r',\delta,\delta')
  =
  \left\{
  a':\ 0\leq a' < b^\delta,\
  \norm{\frac{a'r'}{b^\delta}}
  \leq \frac{1}{b^{\delta'}}
  \right\}
\end{displaymath}
and
\begin{align*}
  \widetilde{\mathcal{E}}(r',\delta,\delta')
  &
    =
    \mathcal{E}(r',\delta,\delta')
    \setminus
    \mathcal{E}(r',\delta,\delta'+1)
  \\
  &
    =
    \left\{
    a':\ 0\leq a' < b^\delta,\
    \frac{1}{b^{\delta'+1}}
    <
    \norm{\frac{a'r'}{b^\delta}}
    \leq
    \frac{1}{b^{\delta'}}
    \right\}
    .
\end{align*}
For any integers $\delta'_1\geq 0$ and $r'\geq 1$ we have
\begin{displaymath}
  \left\{a':\ 0\leq a' < b^\delta \right\}
  =
  \left(
    \bigcup_{0\leq \delta' < \delta'_1}
    \widetilde{\mathcal{E}}(r',\delta,\delta')
  \right)
  \bigcup
  \mathcal{E}(r',\delta,\delta'_1)
  ,
\end{displaymath}
hence, splitting the summation over $a'$ accordingly with
$\delta'_1=\mu+\delta-\mu_2$, we get
\begin{equation}\label{eq:T4=T5+T6}
  T_4(\alpha,\lambda,\mu,\mu_2, r,\delta_0+1,\mu_2)
  =
  T_5(\alpha,\lambda,\mu,\mu_2, r,\delta_0)
  +
  T_6(\alpha,\lambda,\mu,\mu_2,r,\delta_0)
\end{equation}
where, with the notation $r_d = r/\gcd(r,d)$,
\begin{multline*}
  T_5(\alpha,\lambda,\mu,\mu_2, r,\delta_0)
  =
  \sum_{\delta_0+1\leq \delta \leq \mu_2}
  b^{\nu+2\eta_b(\mu_2-\delta)}  
  \sum_{\substack{d \dv  b^{\mu_2-\delta}\\ \gcd(d,b)<b}} d^{1-2\eta_b}
  \sum_{0\leq \delta' < \mu+\delta-\mu_2}
  \\
  \sum_{a' \in \widetilde{\mathcal{E}}(r_d,\delta,\delta')}
  \hspace{-1em}
  \min\left(
    \gcd(r,d) b^{\mu+\delta-\mu_2} ,
    \abs{
      \sin\left( \pi \norm{\frac{a'r_d}{b^\delta}} \right)
    }^{-1}
  \right)
  \abs{
    \mathcal{F}_{\delta}\left(
      \alpha b^{\lambda-\mu_2},\frac{a'}{b^{\delta}}
    \right)
  }^2
  ,
\end{multline*}
and
\begin{multline}\label{eq:definition-T6}
  T_6(\alpha,\lambda,\mu,\mu_2,r,\delta_0)
  =
  \sum_{\delta_0+1\leq \delta \leq \mu_2}
  b^{\nu+2\eta_b(\mu_2-\delta)}  
  \sum_{\substack{d \dv  b^{\mu_2-\delta}\\ \gcd(d,b)<b}} d^{1-2\eta_b}
  \\
  \sum_{a' \in \mathcal{E}(r_d,\delta,\mu+\delta-\mu_2)}
  \hspace{-2.5em}
  \min\left(
    \gcd(r,d) b^{\mu+\delta-\mu_2} ,
    \abs{
      \sin\left( \pi \norm{\frac{a'r_d}{b^\delta}} \right)
    }^{-1}
  \right)
  \hspace{-.2em}
  \abs{
    \mathcal{F}_{\delta}\left(
      \alpha b^{\lambda-\mu_2},\frac{a'}{b^{\delta}}
    \right)
  }^2
  \hspace{-.5em}
  .
\end{multline}

For
\begin{math}
  a'\in
  \widetilde{\mathcal{E}}(r_d,\delta,\delta')
\end{math}
we have
\begin{displaymath}
  \sin\left( \pi \norm{\frac{a'r_d}{b^\delta}} \right)
  \geq
  2 \norm{\frac{a'r_d}{b^\delta}}
  \geq
  \frac{2}{b^{\delta'+1}}
  ,
\end{displaymath}
hence
\begin{multline*}
  T_5(\alpha,\lambda,\mu,\mu_2, r,\delta_0)
  \\
  \ll
  \sum_{\delta_0+1\leq \delta \leq \mu_2}
  b^{\nu+2\eta_b(\mu_2-\delta)}  
  \sum_{\substack{d \dv  b^{\mu_2-\delta}\\ \gcd(d,b)<b}} d^{1-2\eta_b}
  \\
  \sum_{0\leq \delta' < \mu+\delta-\mu_2}
  b^{\delta'+1}
  \sum_{a' \in \widetilde{\mathcal{E}}(r_d,\delta,\delta')}
  \abs{
    \mathcal{F}_{\delta}\left(
      \alpha b^{\lambda-\mu_2},\frac{a'}{b^{\delta}}
    \right)
  }^2
  .
\end{multline*}
Let
\begin{equation}\label{eq:definition-rho2}
  \rho_2 = \min(\rho,\rho').
\end{equation}
Using Lemma~\ref{lemma:L2-mean-value},
the contribution to $T_5(\alpha,\lambda,\mu,\mu_2, r,\delta_0)$
of the terms with $0\leq \delta' < \mu+\delta-\mu_2-\rho_2$
is 
\begin{displaymath}
  \ll
  \sum_{\delta_0+1\leq \delta \leq \mu_2}
  b^{\nu+2\eta_b(\mu_2-\delta)} \
  b^{\mu+\delta-\mu_2-\rho_2} \
  \sigma_{1-2\eta_b}\left(b^{\mu_2-\delta}\right)
  ,
\end{displaymath}
which by \eqref{eq:sigma-z>0-b-lambda} is
\begin{displaymath}
  \ll_b
  \sum_{\delta_0+1\leq \delta \leq \mu_2}
  b^{\nu+2\eta_b(\mu_2-\delta)}  
  b^{\mu+\delta-\mu_2-\rho_2}
  b^{(1-2\eta_b)(\mu_2-\delta)}
  \ll
  \mu_2
  b^{\lambda-\rho_2}
  ,
\end{displaymath}
so that,
since
\begin{math}
  \widetilde{\mathcal{E}}(r_d,\delta,\delta')
  \subseteq
  \mathcal{E}(r_d,\delta,\delta')
  ,
\end{math}
extending the summation over $a'$ to
$\mathcal{E}(r_d,\delta,\delta')$ for the remaining $\delta'$,
it follows that:
\begin{multline}\label{eq:majoration-T5}
  T_5(\alpha,\lambda,\mu,\mu_2, r,\delta_0)
  \\
  \ll
  \sum_{\delta_0+1\leq \delta \leq \mu_2}
  b^{\nu+2\eta_b(\mu_2-\delta)}  
  \sum_{\substack{d \dv  b^{\mu_2-\delta}\\ \gcd(d,b)<b}} d^{1-2\eta_b}
  \qquad
  \\
  \sum_{\delta'= \mu+\delta-\mu_2-\rho_2} ^{\mu+\delta-\mu_2-1}
  b^{\delta'}
  \sum_{a' \in \mathcal{E}(r_d,\delta,\delta')}
  \abs{
    \mathcal{F}_{\delta}\left(
      \alpha b^{\lambda-\mu_2},\frac{a'}{b^{\delta}}
    \right)
  }^2
  +
  \mu_2 b^{\lambda-\rho_2}
  .
\end{multline}

Bounding by $\gcd(r,d) b^{\mu+\delta-\mu_2}$
the minimum in the definition of $T_6$
given by \eqref{eq:definition-T6}
we get
\begin{multline}\label{eq:majoration-T6}
  T_6(\alpha,\lambda,\mu,\mu_2,r,\delta_0)
  \\
  \ll
  \sum_{\delta_0+1\leq \delta \leq \mu_2}
  b^{\nu+2\eta_b(\mu_2-\delta)}  
  \sum_{\substack{d \dv  b^{\mu_2-\delta}\\ \gcd(d,b)<b}} d^{1-2\eta_b}
  \gcd(r,d)\ b^{\mu+\delta-\mu_2}
  \\
  \sum_{a' \in \mathcal{E}(r_d,\delta,\mu+\delta-\mu_2)}
  \abs{
    \mathcal{F}_{\delta}\left(
      \alpha b^{\lambda-\mu_2},\frac{a'}{b^{\delta}}
    \right)
  }^2
  .
\end{multline}
In order to gather the summations in \eqref{eq:majoration-T5}
and \eqref{eq:majoration-T6} into a single sum $T_7$,
we introduce a factor $\gcd(r,d)\geq 1$
in the summation of \eqref{eq:majoration-T5}.
By \eqref{eq:T4=T5+T6},
it follows that
\begin{equation}\label{eq:T4-T7}
  T_4(\alpha,\lambda,\mu,\mu_2, r,\delta_0+1,\mu_2)
  \ll
  T_7(\alpha,\lambda,\mu,\rho,\rho',r,\delta_0)
  +
  \mu_2 b^{\lambda-\rho_2}
\end{equation}
with, using~\eqref{eq:definition-mu2} and \eqref{eq:definition-rho2},
\begin{multline*}
  T_7(\alpha,\lambda,\mu,\rho,\rho',r,\delta_0)
  \\
  =
  \sum_{\delta_0+1\leq \delta \leq \mu_2}
  b^{\nu+2\eta_b(\mu_2-\delta)}  
  \sum_{\substack{d \dv  b^{\mu_2-\delta}\\ \gcd(d,b)<b}} d^{1-2\eta_b}
  \gcd(r,d)
  \\
  \sum_{\delta'=\mu+\delta-\mu_2-\rho_2}^{\mu+\delta-\mu_2}
  b^{\delta'}
  \sum_{a' \in \mathcal{E}(r_d,\delta,\delta')}
  \abs{
    \mathcal{F}_{\delta}\left(
      \alpha b^{\lambda-\mu_2},\frac{a'}{b^{\delta}}
    \right)
  }^2
  .
\end{multline*}

For any integers $r'\geq 1$ and $\delta'\geq 2$,
since $0<b^{-\delta'}<\frac12$,
we have
\begin{displaymath}
  \mathcal{E}(r',\delta,\delta')
  =
  \bigcup_{j=0}^{r'-1}
  \mathcal{E}_j(r',\delta,\delta')
\end{displaymath}
where, for $j \in\{1,\ldots,r'-1\}$,
\begin{displaymath}
  \mathcal{E}_j(r',\delta,\delta')
  =
  \left\{
    a':\ 0\leq a' < b^\delta,\
    \abs{\frac{a'r'}{b^\delta} -j}
    \leq
    \frac{1}{b^{\delta'}}
  \right\}
  ,
\end{displaymath}
and
\begin{multline*}
  \mathcal{E}_0(r',\delta,\delta')
  =
  \left\{
    a':\ 0\leq a' < b^\delta,\
    0\leq \frac{a'r'}{b^\delta}
    \leq
    \frac{1}{b^{\delta'}}
  \right\}
  \\
  \bigcup
  \left\{
    a':\ 0\leq a' < b^\delta,\
    r'-\frac{1}{b^{\delta'}}
    \leq
    \frac{a'r'}{b^\delta}
    < r'
  \right\}
  .
\end{multline*}
We have
\begin{displaymath}
  \forall j \in\{0,\ldots,r'-1\},\
  \forall a' \in \mathcal{E}_j(r',\delta,\delta'),\
  \norm{\frac{a'}{b^\delta} - \frac{j}{r'}}
  \leq
  \frac{1}{r' b^{\delta'}}
  ,
\end{displaymath}
hence, by Lemma~\ref{lemma:F2-difference},
for any integer $\delta'' \geq 0$ we have
\begin{multline*}
  \forall \alpha'\in\R,\
  \forall j \in\{0,\ldots,r'-1\},\
  \forall a' \in \mathcal{E}_j(r',\delta,\delta'),\
  \\
  \abs{
    \mathcal{F}_{\delta''}\left(
      \alpha',\frac{a'}{b^{\delta}}
    \right)
  }^2
  \leq
  \abs{
    \mathcal{F}_{\delta''}\left(
      \alpha',\frac{j}{r'}
    \right)
  }^2
  +
  \frac{2\pi}{3} \frac{b^{\delta''}}{r' b^{\delta'}}
  .
\end{multline*}
Applying this with $r'=r_d$ and $\delta''=\delta'-\rho_2$,
using Lemma~\ref{FT-splitting-product},
for $j \in\{0,\ldots,r_d-1\}$ and
$a' \in \mathcal{E}_j(r_d,\delta,\delta')$
we have
\begin{multline*}
  \abs{
    \mathcal{F}_{\delta}\left(
      \alpha b^{\lambda-\mu_2},\frac{a'}{b^{\delta}}
    \right)
  }^2
  \\
  =
    \abs{
    \mathcal{F}_{\delta-\delta'+\rho_2}\left(
      \alpha b^{\lambda-\mu_2},\frac{a'}{b^{\delta-\delta'+\rho_2}}
    \right)
  }^2
  \abs{
    \mathcal{F}_{\delta'-\rho_2}\left(
      \alpha b^{\lambda-\mu_2+\delta-\delta'+\rho_2},\frac{a'}{b^{\delta}}
    \right)
  }^2
  \\
  \ll
  \abs{
    \mathcal{F}_{\delta-\delta'+\rho_2}\left(
      \alpha b^{\lambda-\mu_2},\frac{a'}{b^{\delta-\delta'+\rho_2}}
    \right)
  }^2
  \\
  \left(
    \abs{
      \mathcal{F}_{\delta'-\rho_2}\left(
        \alpha b^{\lambda-\mu_2+\delta-\delta'+\rho_2},\frac{j}{r_d}
      \right)
    }^2
    +
    \frac{1}{r_d b^{\rho_2}}
  \right)
  .
\end{multline*}
This permits us to write
\begin{equation}\label{eq:T7-T8}
  T_7(\alpha,\lambda,\mu,\rho,\rho',r,\delta_0)
  \ll
  T_8(\alpha,\lambda,\mu,\rho,\rho', r,\delta_0)
  +
  E_8(\alpha,\lambda,\mu,\rho,\rho', r,\delta_0)
  ,
\end{equation}
with
\begin{multline*}
  T_8(\alpha,\lambda,\mu,\rho,\rho', r,\delta_0)
  \\
  =
  \sum_{\delta_0+1\leq \delta \leq \mu_2}
  b^{\nu+2\eta_b(\mu_2-\delta)}  
  \sum_{\substack{d \dv  b^{\mu_2-\delta}\\ \gcd(d,b)<b}} d^{1-2\eta_b}
  \gcd(r,d)
  \sum_{\delta'=\mu+\delta-\mu_2-\rho_2}^{\mu+\delta-\mu_2}
  b^{\delta'}
  \\
  \sum_{j=0}^{r_d-1}
  \abs{
    \mathcal{F}_{\delta'-\rho_2}\left(
      \alpha b^{\lambda-\mu_2+\delta-\delta'+\rho_2},\frac{j}{r_d}
    \right)
  }^2
  \\
  \sum_{a' \in \mathcal{E}_j(r_d,\delta,\delta')}
  \abs{
    \mathcal{F}_{\delta-\delta'+\rho_2}\left(
      \alpha b^{\lambda-\mu_2},\frac{a'}{b^{\delta-\delta'+\rho_2}}
    \right)
  }^2
\end{multline*}
and
\begin{multline*}
  E_8(\alpha,\lambda,\mu,\rho,\rho', r,\delta_0)
  \\
  =
  \sum_{\delta_0+1\leq \delta \leq \mu_2}
  b^{\nu+2\eta_b(\mu_2-\delta)}  
  \sum_{\substack{d \dv  b^{\mu_2-\delta}\\ \gcd(d,b)<b}} d^{1-2\eta_b}
  \gcd(r,d)
  \sum_{\delta'=\mu+\delta-\mu_2-\rho_2}^{\mu+\delta-\mu_2}
  b^{\delta'}
  \\
  \sum_{j=0}^{r_d-1}
  \frac{1}{r_d b^{\rho_2}}
  \sum_{a' \in \mathcal{E}_j(r_d,\delta,\delta')}
  \abs{
    \mathcal{F}_{\delta-\delta'+\rho_2}\left(
      \alpha b^{\lambda-\mu_2},\frac{a'}{b^{\delta-\delta'+\rho_2}}
    \right)
  }^2
  .  
\end{multline*}
Let us first consider $E_8$.
For $j \in\{1,\ldots,r_d-1\}$,
the set $\mathcal{E}_j(r_d,\delta,\delta')$ is a set of consecutive
integers of cardinal at most
\begin{math}
  2 b^{\delta-\delta'} + 2 \leq b^{\delta-\delta'+\rho_2}
  ,
\end{math}
and the set $\mathcal{E}_0(r_d,\delta,\delta')$ is the union of two
such sets. Extending the summation to the full set of residues
modulo $b^{\delta-\delta'+\rho_2}$,
by Lemma~\ref{lemma:L2-mean-value}
we have for $j \in\{0,\ldots,r_d-1\}$:
\begin{multline}\label{eq:majoration-sum-F-E_j}
  \sum_{a' \in \mathcal{E}_j(r_d,\delta,\delta')}
  \abs{
    \mathcal{F}_{\delta-\delta'+\rho_2}\left(
      \alpha b^{\lambda-\mu_2},\frac{a'}{b^{\delta-\delta'+\rho_2}}
    \right)
  }^2
  \\
  \ll
  \sum_{0\leq a'< b^{\delta-\delta'+\rho_2}}
  \abs{
    \mathcal{F}_{\delta-\delta'+\rho_2}\left(
      \alpha b^{\lambda-\mu_2},\frac{a'}{b^{\delta-\delta'+\rho_2}}
    \right)
  }^2
  = 1,
\end{multline}
hence
\begin{multline*}
  E_8(\alpha,\lambda,\mu,\rho,\rho', r,\delta_0)
  \\
  \ll
  \sum_{\delta_0+1\leq \delta \leq \mu_2}
  b^{\nu+2\eta_b(\mu_2-\delta)}  
  \sum_{\substack{d \dv  b^{\mu_2-\delta}\\ \gcd(d,b)<b}} d^{1-2\eta_b}
  \gcd(r,d)
  \sum_{\delta'=\mu+\delta-\mu_2-\rho_2}^{\mu+\delta-\mu_2}
  b^{\delta'}
  \frac{1}{b^{\rho_2}}
  .
\end{multline*}
By Lemma~\ref{lemma:gcd-sum},
using $\tau(d) \leq \tau\left(b^{\mu_2}\right)$
and \eqref{eq:sigma-z>0-b-lambda}, we have
\begin{multline}\label{eq:combined-gcd-sigma-upperbound}
  \frac{1}{b^\rho} \sum_{r=1}^{b^{\rho}-1}
  b^{2\eta_b(\mu_2-\delta)}  
  \sum_{\substack{d \dv  b^{\mu_2-\delta}\\ \gcd(d,b)<b}} d^{1-2\eta_b}
  \gcd(r,d)
  \ll
  b^{2\eta_b(\mu_2-\delta)}  
  \sum_{\substack{d \dv  b^{\mu_2-\delta}\\ \gcd(d,b)<b}} d^{1-2\eta_b}
  \tau(d)
  \\
  \ll
  \tau\left(b^{\mu_2}\right)
  b^{2\eta_b(\mu_2-\delta)}  
  b^{(1-2\eta_b)(\mu_2-\delta)}
  \ll
  \tau\left(b^{\mu_2}\right)
  b^{\mu_2-\delta}
  ,
\end{multline}
which gives
\begin{align}\label{eq:majoration-E8}
  \frac{1}{b^\rho} \sum_{r=1}^{b^{\rho}-1}
  E_8(\alpha,\lambda,\mu,\rho,\rho', r,\delta_0)
  &\ll
  \tau\left(b^{\mu_2}\right)
  \sum_{\delta_0+1\leq \delta \leq \mu_2}
  b^{\nu}  
  b^{\mu_2-\delta}
  b^{\mu+\delta-\mu_2-\rho_2}
  \\
  \nonumber
  &\ll
  \tau\left(b^{\mu_2}\right)
  \mu_2 b^{\lambda-\rho_2}
  .
\end{align}
Let us now consider $T_8$.
By \eqref{eq:majoration-sum-F-E_j} we have
\begin{multline*}
  T_8(\alpha,\lambda,\mu,\rho,\rho', r,\delta_0)
  \\
  \ll
  \sum_{\delta_0+1\leq \delta \leq \mu_2}
  b^{\nu+2\eta_b(\mu_2-\delta)}  
  \sum_{\substack{d \dv  b^{\mu_2-\delta}\\ \gcd(d,b)<b}} d^{1-2\eta_b}
  \gcd(r,d)
  \\
  \sum_{\delta'=\mu+\delta-\mu_2-\rho_2}^{\mu+\delta-\mu_2}
  b^{\delta'}
  \sum_{j=0}^{r_d-1}
  \abs{
    \mathcal{F}_{\delta'-\rho_2}\left(
      \alpha b^{\lambda-\mu_2+\delta-\delta'+\rho_2},\frac{j}{r_d}
    \right)
  }^2
  .
\end{multline*}
By Lemma~\ref{FT-splitting-product}
and Lemma~\ref{lemma:pointwise-upperbound-of-F}
we have
\begin{multline*}
  \abs{
    \mathcal{F}_{\delta'-\rho_2}\left(
      \alpha b^{\lambda-\mu_2+\delta-\delta'+\rho_2},\frac{j}{r_d}
    \right)
  }^2
  \leq
  \\
  \abs{
    \mathcal{F}_{\rho}\left(
      \alpha b^{\lambda-\mu_2+\delta-\delta'+\rho_2+\delta'-\rho_2-\rho},\frac{j}{r_d}
    \right)
  }^2
  \hspace{-.6em}
  G_{\delta'-\rho_2-\rho-1}\left(
      \alpha b^{\lambda-\mu_2+\delta-\delta'+\rho_2} (b^2-1)
    \right)
    .
\end{multline*}
The sequence $(j/r_d)_{j=0,\ldots,r_d-1}$ is
$r_d^{-1}$ well spaced modulo~$1$
and $1\leq r_d \leq r < b^\rho$ so this sequence is
$b^{-\rho }$ well spaced modulo~$1$.
By Lemma~\ref{lemma:L2-mean-value-in-alpha} we have
\begin{displaymath}
  \sum_{j=0}^{r_d-1}
  \abs{
    \mathcal{F}_{\rho}\left(
      \alpha
      b^{\lambda-\mu_2+\delta-\delta'+\rho_2+\delta'-\rho_2-\rho},
      \frac{j}{r_d}
    \right)
  }^2  
  \ll 1
  ,
\end{displaymath}
hence
\begin{multline*}
  T_8(\alpha,\lambda,\mu,\rho,\rho', r,\delta_0)
  \\
  \ll
  \sum_{\delta_0+1\leq \delta \leq \mu_2}
  b^{\nu+2\eta_b(\mu_2-\delta)}  
  \sum_{\substack{d \dv  b^{\mu_2-\delta}\\ \gcd(d,b)<b}} d^{1-2\eta_b}
  \gcd(r,d)
  \\
  \sum_{\delta'=\mu+\delta-\mu_2-\rho_2}^{\mu+\delta-\mu_2}
  b^{\delta'}
  G_{\delta'-\rho_2-\rho-1}\left(
      \alpha b^{\lambda-\mu_2+\delta-\delta'+\rho_2} (b^2-1)
    \right)
  ,
\end{multline*}
which, by \eqref{eq:combined-gcd-sigma-upperbound}
gives
\begin{equation}\label{eq:T8-upperbound-only-with-G}
  \frac{1}{b^\rho} \sum_{r=1}^{b^{\rho}-1}
  T_8(\alpha,\lambda,\mu,\rho,\rho', r,\delta_0)
  \ll
  T_9(\alpha,\lambda,\mu,\rho,\rho',\delta_0)
\end{equation}
with
\begin{multline}\label{eq:definition-T9}
  T_9(\alpha,\lambda,\mu,\rho,\rho',\delta_0)
  =
  \tau\left(b^{\mu_2}\right)
  \sum_{\delta_0+1\leq \delta \leq \mu_2}
  b^{\nu+\mu_2-\delta}  
  \\
  \sum_{\delta'=\mu+\delta-\mu_2-\rho_2}^{\mu+\delta-\mu_2}
  b^{\delta'}
  G_{\delta'-\rho_2-\rho-1}\left(
      \alpha b^{\lambda-\mu_2+\delta-\delta'+\rho_2} (b^2-1)
    \right)
  .
\end{multline}
By \eqref{eq:T4-T7}, \eqref{eq:T7-T8},
\eqref{eq:majoration-E8} we deduce
\begin{equation}\label{eq:T4-upperbound-only-with-G}
  \frac{1}{b^\rho} \sum_{r=1}^{b^{\rho}-1}
  T_4(\alpha,\lambda,\mu,\mu_2, r,\delta_0+1,\mu_2)
  \ll
  T_9(\alpha,\lambda,\mu,\rho,\rho',\delta_0)
  +
  \tau\left(b^{\mu_2}\right)
  \mu_2 b^{\lambda-\rho_2}
  .
\end{equation}
By \eqref{eq:maj_SIIcarre_end} and \eqref{eq:definition-rho2} we get
\begin{multline}\label{eq:S_II-T9}
  S_{II}(\alpha,\lambda,\mu)^2
  \\
  \ll
  \mu_2
  \tau\left(b^{\mu_2}\right)
  b^{2\lambda-\rho_2}
  +
  b^{\lambda}
  b^{\mu_2(1+2\eta_b)} \log b^{\mu_2}
  +
  \frac{b^{2\lambda+\rho+\rho'}}{P^-(b)^{(1-2\eta_b)\mu_2}}
  \tau(b^{\mu_2}) \log b^{\mu_2}
  \\
  +
  \left(\tau\left(b^{\mu_2}\right)\right)^2
  \frac{b^{2\lambda}}{P^-(b)^{(1-2\eta_b)(\mu_2-\delta_0)}}
  +
  b^\lambda
  T_9(\alpha,\lambda,\mu,\rho,\rho',\delta_0)  
  .
\end{multline}

\section{An individual bound for $S_{II}(\alpha,\lambda,\mu)$
  for $\alpha =h/d$ with $d$ small.}

In this part we assume that $\alpha =h/d$, with
$1 \leq h < d$,
\begin{math}
  (b^2-1) b^\lambda h \not\equiv 0 \bmod d
  .
\end{math}

Since
\begin{math}
  \delta'-\rho_2-\rho-2+\lambda-\mu_2+\delta-\delta'+\rho_2
  =
  \lambda-\mu_2+\delta-\rho-2
  \leq \lambda
\end{math}
we have
\begin{displaymath}
  b^{\delta'-\rho_2-\rho-2}
  b^{\lambda-\mu_2+\delta-\delta'+\rho_2}
  (b^2-1) h
  \not\equiv 0 \bmod d
  ,
\end{displaymath}
we are able to use the individual bound for
\begin{displaymath}
  G_{\delta'-\rho_2-\rho-1}\left(
    \alpha b^{\lambda-\mu_2+\delta-\delta'+\rho_2} (b^2-1) \right)
\end{displaymath}
given by \eqref{eq:uniform-upperbound-of-G-for-denominator-d}
to bound above
\eqref{eq:definition-T9}:
\begin{align*}
  &
    T_9(\alpha,\lambda,\mu,\rho,\rho',\delta_0)
    \\
  &\ll
    \tau\left(b^{\mu_2}\right)
    \sum_{\delta_0+1\leq \delta \leq \mu_2}
    b^{\nu+\mu_2-\delta}  
    \sum_{\delta'=\mu+\delta-\mu_2-\rho_2}^{\mu+\delta-\mu_2}
    b^{\delta'}
    b^{\frac{-2\Upsilon_b}{\log d}(\delta'-\rho_2-\rho-1)}\\
  &\ll
    \tau\left(b^{\mu_2}\right)b^\lambda 
    b^{\frac{-2\Upsilon_b}{\log d}(\delta_0-\rho_2-\rho-1)}
    ,
\end{align*}
where $\Upsilon_b$ is defined by \eqref{eq:definition-Upsilon_b}.  By
\eqref{eq:S_II-T9} we deduce
\begin{multline}\label{eq:type-II-small-d}
  S_{II}(\alpha,\lambda,\mu)^2
  \\
  \ll
  \mu_2\tau (b^{\mu_2})b^{2\lambda-\rho_2}
  +
  b^{\lambda}
  b^{\mu_2(1+2\eta_b)} \log b^{\mu_2}
  +
  \frac{b^{2\lambda+\rho+\rho'}}{P^-(b)^{(1-2\eta_b)\mu_2}}
  \tau(b^{\mu_2}) \log b^{\mu_2}
  \\
  +
  \left(\tau\left(b^{\mu_2}\right)\right)^2
  \frac{b^{2\lambda}}{P^-(b)^{(1-2\eta_b)(\mu_2-\delta_0)}}
  +
  \tau\left(b^{\mu_2}\right)b^{2\lambda}
  b^{\frac{-2\Upsilon_b}{\log d}(\delta_0-\rho_2-\rho-1)}
  .
\end{multline}

\begin{lemma}\label{lemma:conclusion-S_II-individual}
  Let $0<\beta_1 < \beta_2 < (1+2\eta_b)^{-1}$. There exists
  $C = C(b,\beta_1,\beta_2) \in \left]0,1\right[$ such that if
  $\beta_1\lambda\leq \mu \leq \beta_2\lambda$ then for any
  $\alpha =h/d$ with
  \begin{math}
    (b^2-1) b^\lambda h \not\equiv 0 \bmod d,
  \end{math}
  we have 
  \begin{equation}\label{eq:conclusion-S_II-individual}
    S_{II}(\alpha,\lambda,\mu)
    \ll
    b^{\lambda (1-C)}
    +
    \lambda^{\frac{\omega(b)}2}
    b^{\lambda\left(1-\frac{\Upsilon_b\beta_1}{2\log d}\right)}
    .
  \end{equation}
\end{lemma}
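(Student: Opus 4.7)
The plan is to optimize the auxiliary parameters $\rho$, $\rho'$, $\delta_0$ in the already-established inequality~\eqref{eq:type-II-small-d}, which bounds $S_{II}(\alpha,\lambda,\mu)^2$ by a sum of five terms. Concretely, we shall take $\rho=\rho'=\floor{\rho_0\lambda}$ for a small constant $\rho_0=\rho_0(b,\beta_1,\beta_2)>0$ to be specified, so that $\rho_2=\rho$ and $\mu_2\leq(\beta_2+2\rho_0)\lambda$. The hypothesis $\beta_2(1+2\eta_b)<1$ permits the choice of $\rho_0$ small enough that $(\beta_2+2\rho_0)(1+2\eta_b)<1$, which controls the second summand $b^{\lambda+\mu_2(1+2\eta_b)}\log b^{\mu_2}$. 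The same $\rho_0$ must also be chosen small enough that $b^{\rho+\rho'}=b^{2\rho_0\lambda}$ is dominated by $P^-(b)^{(1-2\eta_b)\mu_2}$, in order to control the third summand $b^{2\lambda+\rho+\rho'}/P^-(b)^{(1-2\eta_b)\mu_2}$; both constraints are concretely of the form $\rho_0<\rho_0^{\ast}(b,\beta_1,\beta_2)$, and by Lemma~\ref{lemma:eta_b-properties} we have $2\eta_b<1$ for all $b\geq 2$, so these constants are strictly positive.

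Next, we will set $\delta_0=\mu_2-\floor{\beta_1\lambda/4}$. For $\lambda$ large enough (depending on $b,\beta_1,\beta_2$) the condition~\eqref{eq:condition-delta0} holds, since $\delta_0-(\rho+\rho')\geq\mu-\beta_1\lambda/4\geq 3\beta_1\lambda/4$. With these choices, the first summand of~\eqref{eq:type-II-small-d} becomes $O(\lambda^{O(1)}b^{2\lambda-\rho_0\lambda})$, the fourth becomes $O(\lambda^{2\omega(b)}b^{2\lambda}\,P^-(b)^{-(1-2\eta_b)\beta_1\lambda/4})$, and the second and third summands are similarly of the form $O(\lambda^{O(1)}b^{2\lambda(1-C_i)})$ with $C_i=C_i(b,\beta_1,\beta_2)>0$. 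Taking the minimum, the first four summands together contribute $O(b^{2\lambda(1-C)})$ for an explicit $C=C(b,\beta_1,\beta_2)\in(0,1)$; extracting a square root produces the first term $b^{\lambda(1-C)}$ of~\eqref{eq:conclusion-S_II-individual}.

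For the fifth summand, the bound $\tau(b^{\mu_2})\leq\tau(b)\mu_2^{\omega(b)}\ll\lambda^{\omega(b)}$ from~\eqref{eq:tau-b-lambda}, together with the identity $\delta_0-\rho_2-\rho-1=\mu-\floor{\beta_1\lambda/4}-1$ (which exceeds $\beta_1\lambda/2$ for $\lambda$ large, using $\mu\geq\beta_1\lambda$), yields after extracting a square root the estimate $\lambda^{\omega(b)/2}\,b^{\lambda(1-\Upsilon_b\beta_1/(2\log d))}$, which is exactly the second term of~\eqref{eq:conclusion-S_II-individual}.

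The main obstacle is to verify that the several quantitative constraints on $\rho_0$ are simultaneously satisfiable in terms of $b$, $\beta_1$ and $\beta_2$ alone, and that the implicit constants in the individual term-by-term estimates can be absorbed into the $\ll$ in~\eqref{eq:conclusion-S_II-individual}. This reduces to a finite list of explicit inequalities involving $\eta_b$, $\Upsilon_b$, $\beta_1$, $\beta_2$ and $P^-(b)$; no new analytical input is required beyond what has already been assembled in Sections~\ref{section:lemmas}--\ref{section:critical-contribution-of-Type-II-sums}.
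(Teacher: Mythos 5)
Your proposal is correct and follows essentially the same route as the paper: starting from \eqref{eq:type-II-small-d}, taking $\rho=\rho'=\rho_2\asymp c\lambda$ for a small constant $c=c(b,\beta_1,\beta_2)$ subject to a finite list of explicit inequalities (including $2\rho_0<1-\beta_2$ so that \eqref{eq:condition-rho-rho'} holds), choosing $\delta_0$ so that both $\mu_2-\delta_0\gg\lambda$ and $\delta_0-\rho_2-\rho\geq\beta_1\lambda/2$, and bounding the five summands term by term. The only difference is cosmetic: the paper takes $\delta_0=\mu+\rho$ (so $\mu_2-\delta_0=\rho$) where you take $\delta_0=\mu_2-\floor{\beta_1\lambda/4}$; both satisfy \eqref{eq:condition-delta0} and yield the same final estimate.
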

\begin{proof}
  Assume that $\alpha =h/d$ with
  \begin{math}
    (b^2-1) b^\lambda h \not\equiv 0 \bmod d.
  \end{math} 
  We proved that \eqref{eq:type-II-small-d} holds for any positive
  integers $\rho,\rho',\rho_2,\mu_2,\delta_0$ satisfying
  \eqref{eq:condition-rho-rho'}, \eqref{eq:definition-mu2},
  \eqref{eq:condition-delta0} and \eqref{eq:definition-rho2}.  If
  $\rho < \min(\mu,\tfrac{\lambda-\mu}2)$ then
  \eqref{eq:condition-rho-rho'}, \eqref{eq:definition-mu2},
  \eqref{eq:condition-delta0} and \eqref{eq:definition-rho2} are
  satisfied with
  \begin{displaymath}
    \rho'=\rho =\rho_2, \qquad \mu_2 = \mu+2\rho, \qquad \delta_0 = \mu +\rho,
  \end{displaymath}
  hence, since by \eqref{eq:tau-b-lambda} we have
  $\tau(b^{\mu_2})\leq \tau(b) \mu_2^{\omega(b)} \leq \tau(b)
  \lambda^{\omega(b)}$, we obtain
  \begin{multline*}
    S_{II}(\alpha,\lambda,\mu)^2
    \ll
    \lambda^{1+\omega(b)}b^{2\lambda-\rho}
    +
    \lambda b^{\lambda+(\mu+2\rho)(1+2\eta_b)} 
    \\
    +
    \lambda^{1+\omega(b)}
    b^{2\lambda+2\rho-(1-2\eta_b)(\mu+2\rho)\frac{\log P^-(b)}{\log b}}
    \\
    +
    \lambda^{2\omega(b)}
    b^{2\lambda-(1-2\eta_b)\rho \frac{\log P^-(b)}{\log b}}
    +
    \lambda^{\omega(b)}
    b^{2\lambda-\frac{2\Upsilon_b}{\log d}(\mu-\rho)}
    .
  \end{multline*}
  Since $\beta_2 < (1+2\eta_b)^{-1}$, it is possible to choose
  explicitly $c_3 = c(b,\beta_1,\beta_2) > 0$ small enough so that
  \begin{displaymath}
    c_3 < \min\left(\beta_1,\frac{1-\beta_2}2\right),
    \quad
    (\beta_2 + 2 c_3)(1+2\eta_b) < 1,
  \end{displaymath}
  \begin{displaymath}
    2c_3-(1-2\eta_b)(\beta_1+2c_3)\frac{\log P^-(b)}{\log b}
    <0,
    \quad
    \beta_1-c_3 \geq \frac{\beta_1}{2}.
  \end{displaymath}
  Assuming that $\beta_1\lambda\leq \mu \leq \beta_2\lambda$, we can
  choose $\rho=\lfloor c_3 \lambda \rfloor$ and then obtain
  \begin{displaymath}
    S_{II}(\alpha,\lambda,\mu)^2
    \ll
    b^{2\lambda (1-C)}
    +
    \lambda^{\omega(b)}
    b^{2\lambda\left(1-\frac{\Upsilon_b\beta_1}{2\log d}\right)}
  \end{displaymath}
  for some explicit $C = C(b,\beta_1,\beta_2) \in \left]0,1\right[$,
  which gives~\eqref{eq:conclusion-S_II-individual}.
\end{proof}

\section{Bound of $S_{II}(\alpha,\lambda,\mu)$ on average over $\alpha$}

Let $D\geq 1$ and let $\Delta$ be the unique integer
such that
\begin{equation}
  \label{eq:definition-Delta}
  b^{\Delta-1}\leq D < b^\Delta.
\end{equation}
Let us first derive from \eqref{eq:S_II-T9} an upper bound for
\begin{displaymath}
  \frac{1}{D}
  \sum_{\alpha\in {\mathcal A}_D} 
  S_{II}(\alpha,\lambda,\mu)
  .
\end{displaymath}
We notice that both sums in \eqref{eq:definition-T9}
are rather short.
Using the inequality
\begin{equation}
  \forall x_1\geq 0,\ldots,\ \forall x_\ell\geq 0,\
  \sqrt{x_1+\cdots + x_\ell}
  \leq
  \sqrt{x_1}+\cdots +\sqrt{x_\ell}
  ,
\end{equation}
it follows from \eqref{eq:definition-T9} that
\begin{multline}\label{eq:T8^¹/2-G^1/2}
  \frac{1}{D} \sum_{\alpha\in {\mathcal A}_D} b^{\frac{\lambda}2}
  T_9(\alpha,\lambda,\mu,\rho,\rho',\delta_0)^{\frac12}
  \\
  \ll \tau\left(b^{\mu_2}\right)^{\frac12} b^{\frac{\lambda}{2}}
  \sum_{\delta_0+1\leq \delta \leq \mu_2}
  b^{\frac12(\nu+\mu_2-\delta)}
  \sum_{\delta'=\mu+\delta-\mu_2-\rho_2}^{\mu+\delta-\mu_2}
  b^{\frac{\delta'}{2}}\, T_{10}(\lambda,\mu,\rho,\rho',\delta',\delta,D)
\end{multline}
with
\begin{equation}\label{eq:def_T10}
  T_{10}(\lambda,\mu,\rho,\rho',\delta',\delta,D)
  =
  \frac{1}{D}
  \sum_{\alpha\in {\mathcal A}_D} 
  G_{\delta'-\rho_2-\rho-1}^{1/2}
  \left(
      \alpha b^{\lambda-\mu_2+\delta-\delta'+\rho_2} (b^2-1)
    \right)
  .
\end{equation}
Since the denominators of the $\alpha$'s are coprime to $b(b^2-1)$, we
have
\begin{displaymath}
  T_{10}(\lambda,\mu,\rho,\rho',\delta',\delta,D)
  =
  \frac{1}{D}
  \sum_{\alpha\in {\mathcal A}_D} 
  G_{\delta'-\rho_2-\rho-1}^{1/2} \left( \alpha \right)
  .
\end{displaymath}
We intend to apply
Lemma~\ref{lemma:sobolev-gallagher-combined-with-holder}
with an appropriate value of $\kappa$.
By \eqref{eq:definition-zeta_b}
and \eqref{eq:max-Tb-small-kappa}
we have $\zeta_{b,1}>0$.
Moreover by~\eqref{eq:max-Tb-simple},
\begin{math}
  b^{1-\zeta_{b,\kappa}}
  = b\max_{\alpha\in\R} T_{b,\kappa}(\alpha)
\end{math}
converges to $1$
as $\kappa \to + \infty$.
Hence for any large enough integer $\kappa$ we have
\begin{equation}\label{eq:b^-zeta_b-small}
  b^{-\zeta_{b,1}}
  b^{1-\zeta_{b,\kappa}}
  < 1
  .
\end{equation}
We choose $\kappa = \kappa_b$ to be the smallest positive integer such
that \eqref{eq:b^-zeta_b-small} is satisfied:
\begin{equation}\label{eq:def_kappa_b}
  \kappa_b = \min\{\kappa \in \N : b^{-\zeta_{b,1}}
  b^{1-\zeta_{b,\kappa}} < 1\}
  .
\end{equation}
An investigation of the size of $\kappa_b$ will be provided in
Section~\ref{section_study_xi_0}.

Let us assume that $D$ is small enough so that $\Delta$
satisfies 
\begin{equation}\label{eq:condition-kappa-Delta}
  (\kappa_b+1)2\Delta
  \leq
  \mu+\delta_0-\mu_2-2\rho_2-\rho 
  =
  \delta_0-2\rho_2-2\rho-\rho' .
\end{equation}
For
\begin{math}
  \delta \geq \delta_0+1
\end{math}
and 
\begin{math}
  \delta' \geq \mu + \delta - \mu_2 - \rho_2,
\end{math}
we have
\begin{math}
  (\kappa_b+1)2\Delta \leq \delta'-\rho_2-\rho-1
\end{math}
and since, by the definition of $\mathcal{A}_D$ given in
\eqref{eq:definition-A-d}, the sequence of the $\alpha$'s in
$\mathcal{A}_D$ is $b^{-2\Delta}$ well spaced modulo~$1$, we get by
Lemma~\ref{lemma:sobolev-gallagher-combined-with-holder}
\begin{displaymath}
  T_{10}(\lambda,\mu,\rho,\rho',\delta',\delta,D)
  \ll
  b^{-\Delta}
  b^{(1-\zeta_{b,1})\Delta} \,
  (\kappa_b+2)^{1/2}
  b^{(1-\zeta_{b,\kappa_b})\Delta}
  ,
\end{displaymath}
which, by \eqref{eq:b^-zeta_b-small}, implies that
\begin{displaymath}
  T_{10}(\lambda,\mu,\rho,\rho',\delta',\delta,D)
  \ll_b
  b^{-\varepsilon_b \Delta} \,
  ,
\end{displaymath}
where $\varepsilon_b>0$ is defined by the equality
\begin{displaymath}
  b^{-\varepsilon_b}
  =
  b^{-\zeta_{b,1}}
  b^{1-\zeta_{b,\kappa_b}}
  <1.
\end{displaymath}
Inserting this in \eqref{eq:T8^¹/2-G^1/2}, we obtain
\begin{multline*}
  \frac{1}{D}
  \sum_{\alpha\in {\mathcal A}_D} b^{\frac{\lambda}2}
  T_9(\alpha,\lambda,\mu,\rho,\rho',\delta_0)^{\frac12}
  \\
  \ll
  \tau\left(b^{\mu_2}\right)^{\frac12}
  b^{\frac{\lambda}{2}-\varepsilon_b\Delta}
  \sum_{\delta_0+1\leq \delta \leq \mu_2}
  b^{\frac12(\nu+\mu_2-\delta)}  
  \sum_{\delta'=\mu+\delta-\mu_2-\rho_2}^{\mu+\delta-\mu_2}
  b^{\frac{\delta'}{2}}
  \\
  \ll
  \tau\left(b^{\mu_2}\right)^{\frac12}
  b^{\frac{\lambda}{2}-\varepsilon_b\Delta}
  \sum_{\delta_0+1\leq \delta \leq \mu_2}
  b^{\frac12(\nu+\mu_2-\delta)}
  b^{\frac12(\mu+\delta-\mu_2)}
  \ll
  \tau\left(b^{\mu_2}\right)^{\frac12}
  \mu_2
  b^{\lambda-\varepsilon_b\Delta}
  .
\end{multline*}
By \eqref{eq:S_II-T9}, we deduce that if $D$ is small enough so that
$\Delta$ satisfies~\eqref{eq:condition-kappa-Delta} then
\begin{multline}\label{eq:average-S_II}
  \frac{1}{D}
  \sum_{\alpha\in {\mathcal A}_D} 
  S_{II}(\alpha,\lambda,\mu)
  \ll
  \mu_2^{\frac12}
  \tau\left(b^{\mu_2}\right)^{\frac12}
  b^{\lambda+\Delta-\frac{\rho_2}{2}}
  +
  b^{\frac{\lambda}{2}+\Delta+\frac12\mu_2(1+2\eta_b)}
  \left(\log b^{\mu_2}\right)^{\frac12}
  \\
  +
  \frac{b^{\lambda+\Delta+\frac12(\rho+\rho')}}{P^-(b)^{\frac12(1-2\eta_b)\mu_2}}
  \tau(b^{\mu_2})^{\frac12} \left( \log b^{\mu_2} \right)^{\frac12}
  \\
  +
  \tau\left(b^{\mu_2}\right)
  \frac{b^{\lambda+\Delta}}{P^-(b)^{\frac12(1-2\eta_b)(\mu_2-\delta_0)}}
  +
  \tau\left(b^{\mu_2}\right)^{\frac12}
  \mu_2
  b^{\lambda-\varepsilon_b\Delta}
  .
\end{multline}

Let us now focus on the sum 
\begin{displaymath}
  \sum_{\alpha\in \mathcal{A}_D}
  W(\alpha)
  S_{II}(\alpha,\lambda,\mu)
  .
\end{displaymath}
Let $\Delta_0 < \Delta$ be an integer and $D_0= b^{\Delta_0}$.
We write
\begin{equation}\label{eq:decomp_sum_W_SII}
  \sum_{\alpha\in \mathcal{A}_D}
  W(\alpha)
  S_{II}(\alpha,\lambda,\mu)
  =
  V_1(\lambda,\mu,D_0)
  +
  V_2(\lambda,\mu,D_0,D)
\end{equation}
where, in $V_1(\lambda,\mu,D_0)$, the denominator of $\alpha$ is $\leq D_0$ :
\begin{equation}\label{eq:def_V_1}
  V_1(\lambda,\mu,D_0) = \sum_{\alpha\in \mathcal{A}_{D_0}}
  W(\alpha)
  S_{II}(\alpha,\lambda,\mu)
  \leq
  D_0
  \sup_{\alpha \in \mathcal{A}_{D_0}} S_{II}(\alpha,\lambda,\mu)
\end{equation}
and in $V_2(\lambda,\mu,D_0,D)$, the denominator $d$ of $\alpha$ satisfies
$D_0<d\leq D$. By $b$-adic splitting, we have
\begin{multline}\label{eq:maj_V_2_ini}
  V_2(\lambda,\mu,D_0,D)
  \leq
  \sum_{0\leq i < \Delta-\Delta_0} 
  \sum_{\alpha\in \mathcal{A}_{D b^{-i}}\setminus \mathcal{A}_{D b^{-(i+1)}}}
  W(\alpha)
  S_{II}(\alpha,\lambda,\mu)\\
  \leq
  \sum_{0\leq i < \Delta-\Delta_0} 
  \frac{b^{i+1}}{D}
  \sum_{\alpha\in \mathcal{A}_{D b^{-i}}}
  S_{II}(\alpha,\lambda,\mu)
  .
\end{multline}
If $D$ is small enough so that $\Delta$
satisfies~\eqref{eq:condition-kappa-Delta} then, since
\eqref{eq:condition-kappa-Delta} is also satisfied with $\Delta$
replaced by $\Delta-i$, we can apply \eqref{eq:average-S_II} with $D$
replaced by $Db^{-i}$ so that $\Delta$ is replaced by $\Delta-i$,
which leads to
\begin{multline}\label{eq:maj_V2}
  V_2(\lambda,\mu,D_0,D)
  \ll
  \mu_2^{\frac12}
  \tau\left(b^{\mu_2}\right)^{\frac12}
  b^{\lambda+\Delta-\frac{\rho_2}{2}}
  +
  b^{\frac{\lambda}{2}+\Delta+\frac12\mu_2(1+2\eta_b)}
  \left(\log b^{\mu_2}\right)^{\frac12}
  \\
  +
  \frac{b^{\lambda+\Delta+\frac12(\rho+\rho')}}{P^-(b)^{\frac12(1-2\eta_b)\mu_2}}
  \tau(b^{\mu_2})^{\frac12} \left( \log b^{\mu_2} \right)^{\frac12}
  \\
  +
  \tau\left(b^{\mu_2}\right)
  \frac{b^{\lambda+\Delta}}{P^-(b)^{\frac12(1-2\eta_b)(\mu_2-\delta_0)}}
  +
  \tau\left(b^{\mu_2}\right)^{\frac12}
  \mu_2
  b^{\lambda-\varepsilon_b\Delta_0}
  .
\end{multline}

To conclude this section, we will deduce the following result.  For
convenience, we introduce
\begin{equation}\label{eq:def_u_b}
  u_b
  =
  \frac{1-2\eta_b}2\frac{\log P^-(b)}{\log b}
  \in\left]0,\frac12\right[
\end{equation}
and
\begin{equation}\label{eq:def_iota_b}
  \iota_b
  =
  \frac{u_b}
  {1 + 2(\kappa_b+1)u_b}
  =
    \frac{1}{2(\kappa_b+1)}\left(1-\frac{1}{1+2(\kappa_b+1)u_b}\right)
  > 0
  .
\end{equation}
We note that
\begin{equation}\label{eq:formula_iota_u_kappa}
  \frac{\iota_b}{1+6\iota_b}
  =
  \frac{u_b}{1+2(\kappa_b+4)u_b}
  =
  \frac{1}{2(\kappa_b+4)}\left(1-\frac{1}{1+2(\kappa_b+4)u_b}\right)
  .
\end{equation}

\begin{lemma}\label{lemma:conclusion-S_II-sum-on-alpha}
  Let $0<\beta_1 < \beta_2 < (1+2\eta_b)^{-1}$,
  \begin{equation}\label{eq:def_C_b,beta1,beta2}
    C'(b,\beta_1,\beta_2)
    =
    \min\left(
      \frac{1-(1+2\eta_b)\beta_2}{2(3+4\eta_b)},\,
      \frac{u_b}{3-4u_b} \beta_1,\,
      \frac{\iota_b}{1+6\iota_b}\beta_1
    \right)
    >0
  \end{equation}
  and 
  \begin{math}
    \xi \in \left]0,C'(b,\beta_1,\beta_2)\right[.
  \end{math}
  There exists $c' = c'(b,\beta_1) >0$ and
  $\lambda'_0(b,\beta_1,\beta_2,\xi)\geq 2$ such that if
  $\lambda\geq\lambda'_0(b,\beta_1,\beta_2,\xi)$ and
  $\beta_1\lambda\leq \mu \leq \beta_2\lambda$ then, denoting $D =
  b^{\xi \lambda}$,
  \begin{equation}\label{eq:conclusion-S_II-sum-on-alpha}
    \sum_{\alpha\in \mathcal{A}_D}
    W(\alpha)
    S_{II}(\alpha,\lambda,\mu)
    \ll
    b^{\lambda-c'\sqrt{\lambda}}
    .
  \end{equation}
\end{lemma}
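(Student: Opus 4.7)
The plan is to apply the dyadic split~\eqref{eq:decomp_sum_W_SII} at threshold $D_0 = b^{\Delta_0}$ with $\Delta_0 = \lceil c_0 \sqrt{\lambda}\rceil$ for a small constant $c_0 = c_0(b,\beta_1) > 0$ to be fixed later, treating $V_1(\lambda,\mu,D_0)$ with the individual estimate of Lemma~\ref{lemma:conclusion-S_II-individual} and $V_2(\lambda,\mu,D_0,D)$ with the averaged bound~\eqref{eq:maj_V2}. The square-root scale $\Delta_0 \asymp \sqrt{\lambda}$ is forced by the need to balance the loss $b^{\Delta_0}$ arising in $V_1$ against the gain $b^{-\varepsilon_b\Delta_0}$ produced by the last term of~\eqref{eq:maj_V2} (which ultimately originates from the $L^1$-mean of $G_{\delta'-\rho_2-\rho-1}^{1/2}$ appearing in~\eqref{eq:def_T10} and controlled via Lemma~\ref{lemma:sobolev-gallagher-combined-with-holder}).

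For $V_1$, the coprimality conditions built into $\mathcal{A}_{D_0}$ ensure that the hypothesis $(b^2-1)b^\lambda h \not\equiv 0 \pmod d$ of Lemma~\ref{lemma:conclusion-S_II-individual} is met; using $\log d \leq \Delta_0 \log b$ in its conclusion and the trivial estimate~\eqref{eq:def_V_1}, I will obtain
\[
V_1 \ll b^{\Delta_0 + \lambda(1-C)} + \lambda^{\omega(b)/2}\, b^{\Delta_0 + \lambda - \Upsilon_b\beta_1\sqrt{\lambda}/(2 c_0 \log b)}.
\]
Choosing $c_0$ small enough that $\Upsilon_b\beta_1/(2 c_0 \log b) > 2 c_0$ makes both summands $\ll b^{\lambda - c'\sqrt{\lambda}}$ for some $c'=c'(b,\beta_1)>0$.

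For $V_2$, I will take $\rho = \rho' = \rho_2 = \lfloor c_1 \lambda\rfloor$ and $\delta_0 = \lfloor c_2 \lambda\rfloor$ with positive constants $c_1, c_2$ to be chosen, so that $\mu_2 = \mu + 2\rho$ satisfies $\mu_2/\lambda \in [\beta_1 + 2c_1,\, \beta_2 + 2c_1] + o(1)$. With these choices each of the five terms in~\eqref{eq:maj_V2} is of the form $\lambda^{O(1)} \, b^\lambda$ times an exponential in $\lambda$, and requiring each such exponential to be $\ll b^{-c''\lambda}$ translates, taking the respective worst cases $\mu = \beta_2\lambda$ or $\mu=\beta_1\lambda$, into the inequalities
$c_1 > 2\xi$ (term~1),
$(\beta_2 + 2c_1)(1+2\eta_b) + 2\xi < 1$ (term~2),
$c_1(1-2u_b) < u_b\beta_1 - \xi$ (term~3),
and, combining term~4 with the carry-propagation constraint~\eqref{eq:condition-kappa-Delta} (which for the choice above enforces $c_2 \geq 5c_1 + 2(\kappa_b+1)\xi$),
$3 u_b c_1 < u_b\beta_1 - \xi(1 + 2(\kappa_b+1)u_b)$.
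Term~5 gives $\tau(b^{\mu_2})^{1/2}\mu_2\, b^{\lambda - \varepsilon_b\Delta_0} \ll b^{\lambda - c'\sqrt{\lambda}}$ thanks to our choice of $\Delta_0$. A valid $c_1$ (and hence $c_2$) exists precisely when $2\xi$ is strictly less than each of the three upper bounds on $c_1$ coming from terms~2,~3,~and~4. Unwinding these three inequalities gives exactly the three quantities in the definition~\eqref{eq:def_C_b,beta1,beta2} of $C'(b,\beta_1,\beta_2)$, using~\eqref{eq:formula_iota_u_kappa} to rewrite the term-4 condition $\xi\bigl(2(\kappa_b+4) + 1/u_b\bigr) < \beta_1$ in the stated form $\xi < \iota_b \beta_1/(1+6\iota_b)$.

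The main obstacle will be the tension between the carry-propagation constraint~\eqref{eq:condition-kappa-Delta}, which pushes $\delta_0$ upward relative to $\Delta$ and the $\rho_i$'s, and the term-4 bound $u_b(\mu_2 - \delta_0) > \Delta$, which pushes $\delta_0$ downward relative to $\mu_2$. The resulting compatibility requirement, further tightened by the appearance of the Sobolev--Gallagher threshold $\kappa_b$ from~\eqref{eq:def_kappa_b}, produces the third and typically sharpest upper bound $\xi < \iota_b\beta_1/(1+6\iota_b)$ in $C'(b,\beta_1,\beta_2)$. Once admissible $c_0, c_1, c_2$ are fixed, the remaining steps reduce to routine exponent arithmetic, and collecting the estimates for $V_1$ and $V_2$ yields the claimed bound $\ll b^{\lambda - c'\sqrt{\lambda}}$.
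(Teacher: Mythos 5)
Your proposal is correct and follows essentially the same route as the paper: the split at $D_0=b^{\Delta_0}$ with $\Delta_0\asymp\sqrt{\lambda}$, the individual bound of Lemma~\ref{lemma:conclusion-S_II-individual} for $V_1$, the averaged bound \eqref{eq:maj_V2} for $V_2$ with $\rho=\rho'=\rho_2\asymp\lambda$ and $\delta_0$ pinned between the carry constraint \eqref{eq:condition-kappa-Delta} and the term-4 requirement, and the three resulting inequalities on $\xi$ unwinding (via \eqref{eq:formula_iota_u_kappa}) to exactly the three quantities in \eqref{eq:def_C_b,beta1,beta2}. The only cosmetic difference is that the paper parametrizes the optimization by solving the balance equations $\frac{x}{2}=f_{\lambda,\mu}(x)$, $\frac{x}{2}=g_\mu(x)$, $\frac{x}{2}=h_\mu(x)$ and taking $\rho=\lfloor\min(x_1,x_2,x_3)\rfloor$, which is equivalent to your choice of a fixed $c_1$ with $c_1>2\xi$.
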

\begin{proof}
  Let $D\geq 1$ and let $\Delta$ be the unique integer such that
  $b^{\Delta-1}\leq D < b^\Delta$, according
  to~\eqref{eq:definition-Delta}.  Let also $\Delta_0 < \Delta$ be an
  integer and $D_0= b^{\Delta_0}$.  By~\eqref{eq:decomp_sum_W_SII}, it
  suffices to bound $V_1(\lambda,\mu,D_0)$ and
  $V_2(\lambda,\mu,D_0,D)$
  and then choose
  $\Delta_0$ so that both upper bounds are admissible under the
  assumptions of the lemma.

  Let us first bound $V_2(\lambda,\mu,D_0,D)$.
  Since \eqref{eq:maj_V2} holds for any positive integers
  $\rho,\rho',\rho_2,\mu_2,\delta_0$ satisfying
  \eqref{eq:condition-rho-rho'}, \eqref{eq:definition-mu2},
  \eqref{eq:condition-delta0}, \eqref{eq:definition-rho2} and
  \eqref{eq:condition-kappa-Delta}, we intend to make a suitable
  choice of the parameters $\rho,\rho',\rho_2,\mu_2,\delta_0$.  If
  $\rho$ and $\delta_0$ are such that
  \begin{equation}\label{eq:cond_rho_delta_0}
    \rho < \tfrac{\lambda-\mu}2
    \quad \text{and} \quad
    (\kappa_b+1)2\Delta + 5\rho \leq \delta_0 < \mu + 2\rho
    ,
  \end{equation}
  then \eqref{eq:condition-rho-rho'}, \eqref{eq:definition-mu2},
  \eqref{eq:condition-delta0}, \eqref{eq:definition-rho2} and
  \eqref{eq:condition-kappa-Delta} are satisfied with
  \begin{displaymath}
    \rho'=\rho =\rho_2, \qquad \mu_2 = \mu+2\rho,
  \end{displaymath}
  hence, since by \eqref{eq:tau-b-lambda},
  $\tau(b^{\mu_2})\leq \tau(b) \mu_2^{\omega(b)} \leq \tau(b)
  \lambda^{\omega(b)}$, we get
  \begin{multline*}
    V_2(\lambda,\mu,D_0,D)
    \ll
    \lambda^{\frac{1+\omega(b)}2}
    b^{\lambda+\Delta-\frac{\rho}{2}}
    +
    \lambda^{\frac12}
    b^{\frac{\lambda}{2}+\Delta+\frac{1+2\eta_b}2(\mu+2\rho)}
    \\
    +
    \lambda^{\frac{1+\omega(b)}2}
    b^{\lambda+\Delta+\rho-(\mu+2\rho)u_b}
    \\
    +
    \lambda^{\omega(b)}
    b^{\lambda+\Delta- (\mu+2\rho-\delta_0)u_b}
    +
    \lambda^{\frac{\omega(b)}2+1}
    b^{\lambda-\varepsilon_b\Delta_0}
  \end{multline*}
  where $u_b$ is defined by~\eqref{eq:def_u_b}.  In order to make a
  good choice of the parameter $\delta_0$, we define
  \begin{math}
    x(b,\rho,\mu) 
  \end{math}
  as the solution $x$ of the equation
  \begin{displaymath}
    \frac{x- 5\rho}{2(\kappa_b+1)}
    =
    (\mu+2\rho-x)u_b
    ,
  \end{displaymath}
  namely
  \begin{displaymath}
    x
    =
    \mu+2\rho
    - \frac{\mu-3\rho}{1+2(\kappa_b+1)u_b}
  \end{displaymath}
  which is equivalent to
  \begin{displaymath}
    \frac{x- 5\rho}{2(\kappa_b+1)}
    =
    (\mu-3\rho)\iota_b
  \end{displaymath}
  with $\iota_b$ defined by~\eqref{eq:def_iota_b}.
  If $\rho$ satisfies
  \begin{equation}\label{eq:cond_rho}
    \rho < \tfrac{\lambda-\mu}2,
    \quad
    \Delta \leq (\mu-3\rho)\iota_b
    \quad \text{and} \quad
    \mu-3\rho \geq 1 + 2(\kappa_b+1)u_b,
  \end{equation}
  then it follows from the definition of $x(b,\rho,\mu)$ that
  \eqref{eq:cond_rho_delta_0} is satisfied with
  $\delta_0 = \ceil{x(b,\rho,\mu)}$, hence
  \begin{multline}\label{eq:maj_V2_after_choice_delta_0}
    V_2(\lambda,\mu,D_0,D)
    \ll
    \lambda^{\frac{1+\omega(b)}2}
    b^{\lambda+\Delta-\frac{\rho}{2}}
    +
    \lambda^{\frac12}
    b^{\frac{\lambda}{2}+\Delta+\frac{1+2\eta_b}2(\mu+2\rho)}
    \\
    \qquad
    +
    \lambda^{\frac{1+\omega(b)}2}
    b^{\lambda+\Delta+\rho-(\mu+2\rho)u_b}
    \\
    +
    \lambda^{\omega(b)}
    b^{\lambda+\Delta- (\mu-3\rho)\iota_b}
    +
    \lambda^{\frac{\omega(b)}2+1}
    b^{\lambda-\varepsilon_b\Delta_0}.
  \end{multline}
  In order to make a good choice of the parameter $\rho$, we introduce
  the functions
  \begin{displaymath}
    f_{\lambda,\mu}(x) = \frac{\lambda}2 - \frac{1+2\eta_b}2(\mu+2x),
    \quad
    g_{\mu}(x) = (\mu+2x)u_b-x,
    \quad
    h_{\mu}(x) = (\mu-3x)\iota_b
  \end{displaymath}
  and
  \begin{displaymath}
    M_{\lambda,\mu}(x)
    =
    \min\left(
      \frac{x}2,\, f_{\lambda,\mu}(x),\, g_{\mu}(x),\, h_{\mu}(x)
    \right)
    .
  \end{displaymath}
  If we assume that $\rho$ satisfies \eqref{eq:cond_rho} then the
  bound~\eqref{eq:maj_V2_after_choice_delta_0} implies that
  \begin{displaymath}
    V_2(\lambda,\mu,D_0,D)
    \ll
    \lambda^{\omega(b)}
    b^{\lambda + \Delta - M_{\lambda,\mu}(\rho)}
    +
    \lambda^{\frac{\omega(b)}2+1}
    b^{\lambda-\varepsilon_b\Delta_0}.
  \end{displaymath}
  Let us define $x_1$, $x_2$ and $x_3$ by the equalities
  \begin{displaymath}
    \frac{x_1}2 = f_{\lambda,\mu}(x_1),
    \quad
    \frac{x_2}2 = g_{\mu}(x_2),
    \quad
    \frac{x_3}2 = h_{\mu}(x_3),
  \end{displaymath}
  namely
  \begin{displaymath}
    x_1 = \frac{1}{3+4\eta_b}\lambda - \frac{1+2\eta_b}{3+4\eta_b}\mu,
    \quad
    x_2 = \frac{2u_b}{3-4u_b} \mu,
    \quad
    x_3 = \frac{2\iota_b}{1+6\iota_b}\mu,
  \end{displaymath}
  and let
  \begin{displaymath}
    x_0 = \min(x_1,x_2,x_3).
  \end{displaymath}
  Since $f_{\lambda,\mu}$, $g_{\mu}$
  and $h_{\mu}$ are decreasing, the function
  \begin{math}
    x\mapsto M_{\lambda,\mu}(x)
  \end{math}
  coincides with $x\mapsto \frac{x}2$ (thus increases) on
  $\left]-\infty,x_0\right]$ and coincides with
  $x\mapsto \min\left(f_{\lambda,\mu}(x),g_{\mu}(x),h_{\mu}(x)\right)$
  (thus decreases) on $\left[x_0,+\infty\right[$. In particular, we
  have
  \begin{displaymath}
    h_{\mu}(\floor{x_0})\geq h_{\mu}(x_0)\geq M_{\lambda,\mu}(x_0)
    = \frac{x_0}{2}
    \qquad \text{and} \qquad 
    M_{\lambda,\mu}(\floor{x_0})
    =\frac{\floor{x_0}}{2}.
  \end{displaymath}
  It follows that if
  \begin{equation}\label{eq:conditions_F_lambda_mu}
    x_0 < \tfrac{\lambda-\mu}2,
    \quad
    \Delta < \frac{x_0}{2}
    \quad \text{and} \quad
    \mu-3x_0 \geq 1 + 2(\kappa_b+1)u_b,
  \end{equation}
  then \eqref{eq:cond_rho} is satisfied with
  \begin{math}
    \rho = \floor{x_0},
  \end{math}
  hence
  \begin{displaymath}
    V_2(\lambda,\mu,D_0,D)
    \ll
    \lambda^{\omega(b)}
    b^{\lambda + \Delta - \frac{x_0}{2}}
    +
    \lambda^{\frac{\omega(b)}2+1}
    b^{\lambda-\varepsilon_b\Delta_0}.
  \end{displaymath}
  Since 
  \begin{math}
    x_0 \leq x_1 < \frac{\lambda-\mu}{3},
  \end{math}
  the first condition in \eqref{eq:conditions_F_lambda_mu} is true.
  Moreover, since
  \begin{math}
    x_0 \leq x_3,
  \end{math}
  we have
  \begin{math}
    \mu-3x_0
    \geq 
    \frac{1}{1+6\iota_b}\mu,
  \end{math}
  so if we assume that $\mu\geq (1 + 2(\kappa_b+1)u_b)(1+6\iota_b)$
  then the third condition in \eqref{eq:conditions_F_lambda_mu} is
  true. Also, the definition~\eqref{eq:def_C_b,beta1,beta2} of
  $C'(b,\beta_1,\beta_2)$ ensures that
  \begin{displaymath}
    \beta_1\lambda\leq \mu \leq \beta_2\lambda
    \quad
    \Rightarrow
    \quad
    \frac{x_0}2 \geq C'(b,\beta_1,\beta_2) \lambda.
  \end{displaymath}
  We deduce that if
  \begin{displaymath}
    \beta_1\lambda\leq \mu \leq \beta_2\lambda,
    \ \
    \Delta < C'(b,\beta_1,\beta_2) \lambda
    \quad\text{and}\quad
    \lambda \geq (1 + 2(\kappa_b+1)u_b)(1+6\iota_b)\beta_1^{-1},
  \end{displaymath}
  then \eqref{eq:conditions_F_lambda_mu} is satisfied, hence
  \begin{equation}\label{eq:bound_V_2}
    V_2(\lambda,\mu,D_0,D)
    \ll
    \lambda^{\omega(b)}
    b^{\lambda + \Delta - C'(b,\beta_1,\beta_2)\lambda}
    +
    \lambda^{\frac{\omega(b)}2+1}
    b^{\lambda-\varepsilon_b\Delta_0}.
  \end{equation}
  
  Let us now bound $V_1(\lambda,\mu,D_0)$.  For
  $\alpha = \frac{h}d \in \mathcal{A}_{D_0}$, we have
  \begin{math}
    (b^2-1) b^\lambda h \not\equiv 0 \bmod d
  \end{math}
  and $d \leq D_0 = b^{\Delta_0}$. It follows from \eqref{eq:def_V_1}
  and Lemma~\ref{lemma:conclusion-S_II-individual} that if
  $\beta_1\lambda\leq \mu \leq \beta_2\lambda$ then
  \begin{equation}\label{eq:bound_V_1}
    V_1(\lambda,\mu,D_0) 
    \ll
    b^{\Delta_0}
    \left( b^{(1-C(b,\beta_1,\beta_2))\lambda}
      +
      \lambda^{\frac{\omega(b)}2}
      b^{\left(1-\frac{\Upsilon_b\beta_1}{2\Delta_0 \log b}\right)\lambda}
    \right)
  \end{equation}
  for some $C(b,\beta_1,\beta_2)\in\left]0,1\right[$.

  Let 
  \begin{math}
    \xi \in \left]0,C'(b,\beta_1,\beta_2)\right[.
  \end{math}
  We choose $D = b^{\xi \lambda}$ and
  $D_0 = b^{\floor{c_0\sqrt{\lambda}}}$ with $c_0>0$ defined by the equality
  \begin{displaymath}
    c_0 - \frac{\Upsilon_b\beta_1}{2c_0\log b} = - \varepsilon_b c_0,
  \end{displaymath}
  namely
  \begin{displaymath}
    c_0 = \left(\frac{\Upsilon_b\beta_1}{2(1+\varepsilon_b)\log b}\right)^{1/2}.
  \end{displaymath}
  Therefore $\Delta = \floor{\xi \lambda} +1$ and
  $\Delta_0 = \floor{c_0\sqrt{\lambda}}$.  By
  \eqref{eq:decomp_sum_W_SII}, \eqref{eq:bound_V_2} and
  \eqref{eq:bound_V_1}, if
  $\lambda \geq \lambda_1(b,\beta_1,\beta_2,\xi)$ is large enough and
  $\beta_1\lambda\leq \mu \leq \beta_2\lambda$ then
  \begin{multline*}
    \sum_{\alpha\in \mathcal{A}_D}
    W(\alpha)
    S_{II}(\alpha,\lambda,\mu)
    \ll 
    b^{\lambda}
    \Big(
      b^{c_0\sqrt{\lambda}-C(b,\beta_1,\beta_2)\lambda}
      +
      \lambda^{\frac{\omega(b)}2}
      b^{c_0\sqrt{\lambda}-\frac{\Upsilon_b\beta_1}{2c_0 \log b}\sqrt{\lambda}}
      \\
      +
      \lambda^{\omega(b)}
      b^{(\xi - C'(b,\beta_1,\beta_2))\lambda}
      +
      \lambda^{\frac{\omega(b)}2+1}
      b^{-\varepsilon_b c_0\sqrt{\lambda}}
    \Big)
    .
  \end{multline*}
  Hence, taking for instance  
  \begin{math}
    c'=c'(b,\beta_1)
    = \frac{\varepsilon_bc_0}2
    >0,
  \end{math}
  we obtain that if $\lambda \geq \lambda_2(b,\beta_1,\beta_2,\xi)$
  is large enough and $\beta_1\lambda\leq \mu \leq \beta_2\lambda$
  then
  \begin{displaymath}
    \sum_{\alpha\in \mathcal{A}_D}
    W(\alpha)
    S_{II}(\alpha,\lambda,\mu)
    \ll 
    b^{\lambda-c'\sqrt{\lambda}},
  \end{displaymath}
  which ends the proof of the lemma.
\end{proof}

\section{Type I sums}\label{section:type-I-sums}

Our goal is to estimate the sums arising
from~\eqref{eq:maj_gen_type_I} with $\mathcal{A}$ replaced by
$\mathcal{A}_D$ defined by~\eqref{eq:definition-A-d}, $W(\alpha)$
defined by~\eqref{eq:def-W} and $f(\alpha,n)$ replaced by
$\e(\alpha R_{\lambda}(n))$.

Let $\lambda\geq 2$, $\mu \geq 1$ and $\nu\geq 1$ be integers
satisfying \eqref{eq:definition-lambda}, namely
\begin{displaymath}
  \lambda = \mu + \nu
  .
\end{displaymath}

For $I_\mu$ defined by \eqref{eq:definition-I-lambda},
we consider
\begin{equation}\label{eq:definition-SI-sup}
  S_I(\alpha,\lambda,\mu)
  =
  \sup_{t\in\left[b^{\lambda-1},b^{\lambda}\right]}
  \sum_{m\in I_\mu}
  \abs{
    \sum_{\substack{n\\ b^{\lambda-1} \leq mn < t}}
    \e(\alpha R_{\lambda}(mn))
  }
  .
\end{equation}

We will obtain in Lemma~\ref{lemma:conclusion-S_I-individual} an
individual upper bound of $S_I(\alpha,\lambda,\mu)$ for specific
values of $\alpha$ and in Lemma~\ref{lemma:conclusion-S_I-sum-on-alpha},
an upper bound of the sum
\begin{displaymath}
  \sum_{\alpha\in\A_D} W(\alpha) S_{I}(\alpha,\lambda,\mu)
\end{displaymath}
under the condition that
$D=b^{\xi\lambda}$ for some $\xi>0$ and
\begin{equation}\label{eq:condition-mu-nu-typeI}
  \mu \leq \beta_1 \lambda
\end{equation}
for some real number $\beta_1$ such that $0<\beta_1<1$.

In a first step we transform the sums over $n$ into complete sums
involving the discrete Fourier transform $\mathcal{F}_\lambda$.  Then
the next two subsections are devoted to prove
Lemma~\ref{lemma:conclusion-S_I-individual} and
Lemma~\ref{lemma:conclusion-S_I-sum-on-alpha} respectively.

For $m\in I_\mu$, we write
\begin{align*}
  &
    \sum_{\substack{n\\ b^{\lambda-1} \leq mn < t}} \e(\alpha R_{\lambda}(mn))
  \\
  &=
    \sum_{b^{\lambda-1} \leq \ell < t} \e(\alpha R_{\lambda}(\ell))
    \frac{1}{m}
    \sum_{0\leq k<m} \e\left(\frac{-k\ell}{m}\right)
  \\
  &=
    \frac{1}{m}
    \sum_{0\leq k<m}
    \left(
    \sum_{0\leq \ell <t} 
    \e\left(\alpha R_{\lambda}(\ell)-\frac{k\ell}{m}\right)
    -
    \sum_{0\leq \ell <  b^{\lambda-1}} 
    \e\left(\alpha R_{\lambda}(\ell)-\frac{k\ell}{m}\right)
    \right)
    ,
\end{align*}
so that,
since
\begin{math}
  b^{\lambda-1} \leq t \leq b^{\lambda}
  ,
\end{math}
by Lemma~\ref{lemma:b-adic-splitting-of-type-I-sums}
we have
\begin{displaymath}
  \abs{\sum_{\substack{n\\ b^{\lambda-1} \leq mn < t}} \e(\alpha R_{\lambda}(mn))}
  \leq
  \frac{1}{m}  \sum_{0\leq k<m}
  2(b-1) \sum_{0\leq \lambda'\leq \lambda}
  b^{\lambda'}
  \abs{
    \mathcal{F}_{\lambda'}\left(\alpha b^{\lambda-\lambda'},\frac{k}{m}\right)
  }
  .
\end{displaymath}
By Lemma \ref{lemma:sum-function-fractions-gcd}, we deduce that
\begin{equation}\label{eq:intermediate-S_I-individual}
  S_I(\alpha,\lambda,\mu)
  \ll
    \sum_{m'<b^{\mu}}
    \frac{1}{m'}
    \sum_{\substack{0\leq k'<m'\\\gcd(k',m')=1}}
  \sum_{0\leq \lambda'\leq \lambda}
  b^{\lambda'}
  \abs{
  \mathcal{F}_{\lambda'}\left(\alpha b^{\lambda-\lambda'},\frac{k'}{m'}\right)
  }
  .
\end{equation}

\medskip
\subsection{An individual bound for $S_{I}(\alpha,\lambda,\mu)$ for $\alpha =h/d$
  with $d$ small}~

In this part we assume that $\alpha =h/d$, with $1 \leq h < d$ and
\begin{math}
  (b^2-1) b^\lambda h \not\equiv 0 \bmod d
  .
\end{math}

By splitting $b$-adically the sum over $m'$, we get
\begin{equation}\label{eq:maj_SI_alpha_M1}
  S_{I}(\alpha,\lambda,\mu)
  \ll
  \sum_{0\leq v \leq \mu}
  \sum_{0\leq \lambda'\leq \lambda}
  b^{\lambda'}
  \frac{1}{b^v}
  M_1\left(\alpha b^{\lambda-\lambda'},\lambda',v\right)
  ,
\end{equation}
where
\begin{displaymath}
  M_1(\alpha',\lambda',v)
  =
  \sum_{b^{v-1}<m\leq b^{v}}
  \sum_{\substack{0\leq k<m\\\gcd(k,m)=1}}
  \abs{
    \mathcal{F}_{\lambda'}\left(\alpha',\frac{k}{m}\right)
  }
  .
\end{displaymath}
We split the right-hand side of \eqref{eq:maj_SI_alpha_M1} as
$C_1(\alpha)+C_2(\alpha)$ where, in $C_1(\alpha)$, $2v \geq \lambda'$
and in $C_2(\alpha)$, $2v<\lambda'$.
In $C_1 (\alpha)$, since the sums on $k$ and $m$ are long, it will be
sufficient to apply Lemma~\ref{lemma:L1-mean-value-in-alpha} to take
advantage of the $L^1$ mean value associated to
$\mathcal{F}_{\lambda'}$.  For $C_2 (\alpha)$, the sums on $k$ and $m$
are short.  Using the product formula~\eqref{eq:FT-splitting-product},
we will combine Lemma~\ref{lemma:L1-mean-value-in-alpha} with the
individual bound given in
Lemma~\ref{lemma:uniform-upperbound-of-F-for-denominator-d}, which can
successfully be applied since the denominators of the fractions
$\alpha$ are small.

If $2v \geq \lambda'$ then, since the fractions $\frac{k}{m}$ are $b^{-2v}$
well spaced modulo $1$, it follows
from Lemma~\ref{lemma:L1-mean-value-in-alpha} that
\begin{displaymath}
  M_1\left(\alpha b^{\lambda-\lambda'},\lambda',v\right)
  \ll
  b^{2v}  b^{(\eta_b-1)\lambda'},
\end{displaymath}
hence
\begin{displaymath}
  C_1(\alpha)
  \ll
  \sum_{0\leq v \leq \mu}
  \sum_{0\leq \lambda'\leq 2v}
  b^{\lambda'}
  \frac{1}{b^v}
  b^{2v} b^{(\eta_b-1)\lambda'}
  \ll
  \sum_{0\leq v \leq \mu}
  b^{v + 2\eta_b v}
  \ll
  b^{(1+2\eta_b)\mu}.
\end{displaymath}

If $2v < \lambda'$ then by~\eqref{eq:FT-splitting-product}, we have
for any $\vartheta \in \R$,
\begin{displaymath}
  \abs{\mathcal{F}_{\lambda'}\left(\alpha b^{\lambda-\lambda'},\vartheta\right)}
  =
  \abs{\mathcal{F}_{2v}\left(\alpha b^{\lambda-2v},\vartheta\right)}
  \abs{\mathcal{F}_{\lambda'-2v}\left(\alpha b^{\lambda-\lambda'},\vartheta b^{2v}\right)}.
\end{displaymath}
For $2v+2 \leq \lambda' \leq \lambda$, since
\begin{math}
  b^{\lambda'-2v-2}(b^2-1)hb^{\lambda-\lambda'} = b^{\lambda-2-2v}(b^2-1)h \not\equiv 0 \bmod d,
\end{math}
we obtain by Lemma~\ref{lemma:uniform-upperbound-of-F-for-denominator-d}:
\begin{displaymath}
  \abs{
    \mathcal{F}_{\lambda'-2v}\left(\frac{h}{d} b^{\lambda-\lambda'},\vartheta b^{2v}\right)
  }
  \ll
  b^{-\Upsilon_b \frac{\lambda'-2v}{\log d}}
  ,
\end{displaymath}
where $\Upsilon_b$ is defined by \eqref{eq:definition-Upsilon_b}.
Since $\abs{\mathcal{F}_1(\cdot,\cdot)}\leq 1$, the inequality above
is also true for $\lambda'=2v+1$.  Denoting
\begin{equation}\label{eq:definition-Upsilon'_b}
  d'
  =
  \max\left( d, \Upsilon'_b \right)
  \quad
  \text{with}
  \quad
  \Upsilon'_b
  =
  \exp\left(\frac{4\Upsilon_b}{1-2\eta_b}\right)
  ,
\end{equation}
we deduce that for $2v < \lambda'$,
\begin{displaymath}
  \abs{
    \mathcal{F}_{\lambda'-2v}\left(\frac{h}{d} b^{\lambda-\lambda'},\vartheta b^{2v}\right)
  }
  \ll
  b^{-\Upsilon_b \frac{\lambda'-2v}{\log d'}}.
\end{displaymath}
Moreover, by
Lemma~\ref{lemma:L1-mean-value-in-alpha}, we have
\begin{equation}\label{eq:SG_m_I}
  \sum_{b^{v-1}<m\leq b^{v}}
  \sum_{\substack{0\leq k<m\\\gcd(k,m)=1}}
  \abs{\mathcal{F}_{2v}\left(\alpha b^{\lambda-2v},\frac{k}{m}\right)}
  \ll
  b^{2v\eta_b}.
\end{equation}
It follows that if $2v < \lambda'$ then
\begin{displaymath}
  M_1(\alpha b^{\lambda-\lambda'},\lambda',v)
  \ll
  b^{-\Upsilon_b \frac{\lambda'-2v}{\log d'}}
  b^{2v\eta_b}
  =
  b^{-\Upsilon_b \frac{\lambda'}{\log d'}}
  \,
  b^{\left(\frac{2\Upsilon_b}{\log d'}+2\eta_b\right)v}
  ,
\end{displaymath}
hence, since by \eqref{eq:definition-Upsilon'_b}
we have
\begin{math}
  1-2\eta_b-\frac{2\Upsilon_b}{\log d'} \geq \frac{1-2\eta_b}2
\end{math}
and
\begin{math}
  1-\frac{\Upsilon_b}{\log d'}
  \geq
  1-\frac{1-2\eta_b}{4}
  \geq \frac{3}{4},
\end{math}
we get
\begin{displaymath}
  C_2(\alpha)
  \ll
  \sum_{0\leq v \leq \mu}
  \sum_{2v< \lambda'\leq \lambda}
  b^{\left(1-\frac{\Upsilon_b}{\log d'}\right)\lambda'}
  \,
  b^{-\left(1-2\eta_b-\frac{2\Upsilon_b}{\log d'}\right)v}
  \ll
  b^{\left(1-\frac{\Upsilon_b}{\log d'}\right)\lambda}
  .
\end{displaymath}
Combining the previous upper bound for $C_1(\alpha)$ and
$C_2(\alpha)$, we conclude that
\begin{equation}\label{eq:maj_S_I_ind}
  S_{I}(\alpha,\lambda,\mu)
  \ll
   b^{(1+2\eta_b)\mu} + b^{\left(1-\frac{\Upsilon_b}{\log d'}\right)\lambda}.
\end{equation}
This permits us to formulate the following result.
\begin{lemma}\label{lemma:conclusion-S_I-individual}
  Let $0<\beta_1 < (1+2\eta_b)^{-1}$. If
  $1\leq \mu \leq \beta_1\lambda$ then for any
  $\alpha =h/d$ with
  \begin{math}
    (b^2-1) b^\lambda h \not\equiv 0 \bmod d,
  \end{math}
  we have 
  \begin{equation}\label{eq:conclusion-S_I-individual}
    S_{I}(\alpha,\lambda,\mu)
    \ll
    b^{\beta_1(1+2\eta_b)\lambda}
    +
    b^{\left(1-\frac{\Upsilon_b}{\log \max(d,\Upsilon'_b)}\right)\lambda}
  \end{equation}
  where
  $\Upsilon'_b$ is defined by \eqref{eq:definition-Upsilon'_b}.
\end{lemma}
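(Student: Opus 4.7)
The plan is to detect the divisibility $m\mid\ell$ by the orthogonality relation $\mathbf{1}_{m\mid\ell} = \frac1m\sum_{0\leq k<m}\e(-k\ell/m)$, so that the inner sum in the definition \eqref{eq:definition-SI-sup} of $S_I(\alpha,\lambda,\mu)$ becomes a difference of two incomplete sums in $\ell$ of the shape $\sum_{0\leq\ell<t}\e(\alpha R_\lambda(\ell)-k\ell/m)$. I would then apply Lemma~\ref{lemma:b-adic-splitting-of-type-I-sums} to bound each incomplete sum by $\sum_{\lambda'\leq\lambda} b^{\lambda'}|\mathcal{F}_{\lambda'}(\alpha b^{\lambda-\lambda'},k/m)|$, and use Lemma~\ref{lemma:sum-function-fractions-gcd} to convert the $1/m$-weighted sum over $(m,k)$ into a sum over \emph{reduced} fractions $k'/m'$ with $m'<b^\mu$. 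A $b$-adic decomposition in the denominator $m'\asymp b^v$ then reduces the whole estimate to controlling
\begin{displaymath}
  M_1\bigl(\alpha b^{\lambda-\lambda'},\lambda',v\bigr)
  = \sum_{b^{v-1}<m\leq b^v}\sum_{\substack{0\leq k<m\\ \gcd(k,m)=1}}\Bigl|\mathcal{F}_{\lambda'}\bigl(\alpha b^{\lambda-\lambda'},\tfrac{k}{m}\bigr)\Bigr|
\end{displaymath}
summed over $(v,\lambda')$ with weight $b^{\lambda'-v}$.

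I would then split the $(v,\lambda')$-sum at the threshold $2v=\lambda'$. In the regime $2v\geq\lambda'$, the reduced fractions $k'/m'$ are $b^{-2v}$-well-spaced modulo $1$, so Lemma~\ref{lemma:L1-mean-value-in-alpha} combined with the $L^1$ bound of Lemma~\ref{lemma:L1-norm-in-alpha-of-F} yields $M_1\ll b^{2v}b^{(\eta_b-1)\lambda'}$; summing on $\lambda'\leq 2v$ and $v\leq\mu$ produces the geometric contribution $b^{(1+2\eta_b)\mu}$, which under the hypothesis $\mu\leq\beta_1\lambda$ becomes the first term $b^{\beta_1(1+2\eta_b)\lambda}$ in \eqref{eq:conclusion-S_I-individual}. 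In the regime $2v<\lambda'$, the sum over $k'/m'$ is too short for Sobolev-Gallagher alone to give decay in $\lambda'$, so I would apply the splitting formula Lemma~\ref{FT-splitting-product} to factor
\begin{displaymath}
  \mathcal{F}_{\lambda'}\bigl(\alpha b^{\lambda-\lambda'},\tfrac{k}{m}\bigr)
  = \mathcal{F}_{2v}\bigl(\alpha b^{\lambda-2v},\tfrac{k}{m}\bigr)\cdot \mathcal{F}_{\lambda'-2v}\bigl(\alpha b^{\lambda-\lambda'},\tfrac{k}{m}b^{2v}\bigr).
\end{displaymath}
The first factor is handled exactly as above, contributing $b^{2v\eta_b}$. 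For the second, the arithmetic hypothesis $(b^2-1)b^\lambda h\not\equiv 0\bmod d$ translates, for $\alpha=h/d$, into $b^{\lambda'-2v-2}(b^2-1)h\,b^{\lambda-\lambda'}=b^{\lambda-2-2v}(b^2-1)h\not\equiv 0\bmod d$, so Lemma~\ref{lemma:uniform-upperbound-of-F-for-denominator-d} delivers the uniform decay $b^{-\Upsilon_b(\lambda'-2v)/\log d}$ with $\Upsilon_b$ from \eqref{eq:definition-Upsilon_b}.

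The main delicate point, which is where $\Upsilon'_b$ enters, is the calibration when $d$ is very small. The lemma just cited comes with a $-3/4$ correction in the exponent and requires $\lambda'-2v\geq 2$; replacing $d$ by $d'=\max(d,\Upsilon'_b)$ with $\Upsilon'_b$ as in \eqref{eq:definition-Upsilon'_b} simultaneously (i) absorbs this correction, (ii) ensures the trivial bound $|\mathcal{F}_1|\leq 1$ suffices for $\lambda'=2v+1$, and (iii) guarantees that $1-2\eta_b-2\Upsilon_b/\log d'\geq \tfrac{1-2\eta_b}{2}>0$, so that the resulting geometric sum $\sum_v b^{-(1-2\eta_b-2\Upsilon_b/\log d')v}$ converges uniformly in $\lambda$. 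Putting the two factors together gives $M_1\ll b^{-\Upsilon_b\lambda'/\log d'}b^{(2\Upsilon_b/\log d'+2\eta_b)v}$, and summing first on $v$ and then on $\lambda'\leq\lambda$ yields the second term $b^{(1-\Upsilon_b/\log d')\lambda}$ of \eqref{eq:conclusion-S_I-individual}. Combining the two regimes and recalling $d'=\max(d,\Upsilon'_b)$ completes the proof.
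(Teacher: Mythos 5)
Your proposal is correct and follows essentially the same route as the paper: orthogonality to detect $m\mid\ell$, Lemma~\ref{lemma:b-adic-splitting-of-type-I-sums} and Lemma~\ref{lemma:sum-function-fractions-gcd} to reduce to the sums $M_1$, the split at $2v=\lambda'$ with Lemma~\ref{lemma:L1-mean-value-in-alpha} in the long-sum regime, and the splitting product formula combined with Lemma~\ref{lemma:uniform-upperbound-of-F-for-denominator-d} (with $d'=\max(d,\Upsilon'_b)$ to absorb the exponent correction and guarantee convergence of the geometric sum over $v$) in the short-sum regime. This matches the paper's argument in Section~\ref{section:type-I-sums} leading to \eqref{eq:maj_S_I_ind}.
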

\begin{proof}
  This immediately follows from~\eqref{eq:maj_S_I_ind}.
\end{proof}

\medskip
\subsection{Bound of $S_{I}(\alpha,\lambda,\mu)$ on average over $\alpha$}~

Let $\Delta \geq 1$ be an integer.
Summing  \eqref{eq:intermediate-S_I-individual}
on the $\alpha=h/d$
for admissible pairs $(h,d)$
with $1\le h < d\le b^\Delta$, we have
\begin{multline*}
  \sum_{\substack{d\leq b^\Delta\\ \gcd(d,b(b^2-1))=1}}
  \frac{1}{d}
  \sum_{\substack{1\leq h < d\\ \gcd(h,d)=1}}
  S_{I}\left( \frac{h}{d},\lambda,\mu\right)
  \\
  \ll
  \sum_{0\leq \lambda'\leq \lambda}
  b^{\lambda'}
  \hspace{-1.5em}
  \sum_{\substack{d\leq b^\Delta\\ \gcd(d,b(b^2-1))=1}}
  \frac{1}{d}
  \sum_{\substack{1\leq h < d\\ \gcd(h,d)=1}}
  \sum_{m'<b^{\mu}}
  \frac{1}{m'}
  \sum_{\substack{0\leq k'<m'\\\gcd(k',m')=1}}
  \abs{
    \mathcal{F}_{\lambda'}\left(
      \frac{h}{d} b^{\lambda-\lambda'},\frac{k'}{m'}
    \right)
  }
  \\
  =
  \sum_{0\leq \lambda'\leq \lambda}
  b^{\lambda'}
  \sum_{\substack{d\leq b^\Delta\\ \gcd(d,b(b^2-1))=1}}
  \frac{1}{d}
  \sum_{\substack{1\leq h < d\\ \gcd(h,d)=1}}
  \sum_{m<b^{\mu}}
  \frac{1}{m}
  \sum_{\substack{0\leq k<m\\\gcd(k,m)=1}}
  \abs{
    \mathcal{F}_{\lambda'}\left(\frac{h}{d},\frac{k}{m}\right)
  }
  .
\end{multline*}
By splitting $b$-adically the sums over $d$ and $m$
we get
\begin{multline}\label{eq:maj_typeI_ini}
  \sum_{\substack{d\leq b^\Delta\\ \gcd(d,b(b^2-1))=1}}
  \frac{1}{d}
  \sum_{\substack{1\leq h < d\\ \gcd(h,d)=1}}
  S_{I}\left( \frac{h}{d} ,\lambda,\mu\right)
  \\
  \ll
  \sum_{0\leq u \leq \Delta}
  \sum_{0\leq v \leq \mu}
  \sum_{0\leq \lambda'\leq \lambda}
  b^{\lambda'}
  \frac{1}{b^{u}}
  \frac{1}{b^{v}}
  M_2(\lambda',u,v)
\end{multline}
where
\begin{displaymath}
  M_2(\lambda',u,v)
  =
  \sum_{\substack{b^{u-1}< d\leq b^{u}\\ \gcd(d,b(b^2-1))=1}}
  \sum_{\substack{1\leq h < d\\ \gcd(h,d)=1}}
  \sum_{b^{v-1}<m\leq b^{v}}
  \sum_{\substack{0\leq k<m\\\gcd(k,m)=1}}
  \abs{
    \mathcal{F}_{\lambda'}\left(\frac{h}{d},\frac{k}{m}\right)
  }
  .
\end{displaymath}

We will use several times the following property.
By \eqref{eq:FT-splitting-product},
for any $0 \leq \lambda'' \leq \lambda'$
we have
\begin{displaymath}
  \abs{\mathcal{F}_{\lambda'}\left(\alpha,\frac{k}{m}\right)}
  \leq
  \abs{\mathcal{F}_{\lambda''}\left(\alpha b^{\lambda'-\lambda''},\frac{k}{m}\right)}
\end{displaymath}
and since the fractions $\frac{k}{m}$ are $b^{-2v}$
well spaced modulo $1$,
by Lemma~\ref{lemma:L1-mean-value-in-alpha}
we obtain,
for any $0 \leq \lambda'' \leq \lambda'$
and any $\alpha\in \R$, 
\begin{equation}
  \label{eq:SG_m}
   \sum_{b^{v-1}<m\leq b^{v}}
  \sum_{\substack{0\leq k<m\\\gcd(k,m)=1}}
  \abs{
    \mathcal{F}_{\lambda'}\left(\alpha,\frac{k}{m}\right)
  }
  \ll
  \left(b^{2v} + b^{\lambda''}\right) b^{(\eta_b-1)\lambda''}.
\end{equation}

Let
\begin{displaymath}
  \mathcal{S}
  =
  \{ 0,\ldots,\Delta \}
  \times
  \{ 0,\ldots,\mu \}
  \times
  \{ 0,\ldots,\lambda \}
\end{displaymath}
be the range of summation in the right hand side of
\eqref{eq:maj_typeI_ini}. We have
\begin{displaymath}
  \mathcal{S}
  =
  \mathcal{S}_1
  \cup
  \mathcal{S}_2
  \cup
  \mathcal{S}_3
  \cup
  \mathcal{S}_4
  \cup
  \mathcal{S}_5
\end{displaymath}
with
\begin{align*}
  \mathcal{S}_1
  &=
    \left\{
    (u,v,\lambda') \in \mathcal{S}:\
    \max(u,v) \geq \lambda'/2
    \right\}
  \\
  \mathcal{S}_2
  &=
    \left\{
    (u,v,\lambda') \in \mathcal{S}:\
    \max(u,v) < \lambda'/2
    \text{ and }
    \lambda'\leq \lambda_1
    \right\}
  \\
  \mathcal{S}_3
  &=
    \left\{
    (u,v,\lambda') \in \mathcal{S}:\
     v_0 \leq \max(u,v) < \lambda'/2
    \text{ and } c\cdot \min(u,v) \leq \max(u,v)
    \right\}
  \\
  \mathcal{S}_4
  &=
    \left\{
    (u,v,\lambda') \in \mathcal{S}:\
    \max(u,v) < v_0
    \right\}
  \\
  \mathcal{S}_5
  &=
    \big\{
    (u,v,\lambda') \in \mathcal{S}:\
    v_0 \leq \max(u,v) < \lambda'/2
  \\
  &  \hspace{45mm}
    \text{ and }
    c\cdot \min(u,v) > \max(u,v)
    \text{ and }
    \lambda'\geq \lambda_1
    \big\}
\end{align*}
where $\lambda_1$, $v_0$ and $c\geq 1$
are parameters to be chosen in Section \ref{Conclusion-Type-I}.
Notice that the sets $\mathcal{S}_i$ are not disjoint.

For $i\in\{1,2,3,4,5\}$, let
\begin{displaymath}
  C_i
  =
  \sum_{(u,v,\lambda')\in \mathcal{S}_i}
  b^{\lambda'}
  \frac{1}{b^{u}}
  \frac{1}{b^{v}}
  M_2(\lambda',u,v)
  .
\end{displaymath}
It follows from \eqref{eq:maj_typeI_ini} that
\begin{equation}\label{eq:majoration-by-C_1+C_2+C_3+C_4+C_5}
    \sum_{\substack{d\leq b^\Delta\\ \gcd(d,b(b^2-1))=1}}
  \frac{1}{d}
  \sum_{\substack{1\leq h < d\\ \gcd(h,d)=1}}
  S_{I}\left( \frac{h}{d} ,\lambda,\mu \right)
  \ll
  C_1+C_2+C_3+C_4+C_5
  .
\end{equation}
In $C_1$, $\max (u,v)$ is large and we will
use the bound \eqref{eq:SG_m}.  This formula will also be applied for
$C_2$ devoted to the small $\lambda'$ and for $C_3$ where one of the
parameters $u,v$ is large and the other is very small.  In $C_4$ both
$u$ and $v$ will be very small and we will apply
Lemma~\ref{lemma:uniform-upperbound-of-F-for-denominator-d}.  The most
difficult part is the remaining part $C_5$ where
$\max (u,v)<\lambda'/2$ but with no very small parameter $u$ or
$v$. We will handle this part in a quite similar way as in
\cite{DMRSS_reversible_primes} (which is devoted to the base $2$).
In this range it is necessary to exploit the two sequences of
fractions $h/d$ and $k/m$, by combining
the Cauchy-Schwarz inequality
with the different individual and mean values estimates for the functions
$\mathcal{F}_\lambda$ and $G_\lambda$
obtained in Section~\ref{section:fourier-transform}.

\medskip
\subsubsection{Estimate of $C_1$}~\label{subsection:C1}
\smallskip

If $u\leq v$ then we choose $\lambda'' = \lambda'$ in \eqref{eq:SG_m}
and since $2v \geq \lambda'$, we obtain
\begin{displaymath}
  M_2(\lambda',u,v)
  \ll
  b^{2u} b^{2v} b^{(\eta_b-1)\lambda'}.
\end{displaymath}
By exchanging the roles of the points $\frac{h}{d}$ and $\frac{k}{m}$,
we see that this upper bound remains true if $v<u$.

It follows that 
\begin{align*}
  C_1
  &\ll
    \sum_{0\leq u \leq \Delta}
    \sum_{0\leq v \leq \mu}
     \sum_{0\leq \lambda'\leq 2\max(u,v)}
    b^{\lambda'}
    \frac{1}{b^{u}}
    \frac{1}{b^{v}}
    b^{2u} b^{2v} b^{(\eta_b-1)\lambda'}\\
  &\ll
    \sum_{0\leq u \leq \Delta}
    \sum_{0\leq v \leq \mu}
    b^{u}b^{v}
    \sum_{0\leq \lambda'\leq 2\max(u,v)}
    b^{\eta_b \lambda'}\\
  &\ll
    \sum_{0\leq u \leq \Delta}
    \sum_{0\leq v \leq \mu}
    b^{u}b^{v}
    b^{2\eta_b\max(u,v)}
  \ll
    b^{\Delta}b^{\mu}
    b^{2\eta_b\max(\Delta,\mu)}
    .
\end{align*}

\medskip
\subsubsection{Estimate of $C_2 $}\label{sec:C2}~
\smallskip

If $u\leq v$ then we apply \eqref{eq:SG_m} with
$\lambda''=2v$ (which is possible since $v<\lambda'/2$) and we obtain
\begin{displaymath}
  M_2(\lambda',u,v)
  \ll
  b^{2u} b^{2v} b^{(\eta_b-1)2v} = b^{2u}b^{2\eta_b v}
  .
\end{displaymath}
If $u > v$ then, by exchanging the roles of the points $\frac{h}{d}$
and $\frac{k}{m}$, we obtain
\begin{displaymath}
  M_2(\lambda',u,v)
  \ll
  b^{2v} b^{2u} b^{(\eta_b-1)2u} = b^{2v}b^{2\eta_b u}
  .
\end{displaymath}
It follows that 
\begin{align*}
  C_2
  &\ll
    \sum_{0\leq u \leq \Delta}
    \sum_{0\leq v \leq \mu}
    \sum_{0\leq \lambda'\leq \lambda_1}
    b^{\lambda'}
    \frac{1}{b^{u}}
    \frac{1}{b^{v}}
    b^{2\min(u,v)}b^{2\eta_b\max(u,v)}\\
  &\ll
    b^{\lambda_1}
    \left(
    \sum_{0\leq u \leq \min(\Delta,\mu)}
    \sum_{u\leq v \leq \mu}
    b^{u}
    \frac{1}{b^{v(1-2\eta_b)}}
    +
    \sum_{0\leq v \leq \min(\Delta,\mu)}
    \sum_{v\leq u \leq \Delta}
    \frac{1}{b^{u(1-2\eta_b)}}
    b^{v}
    \right)\\
  &\ll
    b^{\lambda_1}
    \left(
    \sum_{0\leq u \leq \min(\Delta,\mu)}
    b^{2u\eta_b}
    +
    \sum_{0\leq v \leq \min(\Delta,\mu)}
    b^{2v\eta_b}
    \right)\\
  &\ll
    b^{\lambda_1} b^{2\min(\Delta,\mu)\eta_b} 
    .
\end{align*}

\medskip
\subsubsection{Estimate of $C_3$}~
\label{subsection:C3}\smallskip

By using the bounds for $M_2(\lambda',u,v)$ of Section~\ref{sec:C2}, we obtain
\begin{align*}
  C_3
  &\ll
    \sum_{\substack{u,v\\0\leq u \leq \Delta\\ 0\leq v \leq \mu\\\max(u,v)\geq v_0\\c\cdot \min(u,v)\leq \max(u,v)}}
  \sum_{0\leq \lambda'\leq \lambda}
  b^{\lambda'}
  \frac{1}{b^{u}}
  \frac{1}{b^{v}}
  b^{2\min(u,v)}b^{2\eta_b\max(u,v)}\\
  &\ll
    b^{\lambda}
    \left(
    \sum_{v_0\leq v\leq \mu}
    \sum_{0\leq u \leq \frac{v}{c}}
    b^{u}
    \frac{1}{b^{v(1-2\eta_b)}}
    +
    \sum_{v_0\leq u \leq \Delta}
    \sum_{0\leq v\leq \frac{u}{c}}
    \frac{1}{b^{u(1-2\eta_b)}}
    b^{v}
    \right)\\
  &\ll
    b^{\lambda}
    \left(
    \sum_{v_0\leq v\leq \mu}
    \frac{1}{b^{v(1-2\eta_b-\frac{1}{c})}}
    +
    \sum_{v_0\leq u \leq \Delta}
    \frac{1}{b^{u(1-2\eta_b-\frac{1}{c})}}
    \right).
\end{align*}
It follows that if $1-2\eta_b-\frac{1}{c}>0$ then
\begin{displaymath}
  C_3 \ll_c \frac{b^{\lambda}}{b^{v_0(1-2\eta_b-\frac{1}{c})}}.
\end{displaymath}

\medskip
\subsubsection{Estimate of $C_4$}~ \label{subsection:C4}
\smallskip

By Lemma~\ref{lemma:uniform-upperbound-of-F-for-denominator-d},
using $d\leq b^u \leq b^{v_0}$, we have
\begin{displaymath}
  M_2(\lambda',u,v)
  \ll
  b^{2u}
  b^{2v}
  b^{-\Upsilon_b \frac{\lambda'}{\log b^{v_0}}},
\end{displaymath}
hence, if $1- \frac{\Upsilon_b}{\log b^{v_0}}\geq \frac12$ then
\begin{align*}
  C_4
  &\ll
  \sum_{0\leq u < v_0}
  \sum_{0\leq v < v_0}
  \sum_{0\leq \lambda'\leq \lambda}
  b^{\lambda'}
  \frac{1}{b^{u}}
  \frac{1}{b^{v}}
  b^{2u}
  b^{2v}
  b^{-\Upsilon_b \frac{\lambda'}{\log b^{v_0}}}\\
  &\ll
  \sum_{0\leq \lambda'\leq \lambda}
    b^{\lambda'\left(1-\frac{\Upsilon_b}{\log b^{v_0}}\right)}
    \sum_{0\leq u < v_0} b^{u}
    \sum_{0\leq v < v_0} b^v\\
  &\ll
    b^{\lambda\left(1-\frac{\Upsilon_b}{\log b^{v_0}}\right)}
    b^{2v_0}
    = b^{\lambda} b^{2v_0-\Upsilon_b \frac{\lambda}{\log b^{v_0}}}.
\end{align*}

\medskip
\subsubsection{Estimate of $C_5$}~ \smallskip

By \eqref{eq:FT-splitting-product}, for integers $\lambda'_1\geq 0$ and
$\lambda'_2\geq 0$ such that $\lambda'_1 + \lambda'_2 \leq \lambda'$ 
we have
\begin{displaymath}
  \abs{
  \mathcal{F}_{\lambda'}\left(
  \frac{h}{d},\frac{k}{m}\right)
  }  
  \leq
  \abs{
  \mathcal{F}_{\lambda'_1}\left(
  \frac{h}{d} b^{\lambda'-\lambda'_1},\frac{k}{m}\right)
  }  
  \abs{
  \mathcal{F}_{\lambda'_2}\left(
  \frac{h}{d},\frac{k}{m}  b^{\lambda'-\lambda'_2}\right)
}
.
\end{displaymath}
By Cauchy-Schwarz we get
\begin{displaymath}
  M_2(\lambda',u,v)
  \leq
  M_{21}(\lambda_1',u,v)^{1/2}
  \
  M_{22}(\lambda_2',u,v)^{1/2}
\end{displaymath}
where
\begin{multline*}
  M_{21}(\lambda_1',u,v)
  \\
  =
  \sum_{\substack{b^{u-1}< d\leq b^{u}\\ \gcd(d,b(b^2-1))=1}}
  \sum_{\substack{1\leq h < d\\ \gcd(h,d)=1}}
  \sum_{b^{v-1}<m\leq b^{v}}
  \sum_{\substack{0\leq k<m\\\gcd(k,m)=1}}
  \abs{
  \mathcal{F}_{\lambda'_1}\left(
  \frac{h}{d} b^{\lambda'-\lambda'_1},\frac{k}{m}\right)
  }^2
\end{multline*}
and
\begin{multline*}
  M_{22}(\lambda_2',u,v)
  \\
  =
  \sum_{\substack{b^{u-1}< d\leq b^{u}\\ \gcd(d,b(b^2-1))=1}}
  \sum_{\substack{1\leq h < d\\ \gcd(h,d)=1}}
  \sum_{b^{v-1}<m\leq b^{v}}
  \sum_{\substack{0\leq k<m\\\gcd(k,m)=1}}
  \abs{
    \mathcal{F}_{\lambda'_2}\left(
      \frac{h}{d},\frac{k}{m}  b^{\lambda'-\lambda'_2}\right)
  }^2
  .
\end{multline*}
Let us first consider $M_{21}(\lambda_1',u,v)$.
By \eqref{eq:FT-splitting-product}
and Lemma \ref{lemma:pointwise-upperbound-of-F},
we have for $\lambda'_{11}+\lambda'_{12} \leq \lambda'_{1}$,
\begin{align*}
  &\abs{
  \mathcal{F}_{\lambda'_1}\left(
  \frac{h}{d} b^{\lambda'-\lambda'_1},\frac{k}{m}\right)
    }
    \\
  &\leq
    \abs{
    \mathcal{F}_{\lambda'_{11}}\left(
    \frac{h}{d} b^{\lambda'-\lambda'_1} b^{\lambda'_{1}-\lambda'_{11}},\frac{k}{m}\right)
    }
    \abs{
    \mathcal{F}_{\lambda'_{12}}\left(
    \frac{h}{d} b^{\lambda'-\lambda'_1},\frac{k}{m} b^{\lambda'_{1}-\lambda'_{12}}\right)
    }
  \\
  &
    \leq
    \abs{
    \mathcal{F}_{\lambda'_{11}}\left(
    \frac{h}{d} b^{\lambda'-\lambda'_{11}},\frac{k}{m}
    \right)
    }
    G_{\lambda'_{12}-1}^{1/2}\left(
    \frac{h}{d} b^{\lambda'-\lambda'_1} (b^2-1)\right)
    ,
\end{align*}
hence
\begin{multline*}
  M_{21}(\lambda_1',u,v)
  \leq
  \sum_{\substack{b^{u-1}< d\leq b^{u}\\ \gcd(d,b(b^2-1))=1}}
  \sum_{\substack{1\leq h < d\\ \gcd(h,d)=1}}
  G_{\lambda'_{12}-1}\left( \frac{h}{d} b^{\lambda'-\lambda'_1}
    (b^2-1) \right)
  \\
  \sum_{b^{v-1}<m\leq b^{v}}
  \sum_{\substack{0\leq k<m\\\gcd(k,m)=1}}
  \abs{ \mathcal{F}_{\lambda'_{11}}\left(
      \frac{h}{d} b^{\lambda'-\lambda'_{11}},\frac{k}{m}\right) }^2
\end{multline*}
By the large sieve inequality (Lemma \ref{lemma:large-sieve}) this gives
\begin{displaymath}
  M_{21}(\lambda_1',u,v)
  \leq
  \left(b^{2v} + b^{\lambda'_{11}} \right)
  b^{-\lambda'_{11}}
  \sum_{\substack{b^{u-1}< d\leq b^{u}\\ \gcd(d,b(b^2-1))=1}}
  \sum_{\substack{1\leq h < d\\ \gcd(h,d)=1}}
  G_{\lambda'_{12}-1}\left( \frac{h}{d} \right).
\end{displaymath}
We apply the Sobolev-Gallagher inequality given in
Lemma~\ref{lemma:sobolev-gallagher} and the
bounds of the $L_1$-norms of the functions $G_{\lambda'}$
and $\left(G_{\lambda'}\right)'$ 
provided by Lemma \ref{lemma:Lkappa-norm-of-G}
and Lemma \ref{lemma:L_1_norm_of-G'}. We obtain
\begin{displaymath}
  M_{21}(\lambda_1',u,v)
  \ll
  \left(b^{2v}b^{-\lambda'_{11}} + 1 \right)
  \left( b^{2u} + b^{\lambda'_{12}} \right)
  b^{-\zeta_{b,1}\lambda'_{12}}
  ,
\end{displaymath}
where $\zeta_{b,1}$ is defined by \eqref{eq:definition-zeta_b}.
If the size of $\lambda'_1$ permits to choose
\begin{displaymath}
  \lambda'_{11} \geq  2v,\quad
  \lambda'_{12} \leq 2u,
\end{displaymath}
we obtain
\begin{displaymath}
  M_{21}(\lambda_1',u,v)
    \ll
  b^{2u-\zeta_{b,1}\lambda'_{12}}
   .
\end{displaymath}

Let us now consider $M_{22}(\lambda_2',u,v)$.
By \eqref{eq:FT-splitting-product}
and Lemma \ref{lemma:pointwise-upperbound-of-F},
we have for $\lambda'_{21}+\lambda'_{22} \leq \lambda'_{2}$:
\begin{align*}
  &
  \abs{
  \mathcal{F}_{\lambda'_2}\left(
  \frac{h}{d},\frac{k}{m}  b^{\lambda'-\lambda'_2} \right)
    }
  \\
  &
  \leq
  \abs{
  \mathcal{F}_{\lambda'_{21}}\left(
  \frac{h}{d} b^{\lambda'_2-\lambda'_{21}},\frac{k}{m} b^{\lambda'-\lambda'_2} \right)
  }
  \abs{
  \mathcal{F}_{\lambda'_{22}}\left(
  \frac{h}{d},\frac{k}{m}  b^{\lambda'-\lambda'_2} b^{\lambda'_2-\lambda'_{22}}\right)
  }
  \\
  &
  \leq
  G_{\lambda'_{21}-1}^{1/2}\left(
  \frac{k}{m} b^{\lambda'-\lambda'_2} (b^2-1) \right)
  \abs{
  \mathcal{F}_{\lambda'_{22}}\left(
  \frac{h}{d},\frac{k}{m}  b^{\lambda'-\lambda'_{22}}\right)
  }
\end{align*}
hence
\begin{multline*}
  M_{22}(\lambda_2',u,v)
  \leq
  \sum_{b^{v-1}<m\leq b^{v}}
  \sum_{\substack{0\leq k<m\\\gcd(k,m)=1}}
  G_{\lambda'_{21}-1}\left( \frac{k}{m} b^{\lambda'-\lambda'_2}
    (b^2-1) \right)
  \\
  \sum_{\substack{b^{u-1}< d\leq b^{u}\\ \gcd(d,b(b^2-1))=1}}
  \sum_{\substack{1\leq h < d\\ \gcd(h,d)=1}}
  \abs{ \mathcal{F}_{\lambda'_{22}}\left( \frac{h}{d},\frac{k}{m}
      b^{\lambda'-\lambda'_{22}}\right) }^2 .
\end{multline*}
By the large sieve inequality (Lemma \ref{lemma:large-sieve}) this gives
\begin{multline*}
  M_{22}(\lambda_2',u,v)
  \\
  \leq
  \left(b^{2u} + b^{\lambda'_{22}}\right)
  b^{-\lambda'_{22}}
  \sum_{b^{v-1}<m\leq b^{v}}
  \sum_{\substack{0\leq k<m\\\gcd(k,m)=1}}
  G_{\lambda'_{21}-1}\left(
    \frac{k}{m} b^{\lambda'-\lambda'_2} (b^2-1)
  \right)
  .
\end{multline*}
We observe that the denominators $m$ are not always
coprime to $b(b^2-1)$.
By filtering by
\begin{math}
  d_0= \gcd(b^{\lambda'-\lambda'_2} (b^2-1), m)
\end{math}
we get
\begin{align*}
  &
    \sum_{b^{v-1}<m\leq b^{v}}
    \sum_{\substack{0\leq k<m\\\gcd(k,m)=1}}
  G_{\lambda'_{21}-1}\left(
  \frac{k}{m} b^{\lambda'-\lambda'_2} (b^2-1)
  \right)
  \\
  &
    =
    \sum_{\substack{d_0\dv b^{\lambda'-\lambda'_2} (b^2-1)\\ d_0 \leq b^v}}
  \hspace{-1em}
  \sum_{\substack{\frac{b^{v-1}}{d_0}<m'\leq \frac{b^{v}}{d_0}\\ \gcd\left(m',\frac{b^{\lambda'-\lambda'_2} (b^2-1)}{d_0}\right)=1}}
  \hspace{-1em}
  \sum_{\substack{0\leq k < d_0 m'\\ \gcd(k,d_0 m')=1}}
  \hspace{-1em}
  G_{\lambda'_{21}-1}\left(
    \frac{k}{m'}\cdot \frac{b^{\lambda'-\lambda'_2} (b^2-1)}{d_0} \right)
  \\
  &\leq
    \sum_{\substack{d_0\dv b^{\lambda'-\lambda'_2} (b^2-1)\\ d_0 \leq b^v}}
  d_0
  \sum_{\substack{\frac{b^{v-1}}{d_0}<m'\leq \frac{b^{v}}{d_0}\\ \gcd\left(m',\frac{b^{\lambda'-\lambda'_2} (b^2-1)}{d_0}\right)=1}}
  \sum_{\substack{0\leq k < m'\\ \gcd(k,m')=1}}
  G_{\lambda'_{21}-1}\left( \frac{k}{m'}\right)
  .
\end{align*}
We apply the Sobolev-Gallagher inequality given in
Lemma~\ref{lemma:sobolev-gallagher} and the
bounds of the $L_1$-norms of the functions $G_{\lambda'}$
and $\left(G_{\lambda'}\right)'$ 
provided by Lemma \ref{lemma:Lkappa-norm-of-G}
and Lemma \ref{lemma:L_1_norm_of-G'}. We obtain,
using Lemma \ref{lemma:sigma-z},
\begin{align*}
  &
    \sum_{b^{v-1}<m\leq b^{v}}
  \sum_{\substack{0\leq k<m\\\gcd(k,m)=1}}
  G_{\lambda'_{21}-1}\left(
    \frac{k}{m} b^{\lambda'-\lambda'_2} (b^2-1)
  \right)
  \\
  &\ll
     \sum_{\substack{d_0\dv b^{\lambda'-\lambda'_2} (b^2-1)\\ d_0 \leq b^v}}
  d_0
  \left(\left(\frac{b^{v}}{d_0}\right)^2 + b^{\lambda'_{21}}\right)
  b^{-\zeta_{b,1} \lambda'_{21}}\\
  &\ll
    \left(b^{2v} + b^{\lambda'_{21}} b^{\min(v,\lambda'-\lambda_2')} \lambda'^{\omega(b)} \right)b^{-\zeta_{b,1} \lambda'_{21}}
    \ll \lambda'^{\omega(b)} \left(b^{2v} + b^{\lambda'_{21}} b^{v} \right)b^{-\zeta_{b,1} \lambda'_{21}}
    .
\end{align*}
If the size of $\lambda'_2$ permits to choose
\begin{displaymath}
  \lambda'_{22} \geq 2u,\quad
  \lambda'_{21} \leq v,
\end{displaymath}
we obtain
\begin{displaymath}
  M_{22}(\lambda_2',u,v)
  \ll
  \lambda'^{\omega(b)}
  b^{2v-\zeta_{b,1}\lambda'_{21}}
  .
\end{displaymath}
Let $\varepsilon_1>0$.
If we assume that $(2+\varepsilon_1)(u+v)\leq \lambda'$ then we can
choose
\begin{displaymath}
  \lambda'_{11} = 2v,
  \quad
  \lambda'_{12} = \lfloor \varepsilon_1 u\rfloor
  \quad
  \lambda'_{22} = 2u,
  \quad
  \lambda'_{21} = \lfloor \varepsilon_1 v\rfloor,
\end{displaymath}
which leads to
\begin{align*}
  M_2(\lambda',u,v)
  &\ll
    \lambda'^{\frac{\omega(b)}{2}}
    \left(b^{2u-\zeta_{b,1}\lambda'_{12}}\right)^{1/2}
    \
    \left(b^{2v-\zeta_{b,1}\lambda'_{21}}\right)^{1/2}
    \\
  &\qquad
    =
    \lambda'^{\frac{\omega(b)}{2}}\
    b^{u+v-\frac{\zeta_{b,1}}{2}\left(\lambda'_{12}+\lambda'_{21}\right)}
  \ll
    \lambda'^{\frac{\omega(b)}{2}}\
    b^{(u+v)(1-\varepsilon_1\frac{\zeta_{b,1}}{2})}
    .
\end{align*}
If $(2+\varepsilon_1)(1+c)\min(\Delta,\mu) \leq \lambda_1$ then for
any $u\leq \Delta$, $v\leq \mu$ and $\lambda'\geq \lambda_1$ such that
$c\cdot \min(u,v)>\max(u,v)$, we have
\begin{displaymath}
  (2+\varepsilon_1)(u+v)
  \leq
  (2+\varepsilon_1)(c+1) \min(u,v)
  \leq
  (2+\varepsilon_1)(c+1) \min(\Delta,\mu)
  \leq \lambda_1
  \leq \lambda'.
\end{displaymath}
It follows that
if
$(2+\varepsilon_1)(1+c)\min(\Delta,\mu) \leq \lambda_1$ then
\begin{displaymath}
  C_5
  \ll
  \sum_{\substack{u,v\\0\leq u \leq \Delta\\0\leq v \leq \mu\\v_0\leq \max(u,v) < c\cdot \min(u,v)}}
  \sum_{\lambda_1\leq \lambda'\leq \lambda}
  b^{\lambda'}
  \frac{1}{b^{u}}
  \frac{1}{b^{v}}
  \lambda'^{\frac{\omega(b)}{2}}\
  b^{(u+v)(1-\varepsilon_1\frac{\zeta_{b,1}}{2})}
\end{displaymath}
so that 
if
$(2+\varepsilon_1)(1+c)\min(\Delta,\mu) \leq \lambda_1$ then
\begin{multline*}
  C_5
  \ll
    \lambda^{\frac{\omega(b)}{2}}\
    b^{\lambda}
    \Bigg(
    \sum_{\frac{v_0}{c} < u \leq \Delta}
    \sum_{u\leq v < c u} \frac{1}{b^{u\varepsilon_1\frac{\zeta_{b,1}}{2}}}
    \frac{1}{b^{v\varepsilon_1\frac{\zeta_{b,1}}{2}}}
  \\
  +
    \sum_{\frac{v_0}{c} < v \leq \mu}
    \sum_{v < u < cv} \frac{1}{b^{u\varepsilon_1\frac{\zeta_{b,1}}{2}}}
    \frac{1}{b^{v\varepsilon_1\frac{\zeta_{b,1}}{2}}}
    \Bigg)
\end{multline*}
hence
\begin{displaymath}
  C_5 \ll_{\varepsilon_1}
    \lambda^{\frac{\omega(b)}{2}}\
    b^{\lambda}
    \left(
    \sum_{\frac{v_0}{c} < u \leq \Delta}
    \frac{1}{b^{u\varepsilon_1\zeta_{b,1}}}
    +
    \sum_{\frac{v_0}{c} < v \leq \mu}
    \frac{1}{b^{v\varepsilon_1\zeta_{b,1}}}
    \right)
    \ll_{\varepsilon_1}
    b^{\lambda}
    \frac{\lambda^{\frac{\omega(b)}{2}}}{b^{\frac{v_0}{c}\varepsilon_1
    \zeta_{b,1}}}
    .
\end{displaymath}

\medskip
\subsubsection{Completion of the bound of $S_{I}(\alpha,\lambda,\mu)$ on average over $\alpha$}~ \label{Conclusion-Type-I}\smallskip

Combining \eqref{eq:majoration-by-C_1+C_2+C_3+C_4+C_5}
and the previous bounds on $C_i$, $i=1,\ldots,5$, we obtain
that for $c\geq 1$, $v_0\geq 1$, $\lambda_1\geq 1$
and $\varepsilon_1>0$
such that
\begin{displaymath}
  (i)\
  1-2\eta_b-\frac{1}{c}>0,
  \ \
  (ii)\
  1- \frac{\Upsilon_b}{\log b^{v_0}} \geq \frac12,
  \ \
  (iii)\
  \left(2+\varepsilon_1\right)(1+c)\min(\Delta,\mu) \leq \lambda_1
\end{displaymath}
we have
\begin{multline*}
  \sum_{\substack{d\leq b^\Delta\\ \gcd(d,b(b^2-1))=1}}
  \frac{1}{d}
  \sum_{\substack{1\leq h < d\\ \gcd(h,d)=1}}
  S_{I}\left( \frac{h}{d},\lambda,\mu \right)
  \\
  \ll_{c,\varepsilon_1}
  b^{\Delta}b^{\mu}
  b^{2\eta_b\max(\Delta,\mu)}
  +
  b^{\lambda_1} b^{2\eta_b\min(\Delta,\mu)}
  +
  \frac{b^{\lambda}}{b^{v_0(1-2\eta_b-\frac{1}{c})}}
  \\
    + b^{\lambda} b^{2v_0-\Upsilon_b \frac{\lambda}{\log b^{v_0}}}
    +b^{\lambda}
   \frac{\lambda^{\frac{\omega(b)}{2}}}{b^{\frac{v_0}{c}\varepsilon_1 \zeta_{b,1}}}.
\end{multline*}
Let us choose $c=15$.
By \eqref{eq:upperbound-eta-b} we have
\begin{math}
  \eta_b\leq \eta_3=\frac{\log  5}{\log 3}-1
\end{math}
so that
\begin{displaymath}
  1-2\eta_b-\frac{1}{c}
  \geq
  1-2\eta_3-\frac{1}{c}
  > 0,  
\end{displaymath}
hence (i) is satisfied.
We choose
\begin{displaymath}
  \lambda_1
  =\ceil{  \left(2+\varepsilon_1\right)(1+c)\min(\Delta,\mu)}
\end{displaymath}
so that (iii) is satisfied.
We also take
$v_0 = \delta_b \sqrt{\lambda}$ for some $\delta_b>0$
to be chosen later depending only on $b$.
For
\begin{math}
  \lambda \geq \left(\frac{2 \Upsilon_b}{\delta_b \log b}\right)^2
\end{math}
the condition (ii) is satisfied,
hence
\begin{multline*}
  \sum_{\substack{d\leq b^\Delta\\ \gcd(d,b(b^2-1))=1}}
  \frac{1}{d}
  \sum_{\substack{1\leq h < d\\ \gcd(h,d)=1}}
  S_{I}\left( \frac{h}{d},\lambda,\mu \right)
  \\
  \ll_{\varepsilon_1}
    b^{\Delta}b^{\mu} b^{2\eta_b\max(\Delta,\mu)}
    +
    b^{\min(\Delta,\mu)(2\eta_b+(2+\varepsilon_1)(1+c))}
    +
    b^{\lambda} b^{-\delta_b (1-2\eta_b-\frac{1}{c})\sqrt{\lambda}}
    \\
    + b^{\lambda} b^{-\left(\frac{\Upsilon_b}{\delta_b\log b}-2\delta_b\right)\sqrt{\lambda}}
    +b^{\lambda} 
    \lambda^{\frac{\omega(b)}{2}}b^{-\frac{\varepsilon_1 \zeta_{b,1}\delta_b}{c} \sqrt{\lambda}}.
\end{multline*}
We then choose
$\delta_b =
\left(\frac{\Upsilon_b}{4\log b}\right)^{1/2}>0$ so
that $\frac{\Upsilon_b}{\delta_b\log b}-2\delta_b>0$,
and
\begin{displaymath}
  \varepsilon_1
  = \frac{1-2\eta_b}{1+c}
  = \frac{1-2\eta_b}{16}
  > 0
  .
\end{displaymath}
We get for
\begin{math}
  \lambda
  \geq
  \left(\frac{2 \Upsilon_b}{\delta_b \log b}\right)^2
  ,
\end{math}
\begin{multline*}
  \sum_{\substack{d\leq b^\Delta\\ \gcd(d,b(b^2-1))=1}}
  \frac{1}{d}
  \sum_{\substack{1\leq h < d\\ \gcd(h,d)=1}}
  S_{I}\left( \frac{h}{d},\lambda,\mu \right)
  \\
  \ll
  b^{\Delta+\mu+2\eta_b\max(\Delta,\mu)}
  +
  b^{33 \min(\Delta,\mu)}
  +
  b^{\lambda-\delta_b (1-2\eta_b-\frac{1}{c})\sqrt{\lambda}}
  \\
  + b^{\lambda-\left(\frac{\Upsilon_b}{\delta_b\log b}-2\delta_b\right)\sqrt{\lambda}}
  +
  \lambda^{\frac{\omega(b)}{2}}
  b^{\lambda-\frac{\varepsilon_1 \zeta_{b,1}\delta_b}{c} \sqrt{\lambda}}.
\end{multline*}
We obtain that if
\begin{equation}\label{eq:condition-Delta-type-I}
  \Delta+\mu+2\eta_b\max(\Delta,\mu) \leq \lambda -\sqrt{\lambda},
  \quad
  33 \min(\Delta,\mu) \leq  \lambda -\sqrt{\lambda},
\end{equation}
then for some $\tilde{\delta}_{b}>0$ (depending only on $b$) and
\begin{math}
  \lambda
  \geq
  \left(\frac{2 \Upsilon_b}{\delta_b \log b}\right)^2
  ,
\end{math}
\begin{equation}\label{eq:S_I-final-upper-bound}
  \sum_{\substack{d\leq b^\Delta\\ \gcd(d,b(b^2-1))=1}}
  \frac{1}{d}
  \sum_{\substack{1\leq h < d\\ \gcd(h,d)=1}}
  S_{I}\left( \frac{h}{d},\lambda,\mu \right)
  \ll
  b^{\lambda-\tilde{\delta}_{b}\sqrt{\lambda}}
  .
\end{equation}

This leads us to the following conclusion.
\begin{lemma}\label{lemma:conclusion-S_I-sum-on-alpha}
  Let $0<\beta_1 < (1+2\eta_b)^{-1}$,
  \begin{equation}\label{eq:def_C''_b,beta1,beta2}
    C''(b,\beta_1)
    =
    \min\left(
      1-\beta_1(1+2\eta_b),\,
      \frac{1}{33}
    \right)
    >0
  \end{equation}
  and 
  \begin{math}
    \xi \in \left]0,C''(b,\beta_1)\right[.
  \end{math}
  There exists $\tilde{\delta}=\tilde{\delta}_{b} >0$ and
  $\lambda''_0(b,\beta_1,\xi)\geq 2$ such that if
  $\lambda\geq\lambda''_0(b,\beta_1,\xi)$ and
  $1\leq \mu \leq \beta_1\lambda$ then,
  denoting $D = b^{\xi \lambda}$,
  \begin{equation}\label{eq:conclusion-S_I-sum-on-alpha}
    \sum_{\alpha\in \mathcal{A}_D}
    W(\alpha)
    S_{I}(\alpha,\lambda,\mu)
    \ll
    b^{\lambda-\tilde{\delta}\sqrt{\lambda}}
    .
  \end{equation}
\end{lemma}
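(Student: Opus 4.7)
The plan is essentially to consolidate the case analysis developed above the lemma statement, verifying that the hypotheses on $\xi$ and $\mu$ make the parameter choices of Section~\ref{Conclusion-Type-I} admissible. First I would start from the bound~\eqref{eq:intermediate-S_I-individual} for $S_I(\alpha,\lambda,\mu)$, sum over $\alpha \in \mathcal{A}_D$ weighted by $W(\alpha)=1/d$, and apply dyadic splitting on $d$ (writing $b^{u-1}<d\leq b^u$) and on $m'$ (writing $b^{v-1}<m\leq b^v$), as in~\eqref{eq:maj_typeI_ini}, to reduce the estimation to controlling
\begin{displaymath}
  \sum_{(u,v,\lambda')\in\mathcal{S}} b^{\lambda'-u-v}\, M_2(\lambda',u,v),
\end{displaymath}
where $\mathcal{S}=\{0,\ldots,\Delta\}\times\{0,\ldots,\mu\}\times\{0,\ldots,\lambda\}$ and $\Delta = \lfloor \xi\lambda\rfloor+1$.

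Next I would decompose $\mathcal{S}$ as $\mathcal{S}_1\cup\cdots\cup\mathcal{S}_5$ exactly as in Sections~\ref{subsection:C1}--\ref{Conclusion-Type-I}, with free parameters $c\geq 1$, $v_0\geq 1$, $\lambda_1$, $\varepsilon_1>0$, and bound each $C_i$ by the estimates already derived: $C_1$ and $C_2$ by~\eqref{eq:SG_m} (large sieve for the $(k,m)$ pair, exchanging roles of $(h,d)$ and $(k,m)$ when $u>v$), $C_3$ likewise under the condition $1-2\eta_b-1/c>0$, $C_4$ via the individual bound of Lemma~\ref{lemma:uniform-upperbound-of-F-for-denominator-d} under $1-\Upsilon_b/\log b^{v_0}>0$, and $C_5$ by the Cauchy--Schwarz splitting combined with the large sieve (Lemma~\ref{lemma:large-sieve}) and the Sobolev--Gallagher-type bound of Lemma~\ref{lemma:sobolev-gallagher-for-G} applied to $G_{\lambda'_{12}-1}$ and $G_{\lambda'_{21}-1}$, valid when $(2+\varepsilon_1)(1+c)\min(\Delta,\mu)\leq \lambda_1$. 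The five bounds from Section~\ref{Conclusion-Type-I} sum to an estimate of the form
\begin{displaymath}
  b^{\Delta+\mu+2\eta_b\max(\Delta,\mu)} + b^{\lambda_1+2\eta_b\min(\Delta,\mu)} + b^\lambda b^{-\delta_b(1-2\eta_b-1/c)\sqrt\lambda} + b^\lambda b^{-(\Upsilon_b/(\delta_b\log b)-2\delta_b)\sqrt\lambda} + b^\lambda \lambda^{\omega(b)/2} b^{-\varepsilon_1\zeta_{b,1}\delta_b\sqrt\lambda/c}.
\end{displaymath}

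Then I would fix parameters as in Section~\ref{Conclusion-Type-I}: $c=15$ (so $1-2\eta_b-1/c>0$ by~\eqref{eq:upperbound-eta-b}), $\varepsilon_1=(1-2\eta_b)/16$, $\lambda_1 = \lceil(2+\varepsilon_1)(1+c)\min(\Delta,\mu)\rceil$, and $v_0=\delta_b\sqrt\lambda$ with $\delta_b=\bigl(\Upsilon_b/(4\log b)\bigr)^{1/2}$, so that $\Upsilon_b/(\delta_b\log b)-2\delta_b>0$. All five terms are then of the form $b^{\lambda-c\sqrt\lambda}$ with $c>0$, provided the two inequalities
\begin{displaymath}
  (\Delta+\mu)(1+2\eta_b)\leq \lambda-\sqrt\lambda, \qquad 33\,\Delta\leq \lambda-\sqrt\lambda
\end{displaymath}
from~\eqref{eq:condition-Delta-type-I} hold.

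The last step is to verify these two inequalities under the standing hypotheses $\mu\leq\beta_1\lambda$ and $\xi<C''(b,\beta_1)=\min\bigl((1+2\eta_b)^{-1}-\beta_1,\,1/33\bigr)$. Since $\Delta\leq \xi\lambda+1$, one has $(\Delta+\mu)(1+2\eta_b)\leq (\xi+\beta_1)(1+2\eta_b)\lambda+O(1)$, and by the choice of $\xi$ the coefficient $(\xi+\beta_1)(1+2\eta_b)$ is strictly less than $1$, so the first inequality holds for $\lambda\geq\lambda_0''(b,\beta_1,\xi)$. Similarly $33\Delta\leq 33\xi\lambda+33$, and $33\xi<1$, so the second holds for large $\lambda$. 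The main (and essentially the only) delicate point is choosing $c$, $v_0$, $\varepsilon_1$, $\delta_b$ simultaneously so that all five regions $C_i$ produce a saving of order $b^{-c\sqrt\lambda}$; the most restrictive regions are $C_4$ (balancing $2v_0$ against $\Upsilon_b\lambda/\log b^{v_0}$, which forces $v_0\asymp\sqrt\lambda$) and $C_5$ (where both $(h,d)$ and $(k,m)$ must be treated nontrivially via the product formula~\eqref{eq:FT-splitting-product} combined with the large sieve). Gathering everything yields~\eqref{eq:conclusion-S_I-sum-on-alpha} with $\tilde\delta=\tilde\delta_b>0$ depending only on $b$.
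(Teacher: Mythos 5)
Your proposal is correct and follows essentially the same route as the paper: all the work is in the estimates of $C_1,\ldots,C_5$ and the parameter choices of Section~\ref{Conclusion-Type-I}, which you reproduce faithfully, and the proof of the lemma itself reduces (exactly as in the paper) to checking that $\Delta=\floor{\xi\lambda}+1$ and $\mu\leq\beta_1\lambda$ satisfy the two conditions~\eqref{eq:condition-Delta-type-I} when $\xi<C''(b,\beta_1)$, which your final step does correctly.
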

\begin{proof}
  Let us remark first that if
  \begin{equation}\label{eq:condition-Delta-type-I-simplified}
    \Delta+\mu(1+2\eta_b) \leq \lambda -\sqrt{\lambda},
    \quad
    33 \Delta \leq  \lambda -\sqrt{\lambda},
  \end{equation}
  then   \eqref{eq:condition-Delta-type-I} is satisfied
  in the case $\max(\Delta,\mu) = \mu$, and
  if $\max(\Delta,\mu) = \Delta$ we have
  \begin{displaymath}
    \Delta+\mu+2\eta_b\max(\Delta,\mu)
    \leq
    (2+2\eta_b)\Delta
    \leq 3 \Delta
    \leq \lambda -\sqrt{\lambda},    
  \end{displaymath}
  thus
  \eqref{eq:condition-Delta-type-I} is also satisfied.
  Therefore   \eqref{eq:condition-Delta-type-I-simplified}
  implies   \eqref{eq:condition-Delta-type-I}.
  
  Let
  \begin{math}
    \xi \in \left]0,C''(b,\beta_1)\right[.
  \end{math}
  Taking $\Delta = \floor{\xi\lambda}+1$, 
  there exists
  \begin{math}
    \lambda''_0(b,\beta_1,\xi)
    \geq
    \left(\frac{2 \Upsilon_b}{\delta_b \log b}\right)^2    
  \end{math}
  and large enough,
  such that for $\lambda\geq \lambda''_0(b,\beta_1,\xi)$,
  we have
  \begin{displaymath}
    \Delta+\beta_1\lambda(1+2\eta_b) \leq \lambda -\sqrt{\lambda},
    \quad
    33 \, \Delta \leq  \lambda -\sqrt{\lambda},
  \end{displaymath}
  hence the conditions~\eqref{eq:condition-Delta-type-I-simplified}
  and therefore \eqref{eq:condition-Delta-type-I} are satisfied
  for $\mu \leq \beta_1\lambda$.
  It follows that \eqref{eq:S_I-final-upper-bound} holds.
  Moreover, since $\Delta \geq \xi\lambda$,
  by \eqref{eq:definition-A-d} and \eqref{eq:def-W} we obtain
  \eqref{eq:conclusion-S_I-sum-on-alpha}.
\end{proof}

\section{Conclusion on exponential sums}\label{section:conclusion_expo_sums}

\subsection{Conclusion in the case $\alpha =h/d$ with $d$ small}

The goal of this section is to complete the proof of
Theorem~\ref{theorem:bound_expo_sum_conclusion}.

Let $\lambda\geq 2$ be an integer and $(h,d)\in\Z^2$ with $d\geq 2$ and
\begin{math}
  (b^2-1) b^\lambda h \not\equiv 0 \bmod d.
\end{math}
We intend to apply Lemma~\ref{lemma:maj_sum_alpha_vaughan} with
$\mathcal{A} = \{\alpha\}$ where $\alpha = \frac{h}{d}$,
$W(\alpha)=1$, $f(\alpha,n)=\e(\alpha R_{\lambda}(n))$ and
\begin{displaymath}
  \beta_1 = \frac14,
  \qquad
  \beta_2 = \beta_2(b) = 
  \frac12 \left(\frac12 + (1+2\eta_b)^{-1}\right) 
  \in \left]\frac12,(1+2\eta_b)^{-1}\right[
  .
\end{displaymath}

By Lemma~\ref{lemma:conclusion-S_II-individual}, there exists
$C(b) \in \left]0,1\right[$ such that if
$\beta_1\lambda\leq \mu \leq \beta_2\lambda$ then 
we have 
\begin{displaymath}
  S_{II}(\alpha,\lambda,\mu)
  \ll
  b^{\lambda (1-C(b))}
  +
  \lambda^{\frac{\omega(b)}2}
  b^{\lambda\left(1-\frac{\Upsilon_b\beta_1}{2\log d}\right)}
  \ll
  \lambda^{\frac{\omega(b)}2}
  b^{\lambda-\frac{c_{II}(b)\lambda}{\log d}}
\end{displaymath}
with
\begin{math}
  c_{II}(b)
  =
  \min\left(C(b)\log 2, \frac{\Upsilon_b \beta_1}2\right)
  > 0
  .
\end{math}

By Lemma~\ref{lemma:conclusion-S_I-individual}, if
$1\leq \mu \leq \beta_1\lambda$ then we have
\begin{displaymath}
  S_{I}(\alpha,\lambda,\mu)
  \ll
  b^{\beta_1(1+2\eta_b)\lambda}
  +
  b^{\left(1-\frac{\Upsilon_b}{\log \max(d,\Upsilon'_b)}\right)\lambda}
  \ll
  b^{\lambda-\frac{c_{I}(b)\lambda}{\log d}}
\end{displaymath}
where
$\Upsilon'_b$ is defined by \eqref{eq:definition-Upsilon'_b}
and 
\begin{displaymath}
  c_{I}(b)
  =
  \min\left((1-\beta_1(1+2\eta_b))\log 2,
    \frac{\Upsilon_b\log 2}{\log \Upsilon'_b},
    \Upsilon_b\right) > 0
  .
\end{displaymath}

There exists $\lambda_0(b)$ such that for
$\lambda \geq \lambda_0(b)$, we have
\begin{math}
  1 < \left(b^{\floor{\beta_1\lambda}}-1\right)^3 < b^{\lambda-1}
\end{math}
and
\begin{math}
  \ceil{\frac{\lambda}2}+1 \leq \beta_2 \lambda.
\end{math}
It follows from Lemma~\ref{lemma:maj_sum_alpha_vaughan} that for
$\lambda \geq \lambda_0(b)$, we have
\begin{displaymath}
  \sup_{t \in\left[b^{\lambda-1}, b^{\lambda}\right]}
  \abs{\sum_{b^{\lambda-1} \leq n < t}
    \Lambda(n)\e\left(\alpha R_{\lambda}(n)\right)}
  \ll
  \lambda^{2+\frac{\omega(b)}2}
  b^{\lambda-\frac{c(b)\lambda}{\log d}}
\end{displaymath}
with $c(b)= \min(c_{I}(b),c_{II}(b))>0$, which remains true for any
$\lambda \geq 2$ (by taking a larger implicit constant, depending at
most on $b$, if necessary).
This proves~\eqref{eq:bound_expo_sum_conclusion}.

The second part of Theorem~\ref{theorem:bound_expo_sum_conclusion}
follows by taking
\begin{displaymath}
  c'=c'(b,A)=
  c(b)(\log b)(2+\frac{\omega(b)}2+A)^{-1}
  >0.
\end{displaymath}
This ends the proof of
Theorem~\ref{theorem:bound_expo_sum_conclusion}.

\subsection{Conclusion on average over $\alpha$}

The goal of this section is to complete the proof of
Theorem~\ref{theorem:bound_expo_sum_average}.
Let
\begin{equation}\label{eq:def_xi_0}
  \xi_0 = \xi_0(b) = 
  \min\left(\frac{1-(1+2\eta_b)\frac{1}{2}}{2(3+4\eta_b)},\,
    \frac{u_b}{3-4u_b} \frac{1}{3},\,
    \frac{\iota_b}{1+6\iota_b}\frac{1}{3}
    ,\frac{1}{33}\right)
  >0
\end{equation}
where $\eta_b$, $u_b$ and $\iota_b$ are defined by
\eqref{eq:definition-eta_b}, \eqref{eq:def_u_b} and
\eqref{eq:def_iota_b} respectively.

Let $\xi \in \left] 0,\xi_0 \right[$.
There exists $\varepsilon=\varepsilon(b,\xi)>0$
such that
\begin{equation}\label{eq:maj_xi_epsilon}
  0 <
  \xi < 
  \min
  \left(
    \frac{1-(1+2\eta_b)\left(\frac{1}{2}+\varepsilon\right)}{2(3+4\eta_b)},\,
    \frac{u_b\left(\frac{1}{3}-\varepsilon\right)}{3-4u_b} ,\,
    \frac{\iota_b\left(\frac{1}{3}-\varepsilon\right)}{1+6\iota_b}
    ,\frac{1}{33}\right)
  .
\end{equation}
Let
\begin{equation}\label{eq:choice_beta_1-beta_2}
  \beta_1=\beta_1(b,\xi)=\frac{1}{3}-\varepsilon(b,\xi),
  \qquad
  \beta_2=\beta_2(b,\xi)=\frac{1}{2}+\varepsilon(b,\xi).
\end{equation}

We observe that \eqref{eq:maj_xi_epsilon} implies that
\begin{math}
  0 < \beta_1 < \beta_2 < (1+2\eta_b)^{-1}
\end{math}
and 
\begin{math}
  0 < \xi < C'(b,\beta_1,\beta_2)
\end{math}
where $C'(b,\beta_1,\beta_2)$ is defined
by~\eqref{eq:def_C_b,beta1,beta2}.  By
Lemma~\ref{lemma:conclusion-S_II-sum-on-alpha}, there exists
$c' = c'(b,\xi) >0$ and $\lambda'_0(b,\xi)\geq 2$ such that if
$\lambda\geq\lambda'_0(b,\xi)$ and $\mu$ is an integer such that
$\beta_1\lambda\leq \mu \leq \beta_2\lambda$ then, for $D = b^{\xi \lambda}$,
\begin{displaymath}
  \sum_{\alpha\in \mathcal{A}_D}
  W(\alpha)
  S_{II}(\alpha,\lambda,\mu)
  \ll
  b^{\lambda-c'\sqrt{\lambda}}
  .
\end{displaymath}
  
By \eqref{eq:upperbound-eta-b} and \eqref{eq:choice_beta_1-beta_2},
we have $\eta_b<\frac{1}{2}$ and $\beta_1 < \frac{1}3$, so that by
\eqref{eq:def_C''_b,beta1,beta2} we get
$C''(b,\beta_1) = \frac{1}{33}$ and therefore using
\eqref{eq:maj_xi_epsilon}, we obtain $0 < \xi < C''(b,\beta_1)$.
Applying Lemma~\ref{lemma:conclusion-S_I-sum-on-alpha}, there exists
$\tilde{\delta}=\tilde{\delta}_{b} >0$ and
$\lambda''_0(b,\xi)\geq 2$ such that if
$\lambda\geq\lambda''_0(b,\xi)$ and $\mu$ is an integer such that
$1\leq \mu \leq \beta_1\lambda$ then, for
$D = b^{\xi \lambda}$,
\begin{displaymath}
  \sum_{\alpha\in \mathcal{A}_D}
  W(\alpha)
  S_{I}(\alpha,\lambda,\mu)
  \ll
  b^{\lambda-\tilde{\delta}\sqrt{\lambda}}
  .
\end{displaymath}  

Moreover, there exists $\lambda'''_0(b,\xi)$ such that for
$\lambda \geq \lambda'''_0(b,\xi)$, we have
\begin{displaymath}
  1 < \left(b^{\floor{\beta_1\lambda}}-1\right)^3 < b^{\lambda-1}
\end{displaymath}
and
\begin{math}
  \ceil{\frac{\lambda}2}+1 \leq \beta_2 \lambda.
\end{math}
It follows from Lemma~\ref{lemma:maj_sum_alpha_vaughan} that for
\begin{displaymath}
  \lambda \geq
  \max(\lambda'_0(b,\xi),\lambda''_0(b,\xi),\lambda'''_0(b,\xi))
\end{displaymath}
and
$D = b^{\xi \lambda}$,
\begin{displaymath}
  \sum_{\alpha \in \mathcal{A}_D}
  W(\alpha)
  \sup_{t \in\left[b^{\lambda-1}, b^{\lambda}\right]}
  \abs{\sum_{b^{\lambda-1} \leq n < t}
    \Lambda(n)\e\left(\alpha R_{\lambda}(n)\right)}
  \ll
  \lambda^2 b^{\lambda-\min(c',\tilde{\delta})\sqrt{\lambda}}
  ,
\end{displaymath}
so that there exists $\lambda_0(b,\xi)$ such that for
$\lambda\geq \lambda_0(b,\xi)$ and
$c = c(b,\xi) = \frac{99}{100}\min(c',\tilde{\delta})$, we
obtain~\eqref{eq:maj_conclusion_sum_expo}.
By \eqref{eq:definition-A-d} and \eqref{eq:def-W},
this completes the proof of
Theorem~\ref{theorem:bound_expo_sum_average}.

\section{Study of $\xi_0(b)$}\label{section_study_xi_0}
Since $\xi_0(b)$ depends on $\iota_b$ (by \eqref{eq:def_xi_0}), which
in turn depends on $\kappa_b$ (by \eqref{eq:def_iota_b}), we first
study~$\kappa_b$.  We then provide values of $\xi_0(b)$ for
$2\leq b \leq 10$ and investigate the size of $\xi_0(b)$ for large
values of~$b$.  In this section, the implicit constants in the
notations $\ll$, $\gg$ and $O(\cdot)$ do not depend on $b$ and are
absolute.

\subsection{Study of $\kappa_b$}

For any integer $b\geq 2$ and any real number $\kappa>0$,
by \eqref{eq:max-Tb-simple} and \eqref{eq:definition-zeta_b}
we have
\begin{displaymath}
  b^{-\zeta_{b,\kappa}}
  =
  \max_{\alpha\in\R} T_{b,\kappa}(\alpha)
  <
  \frac{1}{b}+\sqrt{\frac{6(b+1)}{\pi(b-1)\kappa}}
  ,
\end{displaymath}
hence
\begin{displaymath}
  b^{-\zeta_{b,1}}
  b^{1-\zeta_{b,\kappa}}
  <
  b^{-\zeta_{b,1}}
  \left(1+\sqrt{\frac{6 b^2\, (b+1)}{\pi (b-1)\kappa}}\right)
  ,
\end{displaymath}
thus the integer $\kappa_b$ defined by \eqref{eq:def_kappa_b} satisfies
\begin{equation}\label{eq:kappa_b-upperbound}
  \kappa_b
  \leq
  \floor{
    \frac{6 b^2\, (b+1)}{
      \pi (b-1) \left(b^{\zeta_{b,1}}-1\right)^{2}}
  }
  +1
  .
\end{equation}

For small values of $b$, this upper bound is far from being optimal.
In particular, for $b=2$, by \eqref{eq:definition-zeta_b},
\eqref{eq:max-Tb-small-kappa}, \eqref{eq:definition-Tb} and
\eqref{eq:definition-Kb} we have
\begin{align*}
  2^{-\zeta_{2,1}}
  =
  \max_{\alpha\in\R} T_{2,1}(\alpha)
  &=
  T_{2,1}\left(\frac{1}{2}\right)
  \\
  &=
  \frac12 K_2\left(\frac{1}{3} \norm{\frac{1}{4}}
  \right)
  +
  \frac12 K_2\left(\frac{1}{3} \norm{\frac{3}{4}} \right)
  =
  \cos\frac{\pi}{12}
  ,
\end{align*}
which leads to the upper bound
\begin{displaymath}
  \kappa_2
  \leq
  \floor{
    6\cdot 4\cdot 3 \ \pi^{-1}
    \left(\frac{1}{\cos\frac{\pi}{12}}-1\right)^{-2}}
  + 1
  = 18418.
\end{displaymath}
Let us show by a numerical analysis that
\begin{displaymath}
  \kappa_2=25.
\end{displaymath}
For $\kappa \geq 1$, since $T_{2,\kappa}$ is $1$-periodic and even,
we may assume that $0\leq \alpha \leq \frac12$.
By \eqref{eq:definition-Tb}, we have
\begin{align*}
  T_{2,\kappa}\left(\alpha\right)
  &=
  \frac{1}{2} 
  K_2^{\kappa}\left(\frac{1}{3} \norm{\frac{\alpha}{2}} \right)
  +
  \frac{1}{2} 
  K_2^{\kappa}\left(\frac{1}{3} \norm{\frac{\alpha+1}{2}} \right)
  \\
  &=
  \frac{1}{2} 
  \cos^{\kappa}\left(\frac{\pi \alpha}{6}\right)
  +
  \frac{1}{2}
  \cos^{\kappa}\left(\frac{\pi (1-\alpha)}{6}\right)
\end{align*}
and
\begin{math}
  \kappa \mapsto T_{2,\kappa}(\alpha)
\end{math}
is decreasing.
By an elementary numerical computation,
it follows that
for $\kappa \in\{1,\ldots,24\}$,
\begin{multline*}
  T_{2,\kappa}\left(\frac{1}{20}\right)
  \geq
  T_{2,24}\left(\frac{1}{20}\right)
  =
  \frac12 \cos^{24} \frac{\pi}{120}
  +
  \frac12 \cos^{24} \frac{19\,\pi}{120} 
  \\
  >
  2^{\zeta_{2,1}-1}
  =
  \left(2 \cos \frac{\pi}{12} \right)^{-1}
  ,
\end{multline*}
so that $\kappa_2\geq 25$.
Introducing
\begin{multline*}
  \left(\alpha_0,\ldots,\alpha_{13}\right)
  \\
  =
  (0,0.03,0.04,0.05,0.06,0.07,0.08,0.10,0.13,0.17,0.23,0.32,0.43,0.5)
  ,
\end{multline*}
we have for $j\in\{0,\ldots,12\}$,
\begin{displaymath}
  \forall \alpha\in\left[\alpha_j,\alpha_{j+1}\right],\
  T_{2,\kappa}\left(\alpha\right)
  \leq
  \frac{1}{2} 
  \cos^{\kappa}\left(\frac{\pi \alpha_j}{6}\right)
  +
  \frac{1}{2}
  \cos^{\kappa}\left(\frac{\pi (1-\alpha_{j+1})}{6}\right)
  ,
\end{displaymath}
and checking numerically for $\kappa=25$ that
for $j\in\{0,\ldots,12\}$ we have
\begin{displaymath}
  \frac{1}{2} 
  \cos^{\kappa}\left(\frac{\pi \alpha_j}{6}\right)
  +
  \frac{1}{2}
  \cos^{\kappa}\left(\frac{\pi (1-\alpha_{j+1})}{6}\right)
  < 2^{\zeta_{2,1}-1} = \left( 2 \cos \frac\pi{12} \right)^{-1}
  ,
\end{displaymath}
we conclude that $\kappa_2=25$.

This value of $\kappa_2$ can also be obtained by a collection of plots,
as well as the next further values of $\kappa_b$:
\begin{table}[H]
  \centering
  \begin{equation*}
    \begin{tabu}{|c|c|c|c|c|c|c|c|c|c|}
      \hline
      b        &  2 &  3 &  4 &  5 &  6 &  7  &  8  &  9  &  10 \\
      \hline
      \kappa_b & 25 & 37 & 52 & 70 & 92 & 117 & 145 & 176 & 211 \\
      \hline
    \end{tabu}
  \end{equation*}
  \caption{Values of $\kappa_b$ for $2 \leq b \leq 10$}
  \label{table:values_kappa_b}
\end{table}

Let us now focus on the large values of $b$. We recall
\eqref{eq:kappa_b-upperbound}.  By \eqref{eq:definition-zeta_b},
\eqref{eq:max-Tb-small-kappa}, \eqref{eq:approximation-Tb} and
\eqref{eq:integral-approximation-Tb} we have
\begin{displaymath}
  b^{-\zeta_{b,1}}
  =
  T_{b,1}\left(\frac{b+1}{2}\right)
  =
  T_{\infty,1}
  + O\left(\frac1b\right)
  ,
\end{displaymath}
with
\begin{equation}\label{eq:definition-T_infty_1}
  T_{\infty,1}
  =
  \int_{-1/2}^{1/2} \frac{\sin \pi t}{\pi t} \, dt
  =
  0.87265429946\ldots
  .
\end{equation}
Hence
\begin{displaymath}
  b^{\zeta_{b,1}}
  =
  T_{\infty,1}^{-1}
  + O\left(\frac1b\right)
  ,
\end{displaymath}
thus
\begin{displaymath}
  \left( b^{\zeta_{b,1}} - 1 \right)^2
  =
  \left(T_{\infty,1}^{-1}-1\right)^2
  + O\left(\frac1b\right)
  .
\end{displaymath}
It follows that
\begin{equation}\label{eq:upper_bound_kappa_b_b_large}
  \kappa_b
  \leq
  \floor{
    \frac{6 b^2\, (b+1)}{
      \pi (b-1) \left(b^{\zeta_{b,1}}-1\right)^{2}}
  }
  +1
  =
  \frac{6 b^2}{\pi \left(T_{\infty,1}^{-1}-1\right)^2}
  +
  O(b)
  .
\end{equation}

\subsection{Values of $\xi_0(b)$ for $2\leq b \leq 10$}
From \eqref{eq:def_xi_0}, \eqref{eq:definition-eta_b},
\eqref{eq:def_u_b}, \eqref{eq:def_iota_b} and
Table~\ref{table:values_kappa_b}, we obtain the following table of
values of $\xi_0(b)$.
\begin{table}[H]
  \centering
  \begin{equation*}
    \begin{tabu}{||c|c||c|c||c|c||}
      \hline
      b &  \xi_0(b) & b &  \xi_0(b) & b &  \xi_0(b)
      \\
      \hline
      2 & 0.00441445\ldots & 5 & 0.00206095\ldots & 8 & 0.00101824\ldots\\
      3 & 0.00301523\ldots & 6 & 0.00149811\ldots & 9 & 0.00088096\ldots\\
      4 & 0.00226727\ldots & 7 & 0.00131958\ldots & 10 & 0.00072623\ldots\\
      \hline
    \end{tabu}
  \end{equation*}
  \caption{Values of $\xi_0(b)$ for $2 \leq b \leq 10$}
  \label{table:values_xi_0_b}
\end{table}

\subsection{Study of $\xi_0(b)$ for $b$ large}

For any integer $b\geq 2$,
for any integer $\kappa'_b \geq \kappa_b$,
we have by \eqref{eq:def_xi_0} and \eqref{eq:formula_iota_u_kappa},
\begin{displaymath}
  \xi_0(b) \geq
  \xi'_0(b) := 
  \min\left(\frac{1-(1+2\eta_b)\frac{1}{2}}{2(3+4\eta_b)},\,
    \frac{u_b}{3-4u_b} \frac{1}{3},\,
    \frac{u_b}{1+2(\kappa'_b+4)u_b} \frac{1}{3},
    \frac{1}{33}\right)
  .
\end{displaymath}
For $\kappa'_b \gg b$ and $b$ large enough, let us show that the third
term provides the minimum.  Since $\eta_b \to 0$ as $b \to +\infty$
(by \eqref{eq:majoration-psi-b} and \eqref{eq:definition-eta_b}) and
$u_b \gg (\log b)^{-1}$ (by \eqref{eq:def_u_b}), if $\kappa'_b \gg b$
then for $b$ large enough we have
\begin{math}
 \kappa'_b + 6 > u_b^{-1}
\end{math}
which implies that
\begin{displaymath}
  3-4u_b < 1+2(\kappa'_b+4)u_b
  ,
\end{displaymath}
so that
\begin{displaymath}
  \frac{u_b}{3-4u_b} \frac{1}{3}
  >
  \frac{u_b}{1+2(\kappa'_b+4)u_b} \frac{1}{3}
  \to 0,
\end{displaymath}
and therefore, for $b$ large enough,
\begin{displaymath}
  \xi'_0(b)
  = 
  \frac{u_b}{1+2(\kappa'_b+4)u_b} \frac{1}{3}
  ,
\end{displaymath}
hence
\begin{displaymath}
  \xi'_0(b)
  = 
  \frac{1}{6(\kappa'_b+4)} 
  \frac{1}{1+\frac{1}{2(\kappa'_b+4)u_b}}
  =
  \frac{1}{6(\kappa'_b+4)}
  +
  O\left(
    \frac{1}{(\kappa'_b+4)^2u_b}
  \right)
  .
\end{displaymath}
If we take
\begin{displaymath}
  \kappa'_b =
  \floor{
    \frac{6 b^2\, (b+1)}{
      \pi (b-1) \left(b^{\zeta_{b,1}}-1\right)^{2}}
  }
  + 1
\end{displaymath}
then, by~\eqref{eq:upper_bound_kappa_b_b_large}, $\kappa'_b$ fulfills
the above conditions, hence for $b$ large enough,
\begin{displaymath}
  \xi'_0(b)
  = 
  \frac{\pi \left(T_{\infty,1}^{-1}-1\right)^2}{36 \, b^2}
  +
  O\left( \frac{1}{b^3} \right)
\end{displaymath}
and we conclude that 
\begin{equation}\label{eq:lower_bound_xi_0}
  \xi_0(b)
  \geq
  \frac{\pi \left(T_{\infty,1}^{-1}-1\right)^2}{36 \, b^2}
  +
  O\left( \frac{1}{b^3} \right)
  .
\end{equation}

\section{Completion of the proofs of the main results}

\subsection{Proof of Theorem~\ref{theorem:type_bombieri-vinogradov}}
\label{section:proof_thm_type_BV}

Let $d,a \in \Z$, $d\geq 2$ and
$t \in\left[b^{\lambda-1}, b^{\lambda}\right]$.
We write
\begin{displaymath}
  \pimirror_\lambda(t,a,d)
  =
  \frac{1}{d} \sum_{h=1}^{d} \e\left(\frac{-ha}{d}\right)
  \sum_{b^{\lambda-1} \leq p < t}
  \e\left(\frac{h R_{\lambda}(p)}{d}\right).
\end{displaymath}
Since the contribution of $h=d$ is $\pi_\lambda(t)/d$, we have
\begin{displaymath}
  \abs{
      \pimirror_\lambda(t,a,d)
      - \frac{ \pi_\lambda(t)}{d}
    }
    \leq
    \frac{1}{d} \sum_{h=1}^{d-1} 
    \abs{\sum_{b^{\lambda-1} \leq p < t}
      \e\left(\frac{h R_{\lambda}(p)}{d}\right)}
    .
\end{displaymath}
Moreover, by partial summation, we have for any $\alpha \in\R$,
\begin{align*}
  &\abs{ \sum_{b^{\lambda-1} \leq p < t} \e\left(\alpha R_{\lambda}(p)\right)}
  \\
  &\leq
  \frac{1}{\log b^{\lambda-1}}
   \sup_{u \in \left[b^{\lambda-1}, t \right]}
  \abs{
    \sum_{b^{\lambda-1} \leq p < u} (\log p) \e\left(\alpha R_{\lambda}(p)\right)
  }\\
  &\leq
   \frac{1}{\log b^{\lambda-1}}
   \sup_{u \in \left[b^{\lambda-1}, t \right]}
  \abs{
    \sum_{b^{\lambda-1} \leq n < u} \Lambda(n) \e\left(\alpha R_{\lambda}(n)\right)
  }
    + \frac{\pi(\sqrt{t})  \log t}{\log b^{\lambda-1}}.
\end{align*}
It follows that
\begin{multline}\label{eq:maj_after_partial_summation}
  \sup_{t \in\left[b^{\lambda-1}, b^{\lambda}\right]}
  \sup_{1\le a\le d}
  \abs{
    \pimirror_\lambda(t,a,d)
    - \frac{ \pi_\lambda(t)}{d}
  }
  \\
  \leq
  \frac{1}{\log b^{\lambda-1}}\frac{1}{d} \sum_{h=1}^{d-1}
   \sup_{u \in \left[b^{\lambda-1}, b^{\lambda} \right]}
  \abs{
    \sum_{b^{\lambda-1} \leq n < u} \Lambda(n) \e\left(\frac{h R_{\lambda}(n)}{d}\right)
  }
  +
  \frac{\pi(\sqrt{b^{\lambda}})  \log b^{\lambda}}{\log b^{\lambda-1}}.
\end{multline}
Filtering according to the value of $\gcd(h,d)$, we see that for any
real number $D\geq 2$,
\begin{multline}\label{eq:maj_after_filtering_gcd}
  \sum_{\substack{d \leq D\\ \gcd(d,b(b^2-1))=1}}
  \frac{1}{d} \sum_{h=1}^{d-1}
  \sup_{u \in \left[b^{\lambda-1}, b^{\lambda} \right]}
  \abs{ \sum_{b^{\lambda-1} \leq n < u} \Lambda(n)
    \e\left(\frac{h R_{\lambda}(n)}{d}\right) }
  \\
  \ll
  (\log D)
  \hspace{-1em}
  \sum_{\substack{d' \leq D\\ \gcd(d',b(b^2-1))=1}}
  \frac{1}{d'} \sum_{\substack{1\leq h'<d'\\\gcd(h',d')=1}}
  \sup_{u \in \left[b^{\lambda-1}, b^{\lambda} \right]}
  \abs{ \sum_{b^{\lambda-1} \leq n < u} \Lambda(n)
    \e\left(\frac{h' R_{\lambda}(n)}{d'}\right) }
  \\=
  (\log D)
  \sum_{\alpha \in \mathcal{A}_D}
  W(\alpha)
  \sup_{u \in\left[b^{\lambda-1}, b^{\lambda}\right]}
    \abs{\sum_{b^{\lambda-1} \leq n < u}
      \Lambda(n)\e\left(\alpha R_{\lambda}(n)\right)}
  .
\end{multline}
Now let $\xi_0 = \xi_0(b)$ be defined by~\eqref{eq:def_xi_0} and
$\xi \in\left]0,\xi_0\right[$.  By
Theorem~\ref{theorem:bound_expo_sum_average}, there exists
$c = c(b,\xi) >0$ and $\lambda_0(b,\xi)\geq 2$ such that for any
$\lambda\geq\lambda_0(b,\xi)$ and $D = b^{\xi \lambda}$, we have
\begin{displaymath}
  \sum_{\alpha \in \mathcal{A}_D}
  W(\alpha)
  \sup_{u \in\left[b^{\lambda-1}, b^{\lambda}\right]}
  \abs{\sum_{b^{\lambda-1} \leq n < u}
    \Lambda(n)\e\left(\alpha R_{\lambda}(n)\right)}
  \ll
  b^{\lambda-c\sqrt{\lambda}},
\end{displaymath}
which, combined with \eqref{eq:maj_after_partial_summation} and
\eqref{eq:maj_after_filtering_gcd}, leads to
\begin{multline*}
  \sum_{\substack{d \leq b^{\xi \lambda}\\ \gcd(d,b(b^2-1))=1}}
  \sup_{t \in\left[b^{\lambda-1}, b^{\lambda}\right]}
  \sup_{1\le a\le d}
  \abs{
    \pimirror_\lambda(t,a,d)
    - \frac{ \pi_\lambda(t)}{d}
  }
  \\
  \ll
  \frac{\log b^{\xi \lambda}}{\log b^{\lambda-1}}\, b^{\lambda-c\sqrt{\lambda}}
  + b^{\xi \lambda} \,\frac{\pi(\sqrt{b^{\lambda}})  
    \log b^{\lambda}}{\log b^{\lambda-1}}
  .
\end{multline*}
Since $\xi < \xi_0 < \frac{1}{2}$, the right-hand side above is
$\ll b^{\lambda-c\sqrt{\lambda}}$ for $\lambda\geq \lambda_1(b,\xi)$
large enough.
This ends the proof of Theorem~\ref{theorem:type_bombieri-vinogradov}.

\subsection{Proof of Theorem~\ref{theorem:distribution-level}}
\label{section:proof_thm_distribution-level}

Let $\lambda, i, d$ be integers such that $\lambda\geq 3$,
$i \in \{1,\ldots,b-1\}$, $d\geq 1$ and $\gcd(d,b(b^2-1))=1$.
By \eqref{eq:def_P_lambda_i} and \eqref{eq:def_pi_lambda_t}, we have
\begin{align*}
  \abs{\mathscr{P}_{\lambda,i}}
  &=
  \abs{\{i b^{\lambda-1}\leq p < (i+1)b^{\lambda-1}: p \text{ prime}\}}
  \\
  &=
  \pi_{\lambda}\left((i+1)b^{\lambda-1}\right)
  -
  \pi_{\lambda}\left(i b^{\lambda-1}\right)
  .
\end{align*}
By \eqref{def_T_lambda_d} and \eqref{eq:def_pimirror_lambda_t_a_d}, we have
\begin{align*}
  T_{\lambda,i}(d)
  &=
  |\{ p \in \mathscr{P}_{\lambda,i}: R_\lambda(p) \equiv 0 \bmod d\}|
  \\
  &=
  \pimirror_{\lambda}\left((i+1)b^{\lambda-1},0,d\right)
  -
  \pimirror_{\lambda}\left(i b^{\lambda-1},0,d\right)
  .
\end{align*}
Moreover, since $\gcd(d,b(b^2-1))=1$, we have by \eqref{eq:def_g}
\begin{displaymath}
  g(d) = d^{-1}.
\end{displaymath}
It follows from \eqref{eq:def_Rd} that
\begin{displaymath}
  \abs{E_{\lambda,i}(d)}
  =  
  \abs{T_{\lambda,i}(d) - g(d) \abs{\mathscr{P}_{\lambda,i}}}
  \leq 
  2 \sup_{t \in\left[b^{\lambda-1}, b^{\lambda}\right]}
  \abs{
    \pimirror_\lambda(t,0,d)
    - \frac{ \pi_\lambda(t)}{d}
  }
  ,
\end{displaymath}
hence, for any real number $D>1$,
\begin{multline*}
  \sum_{\substack{d \leq D\\\gcd(d,b(b^2-1))=1}}
  \sup_{1\leq i \leq b-1}
  \abs{E_{\lambda,i}(d)}
  \\
  \leq
  2\sum_{\substack{d \leq D\\ \gcd(d,b(b^2-1))=1}}
  \sup_{t \in\left[b^{\lambda-1}, b^{\lambda}\right]}
  \sup_{1\le a\le d}
  \abs{
    \pimirror_\lambda(t,a,d)
    - \frac{ \pi_\lambda(t)}{d}
  }
  .
\end{multline*}
Theorem~\ref{theorem:distribution-level} then appears as an immediate
consequence of Theorem~\ref{theorem:type_bombieri-vinogradov}.

\subsection{Proof of Theorem~\ref{theorem:theta_upper_bound} and
  Theorem~\ref{theorem:theta_lower_bound}}
\label{section:proof_bounds_Theta_lambda_z}

Let $\xi \in \left]0,\xi_0/2\right[$ where $\xi_0=\xi_0(b)$ is
given by~\eqref{eq:def_xi_0}. We define
$\xi_1 = \xi_1(b,\xi) = \frac12 \left(2 \xi + \xi_0\right) $ so that
$2 \xi < \xi_1 < \xi_0$ and let $z = b^{\xi \lambda}$ and
$D = b^{\xi_1 \lambda} \geq z$.
We recall the notations of Section~\ref{section-sieves}. 

By Theorem~\ref{theorem:distribution-level}, since
$\xi_1 \in \left]0,\xi_0\right[$, there exists $c = c(b,\xi) >0$ and
$\lambda_1(b,\xi)\geq 2$ with the property that for any integer
$\lambda\geq\lambda_1(b,\xi)$, we have
\begin{displaymath}
  \sum_{\substack{d \leq D\\d \mid P(z)}}
  \sup_{1\leq i \leq b-1} \abs{E_{\lambda,i}(d)}
  \leq
  \sum_{\substack{d \leq b^{\xi_1 \lambda}\\\gcd(d,b(b^2-1))=1}}
  \sup_{1\leq i \leq b-1}
  \abs{E_{\lambda,i}(d)}
  \ll
  b^{\lambda-c\sqrt{\lambda}},
\end{displaymath}
hence, by~\eqref{eq:upper_bound_Theta_lambda_z} and
\eqref{eq:lower_bound_Theta_lambda_z}, we have for any
$i \in \{1,\ldots,b-1\}$,
\begin{equation}\label{eq:maj_varTheta_i_inter}
  \varTheta_i\left(\lambda, b^{\xi \lambda}\right)
  <
  \left(F\left(\tfrac{\xi_1}{\xi}\right)
    + O\left((\log b^{\xi_1 \lambda})^{-1/6}\right)\right)
  \abs{\mathscr{P}_{\lambda,i}} V\left(b^{\xi \lambda}\right)
  +
  O\left(b^{\lambda-c\sqrt{\lambda}}\right)
  ,
\end{equation}
\begin{equation}\label{eq:min_varTheta_i_inter}
  \varTheta_i\left(\lambda, b^{\xi \lambda}\right)
  >
  \left(f\left(\tfrac{\xi_1}{\xi}\right)
    + O\left((\log b^{\xi_1 \lambda})^{-1/6}\right)\right)
  \abs{\mathscr{P}_{\lambda,i}} V\left(b^{\xi \lambda}\right)
  +
  O\left(b^{\lambda-c\sqrt{\lambda}}\right)
  .
\end{equation}
Moreover, by the Prime Number Theorem, we have for any
$i \in\{1,\ldots,b-1\}$,
\begin{equation}\label{eq:asymp_P_lambda_i}
  \abs{\mathscr{P}_{\lambda,i}}
  =
  \frac{b^{\lambda-1}}{\log b^{\lambda-1}}
  \left(1+O\left(\frac{1}{\lambda}\right)\right)
\end{equation}
and by~\eqref{eq:asymp_formula_V}, if $b^{\xi \lambda} > b(b^2-1)$ then
\begin{equation}\label{eq:asymp_V_evaluated}
  V\left(b^{\xi \lambda}\right)
  =
  \frac{e^{-\gamma}}{\log b^{\xi \lambda}}
  \left(1+O \left(\frac{1}{\log b^{\xi \lambda}}\right)\right)
  \prod_{p \dv b(b^2-1)} \left(1-p^{-1}\right)^{-1}
  .
\end{equation}

It follows from~\eqref{eq:maj_varTheta_i_inter} that there exists
$\lambda'_0(b,\xi)$ such that for any
$\lambda \geq \lambda'_0(b,\xi)$, we have
\begin{displaymath}
  \sup_{1\leq i \leq b-1} \varTheta_i\left(\lambda, b^{\xi \lambda}\right)
  \ll_{\xi} \frac{b^{\lambda}}{\lambda^2}.
\end{displaymath}
By taking a larger implicit constant if necessary, we see that this
remains true for any $\lambda\geq 1$. Since
$z \mapsto \varTheta_i(\lambda,z)$ is decreasing, this proves
Theorem~\ref{theorem:theta_upper_bound}.

Since $2\xi < \xi_1$, we have $f\left(\tfrac{\xi_1}{\xi}\right)>0$ and
it follows from~\eqref{eq:min_varTheta_i_inter} that there exists
$\lambda_0(b,\xi)$ such that for any $\lambda \geq \lambda_0(b,\xi)$,
we have
\begin{displaymath}
  \inf_{1\leq i \leq b-1}\varTheta_i\left(\lambda, b^{\xi \lambda}\right)
  \gg_{\xi} \frac{b^{\lambda}}{\lambda^2},
\end{displaymath}
which proves Theorem~\ref{theorem:theta_lower_bound}.

\subsection{Proof of Theorem~\ref{theorem:application_weighted_sieve}}
\label{section:proof_theorem:application_weighted_sieve}

Let $\xi \in \left]0,\xi_0\right[$ where $\xi_0=\xi_0(b)$ is given
by~\eqref{eq:def_xi_0} and let $i\in\{1,\ldots,b-1\}$. We recall the
notations of Section~\ref{section-sieves} and we define
$D = b^{\xi \lambda}$.  By Theorem~\ref{theorem:distribution-level},
since $\xi \in \left]0,\xi_0\right[$, there exists $c = c(b,\xi) >0$
and $\lambda_1(b,\xi)\geq 2$ with the property that for any integer
$\lambda\geq\lambda_1(b,\xi)$, we have
\begin{displaymath}
  \sum_{d \leq D}
  \abs{E_{\lambda,i}(d)}
  =
  \sum_{\substack{d \leq D\\\gcd(d,b(b^2-1))=1}}
  \abs{E_{\lambda,i}(d)}
  \ll
  b^{\lambda-c\sqrt{\lambda}}
  ,
\end{displaymath}
hence by \eqref{eq:asymp_P_lambda_i},
\begin{displaymath}
  \sum_{d \leq D}
  \abs{E_{\lambda,i}(d)}
  \ll
  \abs{\mathscr{P}_{\lambda,i}} (\log \abs{\mathscr{P}_{\lambda,i}})^{-3}
  .
\end{displaymath}
Moreover, by omitting the fact that the integers counted in
$T_{\lambda,i} (p^2)$ are prime numbers and next using the one-to-one
correspondence between an integer and its reverse, we obtain for any
real number $z\geq 1$,
\begin{align*}
\sum_{p>z}T_{\lambda,i} (p^2) 
  &\le \sum_{p>z}\abs{\{ib^{\lambda -1} \le n < (i+1)b^{\lambda-1} :
    p^2 \mid R_\lambda(n)\} }\\
& \le\sum_{z<p <b^{\lambda /2}}\abs{ \{ m<b^\lambda : p^2 \mid m \}}\\
& \ll \sum_{p>z}\frac{b^\lambda}{p^2}\ll \frac{b^\lambda}{z},
\end{align*}
hence, taking $z = D^{1/4}$, there exists $\lambda_2(b,\xi)\geq 2$
such that for any $\lambda\geq\lambda_2(b,\xi)$, we have
\begin{displaymath}
  \sum_{p>D^{1/4}} T_{\lambda,i} (p^2)
  \ll
  \abs{\mathscr{P}_{\lambda,i}} (\log \abs{\mathscr{P}_{\lambda,i}})^{-3}.
\end{displaymath}
Let $R$ be an integer such that $\Lambda_R > \xi^{-1}$ with
$\Lambda_R$ defined by \eqref{eq:def_Lambda_R} and let
\begin{math}
  \varepsilon = \varepsilon(\xi,R) = \xi - \Lambda_R^{-1} >0 
\end{math}
so that
\begin{math}
  D = b^{\xi \lambda}
  = \left(b^{\lambda}\right)^{\Lambda_R^{-1} +\varepsilon}.
\end{math}

It follows from Lemma~\ref{lemma-weighted-sieve} and
\eqref{eq:asymp_V_evaluated} that there exists
$\lambda_0(b,\xi,R)\geq 2$ such that for any integer
$\lambda\geq\lambda_0(b,\xi,R)$, we have
\begin{displaymath}
  |\{p \in \mathscr{P}_{\lambda,i}: 
  \Omega(R_{\lambda}(p)) \leq R \text{ and } 
  P^{-}(R_\lambda(p))\geq b^{\frac{\xi }{4}\lambda}\}|
  \asymp_{\xi,R}
  \frac{b^{\lambda}}{\lambda^2}.
\end{displaymath}
This ends the proof of Theorem~\ref{theorem:application_weighted_sieve}.

\subsection{Proof of Theorem~\ref{theorem:true-reversible-primes}}
\label{section:proof_thm_upper_bound_reversible_primes}

Let $\lambda$ and $n$ be integers such that
$b^2 \leq b^{\lambda-1} \leq n < b^{\lambda}$.

If $n$ is prime then, since 
\begin{math}
  \varepsilon_{\lambda-1}(R_{\lambda}(n))
  =
  \varepsilon_0(n) \geq 1
  ,
\end{math}
we have $R_{\lambda}(n) \geq b^{\lambda-1}$.

If $R_{\lambda}(n)$ is prime and $R_{\lambda}(n) \geq b^{\lambda-1}$
then, since
\begin{math}
  \varepsilon_{\lambda-1}(n)
  =
  \varepsilon_0(R_{\lambda}(n))
  ,
\end{math}
we have $\gcd(\varepsilon_{\lambda-1}(n),b)=1$.

Recalling \eqref{eq:def_P_lambda_i} and
\eqref{eq:def_varTheta_lambda_z}, it follows that for any integer
$\lambda \geq 3$ and any real number $z\leq b^{\lambda-1}$,
\begin{multline*}
  \abs{\{b^{\lambda-1} \leq p < b^\lambda :
    R_\lambda(p) \text{ is prime} \}
  }
  \\
  =
  \sum_{\substack{1 \leq i \leq b-1\\\gcd(i,b)=1}}
  \abs{\{ p \in \mathscr{P}_{\lambda,i} :
    R_\lambda(p) \text{ is prime} \}
  }
  \leq
  \sum_{\substack{1 \leq i \leq b-1\\\gcd(i,b)=1}}
  \varTheta_i(\lambda,z)
  .
\end{multline*}
Theorem~\ref{theorem:true-reversible-primes} then appears as an
immediate consequence of Theorem~\ref{theorem:theta_upper_bound}.

\bigskip

\subsection{Proof of a weaker version of Theorem~\ref{thm-almost-prime}}
\label{section:proof_thm_almost_primes_weaker}

If $\lambda,n\geq 1$ are integers and $\xi$ is a positive real number
such that $n < b^{\lambda}$ and all prime factors of $n$ are
$\geq b^{\xi \lambda}$ then
\begin{displaymath}
  b^{\xi \lambda \Omega(n)} \leq n < b^{\lambda},
\end{displaymath}
hence $\Omega(n) < \ceil{\xi^{-1}}$. 
Recalling that by \eqref{eq:def_varTheta_lambda_z}, 
\begin{displaymath}
  \varTheta_1(\lambda,b^{\xi \lambda})
  =
  \abs{\{b^{\lambda-1} \leq p < 2 b^{\lambda-1} :
   P^-(R_{\lambda}(p)) \geq b^{\xi \lambda}\}}
\end{displaymath}
we obtain
\begin{equation}\label{eq:min_Omega_varTheta_inter}
  \abs{\{b^{\lambda-1} \leq p < 2 b^{\lambda-1} :
    \Omega(R_\lambda(p))\leq \ceil{\xi^{-1}}-1\}}
  \geq 
  \varTheta_1(\lambda,b^{\xi \lambda}).
\end{equation}

Let
\begin{equation}\label{eq:def_Omega_b_tilde}
  \widetilde{\Omega}_b = \floor{2/\xi_0(b)}
\end{equation}
where $\xi_0(b) >0$ is defined by \eqref{eq:def_xi_0}. We introduce
$\xi_1(b) = (\widetilde{\Omega}_b+1)^{-1}$ and
$\xi = \xi(b) = \frac12 \left(\xi_1(b)+ \frac{\xi_0(b)}2\right)$, so that,
since $\frac{1}{\xi_1(b)} > \frac{2}{\xi_0(b)}$, we have
$\xi_1(b) < \xi < \frac{\xi_0(b)}2$ and
$\ceil{\xi^{-1}} = \widetilde{\Omega}_b+1$.
Then, by \eqref{eq:min_Omega_varTheta_inter} and
Theorem~\ref{theorem:theta_lower_bound}, there exists
$\lambda_0(b) \geq 2$ such that for any integer
$\lambda \geq \lambda_0(b)$, we have
\begin{displaymath}
  \abs{\{b^{\lambda-1} \leq p < 2 b^{\lambda-1} :
    \Omega(R_\lambda(p))\leq \widetilde{\Omega}_b\}}
  \geq
  \varTheta_1(\lambda,b^{\xi\lambda})
  \gg \frac{b^{\lambda}}{\lambda^2}.
\end{displaymath}
This shows that Theorem~\ref{thm-almost-prime} holds with $\Omega_b$
replaced by $\widetilde{\Omega}_b$.

\subsection{Proof of Theorem~\ref{thm-almost-prime} }
\label{section:proof_thm_almost_primes}
Let 
\begin{equation}\label{eq:def_Omega_b_weighted_sieves}
  \Omega_b=1+\ceil{1/\xi_0(b)}
\end{equation}
where $\xi_0(b) >0$ is defined by \eqref{eq:def_xi_0}.  We recall that
$\Lambda_R$ defined by \eqref{eq:def_Lambda_R} satisfies
$R-1 < \Lambda_R \leq R$.  By applying
Theorem~\ref{theorem:application_weighted_sieve} with $R = \Omega_b$
and
\begin{math}
  \xi = \frac{1}{2}\left(\frac{1}{\Lambda_R}+\frac{1}{R-1}\right)
  ,
\end{math}
which satisfy
\begin{math}
  \frac{1}{\Lambda_R} < \xi < \frac{1}{R-1} \leq \xi_0(b)
  ,
\end{math}
we obtain that there exists $\lambda_0(b) \geq 2$ such that for any
integer $\lambda\geq \lambda_0(b)$ and any $i\in \{1,\ldots,b-1\}$,
\begin{displaymath}
  |\{p \in \mathscr{P}_{\lambda,i}: 
  \Omega(R_{\lambda}(p)) \leq \Omega_b \}|
  \gg
  \frac{b^{\lambda}}{\lambda^2}
  .
\end{displaymath}

For $2 \leq b \leq 10$, we deduce the values of $\Omega_b$
given in Table~\ref{table:values_Omega_b} from Table~\ref{table:values_xi_0_b}
and \eqref{eq:def_Omega_b_weighted_sieves}.

For larger bases $b$, the upper bound of $\Omega_b$ given in
\eqref{eq:maj_Omega_b} follows from \eqref{eq:lower_bound_xi_0} and
\eqref{eq:def_Omega_b_weighted_sieves}.

\bigskip

\subsection{Proof of Theorem~\ref{theorem:reversed-siegel-walfisz}}
\label{section:proof-thm-reversed-siegel-walfisz}

Let $d,a \in \Z$, $d\geq 2$ and
$t \in\left[b^{\lambda-1}, b^{\lambda}\right]$. As in Section
\ref{section:proof_thm_type_BV} we employ exponential sums:
\begin{displaymath}
  \pimirror_\lambda(t,a,d)
  =
  \frac{1}{d} \sum_{h=1}^{d} \e\left(\frac{-ha}{d}\right)
  \sum_{b^{\lambda-1} \leq p < t}
  \e\left(\frac{h R_{\lambda}(p)}{d}\right).
\end{displaymath}
We split this sum in two parts
\begin{displaymath}
  \pimirror_\lambda(t,a,d)=M_\lambda (t,a,d)+E_\lambda (t,a,d)
\end{displaymath}
where $M_\lambda (t,a,d)$ is the contribution of the $h$ such that
$h(b^2-1)b^\lambda\equiv 0\bmod d$ and $E_\lambda (t,a,d)$ is the
contribution of the other~$h$.

The integers $h$ involved in $M_\lambda (t,a,d)$ can be written as
\begin{displaymath}
  h = k \, \frac{d}{\gcd (d, (b^2-1)b^\lambda)}
\end{displaymath}
with $1\le k \leq  \gcd (d,(b^2-1)b^\lambda)$.  We deduce that
\begin{align*}
  M_\lambda (t,a,d)
  &=
    \frac{1}{d}\sum_{1\le k \leq \gcd (d,(b^2-1)b^\lambda)} 
    \sum_{b^{\lambda -1}\le p<t}
    \e\left(\frac{k(R_\lambda (p)-a)}{\gcd (d,(b^2-1) b^\lambda)}\right)\\
  &=
    \frac{\gcd (d, (b^2-1)b^\lambda)}{d}
    \pimirror_\lambda (t,a,\gcd (d,(b^2-1)b^\lambda)).
\end{align*}

The error term is bounded by 
\begin{displaymath}
  \abs{E_\lambda (t,a,d)}
  \le
  \sup_{\substack{1\le h\le d\\h (b^2-1)b^\lambda \not\equiv 0 \bmod d}}
  \abs{
  \sum_{b^{\lambda -1}\le p <t}
  \e\left (\frac{hR_\lambda (p)}{d}
  \right )
  }.
\end{displaymath}
By partial summation exactly as in
Section~\ref{section:proof_thm_type_BV}, we insert the von Mangoldt
function:
\begin{multline*}
  \abs{E_\lambda (t,a,d)}
  \\
  \ll 
  \frac{1}{\lambda}
  \sup_{\substack{1\le h\le d\\h (b^2-1)b^\lambda \not\equiv 0 \bmod d}}
  \sup_{u\in [b^{\lambda -1}, b^\lambda]}
  \abs{
    \sum_{b^{\lambda -1}\le n <u}\Lambda (n)\e \left (\frac{hR_\lambda (n)}{d}\right )
  }
  + \frac{\sqrt{b^{\lambda}}}{\lambda}.
\end{multline*}
By Theorem~\ref{theorem:bound_expo_sum_conclusion}, there exists
$c'(b)>0$ such that for any integer $h$ with
$h (b^2-1)b^\lambda \not\equiv 0 \bmod d$, we have
\begin{displaymath}
  \sup_{u\in [b^{\lambda -1}, b^\lambda]}
  \abs{
    \sum_{b^{\lambda -1}\le n <u}\Lambda (n)
    \e \left (\frac{hR_\lambda (n)}{d}\right )
  }
  \ll
  \lambda^{2+\frac{\omega(b)}2}
  b^{\lambda} \exp\left(-\frac{c'(b)\lambda}{\log d} \right),
\end{displaymath}
hence
\begin{displaymath}
  \abs{E_\lambda (t,a,d)}
  \ll
  \lambda^{1+\frac{\omega(b)}2}
  b^{\lambda}
  \exp\left(-\frac{c'(b)\lambda}{\log d} \right)
  + \frac{\sqrt{b^{\lambda}}}{\lambda}.
\end{displaymath}
Taking 
\begin{math}
  c_1(b)
  =
  \min\left(
    c'(b)\left(2+\frac{\omega(b)}2\right)^{-1}
    ,
    \frac{(\log 2)(\log b)}2
  \right)
  >0
\end{math}
and assuming that
\begin{math}
  2 \leq d \leq \exp\left(\frac{c_1(b) \lambda}{\log \lambda}\right),
\end{math}
since
\begin{displaymath}
  \frac{\lambda}{\log d}(c'(b)-c_1(b))
  \geq
  \left(\frac{c'(b)}{c_1(b)}-1\right) \log \lambda
  \geq
  \left(1+\frac{\omega(b)}2\right) \log \lambda
\end{displaymath}
and
\begin{math}
  \frac{c_1(b)\lambda}{\log d} \leq \frac{\lambda\log b}{2},
\end{math}
it follows that
\begin{displaymath}
  \abs{E_\lambda (t,a,d)}
  \ll
  b^{\lambda}
  \exp\left(-\frac{c_1(b)\lambda}{\log d} \right).
\end{displaymath}
We conclude that for any integer $b\geq 2$, there exists $c_1(b)>0$
such that for $a,d,\lambda\in\Z$ with $\lambda\geq 2$ and
\begin{equation}\label{eq:range_d_proof}
  2 \leq d \leq \exp\left(\frac{c_1(b) \, \lambda}{\log \lambda}\right)
  ,
\end{equation}
we have for any $t \in\left[b^{\lambda-1}, b^{\lambda}\right]$,
\begin{multline}
  \label{eq:reversed-siegel-walfisz-BS}
  \pimirror_\lambda(t,a,d)
  =
  \frac{\gcd(d,(b^2-1)b^\lambda)}{d}
  \pimirror_\lambda\left(
    t,a,\gcd\left(d,\left(b^2-1\right)b^\lambda\right)
  \right)
  \\
  +
  O\left(b^\lambda \exp\left(-\frac{c_1(b) \, \lambda}{\log d}\right)\right)
  .
\end{multline}
This extends the result of Bhowmik and
Suzuki~\cite{bhowmik-suzuki-2024-arxiv} who
obtained~\eqref{eq:reversed-siegel-walfisz-BS} for $b\geq 31699$ (and
$t=b^\lambda$),
and also Chourasiya and Johnston~\cite{chourasiya-johnston-arxiv-2025}
who essentially
proved~\eqref{eq:reversed-siegel-walfisz-BS} for $b\ge 26000$.

Let us now prove \eqref{eq:reversed-siegel-walfisz} under the
condition \eqref{eq:range_d_proof}. Let
\begin{displaymath}
  \lambda_0(b)
  =
  \exp\left( \frac{c_1(b)}{\log 2} \right)
\end{displaymath}
and let us first assume that $\lambda \geq \lambda_0(b)$.  For
$p\dv b$, we have $p^{v_p(d)} \leq d$, hence by
\eqref{eq:range_d_proof},
\begin{displaymath}
  v_p(d)
  \leq
  \frac{c_1(b) \lambda}{(\log p)(\log \lambda)}
  \leq \frac{\log 2}{\log p} \lambda
  \leq\lambda \leq v_p(b^\lambda)
  ,
\end{displaymath}
thus,
for $\lambda\geq \lambda_0(b)$,
\begin{align*}
  \gcd(d,(b^2-1)b^\lambda)
  &=
  \gcd(d,(b^2-1))
  \prod_{p\dv b} p^{\min(v_p(d),v_p(b^\lambda))}
  \\
  &=
  \gcd(d,(b^2-1)) \prod_{p\dv b} p^{v_p(d)}
  =
  \Pi_d(b)
  ,
\end{align*}
where $\Pi_d(b)$ is defined by \eqref{def:Pi_d(b)}.
We deduce that \eqref{eq:reversed-siegel-walfisz} holds under the
condition \eqref{eq:range_d_proof} and $\lambda \geq \lambda_0(b)$.
The range $\lambda \geq \lambda_0(b)$ may be extended to
$\lambda \geq 2$ by choosing in \eqref{eq:reversed-siegel-walfisz}
an appropriate implicit constant depending on $b$.
This completes the proof of
Theorem~\ref{theorem:reversed-siegel-walfisz}.

\bibliographystyle{siam}

\bibliography{biblio}

\end{document}